\theoremstyle{plain}
\newtheorem{theorem}{Theorem}[section]
\newtheorem{corollary}[theorem]{Corollary}
\newtheorem{lemma}[theorem]{Lemma}
\newtheorem{proposition}[theorem]{Proposition}
\newtheorem{definition}[theorem]{Definition}
\theoremstyle{remark}
\newtheorem{remark}[theorem]{Remark}
\definecolor{green}{HTML}{2ECC71}
\definecolor{blue}{HTML}{3498DB}
\definecolor{red}{HTML}{E74C3C}
\DeclarePairedDelimiterX{\hsp}[2]{\langle}{\rangle}{#1, #2}
\DeclarePairedDelimiterX{\pairing}[2]{\langle}{\rangle}{#1 \mid #2}
\DeclarePairedDelimiter{\bracket}{\lbrack}{\rbrack}
\newcommand{\abs}[1]{\left\lvert#1\right\rvert}						
\newcommand{\norm}[1]{\left\lVert#1\right\rVert}					
\newcommand{\set}[1]{\left\{#1\right\}}							
\newcommand{\tparen}[1]{\big({#1}\big)}							
\newcommand{\paren}[1]{\left(#1\right)}							
\newcommand{\comma}{\,\,\mathrm{,}\;\,}
\newcommand{\fstop}{\,\,\mathrm{.}}
\newcommand{\emparg}{{\,\cdot\,}}								
\def\N{{\mathbb N}}    
\def\Z{{\mathbb Z}} 
\def\R{{\mathbb R}}
\newcommand{\vol}{\mathsf{vol}}
\def\Leb{\mathcal{L}}
\begin{document}

\title{
Polyharmonic Fields and
Liouville Quantum Gravity Measures on Tori of Arbitrary Dimension: from Discrete to Continuous\\[3cm]}

\author[1]{Lorenzo Dello Schiavo} 
\author[2]{Ronan Herry}
\author[3]{Eva Kopfer}
\author[3]{Karl-Theodor Sturm}
\affil[1]{Institute of Science and Technology Austria \authorcr lorenzo.delloschiavo{@}ist.ac.at \vspace{.2cm}}
\affil[2]{IRMAR, Université de Rennes 1
  \par
  263 avenue du Général Leclerc, 35042 Rennes Cedex
  \par
 ronan.herry@univ-rennes1 \vspace{.2cm}
  }
\affil[3]{%
Institute for Applied Mathematics --
University of Bonn
\authorcr
eva.kopfer@iam.uni-bonn.de
\authorcr
sturm@uni-bonn.de
}

\maketitle

\begin{abstract}\noindent
For an arbitrary dimension $n$, we study:
\begin{itemize}
  \item the Polyharmonic Gaussian Field $h_L$ on the discrete torus $\mathbb{T}^n_L = \frac{1}{L} \mathbb{Z}^{n} / \mathbb{Z}^{n}$, that is the random field whose law on $\mathbb{R}^{\mathbb{T}^{n}_{L}}$  given by
\begin{equation*}
  c_n\, e^{-b_n\norm{(-\Delta_L)^{n/4}h}^2} dh,
\end{equation*}
where $dh$ is the Lebesgue measure and $\Delta_{L}$ is the discrete Laplacian;
\item the associated discrete Liouville Quantum Gravity measure associated with it, that is the random measure on $\mathbb{T}^{n}_{L}$  
  \begin{equation*}
    \mu_{L}(dz) = \exp \paren{ \gamma h_L(z) - \frac{\gamma^{2}}{2} \mathbf{E} h_{L}(z) } dz,
  \end{equation*}
  where $\gamma$ is a regularity parameter.
\end{itemize}
As $L\to\infty$, we prove convergence of the fields $h_L$ to the Polyharmonic Gaussian Field $h$ on the continuous torus $\mathbb{T}^n = \mathbb{R}^{n} / \mathbb{Z}^{n}$, as well as convergence of the random measures $\mu_L$ to the LQG measure $\mu$ on $\mathbb{T}^n$, for all $\abs{\gamma} < \sqrt{2n}$.
\end{abstract}

\tableofcontents
%
%
%
%
\section*{Introduction}

We study Gaussian random fields and the associated LQG measures on continuous and discrete tori of arbitrary dimension. The random field $h$ on the continuous torus is a particular case of the co-polyharmonic field introduced and analyzed in detail in \cite{DHKS} in great generality on all `admissible' manifolds of even dimension. One of the main goals now is to study the approximation of these fields and the associated LQG measures by their discrete counterparts.

The \emph{polyharmonic fields} $h$ on $\mathbb{T}^n\cong [0,1)^n$ and $h_L$ on $\mathbb{T}^n_L\cong \{0,\frac1L,\ldots, \frac{L-1}L\}^n$ for $L\in\N$ are centered Gaussian random fields with covariance functions
\begin{align*}\mathbf E\Big[h(x)\,h(y)\Big]&=k(x,y)\coloneqq  \frac1{a_n}\, \mathring G^{n/2}(x,y)\comma\\
\mathbf E\Big[h_L(x)\,h_L(y)\Big]&=k_L(x,y)\coloneqq \frac1{a_n}\, \mathring G_L^{n/2}(x,y)\fstop
\end{align*}
 given in terms of the integral kernel for the `grounded' inverse of the (continuous and discrete, resp.) poly-Laplacian $(-\Delta)^{n/2}$ and $(-\Delta_L)^{n/2}$, and a normalization constant $a_n\coloneqq \frac2 {\Gamma(n/2)\, (4\pi)^{n/2}}$. With this choice of the normalization constant 
\begin{lemma}[cf.~Lemma \ref{log-div}]
\[
\Big|k(x,y)-\log\frac1{d(x,y)}\Big|\le C \fstop
\]
\end{lemma}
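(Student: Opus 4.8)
The plan is to subtract off the Euclidean logarithmic singularity from $k$ and then show that the remainder is smooth — hence bounded — by elliptic regularity on the closed manifold $\mathbb T^n$.

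\emph{Reduction.} Since $\mathring G^{n/2}$ and $d$ are translation invariant, we may write $k(x,y)=G(x-y)$ where $G$ is the distribution on $\mathbb T^n$ with $(-\Delta)^{n/2}G=\frac1{a_n}(\delta_0-1)$ — the constant $1$ coming from $\vol(\mathbb T^n)=1$ and the grounding; concretely $G(z)=\frac1{a_n}\sum_{\xi\in\mathbb Z^n\setminus\{0\}}(2\pi\abs{\xi})^{-n}e^{2\pi i\xi\cdot z}$ is the mean-zero solution. Off the diagonal $k$ is smooth by interior elliptic regularity (there $(-\Delta)^{n/2}_x k(x,y)=-\frac1{a_n}$ is smooth), so on $\mathbb T^n\setminus B$, for a fixed ball $B$ around the origin of radius $<\tfrac12$, both $G$ and $\log\frac1{d(\cdot,0)}$ are bounded; and $d(z,0)=\abs{z}$ for $z\in B$. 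Thus it suffices to bound $G(z)-\log\frac1{\abs{z}}$ on $B$.

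\emph{Parametrix.} The key input is the Euclidean identity, valid for even $n$,
\[
(-\Delta)^{n/2}\log\frac1{\abs{z}}=\frac1{a_n}\,\delta_0\quad\text{in }\mathcal{D}'(\mathbb R^n),
\]
with no lower-order polynomial remainder — this is precisely why one chooses $a_n=\frac2{\Gamma(n/2)(4\pi)^{n/2}}$. Fix $\eta\in C_c^\infty(B)$ with $\eta\equiv1$ near $0$, regard $\eta\log\frac1{\abs{\cdot}}$ as a distribution on $\mathbb T^n$ (extended by $0$), and set $R\coloneqq G-\eta\log\frac1{\abs{\cdot}}$. Since $n$ is even, $(-\Delta)^{n/2}$ is a genuine differential operator of order $n$, so by the Leibniz rule $(-\Delta)^{n/2}\big(\eta\log\frac1{\abs{\cdot}}\big)=\frac1{a_n}\delta_0+\rho$, where $\rho$ collects all terms carrying at least one derivative of $\eta$; these are supported in $\{\nabla\eta\neq0\}$, away from $0$, where $\log\frac1{\abs{\cdot}}$ is smooth, hence $\rho\in C^\infty(\mathbb T^n)$. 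Therefore
\[
(-\Delta)^{n/2}R=\frac1{a_n}(\delta_0-1)-\frac1{a_n}\delta_0-\rho=-\frac1{a_n}-\rho\in C^\infty(\mathbb T^n),
\]
and ellipticity of $(-\Delta)^{n/2}$ on $\mathbb T^n$ forces $R\in C^\infty(\mathbb T^n)$, in particular $\abs{R}\le C'$. On $B$ we have $G-\log\frac1{\abs{\cdot}}=R+(\eta-1)\log\frac1{\abs{\cdot}}$, and $(\eta-1)\log\frac1{\abs{\cdot}}$ is bounded on $B$ (it vanishes where $\eta\equiv1$, and $\log\frac1{\abs{\cdot}}$ is bounded on the remaining annulus). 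Combining with the bound on $\mathbb T^n\setminus B$ gives $\abs{k(x,y)-\log\frac1{d(x,y)}}\le C$.

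\emph{Main obstacle.} The only genuinely delicate point is pinning down the Euclidean identity with the correct constant, i.e.\ that $(-\Delta)^{n/2}\log\frac1{\abs{z}}=\frac1{a_n}\delta_0$ exactly, with no polynomial correction; everything else is soft. I would obtain it by analytic continuation of Riesz kernels: the Fourier transform of $\abs{z}^{\alpha-n}$ is an explicit constant times $\abs{\xi}^{-\alpha}$ for $0<\alpha<n$, and as $\alpha\uparrow n$ the simple pole of $\Gamma(\tfrac{n-\alpha}2)$ multiplied against $\abs{z}^{\alpha-n}=1+(\alpha-n)\log\abs{z}+\cdots$ produces exactly $a_n\log\frac1{\abs{z}}$ up to an additive constant, which $(-\Delta)^{n/2}$ annihilates. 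Alternatively one checks it dimension by dimension against the classical polyharmonic fundamental solutions; it is consistent with the familiar cases $n=2$ ($-\Delta\log\frac1{\abs{z}}=2\pi\,\delta_0$, $\frac1{a_2}=2\pi$) and $n=4$ ($(-\Delta)^2\log\frac1{\abs{z}}=8\pi^2\,\delta_0$, $\frac1{a_4}=8\pi^2$).
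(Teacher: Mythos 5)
Your argument is sound for \emph{even} $n$: the reduction to $G(z)-\log\frac1{\abs{z}}$ on a small ball, the parametrix $R=G-\eta\log\frac1{\abs{\cdot}}$, the Leibniz-rule computation showing $\rho\in C^\infty$, and the normalization checks at $n=2,4$ (indeed $\tfrac1{a_2}=2\pi$, $\tfrac1{a_4}=8\pi^2$) are all correct, and the Riesz-kernel continuation is a legitimate way to pin down the constant. This is also a genuinely different route from the paper, which simply cites the estimate for $\mathring G^{n/2}$ from Proposition~2.13 of the companion work \cite{DHKS}; your version is self-contained, which is a real gain in the even-dimensional case.

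The gap is that the lemma is asserted for \emph{arbitrary} $n$ --- the paper's one-line proof of Lemma~\ref{log-div} exists precisely to record that the cited estimate ``not only holds for even but also for odd $n$'' --- and your proof collapses for odd $n$. Every load-bearing step uses that $(-\Delta)^{n/2}$ is a local differential operator: the interior elliptic regularity giving smoothness of $k$ off the diagonal, and above all the Leibniz expansion of $(-\Delta)^{n/2}\big(\eta\log\frac1{\abs{\cdot}}\big)$ into $\frac1{a_n}\delta_0$ plus terms supported in $\{\nabla\eta\neq0\}$. For odd $n$ the operator is a nonlocal pseudodifferential operator: there is no Leibniz rule, the ``remainder'' $\rho$ is no longer compactly supported away from the origin (one would need the pseudolocal property and a careful comparison between the fractional Laplacian on $\mathbb{T}^n$ and on $\R^n$, which are different operators), and even the Euclidean identity must be reformulated since $(-\Delta)^{n/2}$ of a function with logarithmic growth is only defined modulo polynomials. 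A repair that works uniformly in the parity of $n$ is to start instead from the heat-kernel representation $k(x,0)=\frac1{\Gamma(n/2)}\int_0^\infty \mathring p_t(x,0)\,t^{n/2-1}\,dt$ (stated as a lemma in the paper) and extract the logarithmic singularity directly from Gaussian bounds on $\mathring p_t$; as written, your proof establishes the lemma only for even $n$.
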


\paragraph{Characterization of the Discrete Polyharmonic Field.}
Let $n,L\in\N$ be given and assume for convenience that $L$ is odd, let $\Z^n_L=\{-\frac{L-1}2,-\frac{L-1}2+1,\ldots, \frac{L-1}2\}^n$, and set $N\coloneqq L^n$ and
$$c_n\coloneqq \left(\frac{a_n}{2\pi N}\right)^{\frac{N-1}2}\cdot
\prod_{z\in\Z^n_L\setminus\{0\}}\left(
4L^2\,\sum_{k=1}^n \sin^2\big(\pi z_k/L\big)\right)^{n/4}
\fstop$$
Define the  measure $\widehat{\boldsymbol\nu}(h)$ on $\R^{N}\cong\R^{\mathbb{T}_L^n}$
by
$$d\widehat{\boldsymbol\nu}(h)\coloneqq c_n\,e^{-\frac{a_n}{2N}\|(-\Delta_L)^{n/4}h\|^2}\,d\Leb^N(h)\comma$$
 and denote by $\boldsymbol\nu$ its push forwards under the map 
\[
h\mapsto \mathring h, \quad \mathring h_v\coloneqq h_v-\frac1{N} \sum_{v=1}^{N} h_v \fstop
\]
 In other words,
 $\boldsymbol\nu=\widehat{\boldsymbol\nu}\left(\emparg  \middle | \sum_{v=1}^{N} h_v=0\right).$
 
Furthermore,
$$\mathring T_*\boldsymbol\nu=\mathring{\mathbf P}\qquad \mathring T^{-1}_*\mathring{\mathbf P}= \boldsymbol\nu$$
where $\mathring{\mathbf P}$ denotes the distribution of the `grounded white noise'  on ${\mathbb{T}_L^n}$, explicitly given as
$$d\mathring{\mathbf P}(\Xi)=
\frac1{(2\pi)^{\frac{N-1}2}}\,e^{-\frac{1}{2N}\| \Xi\|^2}\,d\Leb^{N-1}_H(\Xi)$$
on the hyperplane $H=\{\Xi\in \R^N: \ \sum_{v=1}^{N} \Xi_v=0\}$,
and where 
\[
\mathring T:  h\mapsto \Xi=\sqrt{a_n}\, (-\Delta_L)^{n/4}h\qquad
\mathring T^{-1}:  \Xi\mapsto h=\frac1{\sqrt{a_n}}\, \mathring G_L^{n/4}\Xi \fstop
\]

 \begin{theorem}[cf.~Thm.~\ref{repres}] 
 The distribution of the discrete polyharmonic field on ${\mathbb{T}_L^n}$ is given by the probability measure $\boldsymbol\nu$  on $\R^{\mathbb{T}_L^n}\cong\R^N$. 
\end{theorem}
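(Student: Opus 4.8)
The plan is to show that $\boldsymbol\nu$ is a centered Gaussian probability measure, supported on the hyperplane $H = \{h\in\R^N : \sum_v h_v = 0\}$ of grounded configurations, whose covariance operator equals $k_L = \tfrac1{a_n}\mathring G_L^{n/2}$. Since a centered Gaussian law is determined by its covariance, and the discrete polyharmonic field is by definition the centered Gaussian field with covariance $k_L$ --- which, as $\mathring G_L^{n/2}$ kills constants, is automatically supported on $H$ --- this identifies $\boldsymbol\nu$ with its law.

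First I would pass to the discrete Fourier basis: the characters $\chi_z$, $z\in\Z^n_L$, diagonalise $-\Delta_L$ with eigenvalues $\lambda_z = 4L^2\sum_{k=1}^n\sin^2(\pi z_k/L)$, where $\lambda_0 = 0$ and $\lambda_z > 0$ otherwise, and $H$ is the span of $\{\chi_z : z\neq 0\}$, i.e. the orthogonal complement $\mathbf 1^\perp$ of the constants (which form the kernel of $(-\Delta_L)^{n/2}$). By Parseval, $\|(-\Delta_L)^{n/4}h\|^2 = \sum_{z\neq 0}\lambda_z^{n/2}|\hat h_z|^2$ depends only on the non-constant Fourier modes of $h$; hence the density of $\widehat{\boldsymbol\nu}$ is invariant under $h\mapsto h + t\mathbf 1$, and factoring $\Leb^N$ through the orthogonal splitting $\R^N = H\oplus\R\mathbf 1$ exhibits $\widehat{\boldsymbol\nu}$ as a (finite) Gaussian measure on $H$ tensored with Lebesgue measure on $\R\mathbf 1$. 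The grounding map $h\mapsto\mathring h$ is the orthogonal projection onto $H$; since $\widehat{\boldsymbol\nu}$ is infinite along $\R\mathbf 1$, its pushforward under this map is to be understood as the conditional law $\widehat{\boldsymbol\nu}(\,\cdot \mid \sum_v h_v = 0)$, and with the stated $c_n$ this is exactly the Gaussian measure on $H$ identified above. In particular $\boldsymbol\nu$ is a centered Gaussian probability measure supported on $H$, and it only remains to identify its covariance operator.

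For that I would use the linear automorphism of $H$ given by $\mathring T$ restricted to $H$, namely $\sqrt{a_n}(-\Delta_L)^{n/4}$: it is diagonal with entries $\sqrt{a_n}\,\lambda_z^{n/4}$ in the basis $(\chi_z)_{z\neq0}$, hence has Jacobian determinant $a_n^{(N-1)/2}\prod_{z\neq0}\lambda_z^{n/4}$. The change of variables $\Xi = \mathring T h$ turns $\tfrac{a_n}{2N}\|(-\Delta_L)^{n/4}h\|^2$ into $\tfrac1{2N}\|\Xi\|^2$ and multiplies the prefactor $c_n$ by that Jacobian; the value of $c_n$ in the statement is designed precisely so that the outcome is the normalised density of the grounded white noise $\mathring{\mathbf P}$ on $H$. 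Thus $\mathring T_*\boldsymbol\nu = \mathring{\mathbf P}$, whence $\boldsymbol\nu = (\mathring T^{-1})_*\mathring{\mathbf P}$ with the linear map $\mathring T^{-1} = \tfrac1{\sqrt{a_n}}\mathring G_L^{n/4}$. A linear image of a centered Gaussian being a centered Gaussian, $\boldsymbol\nu$ has covariance operator
\[
\mathring T^{-1}\,\bigl(\operatorname{Cov}\mathring{\mathbf P}\bigr)\,(\mathring T^{-1})^{*} \;=\; \tfrac1{a_n}\,\mathring G_L^{n/4}\,P_H\,\mathring G_L^{n/4} \;=\; \tfrac1{a_n}\,\mathring G_L^{n/2} \;=\; k_L ,
\]
using that $\operatorname{Cov}\mathring{\mathbf P} = P_H$, the orthogonal projection onto $H$, and that $\mathring G_L$ is self-adjoint, commutes with $P_H$, and satisfies $\mathring G_L P_H = \mathring G_L$.

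The conceptual skeleton above is short; the step I expect to be the genuine --- if purely computational --- obstacle is the normalisation bookkeeping: matching the prefactor $c_n$, the Gaussian constants $(2\pi N)^{\pm(N-1)/2}$, the Jacobian $\prod_{z\neq0}\lambda_z^{n/4}$ of $\mathring T$ on $H$, the convention for the reference measure $\Leb^{N-1}_H$ on the hyperplane, and the normalisation of the counting measure on $\mathbb{T}^n_L$ that is implicit in $\|\cdot\|$, in $\operatorname{Cov}\mathring{\mathbf P}$, and in the kernel $\mathring G_L^{n/2}$, so that all these constants collapse exactly to $c_n$ and $\operatorname{Cov}\mathring{\mathbf P}$ comes out exactly $P_H$. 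Each ingredient is an elementary Gaussian-integral or determinant identity, but they must be tracked together consistently.
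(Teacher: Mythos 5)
Your proposal is correct and follows essentially the same route as the paper: the heart of both arguments is the linear change of variables $\Xi=\mathring T h$ on the hyperplane $H$, whose Jacobian $a_n^{(N-1)/2}\prod_{z\neq0}\lambda_{L,z}^{n/4}$ is exactly absorbed by the normalisation $c_n$ so that $\mathring T_*\boldsymbol\nu=\mathring{\mathbf P}$, whence $\boldsymbol\nu=(\mathring T^{-1})_*\mathring{\mathbf P}$. The only (harmless) difference is that you then identify the law of the field by computing the covariance of $(\mathring T^{-1})_*\mathring{\mathbf P}$ directly, whereas the paper invokes its earlier white-noise representation $h_L=\tfrac1{\sqrt{a_n}}\,\mathring{\mathsf G}_L^{n/4}\Xi$ for the same purpose.
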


\paragraph{Convergence of the Random Fields.} As $L\to\infty$,
the polyharmonic fields $h_L$ on the discrete tori converge to the polyharmonic field $h$ on the continuous torus.
This convergence of the fields, indeed, holds in great generality. 

For a precise formulation, one either has to specify  classes of test functions on $\mathbb{T}^n$ which admit traces on $\mathbb{T}^n_L$, or  unique ways of extending functions on $\mathbb{T}^n_L$ onto $\mathbb{T}^n$.
\begin{theorem}[cf.~Thm.~\ref{thm-hL}] For all $f\in \bigcup_{s>n/2}\mathring H^{s}(\mathbb{T}^n)$,
$$\langle  h_L,f\rangle_{\mathbb{T}^n_L} \to \langle h,f\rangle_{\mathbb{T}^n} \quad \text{in $L^2(\mathbf P)$ as }L\to\infty.$$
\end{theorem}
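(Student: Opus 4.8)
The plan is to pass to Fourier series and to realize both fields on one probability space through a single grounded white noise $\xi=\sum_{z\in\Z^n\setminus\{0\}}\xi_z\,e_z$ on $\mathbb{T}^n$, where $e_z(x)=e^{2\pi i\langle z,x\rangle}$ and the $\xi_z$ are jointly complex Gaussian with $\xi_{-z}=\overline{\xi_z}$ and $\mathbf E[\xi_z\overline{\xi_w}]=\delta_{zw}$. Recalling the spectral representation $h_L=a_n^{-1/2}\,\mathring G_L^{n/4}\,\Xi=\mathring T^{-1}\Xi$ of the discrete field (Theorem~\ref{repres}) and the analogous representation $h=a_n^{-1/2}(-\Delta)^{-n/4}\xi$ of the continuous field, I would set $\widehat h(z)=a_n^{-1/2}\lambda_z^{-n/4}\xi_z$ for $z\in\Z^n\setminus\{0\}$ and $\widehat{h_L}(z)=a_n^{-1/2}(\lambda_z^L)^{-n/4}\xi_z$ for $z\in\Z^n_L\setminus\{0\}$, where $\lambda_z=4\pi^2\ttabs z^2$ and $\lambda_z^L=4L^2\sum_{k=1}^n\sin^2(\pi z_k/L)$; since $(\xi_z)_{z\in\Z^n_L\setminus\{0\}}$ has exactly the law of the Fourier coefficients of the grounded discrete white noise on $\mathbb{T}^n_L$, this is a legitimate coupling of $h_L$ and $h$. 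Two elementary spectral facts will be used throughout: for $z\in\Z^n_L$ one has $16\ttabs z^2\le\lambda_z^L\le\lambda_z$ (from $\frac2\pi t\le\sin t\le t$ on $[0,\frac\pi2]$ and $\ttabs{z_k}<L/2$), and $\lambda_z^L\to\lambda_z$ as $L\to\infty$ for each fixed $z$.

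Fix $s>n/2$ with $f\in\mathring H^s(\mathbb{T}^n)$. Since then the Fourier series of $f$ converges absolutely, $f$ has a continuous representative, the pairing $\langle h_L,f\rangle_{\mathbb{T}^n_L}=\frac1N\sum_{v\in\mathbb{T}^n_L}h_L(v)f(v)$ is well defined, and the discrete Fourier coefficients of the trace are given by the aliasing identity $\widehat{f_L}(z)=\sum_{m\in\Z^n}\widehat f(z+Lm)$ for $z\in\Z^n_L$; in particular $\widehat{f_L}(z)\to\widehat f(z)$ for each fixed $z$. By discrete Plancherel, $\langle h_L,f\rangle_{\mathbb{T}^n_L}=a_n^{-1/2}\sum_{z\in\Z^n_L\setminus\{0\}}(\lambda_z^L)^{-n/4}\,\widehat{f_L}(-z)\,\xi_z$, while $\langle h,f\rangle_{\mathbb{T}^n}=a_n^{-1/2}\sum_{z\in\Z^n\setminus\{0\}}\lambda_z^{-n/4}\,\widehat f(-z)\,\xi_z$, the latter with variance $\frac1{a_n}\sum_{z\neq0}\lambda_z^{-n/2}\ttabs{\widehat f(z)}^2=\iint k(x,y)f(x)f(y)\,\diff x\,\diff y$ as it must be. Both pairings are real centered Gaussians on the coupled space, so it suffices to prove $\mathbf E\big[(\langle h_L,f\rangle_{\mathbb{T}^n_L}-\langle h,f\rangle_{\mathbb{T}^n})^2\big]\to0$. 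Writing the difference as $a_n^{-1/2}\sum_{z\neq0}c_z(L)\,\xi_z$ and using $\mathbf E[\xi_z\xi_w]=\delta_{z,-w}$ together with $c_{-z}(L)=\overline{c_z(L)}$, this variance equals $a_n^{-1}(A_L+B_L)$ with
\[
A_L=\sum_{z\in\Z^n_L\setminus\{0\}}\tabs{(\lambda_z^L)^{-n/4}\widehat{f_L}(z)-\lambda_z^{-n/4}\widehat f(z)}^2,\qquad B_L=\sum_{z\in\Z^n\setminus(\Z^n_L\cup\{0\})}\lambda_z^{-n/2}\tabs{\widehat f(z)}^2.
\]

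Now $B_L\to0$ since it is a tail of the convergent series $\sum_{z\neq0}\lambda_z^{-n/2}\ttabs{\widehat f(z)}^2$ and $\Z^n_L\uparrow\Z^n$. For $A_L$ I would argue by dominated convergence: each summand tends to $0$ by the pointwise convergences recorded above, and a majorant summable uniformly in $L$ is furnished by $(\lambda_z^L)^{-n/4}\le16^{-n/4}\ttabs z^{-n/2}$, by $\ttabs{\widehat{f_L}(z)}\le\sum_m\ttabs{\widehat f(z+Lm)}$, and by Cauchy--Schwarz in the form $\tbraket{\sum_m\ttabs{\widehat f(z+Lm)}}^2\le\tbraket{\sum_m(1+\ttabs{z+Lm}^2)^{-s}}\cdot\tbraket{\sum_m(1+\ttabs{z+Lm}^2)^s\ttabs{\widehat f(z+Lm)}^2}$, where the first bracket is $\le C_{n,s}$ uniformly over $L\ge1$ and $z\in\Z^n_L$ because $\ttabs{z+Lm}\ge\frac12L\ttabs m$ for $\ttabs m\ge\sqrt n$ and $\ttabs{z+Lm}\ge(L+1)/2$ for $1\le\ttabs m<\sqrt n$, while $2s>n$ makes $\sum_{m\neq0}\ttabs m^{-2s}$ converge. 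Summing over $z\in\Z^n_L\setminus\{0\}$ and re-indexing $w=z+Lm\in\Z^n\setminus L\Z^n$ bounds the corresponding part of $A_L$ by $C_{n,s}\ttnorm{f}_{H^s}^2$, uniformly in $L$; splitting the sum defining $A_L$ into the finitely many low frequencies $\ttabs z\le R$ (where it converges termwise to $0$) and the high frequencies $\ttabs z>R$ (where the majorant gives a bound $O(R^{-n})$, uniform in $L$), and letting first $L\to\infty$ and then $R\to\infty$, yields $A_L\to0$, hence the claim.

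The step I expect to be the genuine obstacle is this uniform-in-$L$ summable domination of $A_L$: it is exactly where the hypothesis $s>n/2$ enters — through the absolute convergence of the Fourier series (needed for the trace and for the aliasing identity) and through the bound $\sum_{m\in\Z^n}(1+\ttabs{z+Lm}^2)^{-s}\le C_{n,s}$ — and one must check that the aliased frequency tails do not pile up near the origin; this is precisely what the grounding (removal of the zero mode) together with the uniform comparison $\lambda_z^L\ge16\ttabs z^2$ rules out. The remaining ingredients — the spectral asymptotics, discrete Plancherel and the aliasing formula, and the reduction of $L^2$-convergence of jointly Gaussian variables to convergence of the variance of their difference — are routine.
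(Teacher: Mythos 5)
Your proof is correct and follows essentially the same route as the paper's: the same coupling through shared Gaussian coefficients, the same aliasing identity (Lemma~\ref{high-frequ}), and the same Cauchy--Schwarz estimate with $H^s$ weights together with the uniform bound on $\sum_{m\neq0}|z+Lm|^{-2s}$. The only difference is organizational: the paper splits your $A_L$ further into an eigenvalue-discrepancy term and an aliasing term, and notes that the latter, after Cauchy--Schwarz, is controlled by the tail $\sum_{|v|\ge L/2}|v|^{2s}|\widehat f(v)|^2\to 0$ (all aliased frequencies satisfy $|v|\ge L/2$), which avoids your two-parameter $R$-then-$L$ limit.
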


Let $\mathcal D_L\subset\mathcal C^\infty(\mathbb{T}^n)$ denote the linear span of the eigenfunctions $\varphi_z$ for the negative Laplacian with associated eigenvalues $0<\lambda_z<(L\pi)^2$, or more explicitly,
$$\mathcal D_L\coloneqq\left\{f: \ f(x)=\sum_{z\in \Z^n_L}\Big[ \alpha_z\cos(2\pi x\cdot z)+\beta_z\sin(2\pi x\cdot z)\Big],\  \alpha_z,\beta_z\in\R
\right\}\fstop$$
\begin{theorem}[cf. Thm.~\ref{thm3}] For all $f\in \mathring H^{-n/2}(\mathbb{T}^n)$,
$$\langle  h_{L,\sharp},f\rangle_{\mathbb{T}^n} \to \langle h,f\rangle_{\mathbb{T}^n} \quad \text{in $L^2(\mathbf P)$ as }L\to\infty$$
where $h_{L,\sharp}^\omega$ for every $\omega$ denotes the unique function in $\mathcal D_L$ which coincides with $h_L^\omega$ on $\mathbb{T}^n_L$.
\end{theorem}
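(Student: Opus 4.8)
The plan is to pass to Fourier space and reduce the statement to a dominated‑convergence estimate for a single series, using the natural coupling in which $h$ and $h_L$ are driven by one and the same grounded white noise on $\mathbb{T}^n$. First I would record the spectral representations. Writing $e_z(x)=e^{2\pi\mathrm i\,z\cdot x}$ for $z\in\Z^n$ and $\lambda_z\coloneqq\abs{2\pi z}^2$, one has $\mathcal D_L=\mathrm{span}\{e_z:z\in\Z^n_L\}$, and — since the eigenfunctions of $-\Delta_L$ on $\mathbb{T}^n_L$ are exactly the restrictions of the characters $e_z$, $z\in\Z^n_L$, with eigenvalues $\lambda_{L,z}\coloneqq 4L^2\sum_{k=1}^n\sin^2(\pi z_k/L)$ — the characterization of the discrete field (Thm.~\ref{repres}) gives, with $(\xi_z)_{z\in\Z^n\setminus\{0\}}$ the Fourier coefficients of the white noise,
\[
h=\frac1{\sqrt{a_n}}\sum_{z\in\Z^n\setminus\{0\}}\lambda_z^{-n/4}\,\xi_z\,e_z,\qquad h_{L,\sharp}=\frac1{\sqrt{a_n}}\sum_{z\in\Z^n_L\setminus\{0\}}\lambda_{L,z}^{-n/4}\,\xi_z\,e_z,
\]
the second identity because the right‑hand series lies in $\mathcal D_L$ and restricts on $\mathbb{T}^n_L$ to the spectral expansion of $h_L$, hence is the unique element of $\mathcal D_L$ agreeing with $h_L$ there. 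Both pairings $\langle h_{L,\sharp},f\rangle_{\mathbb{T}^n}$ (a finite sum, since $h_{L,\sharp}$ is a trigonometric polynomial) and $\langle h,f\rangle_{\mathbb{T}^n}$ (the $L^2(\mathbf P)$‑limit of its partial sums, which exists precisely when $\sum_{z\ne0}\lambda_z^{-n/2}\abs{\widehat f(z)}^2<\infty$, $\widehat f(z)\coloneqq\langle f,e_z\rangle_{\mathbb{T}^n}$, i.e.\ when $f\in\mathring H^{-n/2}(\mathbb{T}^n)$) are then given by the corresponding series.

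Next I would compute the second moment of the difference. Since the $\xi_z$ are the orthonormal Fourier coefficients of white noise, a direct computation gives
\[
\mathbf E\Big[\tabs{\langle h-h_{L,\sharp},f\rangle_{\mathbb{T}^n}}^2\Big]=\frac1{a_n}\sum_{z\in\Z^n_L\setminus\{0\}}\big(\lambda_z^{-n/4}-\lambda_{L,z}^{-n/4}\big)^2\abs{\widehat f(z)}^2+\frac1{a_n}\sum_{z\in\Z^n\setminus\Z^n_L}\lambda_z^{-n/2}\abs{\widehat f(z)}^2 .
\]
The second sum is the tail, along the exhaustion $\Z^n_L\uparrow\Z^n$, of the convergent series defining $\norm{f}_{\mathring H^{-n/2}}^2$, so it tends to $0$. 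For the first sum the key elementary input is the uniform two‑sided eigenvalue comparison: for $z\in\Z^n_L$ one has $\abs{z_k}\le\frac{L-1}2<\frac L2$, hence $\pi\abs{z_k}/L\in[0,\pi/2)$, and $\frac2\pi t\le\sin t\le t$ on $[0,\pi/2]$ yields $\frac4{\pi^2}\lambda_z\le\lambda_{L,z}\le\lambda_z$. From $\lambda_{L,z}\le\lambda_z$ one gets $0\le\lambda_{L,z}^{-n/4}-\lambda_z^{-n/4}\le\lambda_{L,z}^{-n/4}$ and from $\lambda_{L,z}\ge\frac4{\pi^2}\lambda_z$ one gets $\lambda_{L,z}^{-n/2}\le(\pi^2/4)^{n/2}\lambda_z^{-n/2}$, so each summand is dominated, uniformly in $L$, by $(\pi^2/4)^{n/2}\lambda_z^{-n/2}\abs{\widehat f(z)}^2$, which is summable because $f\in\mathring H^{-n/2}$; and since $4L^2\sin^2(\pi z_k/L)\to4\pi^2z_k^2$ for each fixed $z$, so $\lambda_{L,z}\to\lambda_z$, each summand vanishes in the limit. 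Dominated convergence for series kills the first sum, and together with the tail we conclude $\mathbf E[\tabs{\langle h-h_{L,\sharp},f\rangle_{\mathbb{T}^n}}^2]\to0$, which is the asserted $L^2(\mathbf P)$‑convergence.

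I do not expect a serious obstacle; the two points to be careful about are (i) the common‑probability‑space coupling — realizing $h$ and $h_{L,\sharp}$ from the same white noise, so that they share the coefficients $\xi_z$ on overlapping frequencies, which is exactly what makes ``$\to$ in $L^2(\mathbf P)$'' meaningful and which I would get from the characterization theorem together with the identification of $\mathrm{spec}(-\Delta_L)$ with the characters indexed by $\Z^n_L$ (this is where the assumption that $L$ is odd enters, only at the level of bookkeeping); and (ii) the uniform ratio bound $\frac4{\pi^2}\le\lambda_{L,z}/\lambda_z\le1$, which simultaneously supplies the $L$‑independent summable majorant and, via $\lambda_{L,z}\to\lambda_z$, the pointwise limit. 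Everything past those two points is the routine bulk/tail split exploiting $f\in\mathring H^{-n/2}$.
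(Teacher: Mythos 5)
Your proposal is correct and follows essentially the same route as the paper: the paper splits $h-h_{L,\sharp}$ into $(h-h_{\sharp,L})+(h_{\sharp,L}-h_{L,\sharp})$, which is exactly your tail sum over $\Z^n\setminus\Z^n_L$ plus your bulk sum over $\Z^n_L\setminus\{0\}$, and handles the bulk by the same dominated-convergence argument based on the uniform ratio bound $\lambda_{L,z}/\lambda_z\in[(2/\pi)^2,1]$ under the shared-$\xi_z$ coupling. Your exact variance identity with the summand $\big(\lambda_z^{-n/4}-\lambda_{L,z}^{-n/4}\big)^2\,\abs{\widehat f(z)}^2$ is in fact a slightly cleaner (and correct) version of the paper's displayed expression, which states the difference of the $n/2$-powers rather than the square of the difference of the $n/4$-powers; both are dominated by the same summable majorant, so the conclusion is unaffected.
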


The same convergence assertion also holds for the so-called \emph{spectrally reduced polyharmonic field} $h_L^{{-\circ}}$ on $\mathbb{T}^n_L$ 
given in terms of the eigenbasis $\{\varphi_z\}_{z\in\Z^n_L}$ of the discrete Laplacian $\Delta_L$ as
\[
h^{{-\circ}}_L(x)\coloneqq \sum_{z\in\Z^n_L}
\left(\frac{L^2}{\pi^2|z|^2}\sum_{k=1}^n \sin^2(\pi z_k/L)\right)^{-n/4}\, \langle h_L| \varphi_z\rangle_{\mathbb{T}^n_L}\cdot \varphi_z(x)\fstop
\]


Our convergence results apply to the case of arbitrary dimension~$n$. In dimension~$n\leq 4$, several results are available in the literature for the convergence of other discrete Fractional Gaussian Fields of integer order to the corresponding counterpart in the continuum, including e.g., the \emph{odometer} for the \emph{sandpile model}, or the \emph{membrane model}.
For a comparison of these results with those in the present work, see~\S\ref{remarks} below.

\paragraph{Convergence of the Random Measures.} The convergence questions for the associated random measures are more subtle. Again, of course, one expects that
the Liouville measure~$\mu_L$ on the discrete tori converge as~$L\to\infty$ to the Liouville measure $\mu$ on the continuous torus.
This convergence of the random measure, however, only holds for small parameters $\gamma$.

\begin{theorem}[cf.~Thm.~\ref{conv-disc-meas}] 
Assume $|\gamma|<\sqrt\frac{n}{e}$, and let $a$ be an odd integer $\ge2$. Then in $\mathbf P$-probability and in $L^1(\mathbf P)$,   
$$\mu_{a^\ell}\to\mu\qquad\text{as }\ell\to\infty.$$ 
\end{theorem}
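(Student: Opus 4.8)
The plan is to follow the standard Kahane--type strategy for convergence of Gaussian multiplicative chaos, adapted to the polyharmonic kernel and to the subsequence $L=a^\ell$ along which the discrete tori are nested. First I would recall that by the logarithmic singularity lemma, the continuum kernel $k$ and the discrete kernels $k_L$ (or rather suitable extensions thereof) are of the form $\log\frac{1}{d(x,y)}$ plus a bounded, and in fact locally Hölder, remainder. Using the field-convergence results already established (Thm.~\ref{thm3} and the extension $h_{L,\sharp}$, or the spectrally reduced field), I would upgrade the $L^2(\mathbf P)$ convergence of the smoothed fields to uniform-on-compacts convergence of the covariance kernels $k_L\to k$, together with a uniform upper bound $k_L(x,y)\le \log\frac{1}{d(x,y)}+C$ valid for all $\ell$. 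The restriction to the subsequence $a^\ell$ is what makes the family of discrete fields jointly Gaussian in a coherent way and lets one compare $\mathbf E[h_{a^{\ell+1}}h_{a^\ell}]$ with $\mathbf E[h_{a^\ell}h_{a^\ell}]$ directly on $\mathbb{T}^n_{a^\ell}\subset\mathbb{T}^n_{a^{\ell+1}}$.

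Next I would set up the chaos random measures $\mu_L(dz)=\exp(\gamma h_L(z)-\tfrac{\gamma^2}{2}\mathbf E h_L(z)^2)\,dz$ as elements of the space of finite measures on $\mathbb{T}^n$ (extending $h_L$ by its piecewise-constant or spectral interpolant), and show that for $|\gamma|<\sqrt{2n}$ they converge, along any mollification, to the same limiting GMC measure $\mu$ determined by $k$; this is essentially Berestycki's elementary approach or Shamov's characterization, and for the subcritical range it is classical. The key quantitative input is an $L^2$ (or, when $\gamma^2\ge n$, an $L^1$ via truncation) estimate controlling $\mathbf E\,\norm{\mu_{a^{\ell+1}}-\mu_{a^\ell}}^2$ (tested against a fixed continuous $f$) by an expression that decays in $\ell$. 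Expanding the square produces three double integrals of the form $\iint f(x)f(y)\exp(\gamma^2 \kappa_{\ell,\ell'}(x,y))\,dx\,dy$ with $\kappa$ one of $k_{a^\ell},k_{a^{\ell+1}}$ or the cross-covariance; the logarithmic bound turns $\exp(\gamma^2\kappa)$ into an integrable singularity $d(x,y)^{-\gamma^2}$ precisely when $\gamma^2<n$, which is why the theorem is stated for $|\gamma|<\sqrt{n/e}$ — the extra factor $e^{-1/2}$ buys room in the constant $C$ of the kernel comparison (so that $e^{\gamma^2 C}$ stays controlled) and, more importantly, room for the Kahane convexity comparison between the discrete and continuum fields whose covariances differ by that same $O(1)$ amount.

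I would then run a Borel--Cantelli / telescoping argument: summability of $\sum_\ell \mathbf E\,\abs{\langle\mu_{a^{\ell+1}},f\rangle-\langle\mu_{a^\ell},f\rangle}$ (obtained from the $L^2$ bound and Cauchy--Schwarz, using that the cross term is close to the diagonal term with an error that is a positive power of $a^{-\ell}$ coming from the Hölder modulus of $k_L-k$) gives a.s.\ Cauchy sequences $\langle\mu_{a^\ell},f\rangle$ for each fixed $f$ in a countable determining family, hence a.s.\ weak convergence $\mu_{a^\ell}\to\tilde\mu$ for some random measure $\tilde\mu$, and $L^1(\mathbf P)$ convergence follows from uniform integrability (the masses $\langle\mu_{a^\ell},1\rangle$ have bounded second moments for $\gamma^2<n$). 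Finally I would identify $\tilde\mu=\mu$: both are GMC measures with respect to the kernel $k$ — for $\tilde\mu$ because the cross-covariances converge to $k$ and Kahane's inequality is stable under such limits, for $\mu$ by definition — so by uniqueness of subcritical GMC (Shamov, or Robert--Vargas) they coincide.

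The main obstacle, and the step I would spend the most care on, is the uniform kernel comparison together with the Kahane convexity interpolation in a form that is honest about the $O(1)$ discrepancy between $k_{a^\ell}$ and $k$: one must either (i) produce an auxiliary field whose covariance dominates both $k_{a^\ell}$ and $k$ up to the additive constant and use it as a bridge, or (ii) exploit the precise spectral representation of the discrete field (the $\sin^2$-eigenvalue formula appearing in the definition of $h_L^{-\circ}$) to show that the difference of covariances is not merely bounded but converges to zero in a Hölder norm away from the diagonal and stays uniformly bounded near it, so that the chaos measures built from the differences vanish in the limit. The constant $\sqrt{n/e}$ is the price of route (i); pushing the measure convergence all the way to the optimal $|\gamma|<\sqrt{2n}$ (as in the field statement) would presumably require route (ii) plus a truncated/martingale argument, and I would flag that as the reason the measure theorem is stated with the smaller constant.
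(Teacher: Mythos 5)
Your proposal follows a completely different route from the paper, and it contains a genuine gap at its foundation. The entire Kahane/second-moment strategy you describe rests on a uniform-in-$L$ comparison $k_{a^\ell}(x,y)\le \log\frac{1}{d(x,y)}+C$, which you attribute to ``the logarithmic singularity lemma''. But Lemma~\ref{log-div} is stated and proved only for the \emph{continuum} kernel $k$; no such uniform bound for the discrete kernels $k_L$ (or their extensions $k_{L,\sharp}$, $k_{+,L}$) is established anywhere in the paper, and obtaining one is precisely the difficulty the authors circumvent. Their substitute is Theorem~\ref{p:Estimate}: a direct Fourier-analytic bound on $\int\exp(\gamma^2 k_{L,\sharp}(0,y))\,dy$ via the Hausdorff--Young inequality and Stirling's formula, and it is this argument --- not any kernel comparison --- that produces the threshold $\sqrt{n/e}$. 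Your explanation of the constant is therefore incorrect: a bounded additive error $C$ in the covariance only multiplies second moments by the harmless factor $e^{\gamma^2 C}$ and cannot degrade the admissible range from $\sqrt{n}$ to $\sqrt{n/e}$; if your uniform logarithmic bound were actually available, your argument would give the result for all $\gamma^2<n$, and the stated theorem would be an understatement. A second gap is the cross-covariance $\mathbf E[h_{a^\ell}h_{a^{\ell+1}}]$: the discrete fields for different $L$ are not a priori coupled on a common probability space with the Hölder-controlled cross-kernel your telescoping estimate needs, and you do not construct such a coupling.

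For comparison, the paper's proof is short and structurally unlike yours: by Lemma~\ref{lem-enhanced-disc}, $h_{a^\ell}$ coincides in distribution on $\mathbb{T}^n_{a^\ell}$ with the enhanced projection $h_{+,a^\ell}$ of the \emph{single} continuum field $h$, so for any test function that is piecewise constant at some level $a^{\ell'}$ one has $\int f\,d\mu_{a^\ell}=\int f\,d\mu_{+,a^\ell}$ for all $\ell\ge\ell'$ (this, not a martingale coherence, is the role of the subsequence $a^\ell$). Convergence $\mu_{+,L}\to\mu$ in $L^1(\mathbf P)$ is Proposition~\ref{thm-conv-enh}, proved by verifying Shamov's criterion with the kernel convergence of Lemma~\ref{lemma-enhanced} and the uniform integrability of Theorem~\ref{p:Estimate}; general continuous $f$ are then handled by an $L^1$-density argument using $\mathbf E\bigl[\int|f-f_j|\,d\mu_{a^\ell}\bigr]=\int|f-f_j|\,dx$. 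If you want to salvage your approach, the honest task is item (ii) of your final paragraph: prove the uniform near-diagonal bound on $k_L$ from the explicit $\sin^2$ spectral representation. That would indeed be worthwhile --- it would likely extend the theorem beyond $\sqrt{n/e}$ --- but it is not done in your proposal and not done in the paper.
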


Analogous convergence results hold for the random measures associated with the Fourier extensions of the discrete polyharmonic fields and the reduced discrete polyharmonic field, in the latter case even in the whole range of subcriticality $\gamma\in (-\sqrt{2n},\sqrt{2n})$. 

\begin{theorem}[cf.~Thm.~\ref{conv-four-meas}, Thm.~\ref{conv-mod-meas}] For $|\gamma|<\sqrt{\frac{n}{e}}$,
$$\mu_{L,\sharp}\to \mu\qquad\text{as }L\to\infty,$$
and for $|\gamma|< \sqrt{2n}$,
$$\mu^{{-\circ}}_{L,\sharp}\to \mu\qquad\text{as }L\to\infty.$$

\end{theorem}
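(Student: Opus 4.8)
The plan is to compare each approximating field, \emph{in distribution}, with a truncation of the continuous polyharmonic field $h$ and to run the Gaussian multiplicative chaos (GMC) machinery on that truncation. Write $k(x,y)=\tfrac1{a_n}\mathring G^{n/2}(x,y)=\tfrac1{a_n}\sum_{z\in\Z^n\setminus\{0\}}\lambda_z^{-n/2}\varphi_z(x)\varphi_z(y)$ in the real Fourier basis $\{\varphi_z\}$ of $L^2(\mathbb T^n)$, with continuous eigenvalues $\lambda_z=4\pi^2\abs z^2$ and discrete ones $\mu_z=4L^2\sum_k\sin^2(\pi z_k/L)$, so that $0<\mu_z\le\lambda_z$. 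From the representation in Thm.~\ref{repres} one reads off $h_{L,\sharp}=\sum_{z\in\Z^n_L}\widehat{h_L}(z)\varphi_z$, whence its covariance is $k_{L,\sharp}=\tfrac1{a_n}\sum_{z\in\Z^n_L\setminus\{0\}}\mu_z^{-n/2}\varphi_z\otimes\varphi_z$; the spectrally reduced field rescales each mode so as to carry the \emph{continuous} variance $\tfrac1{a_n}\lambda_z^{-n/2}$ in place of the discrete one, so that $h^{{-\circ}}_{L,\sharp}$ has the law of the field $h^{(L)}$ obtained by truncating the spectral expansion of $h$ to the modes $z\in\Z^n_L$, with covariance $k^{(L)}=\tfrac1{a_n}\sum_{z\in\Z^n_L\setminus\{0\}}\lambda_z^{-n/2}\varphi_z\otimes\varphi_z$. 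Thus $k_{L,\sharp}=k^{(L)}+S_L$ with $S_L=\tfrac1{a_n}\sum_{z\in\Z^n_L\setminus\{0\}}(\mu_z^{-n/2}-\lambda_z^{-n/2})\varphi_z\otimes\varphi_z$ nonnegative-definite; its diagonal $S_L(x,x)=\tfrac1{a_n}\sum_w L^{-n}g(w)$, where $g(w)=(4\sum_k\sin^2\pi w_k)^{-n/2}-(4\pi^2\abs w^2)^{-n/2}$ and $w$ runs over $(\tfrac1L\Z^n)\cap(-\tfrac12,\tfrac12)^n\setminus\{0\}$, is a Riemann sum bounded uniformly in $L$ (the singularity $g(w)=O(\abs w^{2-n})$ near $0$ being integrable), while the off-diagonal $S_L(x,y)$, a Fourier coefficient of $g$ at the diverging frequency $L(x-y)$, tends to $0$ by Riemann--Lebesgue: the correction $S_L$ is bounded but concentrates onto the diagonal, so the multiplicative tilt it carries oscillates at scale $1/L$ with unit mean.

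For $\mu^{{-\circ}}_{L,\sharp}$ I would then argue as follows. Along the odd values of $L$ the sets $\Z^n_L$ are nested with union $\Z^n$, so passing from $L$ to $L'$ realizes $h^{(L')}$ as $h^{(L)}$ plus an independent Gaussian; hence for every $0\le f\in\C(\mathbb T^n)$ the sequence $\langle\mu^{(L)},f\rangle$ — $\mu^{(L)}$ being the chaos of $h^{(L)}$, which equals $\mu^{{-\circ}}_{L,\sharp}$ in law — is a nonnegative martingale. It converges a.s.; its limit is independent of the chosen $\sigma$-positive approximation, hence equals $\langle\mu,f\rangle$, and it is non-degenerate precisely for $\abs\gamma<\sqrt{2n}$, the critical constant being pinned down by the normalization $a_n$ and the logarithmic divergence of Lemma~\ref{log-div}. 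The standard uniform $L^{1+\delta}$-bound for subcritical chaos martingales (valid for $1+\delta<2n/\gamma^2$) upgrades this to $L^1(\mathbf P)$ convergence, and the passage to $L\to\infty$ through all integers is obtained by a standard interpolation between consecutive odd values; this yields $\mu^{{-\circ}}_{L,\sharp}\to\mu$ for the whole range $\abs\gamma<\sqrt{2n}$.

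For $\mu_{L,\sharp}$ the stated range $\abs\gamma<\sqrt{n/e}$ sits inside the $L^2$-phase $\abs\gamma<\sqrt n$, where a direct second-moment argument suffices. One has $\mathbf E\langle\mu_{L,\sharp},f\rangle=\int f$, $\mathbf E\langle\mu_{L,\sharp},f\rangle^2=\iint f(x)f(y)e^{\gamma^2k_{L,\sharp}(x,y)}\,dx\,dy$ and, for $M\ge L$, $\mathbf E[\langle\mu_{L,\sharp},f\rangle\langle\mu_{M,\sharp},f\rangle]=\iint f(x)f(y)e^{\gamma^2\mathbf E[h_{L,\sharp}(x)h_{M,\sharp}(y)]}\,dx\,dy$. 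Using that $k^{(L)}(x,y)\le k(x,y)+C$ uniformly (the Fourier truncation of a log-correlated kernel is uniformly sub-logarithmic) and $\abs{S_L(x,y)}\le S_L(x,x)\le C_0$, one gets $k_{L,\sharp}(x,y)\le-\log d(x,y)+C'$ uniformly, which for $\abs\gamma<\sqrt n$ makes $d(x,y)^{-\gamma^2}$ an integrable majorant; since moreover $k_{L,\sharp}(x,y)\to k(x,y)$ and $\mathbf E[h_{L,\sharp}(x)h_{M,\sharp}(y)]\to k(x,y)$ off the diagonal, dominated convergence shows that $(\langle\mu_{L,\sharp},f\rangle)_L$ is Cauchy in $L^2(\mathbf P)$ with a limit carrying the covariance kernel $k$ of $h$, hence — by the uniqueness of subcritical GMC — equal to $\langle\mu,f\rangle$. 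So $\mu_{L,\sharp}\to\mu$ in $L^2(\mathbf P)$, in particular for all $\abs\gamma<\sqrt{n/e}$. (Alternatively one writes $h_{L,\sharp}\overset{d}{=}h^{(L)}+\tilde g_L$ with $\tilde g_L$ an independent bounded Gaussian field oscillating at scale $1/L$, so that $\mu_{L,\sharp}=e^{\gamma\tilde g_L-\frac{\gamma^2}2\mathbf E\tilde g_L^2}\,\mu^{(L)}$, and verifies by conditioning on $\mu^{(L)}$ that the oscillating tilt averages to $1$.)

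The hard part is everything about $\mu_{L,\sharp}$ beyond the $L^2$-phase, and it highlights why the reduced field behaves so much better. The remainder $S_L$ does not vanish — it merely spreads onto the diagonal — so to reach $\abs\gamma\in[\sqrt n,\sqrt{2n})$ for $\mu_{L,\sharp}$ one would have to prove that the tilt $e^{\gamma\tilde g_L-\frac{\gamma^2}2\mathbf E\tilde g_L^2}$ still averages to $1$ against the increasingly singular measure $\mu^{(L)}$, which requires controlling the mass that $\mu^{(L)}\otimes\mu^{(L)}$ places at diagonal scale $1/L$ precisely in the regime $\gamma^2\ge n$ where this mass no longer tends to $0$ in mean; this obstruction is absent for $\mu^{{-\circ}}_{L,\sharp}$ exactly because the spectral rescaling removes the oscillating correction and leaves a bona fide $\sigma$-positive approximation. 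For the reduced case the only delicate points are then the non-degeneracy of the limiting chaos up to the critical constant $2n$ and the uniform $L^{1+\delta}$-moment bound throughout the subcritical range; for $\mu_{L,\sharp}$ in the asserted range the argument is the routine second-moment computation above.
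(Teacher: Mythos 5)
Your treatment of $\mu^{{-\circ}}_{L,\sharp}$ is essentially the paper's: the identification of $h^{{-\circ}}_{L,\sharp}$ with the spectral truncation $h_{\sharp,L}$ of the continuous field (Lemma \ref{fourier-ext/rest}) reduces the claim to the convergence of the Fourier approximations $\mu_{\sharp,L}\to\mu$ for $|\gamma|<\sqrt{2n}$, which the paper imports as Proposition \ref{four-meas} and which you re-derive via the standard nested-spectrum martingale argument. That part is fine.

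For $\mu_{L,\sharp}$ there is a genuine gap. Your second-moment argument hinges on the claim that $k_{L,\sharp}(x,y)\le \log\frac1{d(x,y)}+C$ \emph{uniformly in $L$ and in all $x,y\in\mathbb{T}^n$}, which you justify only by the parenthetical ``the Fourier truncation of a log-correlated kernel is uniformly sub-logarithmic''. For cube partial sums in dimension $n\ge 2$ this is not a standard fact and is precisely the crux of the matter: the cube Dirichlet kernel $\sum_{\|z\|_\infty<L/2}e^{2\pi i z\cdot u}$ has $L^1$-norm of order $(\log L)^n$, so convolving $k$ (or $k^{(L)}$, or the discrete-eigenvalue version) against it gives no uniform pointwise control for free; moreover the bound is needed off the lattice, where Fourier interpolation of a lattice-bounded function can be much larger than its lattice values. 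This is exactly the difficulty the paper's Theorem \ref{p:Estimate} is designed to circumvent: rather than a pointwise bound, it controls $\int_{\mathbb{T}^n}|k_{L,\sharp}(0,y)|^p\,dy$ via the Hausdorff--Young inequality and Zygmund's argument, and the Stirling factor in resumming $\sum_p\gamma^{2p}\|k_{L,\sharp}(0,\cdot)\|_p^p/p!$ is what produces the constant $\sqrt{n/e}$ in the statement. The fact that your argument would prove the stronger range $|\gamma|<\sqrt n$ (and, combined with Kahane, presumably $\sqrt{2n}$) --- a result the authors conspicuously do not claim --- is the signal that the asserted uniform estimate is doing all the work and cannot be waved through. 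To repair the proof along the paper's lines you would replace your pointwise domination step by: (a) Shamov's criterion \cite[Lem.~4.5]{DHKS}, whose kernel-convergence hypotheses are checked as in Lemma \ref{lemma-enhanced} using the identification $h_{L,\sharp}\overset{d}{=}\mathsf{r}_L^{{+\circ}}h$, and (b) the uniform $L^2$-boundedness of the total masses from Theorem \ref{p:Estimate}, which is where the restriction $|\gamma|<\sqrt{n/e}$ enters. Your auxiliary observations (nonnegative-definiteness of $S_L=k_{L,\sharp}-k^{(L)}$, uniform boundedness of its diagonal by a Riemann-sum comparison, decay of its off-diagonal) are correct but do not by themselves yield the needed uniform integrability, since a bounded independent tilt $\tilde g_L$ with variance that does not vanish still has to be averaged against the singular measure $\mu^{(L)}$ --- a point you yourself identify in your closing paragraph for the regime $\gamma^2\ge n$, but which already requires an actual argument (Shamov's criterion or Kahane's convexity) in the regime $\gamma^2<n$.
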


\paragraph{Uniform Integrability of the Random Measures.}
As an auxiliary result of independent interest, we provide a direct proof of the uniform integrability of (discrete, semi-discrete, and continuous) random measures on the multidimensional torus.

\begin{theorem}[cf.~Thm.~\ref{p:Estimate}]\label{uni-int-crit} Assume that $|\gamma|< 
\sqrt\frac{n}{e}$. 
Then
$$\sup_L\mathbf E\Big[\big|\mu_L(\mathbb{T}^n_L)\big|^2\Big]<\infty$$
and
$$\sup_L\mathbf E\Big[\big|\mu_{L,\sharp}(\mathbb{T}^n)\big|^2\Big]<\infty\fstop$$

\end{theorem}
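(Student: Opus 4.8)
The plan is to prove the uniform $L^2$-bound via a direct second-moment computation, treating the discrete, semi-discrete, and continuous cases in parallel. First I would write $\mathbf E[\mu_L(\mathbb{T}^n_L)^2]$ as a double integral (sum) over $\mathbb{T}^n_L\times\mathbb{T}^n_L$ of $\exp(\gamma^2 k_L(x,y))$; this is the standard Gaussian computation using $\mathbf E[e^{\gamma h_L(x)+\gamma h_L(y)}]=\exp(\tfrac{\gamma^2}2(\mathbf E h_L(x)^2+\mathbf E h_L(y)^2+2k_L(x,y)))$ together with the Wick normalization in the definition of $\mu_L$. So everything reduces to showing that
\[
\sup_L\ \frac1{N^2}\sum_{x,y\in\mathbb{T}^n_L} e^{\gamma^2 k_L(x,y)} <\infty.
\]

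The key input is the logarithmic bound $k_L(x,y)\le \log\frac1{d(x,y)}+C$, which should hold uniformly in $L$ — this is the discrete analogue of the stated Lemma~\ref{log-div}, and I would either invoke it directly if it is proved earlier in that uniform form, or derive it from the Fourier representation of $\mathring G_L^{n/2}$ by comparing the discrete symbol $4L^2\sum_k\sin^2(\pi z_k/L)$ with $4\pi^2|z|^2$ and controlling the error uniformly. Granting this, $e^{\gamma^2 k_L(x,y)}\le e^{C\gamma^2}\, d(x,y)^{-\gamma^2}$, and the problem becomes the convergence of the Riemann-type sum $\frac1{N^2}\sum_{x,y} d(x,y)^{-\gamma^2}$ to $\int_{\mathbb{T}^n\times\mathbb{T}^n} d(x,y)^{-\gamma^2}\,dx\,dy$, which is finite precisely because $\gamma^2<n$. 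Here one must be slightly careful near the diagonal: for each fixed $x$, the inner sum $\frac1N\sum_{y} d(x,y)^{-\gamma^2}$ is dominated by $\int_{\mathbb{T}^n} d(x,y)^{-\gamma^2}dy$ plus the single diagonal term $\frac1N\cdot d(x,x)^{-\gamma^2}$ — but $d(x,x)=0$, so instead one uses that $k_L(x,x)=\mathbf E h_L(x)^2$ is finite for each $L$ and the diagonal contributes $\frac1N e^{\gamma^2 \mathbf E h_L(x)^2}$; since $\mathbf E h_L(x)^2 = \tfrac1{a_n}\mathring G_L^{n/2}(x,x) = \log L + O(1)$, this term is $O(N^{-1}L^{\gamma^2}) = O(L^{\gamma^2-n}) \to 0$ as $\gamma^2<n$. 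The off-diagonal sum is then handled by comparing with the integral on dyadic annuli $2^{-j-1}<d(x,y)\le 2^{-j}$, using that each such annulus contains $O((2^{-j}L)^n)$ lattice points and contributes $O((2^{-j}L)^n \cdot N^{-1}\cdot 2^{j\gamma^2}) = O(2^{j(\gamma^2-n)})$, summable down to the scale $j\sim \log_2 L$.

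The precise threshold $|\gamma|<\sqrt{n/e}$ rather than $|\gamma|<\sqrt n$ suggests that the bound on $k_L$ actually used is not the clean $\log\frac1{d}+C$ but a two-sided estimate with a worse constant, or that the argument is run through an exponential-moment/Kahane-type inequality where the factor $e$ enters; concretely, if one only controls $k_L(x,y)\le \log\frac1{d(x,y)} + \tfrac12\log\mathbf E h_L(x)^2 + \cdots$ or uses a sub-Gaussian tail bound of the form $\mathbf P(\mu_L>t)\lesssim e^{-ct}$, the effective exponent degrades. I would therefore structure the proof so that the $\sqrt{n/e}$ constant emerges transparently from one explicit inequality — most likely an estimate of the form $e^{\gamma^2 k_L(x,y)} \le \tfrac{e^{C}}{d(x,y)^{\gamma^2}}$ combined with a bound forcing $e\gamma^2<n$ somewhere in the annular summation (e.g., optimizing a free parameter). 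The semi-discrete case $\mu_{L,\sharp}(\mathbb{T}^n)$ is handled identically once one knows that the covariance $\mathbf E[h_{L,\sharp}(x)h_{L,\sharp}(y)]$ on $\mathbb{T}^n\times\mathbb{T}^n$ satisfies the same uniform logarithmic upper bound; this follows from the spectral truncation (only frequencies $|z|<L$ are kept), so the continuum integral $\int\int d(x,y)^{-\gamma^2}$ dominates directly without the need for a diagonal correction, and the sup over $L$ is immediate.

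The main obstacle I expect is establishing the uniform (in $L$) logarithmic upper bound on the discrete covariance kernel $k_L$, including the correct behavior of $\mathbf E h_L(x)^2 = \log L + O(1)$ and the near-diagonal control of $k_L(x,y)$ for $d(x,y)$ comparable to $1/L$: one needs to match the discrete Green's function of the poly-Laplacian to its continuous counterpart at the lattice scale, which requires care with the oscillatory Fourier sum over $\Z^n_L$ and with the interplay between the fractional power $n/2$ and the dimension. Everything downstream — the Riemann-sum convergence, the dyadic annular estimate, and the extraction of the $\sqrt{n/e}$ threshold — is then routine analysis, but this kernel estimate is where the dimension-independence and the uniformity genuinely have to be earned.
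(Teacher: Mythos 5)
Your reduction of $\mathbf E\big[|\mu_L(\mathbb{T}^n_L)|^2\big]$ to $\int_{\mathbb{T}^n_L}\exp(\gamma^2 k_L(0,v))\,dm_L(v)$ via the Gaussian second-moment computation and translation invariance matches the paper's setup (Theorem~\ref{p:Estimate} plus Corollary~\ref{p:Integrability}). But from that point on your argument rests entirely on a uniform-in-$L$ pointwise bound $k_L(x,y)\le\log\frac1{d(x,y)}+C$ for the \emph{discrete} kernel, and this is a genuine gap: that estimate is neither proved earlier in the paper (Lemma~\ref{log-div} concerns only the continuous kernel $k$) nor established by you — you explicitly defer it as ``the main obstacle.'' It is precisely the hard part. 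The discrete symbol $\lambda_{L,z}=4L^2\sum_k\sin^2(\pi z_k/L)$ differs from $\lambda_z$ by a direction-dependent factor, and turning the termwise comparison into a pointwise bound on the oscillatory sum $\sum_z\lambda_{L,z}^{-n/2}\cos(2\pi z\cdot(x-y))$ near the diagonal is exactly what the authors avoid doing. Your own observation that your route would yield the threshold $\sqrt n$ rather than $\sqrt{n/e}$ should have been the decisive clue that the paper cannot be arguing this way.

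What the paper actually does is quite different and requires no pointwise kernel estimate at all. It splits $k_{L,\sharp}=k_{L,R,S}+g_{L,R}+f_{L,S}$ into low-, high-, and mid-frequency parts, bounds the first two uniformly by crude termwise estimates, and for the main part expands $\exp(\gamma^2 k_{L,R,S})$ into the series $\sum_p\frac{\gamma^{2p}}{p!}\int|k_{L,R,S}(0,y)|^p\,dy$, controlling each $L^p$-norm by the Hausdorff--Young inequality $\big(\int|k_{L,R,S}|^p\big)^{1/p}\le\big(\sum_z|c(z)|^{p'}\big)^{1/p'}$ (following Zygmund). The resulting bound grows like $C^p(p/n)^{p-1}$, and Stirling's formula $p!\sim\sqrt{2\pi p}\,(p/e)^p$ in the series produces a geometric ratio $e\gamma^2/n$; this is the true origin of the $\sqrt{n/e}$ threshold, which your reverse-engineering did not identify. (For the discrete case the same scheme runs with Parseval and Riesz--Thorin replacing Hausdorff--Young.) So your proposal is not a valid alternative proof as written: either you must actually prove the uniform logarithmic bound on $k_L$ — a substantial result that would in fact improve the theorem to $|\gamma|<\sqrt n$ — or you must switch to an $L^p$-based argument of the paper's type.
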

\paragraph{\bf Acknowledgements}
KTS is  grateful to Christoph Thiele for valuable discussions and
helpful references.
LDS is grateful to Nathana\"el Berestycki for valuable discussions on Gaussian Multiplicative Chaoses.

LDS gratefully acknowledges financial support from the European Research Council 
(grant agreement No 716117, awarded to J.~Maas) and from the Austrian Science Fund (FWF) through project F65.
He also acknowledges funding of his current position from the Austrian Science Fund (FWF) through project ESPRIT~208.
RH, EK, and KTS gratefully acknowledge funding by the Deutsche Forschungsgemeinschaft through the project `Random Riemannian Geometry' within the SPP 2265 `Random Geometric Systems', 
 through the Hausdorff Center for Mathematics (project ID 390685813), and through  project B03  within the CRC 1060 (project ID 211504053).
 RH and KTS also  gratefully acknowledges financial support from the European Research Council through the ERC AdG `RicciBounds' (grant agreement 694405).

Data sharing not applicable to this article as no datasets were generated or analyzed during the current study.

\section{Laplacian and Kernels on Continuous and Discrete Tori}

\subsection{Laplacian and Kernels on the Continuous Torus}

\paragraph{(a)}
For $n\in\N$, we denote by $\mathbb{T}^n\coloneqq \left(\R/\Z\right)^n$ the continuous $n$-torus. 
Where it seems helpful, one can always think of the torus  $\mathbb{T}^n$ as the set $[0,1)^n\subset\R^n$. 
It inherits from $\R^n$ the additive group structure and the Lebesgue measure, denoted in the sequel by $d\Leb^n(x)$ or simply by $dx$. The distance on $\mathbb{T}^n$ is given by
\[
d(x,y)\coloneqq \left(\sum_{k=1}^n \big( |x_k-y_k|\wedge|1-x_k+y_k| \big)^2\right)^{1/2} \fstop
\]

\paragraph{(b)}
For $z\in\Z^n$ and $x\in\mathbb{T}^n$ put
\[
\Phi_z(x)\coloneqq \exp\Big(2\pi i\, z\cdot x\Big) \fstop
\]
The family $(\Phi_z)_{z\in \Z^n}$ is a complete ON basis of $L^2_{\mathbb C}(\mathbb{T}^n)$. It consists of eigenfunctions of the negative Laplacian $-\Delta=-\sum_{k=1}^n\frac{\partial^2}{\partial x^2_k} $ on $\mathbb{T}^n$ with corresponding eigenvalues given by
 \[
 \lambda_z\coloneqq (2\pi |z|)^2\fstop
 \]

\paragraph{(c)}
The Fourier transform of the function $f\in L^2_{\mathbb C}(\mathbb{T}^n)$ is the function (or ``sequence")  $g\in \ell^2(\Z^n)$ given by
\[
g(z)\coloneqq \langle f,\Phi_z\rangle_{\mathbb{T}^n}\coloneqq \int_{\mathbb{T}^n} f(x)\, \overline\Phi_z(x)\,dx\fstop
\]
Conversely, for $g$ as above and a.e.~$x\in\mathbb{T}^n$,
\[
f(x)=\sum_{z\in\Z^n} g(z) \Phi_z(x) \fstop
\]

\paragraph{(d)}
To obtain a complete ON basis $(\varphi_z)_{z\in\Z^n}$ for the real $L^2$-space, choose a subset $\hat\Z^n$ of $\Z^n\setminus\{0\}$ with
\[
\Z^n\setminus\{0\}=\hat\Z^n\ \dot\cup\  \big(-\hat\Z^n\big)\comma
\]
and define
\[
\varphi_z(x)\coloneqq \begin{cases}
\frac1{\sqrt2}\big(\Phi_z+\Phi_{-z}\big)(x)=\sqrt 2\, \cos\big(2\pi\,z\cdot x\big)\quad&\text{if }z\in\hat\Z^n\comma
\\
\frac1{\sqrt2\, i}\big(\Phi_z-\Phi_{-z}\big)(x)=\sqrt 2\, \sin\big(2\pi\,z\cdot x\big)\quad&\text{if }z\in-\hat\Z^n\comma
\\
{\bf 1}&\text{if }z=0 \fstop
\end{cases}
\]

\paragraph{(e)}
Functions $f$ on $\mathbb{T}^n$ will be called \emph{grounded} if $\int f\,d\Leb^n=0$.
For  $s\in\R$, the (grounded, real) Sobolev space $\mathring H^s(\mathbb{T}_n)$ can be identified with  a set of formal series:
\[
\mathring H^s(\mathbb{T}^n)=\left\{f=\sum_{z\in\Z^n\setminus\{0\}}\alpha_z\varphi_z: \ \alpha_z\in\R, \ \sum_{z\in\Z^n\setminus\{0\}}|z|^{2s}\,|\alpha_z|^2<\infty\right\}
\fstop
\]
Then for all $f=\sum_{z\in\Z^n\setminus\{0\}}\alpha_z\varphi_z\in \mathring H^r(\mathbb{T}^n)$ and $g=\sum_{z\in\Z^n\setminus\{0\}}\beta_z\varphi_z\in \mathring H^s(\mathbb{T}^n)$ with $r+s\ge0$,
$$\langle f,g\rangle_{\mathbb{T}^n}=\sum_{z\in\Z^n\setminus\{0\}}\alpha_z\,\beta_z.$$
The norm of~$\mathring{H}^s(\mathbb{T}^n)$ is given by the square root of 
$\sum_{z\in\Z^n\setminus\{0\}}|z|^{2s}\,|\alpha_z|^2$. Equivalently it could be defined 
 with~$\lambda_z^s$ in place of~$|z|^{2s}$. This is the convention adopted in~\cite{DHKS}.
The two norms differ only by a factor~$(2\pi)^{s}$.



\paragraph{(f)} Given any function $u: \Z^n\to{\mathbb C}$ we define the \emph{principal value along cubes} of the series $\sum_z u(z)$ by
$$\sum^{\Box}_{z\in\Z^n} u(z):=\lim_{L\to\infty}\sum_{z\in\Z^n, \ \norm{z}_{\infty} <L/2}u(z)$$
provided the latter limit exists in $\mathbb C$ or in $\R\cup\{\pm\infty\}$.

\paragraph{(g)}
Since $M$ is compact, there exists a unique \emph{grounded Green kernel} $\mathring{G}$ satysfying
\begin{equation*}
\mathring{G}(x,y) \simeq \abs{x-y}^{2-n}.
\end{equation*}
In particular, $\mathring{G} \in L^{p}(M \times M)$ for all $p < \frac{n}{n-2}$.
We claim that we have:
\begin{align*}
  \mathring G(x,y)& = \sum^\Box_{z\in\Z^n\setminus\{0\}}\frac1{(2\pi |z|)^2}\,\varphi_z(x)\,  \varphi_z(y)= \sum^\Box_{z\in\Z^n\setminus\{0\}}\frac1{(2\pi |z|)^2}\,\Phi_z(x)\, \overline\Phi_z(y)\\
&
=
\sum^\Box_{z\in\Z^n\setminus\{0\}}\frac1{(2\pi |z|)^2}\,\Phi_z(x-y)
=\sum^\Box_{z\in\Z^n\setminus\{0\}}\frac1{(2\pi |z|)^2}\,\cos\Big(2\pi \,z\cdot(x-y)\Big),
\end{align*}
where the convergence holds almost everywhere and in $L^{p}$ for $p < n/(n-2)$.
Indeed consider the filtration $(\mathfrak{F}_{L})$ where $\mathfrak{F}_{L}$ is the $\sigma$-algebra generated by the $\varphi_{z}$ for $z \in \mathbb{Z}^{n}$, $\norm{z}_{\infty} < L/2$, and the associated closed martingale $\mathring G_{L} = \mathbf{E} \bracket{ \mathring{G} | \mathfrak{F}_{L}}$, where expectation is with respect to $\vol \otimes \vol$.
Take $z \in \mathbb{Z}^{n}$ with $\norm{z}_{\infty} < L/2$.
Since $\varphi_{z}$ is $\mathfrak{F}_{L}$-measurable, we get that
\begin{equation*}
  \int \mathring{G}_{k}(x,y) \varphi_{z}(y) dy = \int \mathring{G}(x,y) \varphi_{z}(y) dy = \paren{-\Delta}^{-1} \varphi_{z}(x) = \lambda_{z}^{-1} \varphi_{z}(x).
\end{equation*}
On the other hand, when $\norm{z}_{\infty} \ge L/2$, since the $\varphi_{z}$'s form an orthonormal basis, we find that $\mathbf{E}\bracket{ \varphi_{z} | \mathfrak{F}_{L}}= 0$, and thus
\begin{equation*}
  \int \mathring{G}_{L}(x,y) \varphi_{z}(y) dy = 0.
\end{equation*}
This shows that
\begin{equation*}
  \mathring{G}_{L}(x,y) = \sum_{z \in \mathbb{Z}^{n}_{L} \setminus \{0\}} \lambda_{z}^{-1} \varphi_{z}(x) \varphi_{z}(y),
\end{equation*}
and thus  the almost everywhere convergence of the series follows by the martingale convergence theorem.


\paragraph{(h)}
The \emph{polyharmonic operator} is defined as
\begin{equation*}
  a_n\cdot(-\Delta)^{n/2}
\qquad \text{with} \qquad a_n\coloneqq \frac2 {\Gamma(n/2)\, (4\pi)^{n/2}}\fstop
\end{equation*}
The inverse operator admits a kernel denoted by $k$.

As for the Green kernel, we have the following representation.
\begin{lemma}\label{green-kernel-series}
  We have that
  \begin{equation*}
    k = \sum^{\Box}_{z\in \mathbb{Z}^n\setminus\{0\}}\frac1{{(2\pi|z|)^n}}\,\varphi_z\otimes\varphi_z,
  \end{equation*}
  where the series converges in $L^2(\mathbb{T}^n\times\mathbb{T}^n)$ and almost-everywhere.
\end{lemma}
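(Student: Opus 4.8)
The plan is to mirror the treatment of $\mathring G$ in paragraph~(g), only now the coefficients decay much faster. Write $\varphi_z\otimes\varphi_z$ for the function $(x,y)\mapsto\varphi_z(x)\,\varphi_z(y)$ on $\mathbb{T}^n\times\mathbb{T}^n$, and recall from~(d) that $\{\varphi_z\otimes\varphi_w:z,w\in\Z^n\}$ is a complete orthonormal system of $L^2(\mathbb{T}^n\times\mathbb{T}^n)$. There are two things to prove: convergence in $L^2(\mathbb{T}^n\times\mathbb{T}^n)$ of the (cube‑truncated) series to $k$, and convergence $\vol\otimes\vol$-almost everywhere.

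For the $L^2$‑part I would first note that, since $2n>n$ and the number of $z\in\Z^n$ with $\abs{z}\sim R$ is of order $R^{n-1}$, one has $\sum_{z\in\Z^n\setminus\{0\}}(2\pi\abs{z})^{-2n}<\infty$; hence $\big((2\pi\abs{z})^{-n}\varphi_z\otimes\varphi_z\big)_{z\ne0}$ is square‑summable in $L^2(\mathbb{T}^n\times\mathbb{T}^n)$, so the series converges there unconditionally — in particular along cubes — to some $S$ with $\langle S,\varphi_z\otimes\varphi_w\rangle=\delta_{zw}(2\pi\abs{z})^{-n}$ for $z,w\ne0$ and all other coordinates $0$. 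On the other hand $k$ is the kernel of the grounded inverse $K$ of $(-\Delta)^{n/2}$, so $K\varphi_z=\lambda_z^{-n/2}\varphi_z=(2\pi\abs{z})^{-n}\varphi_z$ for $z\ne0$ and $K\varphi_0=0$; the same summability makes $K$ Hilbert–Schmidt, so $k\in L^2(\mathbb{T}^n\times\mathbb{T}^n)$, and by Fubini $\langle k,\varphi_z\otimes\varphi_w\rangle=\int_{\mathbb{T}^n}\varphi_z\,(K\varphi_w)\,dx=\langle K\varphi_w,\varphi_z\rangle$, which equals $\delta_{zw}(2\pi\abs{z})^{-n}$ for $z,w\ne0$ and $0$ otherwise. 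Thus $k$ and $S$ have identical coordinates in the orthonormal basis $\{\varphi_z\otimes\varphi_w\}$, whence $k=S$, which is the asserted $L^2$‑convergence.

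For the almost‑everywhere convergence along cubes I would reproduce the closed‑martingale argument of paragraph~(g): with $(\mathfrak F_L)$ the filtration used there, the coefficient computation above identifies $\mathbf E[k\mid\mathfrak F_L]$ with the cube partial sum $\sum_{z\in\Z^n_L\setminus\{0\}}(2\pi\abs{z})^{-n}\varphi_z\otimes\varphi_z$, and since $k\in L^1(\vol\otimes\vol)$ the martingale convergence theorem yields convergence to $k$ both in $L^1$ and $\vol\otimes\vol$-a.e. Alternatively, summing over $\pm z$ reduces the statement to one variable, $k(x,y)=\kappa(x-y)$ with $\kappa=\sum^{\Box}_{z\ne0}(2\pi\abs{z})^{-n}\Phi_z$ a function on $\mathbb{T}^n$ that is smooth off the origin with only a logarithmic singularity there, whence its cube‑truncated Fourier series converges to it $\vol$-a.e. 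This last point — upgrading $L^2$‑convergence of the \emph{principal value along cubes} to a.e.\ convergence, as opposed to convergence of a mere subsequence — is the only genuinely analytic ingredient, and I expect it to be the main obstacle; everything else is the orthonormal‑basis bookkeeping already carried out for $\mathring G$ in~(g).
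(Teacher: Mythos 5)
Your proposal is correct and follows essentially the same route as the paper: orthogonality of the summands together with square-summability of the coefficients $(2\pi\abs{z})^{-n}$ gives the $L^2$-convergence, and the almost-everywhere convergence along cubes is obtained by the same closed-martingale argument used for $\mathring G$ in paragraph (g). The only addition is your explicit identification of the $L^2$-limit with $k$ by matching Fourier coefficients against the action of the inverse operator, a step the paper leaves implicit.
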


\begin{remark} We conjecture 
that the convergence indeed holds everywhere but do not have a proof of this fact.
\end{remark}

\begin{proof} 
  Since the series on the right-hand side is orthogonal, we find that
  \begin{equation*}
    \norm{\sum_{z \in \mathbb{Z}^{n} \setminus \{0\}} \lambda_{z}^{-n/2} \varphi_{z} \otimes \varphi_{z}}_{L^{2}} = \sum_{z \in \mathbb{Z}^{n} \setminus \{0\} } \paren{2 \pi z}^{-2n} < \infty.
  \end{equation*}
  This shows that the series actually converges in $L^{2}$.
  The rest of the claim is obtained by a martingale argument as for the previous lemma.
\end{proof}

\begin{lemma}
The function 
\begin{equation}\label{eq:l:HKRepresentation:0}
f\colon x\longmapsto k(x,0)=\frac1{\Gamma(n/2)}\int_0^\infty \mathring p_t(x,0)\, t^{n/2-1}dt
\end{equation}
is differentiable at every~$x\in \mathbb{T}^n\setminus\set{0}$, and, for every~$k\leq n$,
\begin{equation}\label{eq:l:HKRepresentation:0.1}
\frac{\partial}{\partial x_k}f(x)=\frac1{\Gamma(n/2)}\int_0^\infty  \frac{\partial}{\partial x_k}\mathring p_t(x,0)\, t^{n/2-1}dt \fstop
\end{equation}
\end{lemma}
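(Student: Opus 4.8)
\emph{Approach.} I would prove both assertions at once by justifying differentiation under the integral sign in~\eqref{eq:l:HKRepresentation:0}. Fix $\delta>0$ and restrict to the open set $U_\delta\coloneqq\set{x\in\mathbb{T}^n:d(x,0)>\delta}$; since $\delta>0$ is arbitrary, differentiability on each $U_\delta$ yields differentiability on $\mathbb{T}^n\setminus\set{0}$. Recall that the grounded heat kernel admits the two representations $\mathring p_t(x,y)=p_t(x,y)-1$, with $p_t$ the heat kernel of $\mathbb{T}^n$, and $\mathring p_t(x,y)=\sum_{z\in\Z^n\setminus\set{0}}e^{-\lambda_z t}\,\varphi_z(x)\,\varphi_z(y)$; for each $t>0$ the map $x\mapsto\mathring p_t(x,0)$ is $\mathcal C^\infty$ --- term-by-term differentiation of the spectral series being legitimate because $e^{-\lambda_z t}$ decays super-polynomially in $z$ --- and $\partial_{x_k}\mathring p_t(x,0)=\partial_{x_k}p_t(x,0)$, the constant being annihilated. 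By the Leibniz rule, i.e.\ dominated convergence applied to the difference quotients of $f$, it then suffices to produce, for each $k\le n$, a function $g_\delta\in L^1\bigl((0,\infty)\bigr)$ such that $\abs{\partial_{x_k}\mathring p_t(x,0)}\,t^{n/2-1}\le g_\delta(t)$ for all $t>0$ and all $x\in U_\delta$.

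\emph{The two regimes.} For $t\in(0,1]$ I would use the theta representation $p_t(x,0)=(4\pi t)^{-n/2}\sum_{w\in\Z^n}e^{-\abs{x-w}^2/(4t)}$. Differentiating term by term and exploiting that $\abs{x-w}\ge d(x,0)>\delta$ for every $w\in\Z^n$, that $u\mapsto u\,e^{-u^2}$ is bounded, and that $\sum_{w\in\Z^n}e^{-\abs{x-w}^2/(ct)}$ is dominated (for $t\le1$) by $\sum_{w\in\Z^n}e^{-\abs{x-w}^2/c}<\infty$, one reaches a Gaussian gradient bound $\abs{\partial_{x_k}p_t(x,0)}\le C_n\,t^{-(n+1)/2}\,e^{-c_n\delta^2/t}$; multiplying by $t^{n/2-1}$ gives $C_n\,t^{-3/2}e^{-c_n\delta^2/t}$, which is integrable on $(0,1]$ (substitute $s=1/t$). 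For $t\ge1$ I would instead use the spectral form $\mathring p_t(x,0)=\sum_{z\in\Z^n\setminus\set{0}}e^{-\lambda_z t}\,\Phi_z(x)$, which gives $\abs{\partial_{x_k}\mathring p_t(x,0)}\le 2\pi\sum_{z\ne0}\abs{z}\,e^{-4\pi^2\abs{z}^2 t}$; since $\abs{z}^2\ge1$ here, $e^{-4\pi^2\abs{z}^2 t}\le e^{-4\pi^2(t-1)}e^{-4\pi^2\abs{z}^2}$, whence $\abs{\partial_{x_k}\mathring p_t(x,0)}\le C_n'\,e^{-4\pi^2 t}$, and after multiplication by $t^{n/2-1}$ we obtain a bound integrable on $[1,\infty)$. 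Taking $g_\delta$ to be the sum of the two bounds completes the proof.

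\emph{Main obstacle.} The only genuinely delicate step is the small-time estimate, where one needs a Gaussian-type gradient bound $\abs{\nabla_x p_t(x,0)}\le C_n\,t^{-(n+1)/2}\,e^{-c_n\,d(x,0)^2/t}$ holding \emph{uniformly} for $x$ bounded away from $0$. On the flat torus this is elementary through the theta representation, but the lattice sum must be organized so as to extract simultaneously the exponentially small prefactor $e^{-c_n\delta^2/t}$ and a uniformly convergent remainder; alternatively, such an estimate is furnished by the standard parametrix construction for heat kernels on compact Riemannian manifolds.
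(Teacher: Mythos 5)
Your proposal is correct and follows essentially the same route as the paper, which simply invokes ``standard Gaussian upper heat kernel estimates'' for the summability of the differentiated integrand and then differentiates under the integral sign; you have supplied exactly those estimates (the theta-representation gradient bound $\ttabs{\partial_{x_k}p_t(x,0)}\le C_n\,t^{-(n+1)/2}e^{-c_n\delta^2/t}$ for small $t$ and the spectral-gap bound for large $t$), and both are accurate. The only point you leave implicit is the passage from existence of the partial derivatives to differentiability of $f$: as in the paper, this follows because your domination is uniform on $U_\delta$, so the same dominated-convergence argument shows each $\partial_{x_k}f$ is continuous there, and continuity of all partials gives differentiability.
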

\begin{proof}
The heat-kernel representation in~\eqref{eq:l:HKRepresentation:0} holds as in~\cite[Lem.~2.4]{DKS20}.
For fixed~$k\leq n$, standard Gaussian upper heat kernel estimates provide the summability of the right-hand side in~\eqref{eq:l:HKRepresentation:0.1}, hence~\eqref{eq:l:HKRepresentation:0.1} follows by differentiation under integral sign.
Since~$x\mapsto \tparen{\frac{\partial}{\partial x_k}\mathring p_t}(x,0)$ is continuous for every~$k$ on the whole of~$\mathbb{T}^n$, we have that~$\frac{\partial}{\partial x_k} f$ is continuous away from~$0$, and the differentiability of~$f$ follows by standard arguments in multivariate calculus.
\end{proof}

The 
constant $a_n$ is chosen such that it leads to a precise logarithmic divergence of $k$.
\begin{lemma}[\cite{DHKS}]\label{log-div} $\exists C=C(n): \forall x,y\in\mathbb{T}^n$:
\begin{equation}
\left|k(x,y)-\log \frac1{d(x,y)}\right|\le C\fstop
\end{equation}
\end{lemma}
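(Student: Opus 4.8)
The plan is to reduce the estimate to a local statement near the diagonal and then to control the kernel via its heat-kernel representation. First I would use the representation
\[
k(x,y)=\frac1{\Gamma(n/2)}\int_0^\infty \mathring p_t(x,y)\, t^{n/2-1}\,dt,
\]
already established above, where $\mathring p_t$ is the grounded heat kernel on $\mathbb{T}^n$. I would split the integral at $t=1$. On $[1,\infty)$ the integrand is bounded uniformly in $x,y$ (the grounded heat kernel converges to $0$ exponentially, being orthogonal to constants), so that piece contributes a bounded quantity. On $(0,1]$ I would compare $\mathring p_t(x,y)$ with the Euclidean heat kernel $p_t^{\R^n}(x,y)=(4\pi t)^{-n/2}\exp(-d(x,y)^2/4t)$: by the standard parametrix/Minakshisundaram--Pleijel construction on a compact manifold (here the flat torus, where it is even more elementary since locally the torus is isometric to $\R^n$ up to exponentially small corrections coming from the other lattice points), one has $|\mathring p_t(x,y)-p_t^{\R^n}(x,y)|\le C$ on $(0,1]$ uniformly, plus the constant subtracted to ground the kernel, which is $O(1)$ on this range. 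Hence up to a bounded error,
\[
k(x,y)=\frac1{\Gamma(n/2)}\int_0^1 (4\pi t)^{-n/2}\,e^{-d(x,y)^2/4t}\, t^{n/2-1}\,dt + O(1).
\]

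Second I would evaluate this model integral explicitly. Writing $r=d(x,y)$ and substituting $s=r^2/4t$, the integral becomes
\[
\frac1{\Gamma(n/2)(4\pi)^{n/2}}\int_0^1 t^{-1}e^{-r^2/4t}\,dt
=\frac1{\Gamma(n/2)(4\pi)^{n/2}}\int_{r^2/4}^\infty s^{-1}e^{-s}\,ds.
\]
Up to the normalization $a_n=\tfrac2{\Gamma(n/2)(4\pi)^{n/2}}$, this is $\tfrac12 \Gamma(0,r^2/4)$ where $\Gamma(0,\cdot)$ is the upper incomplete Gamma function (exponential integral $E_1$). The asymptotics $\Gamma(0,u)=-\log u-\gamma_{\mathrm{Euler}}+O(u)$ as $u\to0$ and $\Gamma(0,u)\to 0$ as $u\to\infty$ give
\[
\tfrac12\,\Gamma(0,r^2/4)=\log\frac1r + O(1)
\]
uniformly over $r=d(x,y)\in(0,\operatorname{diam}\mathbb{T}^n]$; indeed for $r$ bounded away from $0$ both sides are bounded, and for small $r$ the $-\log(r^2/4)/2=\log(1/r)+\log 2$ term is the leading singularity. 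Combining with the previous step yields $|k(x,y)-\log\tfrac1{d(x,y)}|\le C$, which is the claim. (The choice of $a_n$ is precisely what makes the coefficient of $\log\tfrac1r$ equal to $1$.)

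The main obstacle is the uniform comparison of the grounded torus heat kernel with the Euclidean one on the short-time range $(0,1]$, uniformly in $x,y$ — in particular handling the regime where $d(x,y)$ is not small (where the exponential factor $e^{-d(x,y)^2/4t}$ already makes the Euclidean integral bounded, but one must still argue the torus kernel does not blow up) and the grounding correction $\mathring p_t(x,y)=p_t(x,y)-1$ (since $\int p_t\,d\Leb^n=1$), whose contribution to $\int_0^1 t^{n/2-1}\,dt$ is finite and $O(1)$. On the flat torus this comparison is cleaner than in the general manifold case treated in \cite{DHKS}: one can use the exact image-sum formula $p_t(x,y)=\sum_{z\in\Z^n} p_t^{\R^n}(x-y+z,0)$, isolate the $z=0$ term as the singular part, and bound the remaining sum by a convergent quantity that is uniformly bounded for $t\in(0,1]$. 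I would carry out this bound as the one genuinely quantitative lemma, and treat the incomplete-Gamma asymptotics and the $[1,\infty)$ tail as routine.
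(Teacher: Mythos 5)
Your argument is correct in substance, but it takes a genuinely different route from the paper: the paper's entire proof of Lemma~\ref{log-div} is a citation, namely the observation that the estimate of Proposition~2.13 in \cite{DHKS} for the kernel $G^{n/2}(x,y)$ of the $n/2$-th power of the Green operator holds for odd as well as even $n$. You instead give a self-contained derivation via the heat-kernel representation, the splitting at $t=1$, the image-sum formula $p_t(x,y)=\sum_{z\in\Z^n}(4\pi t)^{-n/2}e^{-\abs{x-y+z}^2/4t}$ to isolate the nearest-image singularity, and the incomplete-Gamma asymptotics $\int_{r^2/4}^\infty s^{-1}e^{-s}\,ds=-2\log r+O(1)$. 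This is essentially the proof of \cite{DHKS} Prop.~2.13 specialized to the flat torus, where the general parametrix construction collapses to an exact image sum; what your route buys is a short argument valid verbatim for all $n$, even or odd, without invoking the general-manifold machinery --- which is precisely the point the paper's one-line proof is making by citation. The quantitative steps you flag are all sound: the tail $t\ge 1$ is exponentially small because $\mathring p_t$ is orthogonal to constants; the grounding correction contributes $\int_0^1 t^{n/2-1}\,dt=2/n$; and the non-minimal images contribute a uniformly bounded amount on $(0,1]$, since once $d(x,y)\le 1/4$ every other image lies at distance at least $3/4$, while for $d(x,y)\ge 1/4$ both $k(x,y)$ and $\log\tfrac1{d(x,y)}$ are bounded outright.

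One bookkeeping point to fix when writing this up. Since $k$ is by definition the inverse kernel of $a_n(-\Delta)^{n/2}$, i.e.\ $k=\tfrac1{a_n}\mathring G^{n/2}$, its heat-kernel representation must carry the prefactor $\tfrac1{a_n}$, namely $k(x,y)=\tfrac{1}{a_n\,\Gamma(n/2)}\int_0^\infty \mathring p_t(x,y)\,t^{n/2-1}\,dt$. (The display you quote, like the series in Lemma~\ref{green-kernel-series}, omits this factor and is not consistent with $k=\tfrac1{a_n}\mathring G^{n/2}$ as stated in the introduction and in~\eqref{kL}.) Without the prefactor your model integral evaluates to $\tfrac{a_n}{2}\,\Gamma(0,r^2/4)=a_n\log\tfrac1r+O(1)$, i.e.\ the wrong constant in front of the logarithm; with it you get $\tfrac12\,\Gamma(0,r^2/4)=\log\tfrac1r+O(1)$ as claimed. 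Your parenthetical ``up to the normalization $a_n$'' silently reinstates the factor --- make that step explicit.
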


\begin{proof} Note  that the estimate in Proposition 2.13 in \cite{DHKS} for the kernel $G^{n/2}(x,y)$ of the $n/2$-power of the Green operator not only holds for even but  also for odd $n$.
\end{proof}


\subsection{Laplacian and Kernels on the Discrete Torus}

\paragraph{(a)}
 For the sequel, fix  $L\in\N$. For convenience, we assume that $L$ is odd. Put 
\[
\Z^n_L\coloneqq \{z\in\Z^n: \, \|z\|_\infty\coloneqq\max_{k=1,\ldots,n}|z_k|<L/2\}\comma
\]
 and let
\[
\mathbb{T}^n_L\coloneqq \big(\tfrac1L \Z\big)^n\big/\Z^n
\]
denote the discrete $n$-torus with edge length $\frac1L$. 
Where  helpful, one can  think of the discrete torus  $\mathbb{T}_L^n$ as the set 
$\frac1L\Z^n_L=\{\frac{k}L: \ 
k\in\Z,\, 0\le k<L
\})^n\subset\R^n$. We always regard it as a subset of the continuous torus $\mathbb{T}^n$.
Furthermore, let
\[
m_L\coloneqq \frac1{L^n}\sum_{z\in\mathbb{T}_L^n}\delta_z
\]
denote the normalized counting measure on $\mathbb{T}^n_L$.
Points $v,u\in \mathbb{T}^n_L$ are \emph{neighbors}, in short $v\sim u$, if $
d(v,u)=\frac1L$. Each point in $\mathbb{T}^n_L$ has $2n$ neighbors.

\paragraph{(b)}
We define the \emph{discrete Laplacian} $\Delta_L$ acting on functions $f\in L^2(\mathbb{T}^n_L)$ by
\[
\Delta_L f(v)\coloneqq  L^2\cdot\sum_{u\sim v}\Big[f(u)-f(v)\Big]=2nL^2 \big({\sf p}_Lf-f\big)(v)
\]
with the transition kernel on $\mathbb{T}^n_L$ given by
\[
p_L(v,u)\coloneqq \frac{L^n}{2n}{\bf 1}_{v\sim u}
\]
and its action by
 $({\sf p}_Lf)(v)=L^{-n} \sum_u p_L(v,u) f(u)$.
Furthermore,
we define the grounded transition kernel by $\mathring p_L(v,u)\coloneqq p_L(v,u)-1$.

The \emph{discrete Green operator} acting on grounded functions $f\in \mathring L^2(\mathbb{T}^n_L)$ is defined by
\begin{equation*}
\mathring{\sf G}_L f\coloneqq \frac1{2nL^2}\, \sum_{k=0}^\infty {\sf p}_L^kf=\frac1{2nL^2}\, \sum_{k=0}^\infty \mathring {\sf p}_L^kf \fstop
\end{equation*}
In particular, the grounded discrete Green kernel is given by
\[
\mathring G_L(v,u)=\frac1{2nL^2}\, \sum_{k=0}^\infty \mathring p_L^k(v,u)
\]
and its action by
$(\mathring{\mathsf{G}}_Lf)(v)=L^{-n} \sum_y \mathring{G}_L(v,u) f(u)$.

\paragraph{(c)}
A complete ON basis of the complex $L^2_{\mathbb C}(\mathbb{T}^n_L,m_L)$ is given by $(\Phi_z)_{z\in \Z^n_L}$ with
\[
\Phi_z(v)\coloneqq \exp\Big(2\pi i\, z\cdot v\Big) \qquad (\forall v\in\mathbb{T}^n_L)\fstop
\]
The functions $\Phi_z$ are (normalized) eigenfunctions of the negative discrete Laplacian $-\Delta_L$ with eigenvalues
\begin{equation}\label{lL}
\lambda_{L,z}\coloneqq 4L^2\,\sum_{k=1}^n \sin^2\Big(\pi z_k/L\Big)\fstop
\end{equation}
Note that as $L\to\infty$, the RHS converges to $\lambda_z=(2\pi |z|)^2$ for any $z\in\Z^n$.

A  complete ON basis of $L^2_{\mathbb R}(\mathbb{T}^n_L,m_L)$ is given by the functions $\varphi_z$ for $z\in\Z^n_L$ where as before   $\varphi_0\equiv1$ and $\varphi_z(v)=\sqrt 2\, \cos\big(2\pi\,z\cdot v\big)$ if $z\in\hat\Z^n\cap\Z^n_L$ and $\varphi_z(v)=\sqrt 2\, \sin\big(2\pi\,z\cdot v\big)$ if $z\in(-\hat\Z^n)\cap\Z^n_L$.

\begin{remark} For even $L$, the previous definitions require some modifications. The set $\Z^n_L$ has to be re-defined as
$$\Z^n_L\coloneqq  \big\{-{L}/2+1,\ldots, {L}/2-1,{L}/2\big\}^n\fstop$$
Each $z\in\Z^n_L$ we decompose into $z'\coloneqq (z_k)_{k\in\sigma_z}$ and $\tilde z\coloneqq (z_k)_{k\in\tau_z}$ with
$\sigma_z\coloneqq\{k\in \{1,\ldots, n\}: z_k=L/2\}$, $\tau_z\coloneqq\{k\in \{1,\ldots, n\}: z_k<L/2\}$.
Similarly, for $v\in\mathbb{T}^n_L$ we put $v'\coloneqq (v_k)_{k\in\sigma_z}$ and $\tilde v\coloneqq (v_k)_{k\in\tau_z}$. Then
$$\Phi_z(v)=(-1)^{L\,|v'|_{\sigma_z}}\cdot \Phi_{\tilde z}(\tilde v)\quad\text{with }|v'|_{\sigma_z}\coloneqq \sum_{k\in\sigma_z}v_k\fstop$$
Thus a complete ON basis of $L^2_{\mathbb R}(\mathbb{T}^n_L,m_L)$ is given by the functions 
\begin{equation}\varphi_z(v)\coloneqq(-1)^{L\,|v'|_{\sigma_z}}\cdot \varphi_{\tilde z}(\tilde v)\comma \quad z\in Z^n_L\comma\end{equation}
where $\varphi_{\tilde z}$ for $\tilde z\in\Z^n$ with $\|\tilde z\|_\infty<L/2$ is defined as before.
\end{remark}
\paragraph{(d)}
In terms of the discrete eigenfunctions, the \emph{discrete grounded Green kernel}, the integral kernel of the inverse of $-\Delta_L$ acting on grounded $L^2$-functions, is given  as
\begin{align*}\mathring G_L(v,u)&= \sum_{z\in\Z^n_L\setminus\{0\}} \frac1{\lambda_{L,z}}\varphi_{z}(v)\, \varphi_z(u)= \sum_{z\in\Z^n_L\setminus\{0\}} \frac1{\lambda_{L,z}}\Phi_z(v)\,\overline \Phi_z(u)\\
&
=\sum_{z\in\Z^n_L\setminus\{0\}} \frac1{4L^2\,\sum_{k=1}^n \sin^2\Big(\pi z_k/L\Big)}\cdot 
\cos\Big(2\pi \, z\cdot (v-u)\Big)\comma
\end{align*}
and the \emph{discrete polyharmonic kernel}, the integral kernel of the inverse of $a_n(-\Delta_L)^{n/2}$ acting on grounded $L^2$-functions, as
\begin{align}k_L(v,u)&=\frac1{a_n}\sum_{z\in\Z^n_L\setminus\{0\}} \frac1{\lambda^{n/2}_{L,z}}\varphi_z(v)\, \varphi_z(u)=\frac1{a_n}\sum_{z\in\Z^n_L\setminus\{0\}} \frac1{\lambda_{L,z}^{n/2}}\Phi_z(v)\,\overline \Phi_z(u)
\nonumber
\\
\label{kL}
&=\frac1{a_n}\sum_{z\in\Z^n_L\setminus\{0\}} \frac1{\Big(4L^2\,\sum_{k=1}^n \sin^2\big(\pi z_k/L\big)\Big)^{n/2}}\cdot 
\cos\Big(2\pi \, z\cdot (v-u)\Big)\fstop
\end{align}

\begin{figure}
\includegraphics[scale=.45]{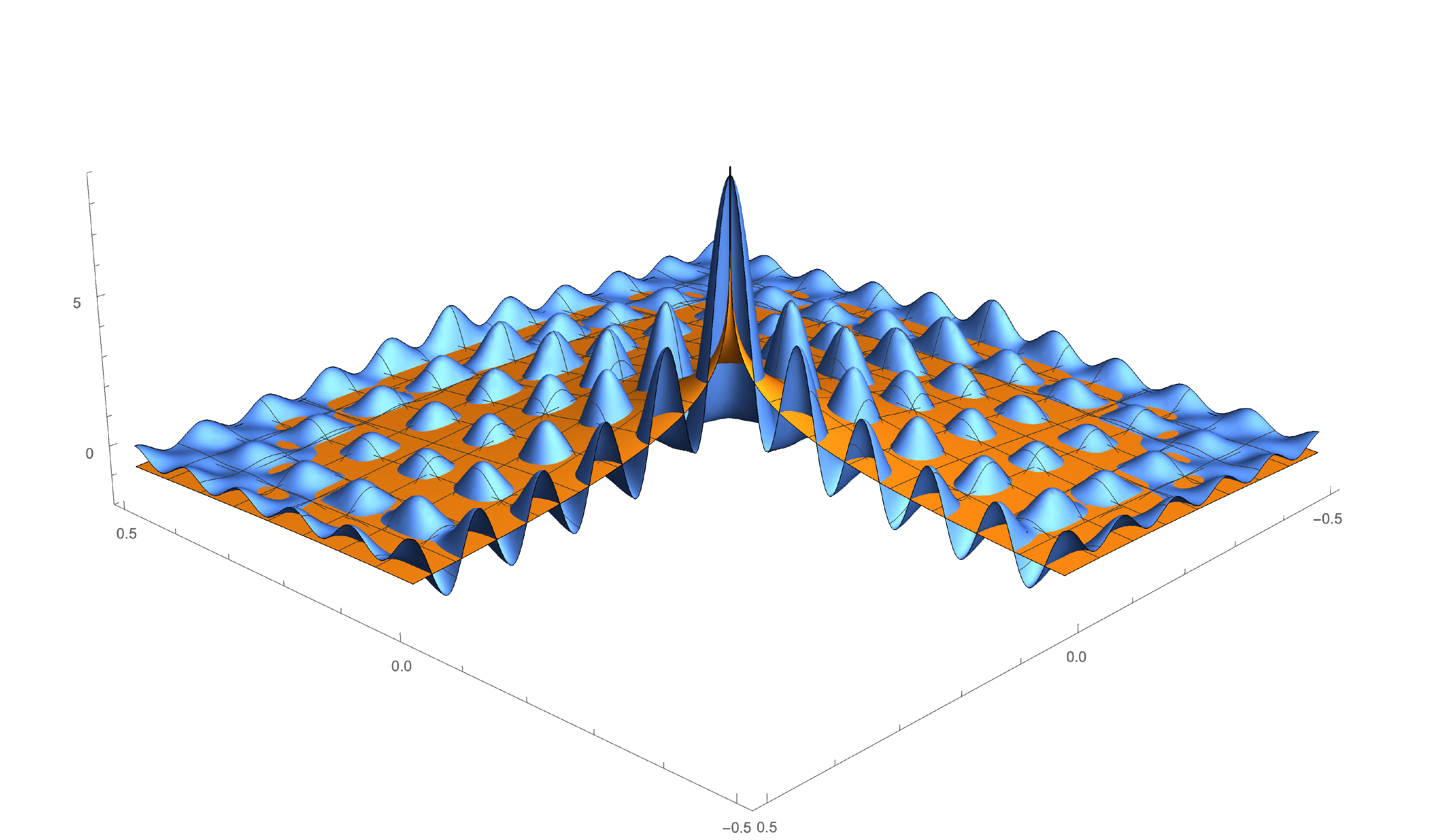}
\caption{$k(0,y)$ (orange) and $k_{11}(0,y)$ (blue) for~$y\in \mathbb{T}^2$ (sectional view with one quadrant removed).
}
\end{figure}

\subsection{Extensions and Projections}
\paragraph{(a) Piecewise Constant Extension/Projection.} \label{cons-proj-ext}
 Set $Q_L\coloneqq[-\frac1{2L},\frac1{2L})^n$ and $Q_L(v)\coloneqq v+Q_L$ for $v\in\mathbb{T}^n_L$. Observe that
$$\mathbb{T}^n=\dot{\bigcup\limits_{v\in\mathbb{T}^n_L}} Q_L(v)\fstop$$
Functions on $\mathbb{T}^n$ are called \emph{piecewise constant} if they are constant on each of the cubes $Q_L(v), v\in\mathbb{T}^n_L$.
Every function $f$ on the discrete torus $\mathbb{T}^n_L$ can be uniquely \emph{extended} to a piecewise constant function $f_{L,\flat}(x)$ by setting $f_{L,\flat}(x)\coloneqq f(v)$ if $x\in Q_L(v)$. In other words,
$$f_{L,\flat}(x)=(\dot{\sf q}_{L,}f)(x)\coloneqq \langle q_{L}(x,\emparg),f\rangle_{\mathbb{T}_L^n}
$$
with the Markov kernel $q_{L}(x,v)\coloneqq L^n\,{\bf 1}_{v+Q_L}(x)$ on $\mathbb{T}^n\times\mathbb{T}^n_L$. The latter is the restriction of the Markov kernel
\begin{equation}q_L=L^n\,\sum_{v\in\mathbb{T}^n_L}{\bf 1}_{v+Q_L}\otimes {\bf 1}_{v+Q_L}\quad \text{on }\mathbb{T}^n\times\mathbb{T}^n.\end{equation}
Note that $\int_{\mathbb{T}^n}q_L(x,y)dy=1$ as well as $\int_{\mathbb{T}^n_L} q_L(x,v)dm_L(v)=1$.

The \emph{projection} from $L^2(\mathbb{T}^n)$ onto the set of piecewise constant functions on $\mathbb{T}^n$ is given by $f\mapsto f_{\flat,L}$ with
$$ f_{\flat,L}(x)\coloneqq ({\sf q}_Lf)(x)\coloneqq \langle q_L(x,\emparg),f\rangle_{\mathbb{T}^n}= L^n\,\sum_{v\in\mathbb{T}^n_L}\langle 1_{v+Q_L},f\rangle_{\mathbb{T}^n}\cdot1_{v+Q_L}(x)\fstop$$

Here and in the sequel, the \emph{integral operators} associated with kernels $p,q,r$ will be denoted by ${\sf p,q,r}$, resp.
In general, these are regarded as integral operators on $\mathbb{T}^n$.  If we want to regard them as  integral operators on $\mathbb{T}^n_L$,
 we write ${\sf \dot p,\dot q, \dot r}$ instead.

\paragraph{(b) Fourier Extension/Projection.} \label{four-proj-ext}
Let $\mathcal D_L$ denote the linear span of $\{\varphi_z: \ z\in\Z^n_L\}$.
Every  function $f$ on the discrete torus $\mathbb{T}^n_L$ can be uniquely represented as $f=\sum_{z\in\Z_L^n}\alpha_z\varphi_z$ with suitable coefficients $\alpha_z\in\R$ for $z\in \Z^n_L$, and thus uniquely \emph{extends} to a function 
$f_{L,\sharp}\in \mathcal D_L$ on the continuous torus $\mathbb{T}^n$.
Formally,
$$ f_{L,\sharp}(x)
\coloneqq (\dot{\sf r}_{L}f)(x)\coloneqq \langle f,r_{L}(x,\emparg)\rangle_{\mathbb{T}_L^n}\coloneqq\sum_{z\in\Z^n_L}\langle f,\varphi_z\rangle_{\mathbb{T}_L^n}\cdot \varphi_z(x)$$
with the kernel
\begin{equation}
r_{L}\coloneqq\sum_{z\in\Z_L^n}\varphi_z\otimes\varphi_z\quad \text{on }\mathbb{T}^n\times\mathbb{T}^n.\end{equation}
Regarded as a kernel on  $\mathbb{T}_L^n\times\mathbb{T}^n$, the latter defines the Fourier extension operator. As a kernel on $\mathbb{T}_L^n\times\mathbb{T}_L^n$ it indeed is the identity.

Conversely, the \emph{projection} from $\bigcup_{s}H^s(\mathbb{T}^n)$ onto $\mathcal D_L$ is given by $f\mapsto f_{\sharp,L}$ with
$$ f_{\sharp,L}(x)
\coloneqq ({\sf r}_{L}f)(x)\coloneqq \langle f,r_{L}(x,\emparg)\rangle_{\mathbb{T}^n}\coloneqq\sum_{z\in\Z^n_L}\langle f,\varphi_z\rangle_{\mathbb{T}^n}\,\varphi_z(x).$$
In particular, if $f=\sum_{z\in\Z^n}\alpha_z\varphi_z$ then $f_{\sharp,L}=\sum_{z\in\Z_L^n}\alpha_z\varphi_z$.

\paragraph{(c) Enhancement and Reduction.} 

For $f=\sum_{z\in\Z_L^n}\alpha_z\varphi_z\in \bigcup_{s}H^s(\mathbb{T}^n)$ we define its \emph{spectral reduction}  and its \emph{spectral enhancement}, resp., by
$$f_L^{{-\circ}}\coloneqq\sum_{z\in\Z_L^n}\left(\frac{\lambda_{L,z}}{\lambda_{z}}\right)^{n/4}\alpha_z\varphi_z, \qquad
f_L^{{+\circ}}\coloneqq\sum_{z\in\Z_L^n}\left(\frac{\lambda_z}{\lambda_{L,z}}\right)^{n/4}\alpha_z\varphi_z.$$
Note that
\begin{equation}\label{ratio-lambda}
\frac{\lambda_{L,z}}{\lambda_{z}}=\frac{L^2}{\pi^2\,|z|^2}\sum_{k=1}^n \sin^2(\pi z_k/L)\  \in \ \left[ (2/\pi)^2, 1\right]\quad\text{and }\to 1\text{ as }L\to\infty.
\end{equation}
Similarly, we define its \emph{integral reduction}  and its \emph{integral enhancement}, resp., by
$$f_L^{{\circ-}}\coloneqq\sum_{z\in\Z_L^n}\vartheta_{L,z}\alpha_z\varphi_z,\qquad f_L^{{\circ+}}\coloneqq\sum_{z\in\Z_L^n}\frac1{\vartheta_{L,z}}\alpha_z\varphi_z$$
with 
\begin{equation}\label{def-theta}
\vartheta_{L,z}\coloneqq \prod_{k=1}^n\left(\frac L{\pi z_k}\, \sin \left(\frac{\pi z_k}L\right)\right)\ \in \ \left[ (2/\pi)^n, 1\right]\quad\text{and }\to 1\text{ as }L\to\infty.
\end{equation}
In terms of integral operators this can be expressed as
$$f_L^{{-\circ}}={\sf r}_L^{{-\circ}}f, \qquad f_L^{{+\circ}}={\sf r}_L^{{+\circ}}f, \qquad f_L^{{\circ-}}={\sf r}_L^{{\circ-}}f, \qquad f_L^{{\circ+}}={\sf r}_L^{{\circ+}}f$$
with integral and enhancement kernels  on $\mathbb{T}^n\times\mathbb{T}^n$ defined as follows
$$r_L^{{+\circ}}=\sum_{z\in\Z_L^n}\left(\frac{\lambda_z}{\lambda_{L,z}}\right)^{n/4}\varphi_z\otimes\varphi_z,\qquad
r_L^{{-\circ}}=\sum_{z\in\Z_L^n}\left(\frac{\lambda_z}{\lambda_{L,z}}\right)^{-n/4}\varphi_z\otimes\varphi_z
$$
$$r_L^{{\circ+}}=\sum_{z\in\Z_L^n}\vartheta_{L,z}^{-1}\ \varphi_z\otimes\varphi_z,\qquad
r_L^{{\circ-}}=\sum_{z\in\Z_L^n}\vartheta_{L,z}\ \varphi_z\otimes\varphi_z
$$
$$r_L^{{+}}=\sum_{z\in\Z_L^n}\left(\frac{\lambda_z}{\lambda_{L,z}}\right)^{n/4}\vartheta_{L,z}^{-1}\ \varphi_z\otimes\varphi_z,\qquad
r_L^{{-}}=\sum_{z\in\Z_L^n}\left(\frac{\lambda_z}{\lambda_{L,z}}\right)^{-n/4}\vartheta_{L,z}\ \varphi_z\otimes\varphi_z
$$

\begin{lemma}\label{averages}
For $f\in  \mathcal D_L$,
$${\sf q}_Lf=f_L^{{\circ-}}\text{ on }\mathbb{T}^n_L\quad\text{and}\quad 
{\sf q}_L( f_L^{{\circ+}})=f\text{ on }\mathbb{T}^n_L.
$$
\end{lemma}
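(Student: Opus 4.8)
The plan is to reduce both identities to a single Fourier mode together with one elementary one–dimensional integral. First, observe that for any integrable $f$ on $\mathbb{T}^n$ and any grid point $v\in\mathbb{T}^n_L$ one has $({\sf q}_Lf)(v)=L^n\int_{v+Q_L}f\,dy$: since $v$ lies in its own cube $v+Q_L$, only the term indexed by $v$ in the sum defining ${\sf q}_Lf$ survives at the point $v$. Thus, restricted to $\mathbb{T}^n_L$, the operator ${\sf q}_L$ is exactly ``averaging over the cube $Q_L(v)$'', and since both sides of each claimed identity depend linearly on $f$, it suffices to verify them when $f$ runs through a basis; I would use the complex exponentials $\Phi_z$, $z\in\Z^n_L$, and then pass to the real basis.

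Next I would compute the cube average of $\Phi_z$. Because $\Phi_z(y)=\prod_{k=1}^n e^{2\pi i z_k y_k}$ factors over the coordinates and $Q_L$ is a product of intervals, a change of variables gives
\[
L^n\int_{v+Q_L}\Phi_z\,dy=\prod_{k=1}^n L\int_{-1/(2L)}^{1/(2L)}e^{2\pi i z_k(v_k+s)}\,ds=\prod_{k=1}^n e^{2\pi i z_k v_k}\,\frac{L}{\pi z_k}\,\sin\big(\tfrac{\pi z_k}{L}\big),
\]
where each factor with $z_k=0$ is read as its limiting value $1$, in accordance with the convention in~\eqref{def-theta}. The right-hand side is precisely $\Phi_z(v)\,\vartheta_{L,z}$, so $({\sf q}_L\Phi_z)(v)=\vartheta_{L,z}\,\Phi_z(v)$ for every $v\in\mathbb{T}^n_L$. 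Since $\vartheta_{L,z}$ is invariant under $z_k\mapsto-z_k$ (the sine being odd) and each real basis function $\varphi_z$ is a fixed linear combination of $\Phi_{\pm z}$ (with $\varphi_0=\mathbf{1}$ and $\vartheta_{L,0}=1$), the same identity $({\sf q}_L\varphi_z)(v)=\vartheta_{L,z}\,\varphi_z(v)$ then holds on $\mathbb{T}^n_L$ for the real basis.

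Finally I would conclude by linearity. For $f=\sum_{z\in\Z^n_L}\alpha_z\varphi_z\in\mathcal D_L$ this gives ${\sf q}_Lf=\sum_z\alpha_z\vartheta_{L,z}\varphi_z=f_L^{{\circ-}}$ on $\mathbb{T}^n_L$, which is the first assertion. For the second, recall from~\eqref{def-theta} that $\vartheta_{L,z}\in[(2/\pi)^n,1]$, hence $\vartheta_{L,z}\neq0$, so $f_L^{{\circ+}}=\sum_z\vartheta_{L,z}^{-1}\alpha_z\varphi_z$ again lies in $\mathcal D_L$; applying the first assertion to $g=f_L^{{\circ+}}$ yields ${\sf q}_L(f_L^{{\circ+}})=(f_L^{{\circ+}})_L^{{\circ-}}=\sum_z\vartheta_{L,z}\cdot\vartheta_{L,z}^{-1}\alpha_z\varphi_z=f$ on $\mathbb{T}^n_L$.

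The argument contains no genuine difficulty; the only slightly delicate points are the bookkeeping between the complex and real bases and the degenerate factor when some $z_k$ vanishes, and — if one wishes to dispense with the standing assumption that $L$ is odd — the analogous computation for even $L$, where the modes with $z_k=L/2$ must be handled through the modified eigenfunctions $\varphi_z(v)=(-1)^{L|v'|_{\sigma_z}}\varphi_{\tilde z}(\tilde v)$; there the one–dimensional integral still produces exactly the factor $\frac{L}{\pi z_k}\sin(\pi z_k/L)$, so the conclusion is unchanged.
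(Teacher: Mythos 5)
Your proposal is correct and follows essentially the same route as the paper: compute the cube average of a single Fourier mode $\Phi_z$ to obtain the eigenvalue $\vartheta_{L,z}$, pass to the real basis, and conclude by linearity. Your write-up is somewhat more careful than the paper's (explicitly treating the $z_k=0$ factors, the sign-invariance of $\vartheta_{L,z}$, and the deduction of the second identity from the first via $\vartheta_{L,z}\neq 0$), but the underlying argument is identical.
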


\begin{proof} For $f=\Phi_z$ with $z\in\Z^n_L$, and for $v\in\mathbb{T}^n_L$,
\begin{align*}{\sf q}_Lf(v)&=L^n\int_{v+Q_L}\Phi_z(x)dx=\Phi_z(v)\cdot L^n\,\int_{Q_L}\Phi_z(x)dx\\
&=\Phi_z(v)\cdot \prod_{k=1}^n L\,\int_{-\frac1{2L}}^{\frac1{2L}}\cos(2\pi x_k z_k)dx_k=\Phi_z(v)\cdot\prod_{k=1}^n\left(\frac L{\pi z_k}\, \sin \left(\frac{\pi z_k}L\right)\right).
\end{align*}
Therefore, for $f=\varphi_z$ with $z\in\Z^n_L$, and for $v\in\mathbb{T}^n_L$,
$${\sf q}_Lf(v)=f(v)\cdot \vartheta_{L,z}.$$
Thus 
the claim follows.
\end{proof}

\paragraph{(d) Continuous vs.~Discrete Scalar Product.}

For functions $f=\sum_{z\in\Z_L^n}\alpha_z\varphi_z$ and $g=\sum_{w\in\Z_L^n}\beta_w\varphi_w$, the scalar products in $\mathbb{T}^n_L$ and in $\mathbb{T}^n$ coincide:
\[
\langle f,g\rangle_{\mathbb{T}_L^n}=\langle f,g\rangle_{\mathbb{T}^n}=\sum_{z\in\Z_L^n}\alpha_z\,\beta_{z} \fstop
\]
This simple identity, however, no longer holds if the Fourier representation of $f$ and $g$ also contains terms with higher frequencies.

\begin{lemma} \label{high-frequ}
(i) For $f=\sum_{z\in\Z_K^n}\alpha_z\varphi_z$ and $g=\sum_{w\in\Z_K^n}\beta_w\varphi_w$, 
$$\langle f,g\rangle_{\mathbb{T}_L^n}=\sum_{z\in\Z_K^n}\sum_{w\in\Z^n, \|z+Lw\|_\infty<K/2} \alpha_z\,\beta_{z+Lw}.$$
(ii) For any $\alpha: \Z^{n}\to \mathbb R$, the limit $f =\displaystyle\sum^{\Box}_{z\in\Z^n}\alpha_z\varphi_z $ exists in $L^2(\mathbb{T}^n_L)$ if and only if 
  \begin{equation}\label{L2-discrete-torus}
  \sup_{K}\Big\| \sum_{z\in\Z_K^n}\alpha_z\varphi_z \Big\|^2_{\mathbb{T}^n_L}<\infty
\end{equation}
(iii)
For all $f=\displaystyle\sum^{\Box}_{z\in\Z^n}\alpha_z\varphi_z$ and $g=\displaystyle\sum^{\Box}_{w\in\Z^n}\beta_w\varphi_w$ in $L^2(\mathbb{T}^n_L)$,
\[
\langle f,g\rangle_{\mathbb{T}_L^n}
= \lim_{K \to \infty} \sum_{z \in \mathbb{Z}_{K}^{n}} \sum_{w \in \mathbb{Z}^{n} : \norm{z+Lw}_{\infty} < K/2} \alpha_{z} \beta_{z + Lw}
\comma \qquad
\langle f,g\rangle_{\mathbb{T}^n}=\sum^{\Box}_{z\in\Z^n}\alpha_z\,\beta_{z}.
\]

\end{lemma}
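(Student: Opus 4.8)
\emph{The plan.} All three parts hinge on a single aliasing identity, which I would establish first. Every point of $\mathbb{T}^n_L$ has the form $v=k/L$ with $k\in\Z^n$, so $\Phi_z(v)=e^{2\pi i\,z\cdot k/L}$ depends on $z\in\Z^n$ only through its class in $\Z^n/L\Z^n$; writing $\bar z\in\Z^n_L$ for the representative of $z+L\Z^n$, this gives $\Phi_z|_{\mathbb{T}^n_L}=\Phi_{\bar z}|_{\mathbb{T}^n_L}$, and since $(\Phi_w)_{w\in\Z^n_L}$ is an orthonormal basis of $L^2(\mathbb{T}^n_L,m_L)$,
\[
\langle\Phi_z,\Phi_w\rangle_{\mathbb{T}^n_L}=\mathbf 1\{z\equiv w \pmod{L\Z^n}\},\qquad z,w\in\Z^n.
\]
Expressing the real functions $\varphi_z$ through $\Phi_{\pm z}$, folding each exponential into $\Z^n_L$ and recombining, I would then record the $\varphi$-analogue: $\langle\varphi_z,\varphi_w\rangle_{\mathbb{T}^n_L}$ vanishes unless $z$ and $w$ lie in a common class modulo $L\Z^n$, with an explicit constant otherwise. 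This is a routine, if slightly fiddly, computation, and it is the only place where the parity of $L$ and the choice of $\hat\Z^n$ enter.

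\emph{Part (i).} I would insert the expansions of $f$ and $g$, use bilinearity of $\langle\cdot,\cdot\rangle_{\mathbb{T}^n_L}$, and apply the previous identity: the only surviving pairs $(z,w)\in\Z^n_K\times\Z^n_K$ are those with $w=z+Lu$ for some $u\in\Z^n$, the remaining membership constraint being $\|z+Lu\|_\infty<K/2$. Reindexing the inner sum by $u$ then yields exactly the asserted double sum. The one point to check carefully is that the surviving weights are the bare products $\alpha_z\beta_{z+Lu}$; this is precisely what the preliminary claim supplies.

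\emph{Parts (ii) and (iii).} For (ii) the key structural fact is that all partial sums $f_K\coloneqq\sum_{z\in\Z^n_K}\alpha_z\varphi_z$, restricted to $\mathbb{T}^n_L$, lie in the single \emph{finite-dimensional} Hilbert space $L^2(\mathbb{T}^n_L,m_L)$, of dimension $N=L^n$. Grouping frequencies by class modulo $L\Z^n$ and using aliasing, $f_K|_{\mathbb{T}^n_L}=\sum_{\bar z\in\Z^n_L}c_K(\bar z)\,\varphi_{\bar z}|_{\mathbb{T}^n_L}$ where $c_K(\bar z)$ is a signed cube-partial-sum of $(\alpha_z)_{z\equiv\bar z}$; hence by orthonormality $\|f_K\|^2_{\mathbb{T}^n_L}=\sum_{\bar z\in\Z^n_L}|c_K(\bar z)|^2$, and $f_K$ converges in $L^2(\mathbb{T}^n_L)$ iff each of the finitely many scalar sequences $(c_K(\bar z))_K$ converges. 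The implication ``the limit exists $\Rightarrow$ \eqref{L2-discrete-torus}'' is then immediate. I expect the converse to be the main obstacle: \eqref{L2-discrete-torus} only gives boundedness of each $(c_K(\bar z))_K$, and one must upgrade this to convergence using the cube structure of the partial sums rather than mere boundedness. Finally, (iii) follows by combining (i) and (ii): if both series converge in $L^2(\mathbb{T}^n_L)$ then $f_K\to f$ and $g_K\to g$ there, so continuity of the inner product gives $\langle f,g\rangle_{\mathbb{T}^n_L}=\lim_K\langle f_K,g_K\rangle_{\mathbb{T}^n_L}$, into which one substitutes the finite-$K$ formula of part (i); the identity $\langle f,g\rangle_{\mathbb{T}^n}=\sum^\Box_z\alpha_z\beta_z$ is just Parseval on $\mathbb{T}^n$ for the orthonormal system $(\varphi_z)_{z\in\Z^n}$, read along cubes.
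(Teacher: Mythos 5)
Your route for part (i) is essentially the paper's --- everything reduces to the aliasing identity $\langle\Phi_z,\Phi_w\rangle_{\mathbb{T}^n_L}=\mathbf{1}\{z-w\in L\Z^n\}$, which the paper also uses, proving the clean complex identity first and then substituting $a_z=\tfrac1{\sqrt2}(\alpha_z+i\alpha_{-z})$, $a_{-z}=\tfrac1{\sqrt2}(\alpha_z-i\alpha_{-z})$. But the step you set aside as ``routine, if slightly fiddly'' is where all the content sits, and your preliminary claim about it is false as stated. Since $\varphi_z$ mixes $\Phi_z$ and $\Phi_{-z}$, the quantity $\langle\varphi_z,\varphi_w\rangle_{\mathbb{T}^n_L}$ is governed by the \emph{two} congruences $z\equiv w$ and $z\equiv -w\pmod{L\Z^n}$, not by ``$z$ and $w$ in a common class.'' Concretely, for $n=1$, $L=3$, $\hat\Z=\Z_{>0}$ one has $\varphi_2=\varphi_1$ and $\varphi_{-2}=-\varphi_{-1}$ on $\mathbb{T}^1_3$, so $\langle\varphi_1,\varphi_2\rangle_{\mathbb{T}^1_3}=1$ although $1\not\equiv2\pmod 3$, while $\langle\varphi_1,\varphi_{-2}\rangle_{\mathbb{T}^1_3}=0$ although $1\equiv-2\pmod 3$. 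Hence the surviving pairs are not only those with $w=z+Lu$, the weights are not the ``bare products'' (cross-class pairs contribute terms of the form $\alpha_z\beta_{-w}-\alpha_{-z}\beta_{w}$), and carrying out the bookkeeping you deferred shows that this reduction does not collapse to the displayed double sum without a careful discussion of how the choice of $\hat\Z^n$ interacts with reduction modulo $L\Z^n$ (test the formula on $f=\varphi_1$, $g=\varphi_2$, $L=3$, $K=9$). You must do this computation, not assert its outcome.

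For part (ii) you prove only one implication. You correctly diagnose that \eqref{L2-discrete-torus} yields boundedness of the finitely many scalar sequences $(c_K(\bar z))_K$ and that boundedness alone does not give convergence, but ``one must upgrade this to convergence using the cube structure'' is a statement of the problem rather than an argument; the converse is left entirely unproved, and it is the substantive half of the claim. The paper closes this gap by a martingale argument: the cube partial sums are treated as an $L^2$-bounded martingale with respect to the filtration generated by $\{\varphi_z:z\in\Z^n_K\}$, as in the proof of Lemma \ref{green-kernel-series}, so that the martingale convergence theorem upgrades the uniform bound to $L^2$-convergence. You need either to reproduce such an argument or to give a direct proof that each signed cube partial sum $c_K(\bar z)$ converges; neither appears in your proposal. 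Part (iii) and the Parseval identity on $\mathbb{T}^n$ are fine once (i) and (ii) are actually in place, and there your reasoning matches the paper's.
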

\begin{proof}
We first prove (i).
To that extent, we prove the analogous assertion in the complex Hilbert space:
for all $f=\sum_{z\in\Z_K^n} a_z\Phi_z$ and $g=\sum_{w\in\Z_K^n} b_w\Phi_w$,
\begin{align*}
\langle f,g\rangle_{\mathbb{T}_L^n}&=
\Big\langle \sum_{z\in\Z^n_K}a_z\Phi_z, \sum_{w\in\Z^n_K} b_w\Phi_w\Big\rangle_{\mathbb{T}_L^n}
\\
&=\int_{\mathbb{T}^n_L}\sum_{z\in\Z^n_K}\sum_{w\in\Z^n_K} a_z\,\overline{b}_{w}
\exp\big( 2\pi i\, v(z-w) \big)\,dm_L(v)
\\
&=\sum_{z\in\Z_K^n}\sum_{w\in\Z_K^n} a_z\, \overline{b}_{w}\cdot
\int_{\mathbb{T}^n_L}
\exp\big( 2\pi i\, v(z-w) \big)\,dm_L(v)
\\
&=\sum_{z\in\Z_K^n}\sum_{w\in\Z^n, \|z+Lw\|_\infty<K/2} a_z\,\overline{b}_{z+Lw}\end{align*}
since for every $z\in\Z^n$
\[
\int_{\mathbb{T}^n_L}
\exp(2\pi i\, v z)\,dm_L(v)=
\begin{cases}1,& \ \text{if }z\in L\Z^n
\\
0,&\ \text{else}
\end{cases} \fstop
\]
The claim for the real Hilbert space then follows choosing $a_z=\frac1{\sqrt 2}(\alpha_z+i\alpha_{-z})$ and $a_{-z}=\frac1{\sqrt 2}(\alpha_z-i\alpha_{-z})$ for $z\in\hat\Z^n$ and analogously $b_z$.

We prove (ii).
Assume first that $f \in L^{2}(\mathbb{T}^{n}_{L})$.
Then $\sum_{z \in \mathbb{Z}^{n}_{K}} \alpha_{z} \varphi_{z}$ converges to $f$ in $L^{2}(\mathbb{T}_{L}^{n})$.
This implies \eqref{L2-discrete-torus}.

Conversely, assume that \eqref{L2-discrete-torus} holds.
Then a martingale argument similar to that of Lemma \ref{green-kernel-series} shows that $f$ is the limit in $L^{2}(\mathbb{T}^{n}_{L})$ of $\sum_{z \in \mathbb{Z}_{K}^{n}} \alpha_{z} \varphi_{z}$.

We prove (iii).
We only need to show the first equation.
By linearity it is sufficient to show it for $f = g$.
In view of what precedes, we have
\begin{equation*}
  \norm{f}_{\mathbb{T}_{L}^{n}}^{2} = \lim_{K \to \infty} \norm{\sum_{z \in \mathbb{Z}_{K}^{n}} \alpha_{z} \varphi_{z}}^{2} = \lim_{K \to \infty} \sum_{z \in \mathbb{Z}_{K}^{n}} \sum_{w \in \mathbb{Z}^{n} : \norm{z+Lw}_{\infty} < K/2} \alpha_{z} \alpha_{z + Lw},
\end{equation*}
which proves the claim.
The convergence of the series is ensured by \eqref{L2-discrete-torus}.
\end{proof}

\begin{remark}
According to the previous lemma, in particular, for every 
$f=\displaystyle\sum^{\Box}_{z\in\Z^n}\alpha_z\varphi_z$,
$$\big\|f\big\|^2_{\mathbb{T}_L^n}=\sum_{z\in\Z^n}\sum_{w\in\Z^n}\alpha_z\,\alpha_{z+Lw}$$
if the latter series is absolutely convergent.

One can show (cf.~proof of Theorem \ref{thm-hL}) that the latter series is absolutely convergent if $f\in \bigcup_{s>n/2}H^s(\mathbb{T}^n)$. This is in accordance with the Sobolev Embedding Theorem which asserts that in this case $f\in\mathcal C(\mathbb{T}^n)$ and thus guarantees that the pointwise evaluation of $f$ (at the lattice points of $\mathbb{T}^n_L$) is meaningful. 
\end{remark}

\section{The Polyharmonic Gaussian Field on the Discrete Torus}

 \subsection{Definition and Construction of the Field} Throughout the sequel, fix integers $n$ and $L$. For convenience, we assume that~$L$ is odd, and we set~$N\coloneqq L^n$. 

 \begin{definition}  A random field $h_L=(h_L(v))_{v\in\mathbb{T}^n_L}$ --- defined on some probability space $(\Omega,\mathfrak A, \mathbf P)$ ---  is called \emph{polyharmonic Gaussian field on the discrete $n$-torus} if it is a centered Gaussian field with covariance function $k_L$ (see~\eqref{kL}).
 \end{definition}

 \begin{proposition}\label{discrete-iid}
 Given iid standard normals $\xi_z$ for ${z\in\Z_L^n\setminus\{0\}}$ on a probability space $(\Omega,\mathfrak A, \mathbf P)$, a polyharmonic Gaussian field on $\mathbb{T}^n_L$ is defined by
\begin{equation}\label{haL}
 h^\omega_L(v)\coloneqq \frac1{\sqrt{a_n}}\sum_{z\in\Z_L^n\setminus\{0\}} 
\frac1{\lambda_{L,z}^{n/4}}
\cdot 
\xi_z^\omega\cdot \varphi_z(v) \qquad (\forall v\in\mathbb{T}_L^n, \ \omega\in\Omega)\fstop
\end{equation}
\end{proposition}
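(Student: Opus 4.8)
The plan is to verify that the explicitly constructed field $h_L^\omega$ in~\eqref{haL} is centered Gaussian and has the prescribed covariance $k_L$, since by definition this is exactly what it means to be a polyharmonic Gaussian field on $\mathbb{T}^n_L$. First I would observe that $h_L^\omega(v)$ is, for each fixed $v$, a finite linear combination of the iid standard normals $\xi_z$, $z\in\Z^n_L\setminus\{0\}$; hence any finite family $(h_L(v_1),\dots,h_L(v_m))$ is a linear image of the jointly Gaussian vector $(\xi_z)_z$, and so $h_L$ is a centered Gaussian field. (Centredness is immediate from $\mathbf E[\xi_z]=0$.)

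The substance is the covariance computation. Using independence and $\mathbf E[\xi_z\xi_w]=\delta_{zw}$, I would compute
\begin{equation*}
\mathbf E\big[h_L(v)\,h_L(u)\big]=\frac1{a_n}\sum_{z,w\in\Z^n_L\setminus\{0\}}\frac1{\lambda_{L,z}^{n/4}\lambda_{L,w}^{n/4}}\,\mathbf E[\xi_z\xi_w]\,\varphi_z(v)\varphi_w(u)=\frac1{a_n}\sum_{z\in\Z^n_L\setminus\{0\}}\frac1{\lambda_{L,z}^{n/2}}\,\varphi_z(v)\varphi_z(u),
\end{equation*}
which is exactly the series representation of $k_L(v,u)$ recorded in~\eqref{kL}. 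This identifies the covariance function as $k_L$, completing the proof. One should also note that $k_L$ is a genuine covariance kernel (symmetric and positive semidefinite), which is automatic here since it has been exhibited as a Gram matrix $\sum_z \lambda_{L,z}^{-n/2} a_n^{-1}\varphi_z\otimes\varphi_z$ with nonnegative coefficients.

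There is essentially no obstacle: the only mild point worth a sentence is that the field is indexed by the finite set $\mathbb{T}^n_L$, so all sums are finite, there are no convergence issues, and the ON-basis property $\langle\varphi_z,\varphi_w\rangle_{\mathbb{T}^n_L}=\delta_{zw}$ of Section~1.2(c) is not even needed for the covariance identity — only the orthogonality of the $\xi_z$ matters. If anything requires care it is merely bookkeeping the normalization: the factor $1/\sqrt{a_n}$ in~\eqref{haL} squares to $1/a_n$, matching the prefactor in $k_L$, and the exponent $n/4$ on $\lambda_{L,z}$ doubles to $n/2$, matching the exponent in~\eqref{kL}. Hence the construction in~\eqref{haL} indeed yields a polyharmonic Gaussian field on $\mathbb{T}^n_L$, and uniqueness in law follows since a centered Gaussian field is determined by its covariance.
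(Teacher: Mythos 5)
Your proposal is correct and follows essentially the same route as the paper, whose entire proof is the one-line covariance computation $\mathbf E[h_L(v)h_L(u)]=\frac1{a_n}\sum_{z\in\Z^n_L\setminus\{0\}}\lambda_{L,z}^{-n/2}\varphi_z(v)\varphi_z(u)=k_L(v,u)$ that you carry out. The extra remarks on Gaussianity, centredness, and positive semidefiniteness are harmless elaborations of what the paper leaves implicit.
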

Here  $\lambda_{L,z}=4L^2\,\sum_{k=1}^n \sin^2\big(\pi z_k/L\big)$ for $z\in\Z_L^n$ are the  eigenvalues of the discrete Laplacian, see~\eqref{lL}, and the eigenvalue 0 is excluded in the representation of the random field.

\begin{proof} For all $v,u\in \mathbb{T}_L^n$,
\begin{align*}\mathbf E\big[h_L(v)\,h_L(u)\big]= \frac1{a_n} 
\sum_{z\in\Z_L^n\setminus\{0\}} \frac1{\lambda_{L,z}^{n/2}}\cdot\varphi_z(v)\,\varphi_z(u)
=k_L(v,u) \fstop&  \qedhere
\end{align*}
\end{proof}


%

Alternatively   the polyharmonic field on the discrete torus $\mathbb{T}^n_L$ can be defined in terms of the white noise on the discrete torus. Recall that a random field $\Xi= (\Xi_v)_{v\in\mathbb{T}^n_L}$ 
is called \emph{white noise on $(\mathbb{T}^n_L,m_L)$} if the  $\Xi_v$ for ${v\in\mathbb{T}_L^n}$ are independent centered Gaussian random variables with variance $L^n$. (This normalization guarantees that $\int_{\mathbb{T}^n_L}\Xi_vdm_L(v)$ is $\mathcal N(0,1)$ distributed.)

\begin{proposition}
Given a white noise $\Xi= (\Xi_v)_{v\in\mathbb{T}^n_L}$ on $(\mathbb{T}^n_L,m_L)$, a polyharmonic Gaussian field on $\mathbb{T}^n_L$ is defined by
\begin{align*}h^\omega_L(v)&
 =\frac1{\sqrt{a_n}\,L^n}\,\sum_{u\in\mathbb{T}^n_L}\mathring G_L^{n/4}(v,u)\, \Xi^\omega_u
\end{align*}
with 
$$\mathring G_L^{n/4}(v,u)=\sum_{z\in\Z_L^n\setminus\{0\}} 
\frac1{\lambda_{L,z}^{n/4}}\cdot \cos\big(2\pi z(v-u)\big)\fstop$$
\end{proposition}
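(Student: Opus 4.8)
The plan is to verify that the Gaussian field defined by the white-noise formula is centered Gaussian with the correct covariance $k_L$, which by the very definition of the polyharmonic Gaussian field (and the uniqueness of Gaussian laws given mean and covariance) is all that is required. First I would record that $h_L^\omega$ as defined is manifestly a linear image of the Gaussian vector $(\Xi_u^\omega)_{u\in\mathbb{T}_L^n}$, hence centered Gaussian, so only the covariance needs checking. Next I would spell out the two ingredients: the white noise $\Xi$ has $\mathbf E[\Xi_u \Xi_{u'}] = L^n \,\mathbf 1_{u=u'}$ (equivalently $\mathbf E[\Xi_u\Xi_{u'}] = L^{2n} m_L(\{u\})\mathbf 1_{u=u'}$), and the kernel $\mathring G_L^{n/4}$ is the one displayed, namely $\mathring G_L^{n/4}(v,u) = \sum_{z\in\Z_L^n\setminus\{0\}} \lambda_{L,z}^{-n/4}\cos(2\pi z(v-u))$; I should note in passing that this is consistent with (and follows from) the eigenfunction expansion $\mathring G_L^{n/4}(v,u) = \sum_{z}\lambda_{L,z}^{-n/4}\varphi_z(v)\varphi_z(u)$ together with the summation identity $\sum_{k=1}^n(\cos a_k\cos b_k+\sin a_k\sin b_k)$-type collapse already used for $\mathring G_L$ and $k_L$ in \S1.2(d).

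The core computation is then a direct expansion:
\begin{align*}
\mathbf E\big[h_L(v)\,h_L(u)\big]
&= \frac1{a_n\,L^{2n}}\sum_{w,w'\in\mathbb{T}_L^n} \mathring G_L^{n/4}(v,w)\,\mathring G_L^{n/4}(u,w')\,\mathbf E[\Xi_w\Xi_{w'}]\\
&= \frac1{a_n\,L^{n}}\sum_{w\in\mathbb{T}_L^n} \mathring G_L^{n/4}(v,w)\,\mathring G_L^{n/4}(u,w)\\
&= \frac1{a_n}\,\langle \mathring G_L^{n/4}(v,\emparg), \mathring G_L^{n/4}(u,\emparg)\rangle_{\mathbb{T}_L^n}.
\end{align*}
Now I would insert the eigenfunction expansion of $\mathring G_L^{n/4}$ in each slot and use that $(\varphi_z)_{z\in\Z_L^n}$ is an ON basis of $L^2(\mathbb{T}_L^n,m_L)$, so the cross terms vanish and $\langle \varphi_z,\varphi_z\rangle_{\mathbb{T}_L^n}=1$; this yields $\frac1{a_n}\sum_{z\in\Z_L^n\setminus\{0\}}\lambda_{L,z}^{-n/2}\varphi_z(v)\varphi_z(u)$, which is exactly $k_L(v,u)$ by \eqref{kL}. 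Combined with Proposition~\ref{discrete-iid}, or simply with the definition of the field, this finishes the proof; one may even bypass $k_L$ by observing that $\frac1{\sqrt{a_n}L^n}\sum_u \mathring G_L^{n/4}(v,u)\Xi_u = \frac1{\sqrt{a_n}}\sum_z \lambda_{L,z}^{-n/4}\widehat\Xi_z\varphi_z(v)$ where $\widehat\Xi_z\coloneqq\langle \Xi,\varphi_z\rangle_{\mathbb{T}_L^n}$ are iid standard normals (an elementary fact about white noise projected onto an ON basis), reducing the statement to Proposition~\ref{discrete-iid}.

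There is no serious obstacle here: the only mild point of care is bookkeeping the factor $L^n = |\mathbb{T}_L^n|$ coming from the normalization of the discrete white noise against the $L^{-n}$ in the definition of $\langle\emparg,\emparg\rangle_{\mathbb{T}_L^n}$ and in the action $(\mathring{\mathsf G}_L f)(v) = L^{-n}\sum_u \mathring G_L(v,u)f(u)$, so that the two factors of $L^{-n}$ in $h_L$'s definition and the one factor of $L^n$ from $\mathbf E[\Xi_w\Xi_{w'}]$ combine to give precisely the discrete inner product. I would state the white-noise covariance in whichever normalization makes this cancellation transparent and then let the ON-basis expansion do the rest.
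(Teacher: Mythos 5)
Your proposal is correct and follows essentially the same route as the paper: expand the double sum, use $\mathbf E[\Xi_u\Xi_{u'}]=L^n\mathbf 1_{u=u'}$ to reduce to the discrete inner product $\frac1{a_n}\langle \mathring G_L^{n/4}(v,\emparg),\mathring G_L^{n/4}(w,\emparg)\rangle_{\mathbb{T}_L^n}=\frac1{a_n}\mathring G_L^{n/2}(v,w)=k_L(v,w)$, which the paper does in one display. Your extra remarks (centeredness via linearity, the alternative reduction to Proposition~\ref{discrete-iid} by projecting the white noise onto the eigenbasis) are sound but not needed beyond what the paper records.
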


\begin{proof} For all $v,w\in\mathbb{T}^n_L$,
\begin{align*}
\mathbf E\big[h_L(v)\,h_L(w)\big]&= \frac1{a_n\, L^{2n}} \,\sum_{u\in\mathbb{T}^n_L}\mathring G_L^{n/4}(v,u)
\cdot \mathring G_L^{n/4}(w,u)
\cdot L^n\\
&=\frac1{a_n}\mathring G_L^{n/2}(v,w)=k_L(v,w)\fstop \qedhere
\end{align*}
\end{proof}

In other words,
\begin{equation}\label{bijection}
h^\omega_L=\frac1{\sqrt{a_n}}\,\mathring{\sf G}_L^{n/4} \, \Xi^\omega
\end{equation}
with $\Xi\coloneqq (\Xi_v)_{v\in\mathbb{T}^n_L}$ being a white noise on $\mathbb{T}^n_L$.
The latter is a  Gaussian random variable on $\R^N\cong\R^{\mathbb{T}_L^n}$ --- recall that $N=L^n$ ---  with distribution 
\[
d{\mathbf P}(\Xi)=\frac1{(2\pi N)^{N/2}}\,e^{-\frac1{2N}\|\Xi\|^2}\,d\Leb^N(\Xi) \fstop
\]
Here $\|\Xi \|$ denotes the Euclidean norm of $\Xi\in\R^N$, and thus under the identification $\R^{N}\cong\R^{\mathbb{T}_L^n}$,
$$\frac1N \|\Xi \|^2=\|\Xi \|^2_{L^2(\mathbb{T}^n_L,m_L)}\fstop$$

\subsection{A Second Look on the Polyharmonic Gaussian Field on Discrete Tori}
\paragraph{(a)}
Consider the 
orthogonal decomposition of $\R^N$ into the line $\R\cdot (1,\ldots,1)$ and its orthogonal complement $\mathring{H}\coloneqq \{\Xi\in\R^N: \sum_{v=1}^N\Xi_v=0\}$. More precisely, consider  the maps
\[
\bar A: \R^N\longrightarrow\R, \quad \Xi\longmapsto \bar \Xi\coloneqq \frac1{\sqrt N}\sum_{v=1}^{N} \Xi_v
\]
and
\[
\mathring A: \R^N\longrightarrow \mathring{H}, \quad \Xi\longmapsto  \mathring \Xi \quad \text{with} \quad \mathring \Xi_j\coloneqq \Xi_j-\frac1{\sqrt N} \bar \Xi\fstop
\]
Note that $A\coloneqq (\mathring A,\bar A): \R^N\to \mathring{H}\times\R\subset  \R^{1+N}$ is a bijective linear map with $A^T\,A=E_N$ and inverse given by
\[
B:\mathring{H}\times\R\to \R^N, \quad (\mathring \Xi,t)\mapsto \mathring \Xi+\frac t{\sqrt N}\cdot (1,\ldots,1) \fstop
\]
Thus if $\Leb^{N-1}_H$ denotes the $(N-1)$-dimensional Lebesgue measure on the hyperplane $\mathring{H}$ then on $\R^N$,
\[
\Leb^N=\Leb_{\mathring{H}}^{N-1}\otimes\Leb^1\fstop
\]

The push forward $\bar A_*{\mathbf P}$ is the  normal distribution $\mathcal N(0,\sqrt N)$ on the real line.
The push forward~$\mathring{\mathbf P}\coloneqq \mathring A_*{\mathbf P}$, called ``law of the grounded white noise", is a Gaussian measure on the hyperplane $\mathring{H}$ given explicitly as
\[
d\mathring{\mathbf P}(\Xi)=
\frac1{(2\pi N)^{\frac{N-1}2}}\,e^{-\frac{1}{2N} \|\Xi \|^2}\,d\Leb^{N-1}_{\mathring{H}}(\Xi) \fstop
\]
It can also be characterized as the conditional law $
\mathbf{P}(\emparg | \bar A=0)$. 

\paragraph{(b)}
Let us define a measure on $\R^{N}\cong\R^{\mathbb{T}_L^n}$
by
\begin{equation}
d\widehat{\boldsymbol\nu}(h)\coloneqq c_n\,e^{-\frac{a_n}{2N}\|(-\Delta_L)^{n/4}h\|^2}\,d\Leb^N(h)
\end{equation}
with a constant 
\begin{equation}
c_n\coloneqq \left(\frac{a_n}{2\pi N}\right)^{\frac{N-1}2}\cdot
\prod_{z\in\Z^n_L\setminus\{0\}}\left(
4L^2\,\sum_{k=1}^n \sin^2\big(\pi z_k/L\big)\right)^{n/4}\comma
\end{equation}
 and consider the push forwards under the maps $\bar A$ and $\mathring A$ introduced above.
Then
\[
\bar A_*\widehat{\boldsymbol\nu}=c_n\,\Leb^1\quad\text{on }\R^1
\]
and
$\boldsymbol\nu\coloneqq \mathring A_*\widehat{\boldsymbol\nu}$ is a measure (actually, a probability measure as we will see below) on the hyperplane $\mathring{H}\coloneqq \{\Xi\in\R^{N}: \bar\Xi=0\}\cong\R^{N-1}$ given by
\[
d{\boldsymbol\nu}(h)\coloneqq c_n\,e^{-\frac{a_n}{2N}\|(-\Delta_L)^{n/4}h\|^2}\,d\Leb^{N-1}_{\mathring{H}}(h) \fstop
\]
\paragraph{(c)}
Now consider the map
\[
T: \R^{N}\to\R^{N}\comma\qquad h\mapsto \Xi=\sqrt{a_n}\, (-\Delta_L)^{n/4}h
\]
as well as its restriction $\mathring T: \mathring{H}\to \mathring{H}$. The latter is bijective with inverse 
\[
\mathring T^{-1}: \mathring{H}\to \mathring{H}\comma\qquad \Xi\mapsto h=\frac1{\sqrt{a_n}}\, \mathring{\sf G}_L^{n/4}\Xi\comma
\]
cf. \eqref{bijection}, and with determinant
\[
\mathrm{det}\, \mathring T=a_n^{\frac{N-1}2}\cdot\prod_{z\in\Z^n_L\setminus\{0\}}\lambda_{L,z}^{n/4} \fstop
\]

\begin{theorem}\label{repres} The distribution of the discrete polyharmonic field on ${\mathbb{T}_L^n}$ is given by the probability measure $\boldsymbol\nu$  on $\R^{\mathbb{T}_L^n}\cong\R^N$. (Indeed, it is supported there by the hyperplane of grounded fields.) Furthermore,
\[
\mathring T_*\boldsymbol\nu=\mathring{\mathbf P} \fstop
\]
\end{theorem}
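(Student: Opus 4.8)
The plan is to reduce the statement to a single linear change of variables on the hyperplane $\mathring H$, using the determinant of $\mathring T$ already recorded in paragraph (c). First I would observe that, by the white-noise representation \eqref{bijection}, if $\Xi$ is a white noise on $(\mathbb{T}^n_L,m_L)$ and $\mathring\Xi=\mathring A\,\Xi$ denotes its grounded part, then $h_L=\tfrac1{\sqrt{a_n}}\,\mathring{\sf G}_L^{n/4}\Xi=\mathring T^{-1}\mathring\Xi$: indeed the kernel $\mathring G_L$, hence also $\mathring G_L^{n/4}$, annihilates the constant mode (it is a sum of $\varphi_z\otimes\varphi_z$ over $z\neq 0$), so only $\mathring\Xi$ survives, and $\mathring T^{-1}\colon\mathring H\to\mathring H$ is exactly the map $\Xi\mapsto\tfrac1{\sqrt{a_n}}\mathring{\sf G}_L^{n/4}\Xi$. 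Since $\mathring\Xi$ has law $\mathring{\mathbf P}=\mathring A_*\mathbf P$ by paragraph (a), the law of $h_L$ equals $(\mathring T^{-1})_*\mathring{\mathbf P}$. It therefore suffices to prove $\mathring T_*\boldsymbol\nu=\mathring{\mathbf P}$: granting this, $\boldsymbol\nu=(\mathring T^{-1})_*\mathring{\mathbf P}$ is both a probability measure and the law of $h_L$, and the fact that it is supported on the grounded hyperplane is built into its construction as $\mathring A_*\widehat{\boldsymbol\nu}$.

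To establish $\mathring T_*\boldsymbol\nu=\mathring{\mathbf P}$ I would apply the change-of-variables formula for $\Leb^{N-1}_{\mathring H}$ under the linear automorphism $\mathring T$ of $\mathring H$. Three facts are used. First, since $\{\varphi_z/\sqrt N:z\in\Z^n_L\}$ is an orthonormal basis of $\R^N$ for the Euclidean structure, $-\Delta_L$ (viewed as an $N\times N$ matrix) is diagonalized by the $\varphi_z$ with eigenvalues $\lambda_{L,z}$; hence $\mathring T=\sqrt{a_n}\,(-\Delta_L)^{n/4}$ restricted to $\mathring H=\mathrm{span}\{\varphi_z:z\neq0\}$ has eigenvalues $\sqrt{a_n}\,\lambda_{L,z}^{n/4}$ and $\det\mathring T=a_n^{(N-1)/2}\prod_{z\neq0}\lambda_{L,z}^{n/4}$, as stated in paragraph (c). Second, on $\mathring H$ one has $(-\Delta_L)^{n/4}h=\tfrac1{\sqrt{a_n}}\,\mathring T h$, so the exponent transforms as $\tfrac{a_n}{2N}\norm{(-\Delta_L)^{n/4}h}^2=\tfrac1{2N}\norm{\mathring T h}^2$. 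Third, $\Leb^{N-1}_{\mathring H}$ transforms with the Jacobian factor $\abs{\det\mathring T}$. Substituting $\Xi=\mathring T h$ in $d\boldsymbol\nu(h)=c_n\,e^{-\frac{a_n}{2N}\norm{(-\Delta_L)^{n/4}h}^2}d\Leb^{N-1}_{\mathring H}(h)$ then produces the density $\tfrac{c_n}{\abs{\det\mathring T}}\,e^{-\frac1{2N}\norm{\Xi}^2}$ with respect to $\Leb^{N-1}_{\mathring H}(\Xi)$.

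It remains only to match the normalizing constant, which is precisely why $c_n$ was defined as it was: using $\lambda_{L,z}=4L^2\sum_{k=1}^n\sin^2(\pi z_k/L)$,
\[
\frac{c_n}{\abs{\det\mathring T}}=\frac{\bigl(\tfrac{a_n}{2\pi N}\bigr)^{(N-1)/2}\prod_{z\neq0}\lambda_{L,z}^{n/4}}{a_n^{(N-1)/2}\prod_{z\neq0}\lambda_{L,z}^{n/4}}=\frac1{(2\pi N)^{(N-1)/2}},
\]
which is exactly the constant in $d\mathring{\mathbf P}(\Xi)=\tfrac1{(2\pi N)^{(N-1)/2}}e^{-\frac1{2N}\norm{\Xi}^2}d\Leb^{N-1}_{\mathring H}(\Xi)$ from paragraph (a). Hence $\mathring T_*\boldsymbol\nu=\mathring{\mathbf P}$ and the theorem follows. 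I do not expect a genuine obstacle: the only delicate point is the measure-theoretic bookkeeping on the hyperplane, namely that the relevant operators really restrict to $\mathring H$ --- this is exactly where ``grounded'' is used --- and that the Jacobian entering the change of variables is $\det(\mathring T|_{\mathring H})$ and not that of $T$ on all of $\R^N$. As an alternative route one could skip the change of variables and identify $\boldsymbol\nu$ directly as the centered Gaussian on $\mathring H$ with precision operator $\tfrac{a_n}{N}(-\Delta_L)^{n/2}|_{\mathring H}$, then check that the entries of its covariance matrix are $\tfrac{N}{a_n}(\mathring{\sf G}_L^{n/2})_{vu}=k_L(v,u)$, matching the covariance of $h_L$; but the pushforward argument is shorter and delivers $\mathring T_*\boldsymbol\nu=\mathring{\mathbf P}$ at the same time.
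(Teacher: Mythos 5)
Your proposal is correct and follows essentially the same route as the paper: a linear change of variables $\Xi=\mathring T h$ on the hyperplane $\mathring H$, with the Jacobian $\det\mathring T=a_n^{(N-1)/2}\prod_{z\neq 0}\lambda_{L,z}^{n/4}$ cancelling against the definition of $c_n$ to produce the density of $\mathring{\mathbf P}$. The only difference is cosmetic --- you spell out the preliminary identification of the law of $h_L$ with $(\mathring T^{-1})_*\mathring{\mathbf P}$ via the white-noise representation, which the paper establishes in the propositions preceding the theorem rather than inside the proof itself.
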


\begin{proof} For bounded measurable $f$ on $\mathring{H}$,

\begin{align*}
\int_{\mathring{H}} f(\Xi)\,d\mathring T_*\boldsymbol\nu(\Xi)&=\int_{\mathring{H}}f(\mathring T h)\,d\boldsymbol\nu(h)\\
&=
c_n\,\int_{\mathring{H}}f(\mathring T h)\,
e^{-\frac{a_n}{2N}\|(-\Delta_L)^{n/4}h\|^2}\,d\Leb^{N-1}_{\mathring{H}}(h)\\
&=
c_n\,\int_{\mathring{H}}f(\mathring T h)\,
e^{-\frac{1}{2N}\| \mathring T h\|^2}\,d\Leb^{N-1}_{\mathring{H}}(h)\\
&=
c_n\,\mathrm{det}\, \mathring T^{-1}\,\int_{\mathring{H}}f(\Xi)\,
e^{-\frac{1}{2N}\| \Xi\|^2}\,d\Leb^{N-1}_{\mathring{H}}(\Xi)\\
&=
c_n\,\mathrm{det}\, \mathring T^{-1}\,(2\pi N)^{\frac{N-1}2}\,\int_Hf(\Xi)\,d\mathring{\mathbf P}(\Xi).
\end{align*}
Since $c_n\,\mathrm{det}\, \mathring T^{-1}\,(2\pi N)^{\frac{N-1}{2}}=1$ according to our choice of $c_n$,
this proves the claim.
\end{proof} 

\subsection{The Reduced Polyharmonic Gaussian Field on the Discrete Torus}\label{ss:ReducedFields}

Besides the polyharmonic Gaussian field $h_L$ on the discrete torus,  we occasionally consider two closely related random fields $h_L^{{-\circ}}$ and $h_L^{{-}}$ in the defining properties of which the eigenvalues $\lambda_{L,z}=4L^2\,\sum_{k=1}^n \sin^2\big(\pi z_k/L\big)$  of the discrete Laplacian are replaced by the eigenvalues $\lambda_z=(2\pi|z|)^2$ of the continuous Laplacian or by $\lambda_z\cdot \vartheta_{L,z}^{-4/n}$, resp., with $\vartheta_{L,z}$ as in \eqref{def-theta}.

More precisely, a  \emph{spectrally reduced discrete polyharmonic Gaussian field}   is a centered Gaussian field $h_L^{{-\circ}}=(h^{{-\circ}}_L(v))_{v\in\mathbb{T}^n_L}$ with covariance function 
 \begin{align}k^{{-\circ}}_L(v,u)&\coloneqq \frac1{a_n}\sum_{z\in\Z^n_L\setminus\{0\}} \frac1{\lambda^{n/2}_{z}}\ \varphi_z(v)\, \varphi_z(u)
\nonumber
=\frac1{a_n}\sum_{z\in\Z^n_L\setminus\{0\}} \frac1{(2\pi|z|)^{n}}\cdot 
\cos\Big(2\pi \, z\cdot (v-u)\Big)\fstop
\end{align}
A  \emph{reduced discrete polyharmonic Gaussian field}   is a centered Gaussian field $h_L^{{-}}=(h^{{-}}_L(v))_{v\in\mathbb{T}^n_L}$ with covariance function 
 \begin{align}k^{{-}}_L(v,u)&\coloneqq \frac1{a_n}\sum_{z\in\Z^n_L\setminus\{0\}} \frac{\vartheta_{L,z}^2}{\lambda^{n/2}_{z}}\ \varphi_z(v)\, \varphi_z(u)
\nonumber
=\frac1{a_n}\sum_{z\in\Z^n_L\setminus\{0\}} \frac{\vartheta^2_{L,z}}{(2\pi|z|)^{n}}\cdot 
\cos\Big(2\pi \, z\cdot (v-u)\Big)\fstop
\end{align}


Similarly as before for $h_L$, we obtain the following representation results.
 \begin{remark} 
  (i) Given  a polyharmonic Gaussian field $h_L$ on $\mathbb{T}^n_L$,  a reduced polyharmonic Gaussian field and a spectrally  reduced polyharmonic Gaussian field  on $\mathbb{T}^n_L$ are defined by
\begin{align*}
h_L^{{-}}\coloneqq {\sf r}_L^{{-}}(h_L), \qquad h_L^{{-\circ}}\coloneqq {\sf r}_L^{{-\circ}}(h_L)
\fstop\end{align*}

 (ii)
 Given iid standard normals $\xi_z$ for ${z\in\Z_L^n\setminus\{0\}}$, a reduced polyharmonic Gaussian field on $\mathbb{T}^n_L$ is defined by
\begin{equation}
  h_L^{{-}}(v)\coloneqq \frac1{\sqrt{a_n}}\sum_{z\in\Z_L^n\setminus\{0\}} 
\frac{\vartheta_{L,z}}{\lambda_{z}^{n/4}}
\cdot 
\xi_z\cdot \varphi_z(v) \fstop
\end{equation}
and  a spectrally reduced polyharmonic Gaussian field by
\begin{equation}
 h^{{-\circ}}_L(v)\coloneqq \frac1{\sqrt{a_n}}\sum_{z\in\Z_L^n\setminus\{0\}} 
\frac1{\lambda_{z}^{n/4}}
\cdot 
\xi_z\cdot \varphi_z(v) \fstop
\end{equation}

\end{remark}

\begin{figure}[htb!]
     \centering
     \begin{subfigure}[b]{0.3\textwidth}
         \centering
         \includegraphics[width=\textwidth]{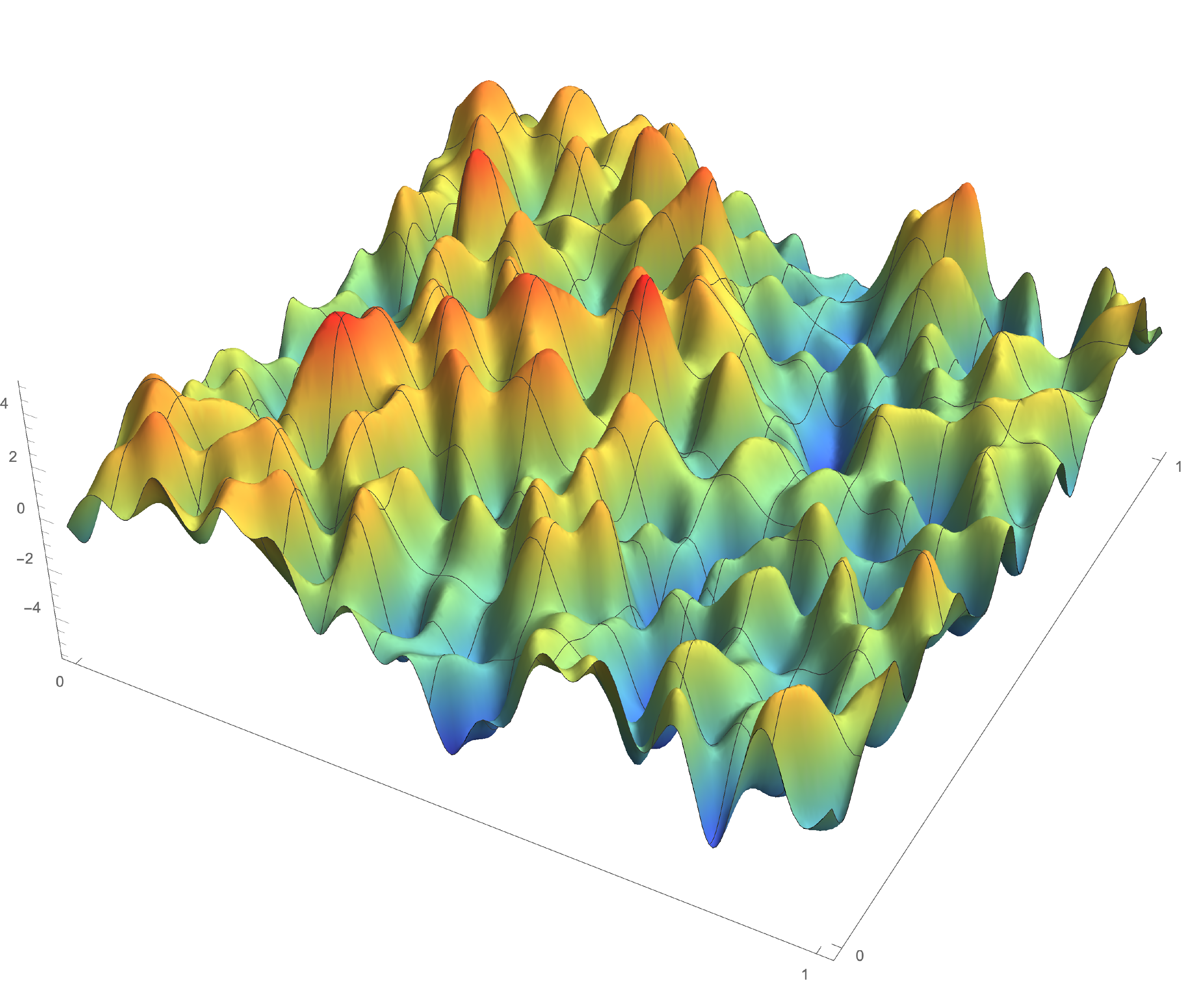}
         \caption{$h_{25,\sharp}$}
     \end{subfigure}
     \hfill
     \begin{subfigure}[b]{0.3\textwidth}
         \centering
         \includegraphics[width=\textwidth]{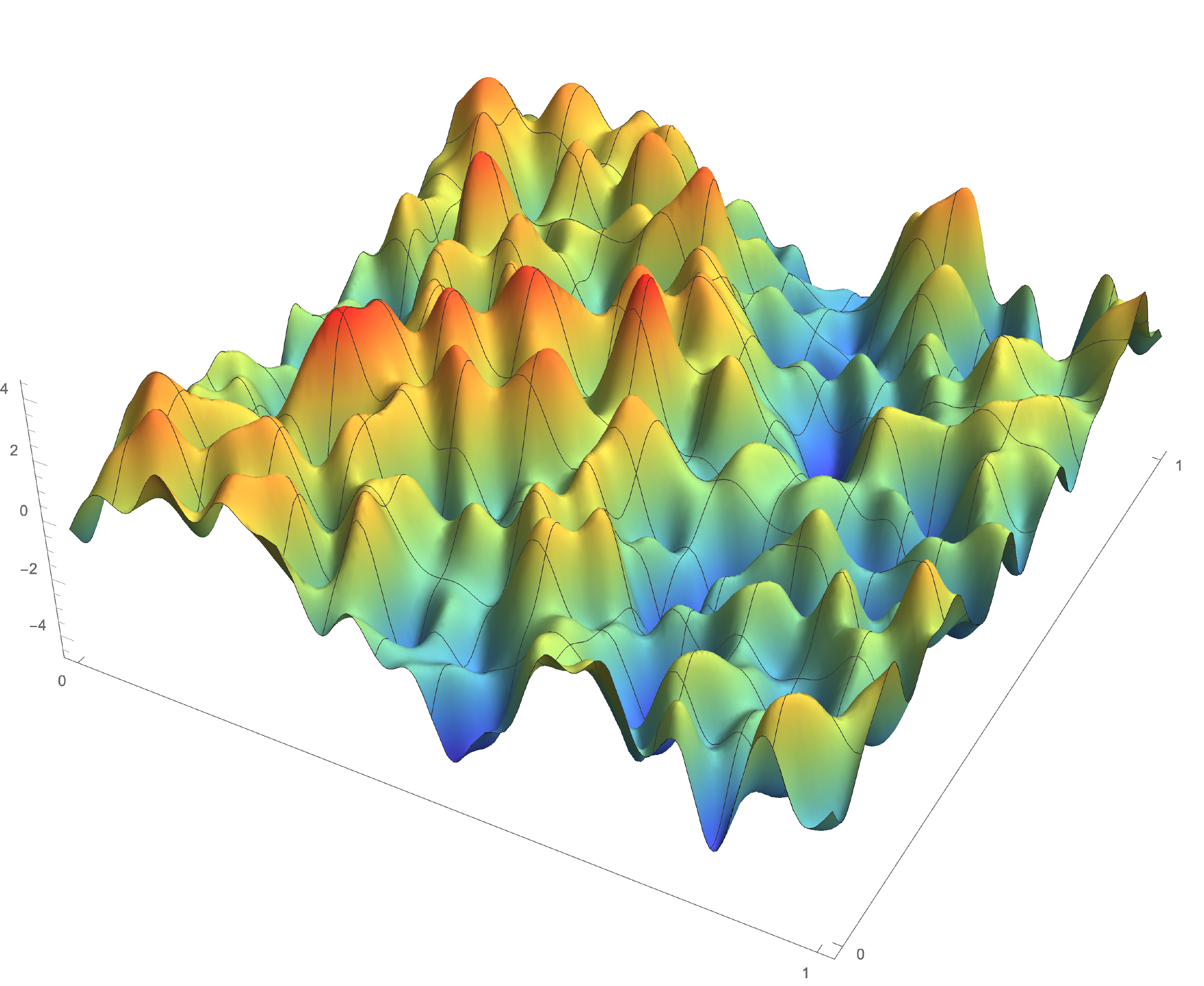}
         \caption{$h_{\sharp,25}$}
     \end{subfigure}
     \hfill
     \begin{subfigure}[b]{0.3\textwidth}
         \centering
       \includegraphics[width=\textwidth]{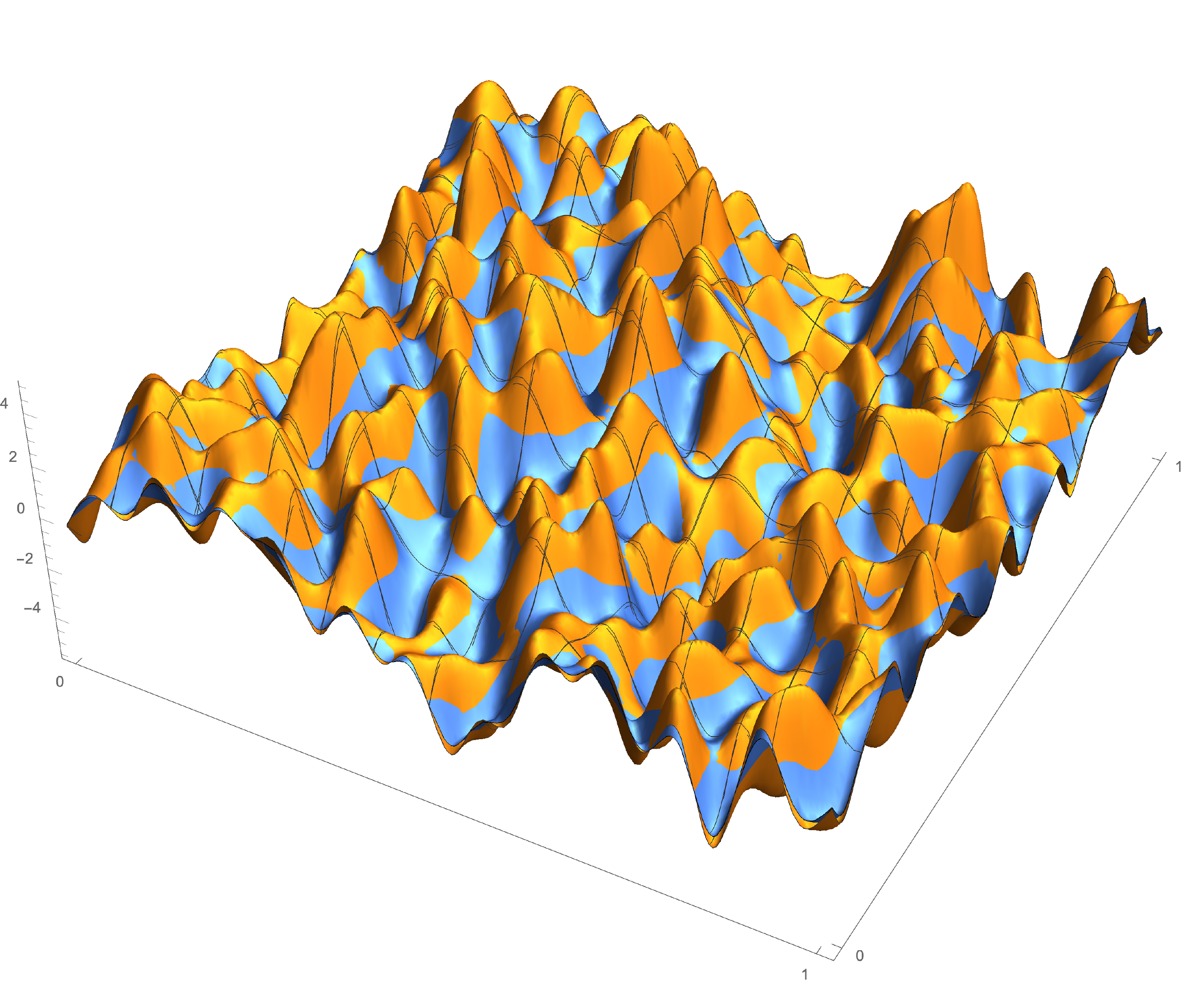}
         \caption{$h_{25,\sharp}$ (blue) and $h_{\sharp,25}$ (orange)}
     \end{subfigure}
        \caption{Fourier extension/projection of~$h$ on~$\mathcal{D}_{25}$ with same realization of the randomness.}
\end{figure}

\section{The Polyharmonic Gaussian Field on the Continuous Torus and its (Semi-) Discrete Approximations}

This section is devoted to the analysis of approximation properties for the polyharmonic field on the continuous torus in terms of Gaussian fields on the discrete torus and semi-discrete extensions of the latter on the continuous torus.
\begin{itemize}
\item
The basic objects are the polyharmonic field $h$ on the continuous torus and its discrete counterpart, the polyharmonic field $h_L$ on the discrete torus.
\item Starting from the field $h$ on $\mathbb{T}^n$, we define its Fourier projection (aka eigenfunction approximation) $h_{\sharp,L}$, its piecewise constant projection $h_{\flat,L }$, its natural projection $h_{\circ,L }$, and
its enhanced projection $h_{+,L}$. 
All of them are 
 Gaussian random fields on $\mathbb{T}^n$.
\item Starting from the field $h_L$ on $\mathbb{T}^n_L$, we define its Fourier extension $h_{L,\sharp}$ and its piecewise constant extension $h_{L,\flat}$. 
Analogous extensions are defined for the so-called spectrally reduced discrete field $h_L^{{-\circ}}$ and the reduced discrete field $h_L^{{-}}$ on $\mathbb{T}^n_L$. 
All these extensions are Gaussian random fields on $\mathbb{T}^n$.
\end{itemize}
To summarize
\begin{itemize}[-]
\item $\flat$ stands for piecewise constant extension/projection, $\sharp$ for Fourier extension/restriction;
\item $h_{L,\ast}$ with $\ast\in\{\flat,\sharp\}$ denotes the respective extension of the discrete field $h_L$; similarly for $h_L^{{-\circ}}$ and $h_L^{{-}}$;
\item $h_{\ast, L}$ with $\ast\in\{\flat,\sharp, \circ, +\}$ denotes the projection of the continuous field $h$ onto the respective class of fields of order $L$ on the continuous torus.
\end{itemize}
%
%
%

\subsection{The Polyharmonic Gaussian Field on the Continuous Torus and Convergence Properties of its Projections}\label{ss:Projections}

\subsubsection{The Polyharmonic Gaussian Field $h$ on the Continuous Torus}\label{ss:Projections:A}

\begin{definition} A random field $h=(\langle h|f\rangle)_{f\in H^{n/2}(\mathbb{T}^n)}$ on the continuous $n$-torus is called \emph{polyharmonic Gaussian field} if it is a centered Gaussian field with covariance function $k$ 
in the sense that
\[
\mathbf E\big[\langle h|f\rangle\cdot \langle h|g\rangle\big]=\int_{\mathbb{T}^n}\int_{\mathbb{T}^n} f(x) k(x,y) g(y)\,dy\,dx
\qquad(\forall f,g\in H^{n/2}(\mathbb{T}^n))\fstop
\]
\end{definition}

\begin{proposition}
\begin{enumerate}[$(i)$] 
\item The polyharmonic Gaussian field exists.

\item It can be realized in $\mathring H^{-\epsilon}(\mathbb{T}^n)$. 

\item The pairing $\langle h|f\rangle$ continuously extends to all $f\in \mathring H^{-n/2}(\mathbb{T}^n)$.
\end{enumerate}
\end{proposition}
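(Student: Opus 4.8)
The plan is to construct $h$ explicitly as a random Fourier series with respect to the eigenbasis $(\varphi_z)$ and to read off all three assertions from elementary estimates on the Fourier coefficients. Fix iid standard Gaussians $(\xi_z)_{z\in\Z^n\setminus\{0\}}$ on some probability space and set, formally,
\[
h\coloneqq \sum_{z\in\Z^n\setminus\{0\}}\frac1{(2\pi|z|)^{n/2}}\,\xi_z\,\varphi_z .
\]
For $(ii)$ I would first show that this series converges in $L^2\big(\mathbf P;\mathring H^{-\epsilon}(\mathbb{T}^n)\big)$ for every $\epsilon>0$. Since $\norm{\varphi_z}_{\mathring H^{-\epsilon}}^2=|z|^{-2\epsilon}$ and the $\xi_z$ are independent, the partial sums $h_K\coloneqq \sum_{0<\norm{z}_\infty<K/2}(2\pi|z|)^{-n/2}\xi_z\varphi_z$ satisfy
\[
\mathbf E\,\norm{h_K-h_{K'}}_{\mathring H^{-\epsilon}}^2=(2\pi)^{-n}\!\!\sum_{K'/2\le\norm{z}_\infty<K/2}\!\!|z|^{-n-2\epsilon},
\]
and $\sum_{z\neq0}|z|^{-n-2\epsilon}<\infty$ because $n+2\epsilon>n$ (comparison with $\int_{|x|\ge1}|x|^{-n-2\epsilon}\,dx$). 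Hence $(h_K)_K$ is Cauchy in $L^2(\mathbf P;\mathring H^{-\epsilon})$; its limit is an $\mathring H^{-\epsilon}$-valued random variable, automatically grounded since the series omits the zero mode, and — the summands being independent and symmetric — the It\^o--Nisio theorem upgrades convergence in probability to almost sure convergence in $\mathring H^{-\epsilon}$.

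For $(i)$ I would then check that the pairings $\langle h|f\rangle$, $f\in H^{n/2}(\mathbb{T}^n)$, form a centered Gaussian field with covariance $k$. Expanding $f=\sum_z\langle f,\varphi_z\rangle\varphi_z$ (the zero mode never contributes, matching groundedness of $h$), one gets $\langle h|f\rangle=\sum_{z\neq0}(2\pi|z|)^{-n/2}\xi_z\langle f,\varphi_z\rangle$, a convergent series of independent centered Gaussians, hence centered Gaussian; for finitely many $f_1,\dots,f_m$ the vector $(\langle h|f_j\rangle)_j$ is an $L^2(\mathbf P)$-limit of jointly Gaussian vectors, hence jointly Gaussian. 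Its covariance is
\[
\mathbf E\big[\langle h|f\rangle\,\langle h|g\rangle\big]=\sum_{z\neq0}\frac1{(2\pi|z|)^n}\langle f,\varphi_z\rangle\langle g,\varphi_z\rangle=\int_{\mathbb{T}^n}\!\int_{\mathbb{T}^n}f(x)\,k(x,y)\,g(y)\,dx\,dy,
\]
the last equality by Lemma~\ref{green-kernel-series} together with Parseval; this establishes existence.

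For $(iii)$ the very same computation gives $\mathbf E|\langle h|f\rangle|^2=(2\pi)^{-n}\sum_{z\neq0}|z|^{-n}|\langle f,\varphi_z\rangle|^2=(2\pi)^{-n}\norm{f}_{\mathring H^{-n/2}(\mathbb{T}^n)}^2$, so $f\mapsto\langle h|f\rangle$ is, up to the constant $(2\pi)^{-n/2}$, an isometry from the trigonometric polynomials — dense in $\mathring H^{-n/2}$ — into $L^2(\mathbf P)$, and therefore extends uniquely by continuity to all of $\mathring H^{-n/2}(\mathbb{T}^n)$; the extension is again centered Gaussian, being an $L^2(\mathbf P)$-limit of centered Gaussians. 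The whole argument is essentially routine: the only point requiring a little care is $(ii)$, i.e.\ producing an honest $\mathring H^{-\epsilon}$-valued random variable rather than merely a consistent family of finite-dimensional laws, and the summability threshold $\sum_{z\neq0}|z|^{-s}<\infty\iff s>n$ is exactly what forces $\epsilon>0$ there and what makes $\mathring H^{-n/2}$ the sharp space in $(iii)$.
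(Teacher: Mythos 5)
Your proof is correct, but it takes a genuinely different route from the paper: the paper disposes of this Proposition in two lines by citing \cite{DHKS} (Thm.~3.1 and Rmks.~3.4--3.5 there, observing that the arguments for even $n$ on admissible manifolds carry over verbatim to odd $n$ on the flat torus), whereas you carry out the explicit Karhunen--Lo\`eve/random-Fourier-series construction from scratch. What your approach buys is a short, self-contained and completely elementary proof tailored to the torus, with the three assertions reduced to the single summability criterion $\sum_{z\neq0}|z|^{-s}<\infty\iff s>n$; what the citation buys the authors is coverage of general even-dimensional manifolds where no global eigenbasis with such explicit eigenvalue asymptotics is available. All three steps of your argument are sound: the Cauchy estimate in $L^2(\mathbf P;\mathring H^{-\epsilon})$, the identification of the covariance via Lemma~\ref{green-kernel-series} and Parseval, and the isometry (up to a constant) from $\mathring H^{-n/2}$ into $L^2(\mathbf P)$ giving the continuous extension; the It\^o--Nisio upgrade to a.s.\ convergence is a nice bonus but not needed for realizability. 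One cosmetic caveat: to match the normalization used everywhere else in the paper (e.g.\ Proposition~\ref{discrete-iid} and the representation $h=\tfrac1{\sqrt{a_n}}\sum_z\lambda_z^{-n/4}\xi_z\varphi_z$ used in \S\ref{ss:Projections}), your series should carry the prefactor $1/\sqrt{a_n}$, since $k=\tfrac1{a_n}\mathring G^{n/2}$; your computation is literally consistent with the statement of Lemma~\ref{green-kernel-series} as printed, but that statement itself appears to have dropped the factor $1/a_n$, so you have inherited a typo rather than introduced an error.
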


\begin{proof} For even $n$, the polyharmonic field to be considered here  is just a particular case of the co-polyharmonic field considered in \cite{DHKS} on large classes of Riemannian manifolds.
For flat spaces like the torus, the arguments for proving Thm.~3.1 and Rmks.~3.4+3.5 there obviously also apply to odd $n$.
\end{proof}

\subsubsection{Fourier Projection of $h$}\label{ss:Projections:B} 
Given a polyharmonic Gaussian field $h$ on the continuous torus, we define its projection onto the space~$\mathcal D_L$ 
(see subsection \ref{four-proj-ext}(a))
by
\begin{equation}\label{haLtil}
 h_{\sharp,L}(x)\coloneqq 
 \langle h| r_{L}(x,\emparg)\rangle, \qquad r_{L}\coloneqq \sum_{z\in\Z_L^n\setminus\{0\}} \varphi_z\otimes \varphi_z
\end{equation}
This is a centered Gaussian field on $\mathbb{T}^n$ with covariance function
\begin{equation}\label{kaLtil}
k_{\sharp,L}(x,y)\coloneqq\frac1{a_n}\sum_{z\in\Z^n_L\setminus\{0\}} \frac1{\lambda_{z}^{n/2}}\cdot 
\cos\Big(2\pi \, z\cdot (x-y)\Big) 
\end{equation}
where $\lambda_{z}=(2\pi |z|)^2$.
\begin{proposition}[{\cite{DHKS}}] \label{thm-four-L}
For all $f\in \mathring H^{-n/2}(\mathbb{T}^n)$,
\[
\langle h_{\sharp,L},f\rangle_{\mathbb{T}^n} \to \langle h,f\rangle_{\mathbb{T}^n} \quad \text{in $L^2(\mathbf P)$ as }L\to\infty 
\]
and for every $\epsilon>0$,
\[h_{\sharp,L}\to h \quad \text{in $L^2(H^{-\epsilon}(\mathbb{T}^n),\mathbf P)$ as }L\to\infty \fstop\]
\end{proposition}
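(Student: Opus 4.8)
The plan is to realize $h$ as a random Fourier series in the eigenbasis $(\varphi_z)$, to recognize $h_{\sharp,L}$ as a partial sum of that series, and thereby to reduce both assertions to the vanishing of the tail of a convergent numerical series.

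First I would set $\alpha_z\coloneqq\langle h\,|\,\varphi_z\rangle$ for $z\in\Z^n\setminus\{0\}$. Since $k\varphi_z=\frac1{a_n}(2\pi|z|)^{-n}\varphi_z$ and the $\varphi_z$ are orthonormal, the defining covariance identity gives $\mathbf E[\alpha_z\alpha_w]=\frac1{a_n}(2\pi|z|)^{-n}\delta_{zw}$, so the $\alpha_z$ are independent centered Gaussians and $h=\sum_{z\in\Z^n\setminus\{0\}}\alpha_z\varphi_z$, the series converging in $L^2(\mathring H^{-\epsilon}(\mathbb{T}^n),\mathbf P)$ for every $\epsilon>0$ since $\sum_z|z|^{-2\epsilon}\,\mathbf E\alpha_z^2=\frac1{a_n(2\pi)^n}\sum_z|z|^{-n-2\epsilon}<\infty$. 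From the definition of the kernel $r_L$ in \eqref{haLtil} one reads off $h_{\sharp,L}=\sum_{z\in\Z_L^n\setminus\{0\}}\alpha_z\varphi_z$, i.e.\ exactly the partial sum over $\|z\|_\infty<L/2$; hence $h-h_{\sharp,L}=\sum_{\|z\|_\infty\ge L/2}\alpha_z\varphi_z$.

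For the pairing statement I would fix $f=\sum_{w}\beta_w\varphi_w\in\mathring H^{-n/2}(\mathbb{T}^n)$, i.e.\ $\sum_w|w|^{-n}|\beta_w|^2<\infty$; this is exactly the range in which the $L^2(\mathbf P)$-continuous extension of the pairing supplied by the proposition in \S\ref{ss:Projections:A} is available, with $L^2(\mathbf P)$-convergent representation $\langle h\,|\,f\rangle=\sum_z\alpha_z\beta_z$ of variance $\frac1{a_n}\sum_z(2\pi|z|)^{-n}|\beta_z|^2<\infty$. Since $\langle h_{\sharp,L},f\rangle_{\mathbb{T}^n}=\sum_{z\in\Z_L^n\setminus\{0\}}\alpha_z\beta_z$, the orthogonality of the $\alpha_z$ gives
\[
\mathbf E\Big[\big|\langle h,f\rangle_{\mathbb{T}^n}-\langle h_{\sharp,L},f\rangle_{\mathbb{T}^n}\big|^2\Big]=\frac1{a_n}\sum_{z:\,\|z\|_\infty\ge L/2}(2\pi|z|)^{-n}|\beta_z|^2 ,
\]
the tail of the convergent series above, which therefore tends to $0$ as $L\to\infty$. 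For the second assertion, the same computation with the $\mathring H^{-\epsilon}$-norm yields
\[
\mathbf E\Big[\big\|h-h_{\sharp,L}\big\|_{H^{-\epsilon}(\mathbb{T}^n)}^2\Big]=\frac1{a_n}\sum_{z:\,\|z\|_\infty\ge L/2}|z|^{-2\epsilon}(2\pi|z|)^{-n} ,
\]
again a tail of a convergent series (finite for every $\epsilon>0$), whence the claimed convergence in $L^2(H^{-\epsilon}(\mathbb{T}^n),\mathbf P)$.

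The computation is elementary once the series representation is in place, so I do not anticipate a genuine obstacle. The one point meriting care --- and the reason I would simply invoke the proposition in \S\ref{ss:Projections:A} rather than reprove it --- is the identification of the admissible test functions with $\mathring H^{-n/2}(\mathbb{T}^n)$, i.e.\ that $f\mapsto\langle h\,|\,f\rangle$ is well defined and continuous from $\mathring H^{-n/2}(\mathbb{T}^n)$ into $L^2(\mathbf P)$; this is exactly where the exponent $n/2$ in the polyharmonic operator $a_n(-\Delta)^{n/2}$ enters, through the fact that $k$ is a positive multiple of the inverse of the isomorphism $(-\Delta)^{n/2}\colon\mathring H^{n/2}(\mathbb{T}^n)\to\mathring H^{-n/2}(\mathbb{T}^n)$.
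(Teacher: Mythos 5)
Your argument is correct and is essentially the paper's own: both realize $h$ as the random Fourier series $\sum_z \alpha_z\varphi_z$ with independent Gaussian coefficients of variance $\frac1{a_n}(2\pi|z|)^{-n}$, identify $h_{\sharp,L}$ as the partial sum over $\Z_L^n\setminus\{0\}$, and reduce both claims to the vanishing of tails of the convergent series $\sum_z(2\pi|z|)^{-n}\langle\varphi_z,f\rangle^2$ and $\sum_z|z|^{-n-2\epsilon}$. The paper's proof only writes out the first assertion (deferring the rest to \cite{DHKS}), so your explicit tail computation for the $H^{-\epsilon}$-norm is a welcome completion, and your remark on why the pairing extends to $\mathring H^{-n/2}(\mathbb{T}^n)$ correctly isolates the only point that is not a pure computation.
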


\begin{proof} For the convenience of the reader, we summarize briefly the argument from \cite{DHKS} for the first assertion:
\begin{align*}
\mathbf E\Big[\langle h-h_{\sharp,L},f\rangle^2\Big]&=\frac1{a_n}
\mathbf E\left[\left\lvert\sum_{z\in\Z^n\setminus\Z_L^n} \frac1{{(2\pi|z|)}^{n/2}}\,\xi_z\, \langle\varphi_z,f\rangle\right\rvert^2
\right]\\
&=\frac1{a_n}\sum_{z\in\Z^n\setminus\Z_L^n} \frac1{{(2\pi|z|)}^{n}}\, \langle\varphi_z,f\rangle^2 \to 0
\end{align*}
as $L\to\infty$ since
\[
\sum_{z\in\Z^n\setminus\{0\}} \frac1{{(2\pi|z|)}^{n}}\, \langle\varphi_z,f\rangle^2 =\big\|(-\Delta)^{-n/2}f\big\|_{L^2}^2=\big\|f\big\|_{\mathring H^{-n/2}}^2<\infty \fstop \qedhere
\]
\end{proof}

\subsubsection{Piecewise Constant 
Projection of $h$}\label{ss:Projections:C}
Given a polyharmonic Gaussian field $h$ on the continuous torus, we define its piecewise constant projection 
(cf.~subsection~\ref{cons-proj-ext}(b)) by 
\begin{equation}\label{h-flat-def}
 h_{\flat,L}(x)\coloneqq 
 \langle h| q_L(x,\emparg)\rangle, \qquad 
 q_L\coloneqq L^n\sum_{v\in\mathbb{T}^n_L}{\bf 1}_{v+Q_L}\otimes {\bf 1}_{v+Q_L}\fstop
\end{equation}
Then $(h_{\flat,L}(x))_{x\in\mathbb{T}^n}$ is a centered Gaussian field with covariance function
\begin{equation}\label{k-flat-def}
k_{\flat,L}(x,y)\coloneqq \mathbb E\big[h_{\flat,L}(x)\,h_{\flat,L}(y)\big]=L^{2n}\sum_{v,w\in\mathbb{T}^n_L}
{\bf 1}_{v+Q_L}(x)\, {\bf 1}_{w+Q_L}(y)\, \int_{v+Q_L}\int_{w+Q_L} k(x',y')\,dy'\,dx'\fstop
\end{equation}


\begin{proposition} For all $f\in L^2(\mathbb{T}^n)$,
\[
\langle h_{\flat,L},f\rangle_{\mathbb{T}^n} \to \langle h,f\rangle_{\mathbb{T}^n} \quad \text{in $L^2(\mathbf P)$ as }L\to\infty 
\]
and for every $s>n/2$,
\begin{equation*}
\norm{h_{\flat,L} - h}_{H^{-s}} \to0 \quad \text{in $L^2(\mathbf P)$ as }L\to\infty\fstop
\end{equation*}
\end{proposition}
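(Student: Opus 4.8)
The plan is to reduce both convergences to one soft fact --- that the orthogonal projection~${\sf q}_L$ of~$L^2(\mathbb{T}^n)$ onto the space of functions constant on each cube~$Q_L(v)$ converges strongly to the identity as~$L\to\infty$ --- together with the a priori bound, valid for every~$\psi\in L^2(\mathbb{T}^n)$,
\begin{equation*}
  \mathbf E\big[\langle h,\psi\rangle^2\big]=\int_{\mathbb{T}^n}\int_{\mathbb{T}^n}\psi(x)\,k(x,y)\,\psi(y)\,dx\,dy\le C_n\,\norm{\psi}_{L^2}^2\fstop
\end{equation*}
Here~$C_n$ is the operator norm of the integral operator with kernel~$k$, which is finite because that operator is Hilbert--Schmidt by Lemma~\ref{green-kernel-series}; the first equality holds for~$\psi\in H^{n/2}(\mathbb{T}^n)$ by definition of the field and extends to all of~$L^2(\mathbb{T}^n)$ by density and continuity of both sides.

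First I would record that~$h_{\flat,L}={\sf q}_L h$ in the appropriate distributional sense. Since~${\sf q}_L$ has finite rank~$L^n$ and indicator functions of cubes belong to~$\mathring H^{\epsilon}(\mathbb{T}^n)$ for~$\epsilon<1/2$, the operator~${\sf q}_L$ extends boundedly to~$\mathring H^{-\epsilon}(\mathbb{T}^n)$; and from the definition~\eqref{h-flat-def}, using that~$y\mapsto q_L(y,\emparg)$ is bounded in~$L^2(\mathbb{T}^n)$, a routine Fubini argument gives~$\langle h_{\flat,L},g\rangle_{\mathbb{T}^n}=\langle h,{\sf q}_L g\rangle$ for every~$g\in L^2(\mathbb{T}^n)$. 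Combined with the a priori bound this yields, for every~$g\in L^2(\mathbb{T}^n)$, the inequality that drives everything:
\begin{equation*}
  \mathbf E\big[\langle h-h_{\flat,L},g\rangle^2\big]=\mathbf E\big[\langle h,\,g-{\sf q}_L g\rangle^2\big]\le C_n\,\norm{g-{\sf q}_L g}_{L^2}^2\fstop
\end{equation*}
Applying it with~$g=f\in L^2(\mathbb{T}^n)$ proves the first assertion: the cubes~$Q_L(v)$ have diameter~$\sqrt n/L\to0$, so~${\sf q}_L f\to f$ uniformly --- hence in~$L^2$ --- when~$f$ is continuous, and then for every~$f\in L^2(\mathbb{T}^n)$ by density of~$\C(\mathbb{T}^n)$ and since~${\sf q}_L$ is a contraction; thus the right-hand side tends to~$0$.

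For the~$H^{-s}$-assertion with~$s>n/2$ I would expand~$h-h_{\flat,L}$, which is grounded, in the orthonormal basis~$(\varphi_z)$, so that only the modes~$z\neq0$ contribute and, by Tonelli and the previous display with~$g=\varphi_z$,
\begin{equation*}
  \mathbf E\big[\norm{h-h_{\flat,L}}_{H^{-s}}^2\big]=\sum_{z\in\Z^n\setminus\{0\}}\lambda_z^{-s}\,\mathbf E\big[\langle h-h_{\flat,L},\varphi_z\rangle^2\big]\le C_n\sum_{z\in\Z^n\setminus\{0\}}\lambda_z^{-s}\,\norm{\varphi_z-{\sf q}_L\varphi_z}_{L^2}^2\fstop
\end{equation*}
Here~$\norm{\varphi_z-{\sf q}_L\varphi_z}_{L^2}^2=1-\norm{{\sf q}_L\varphi_z}_{L^2}^2\le1$ uniformly in~$L$, the series~$\sum_{z\neq0}\lambda_z^{-s}$ converges precisely because~$s>n/2$, and~$\norm{\varphi_z-{\sf q}_L\varphi_z}_{L^2}\to0$ as~$L\to\infty$ for each fixed~$z$ (continuity of~$\varphi_z$); dominated convergence then gives the limit~$0$. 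The threshold~$s>n/2$ is used in an essential way: the convergence~${\sf q}_L\varphi_z\to\varphi_z$ is not uniform in~$z$, so it is the summable weight~$\lambda_z^{-s}$ that licenses interchanging the limit with the sum.

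The only point needing real care --- and the one I would write out in detail --- is the identification~$h_{\flat,L}={\sf q}_L h$ together with its attendant Fubini exchange, since~$h$ is merely a random element of~$\mathring H^{-\epsilon}(\mathbb{T}^n)$ rather than a genuine function. Granting that, both statements are dominated-convergence arguments resting on the strong convergence~${\sf q}_L\to\mathrm{Id}$ on~$L^2(\mathbb{T}^n)$.
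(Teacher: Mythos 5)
Your proof is correct. For the first assertion it is essentially identical to the paper's: both reduce to $\langle h_{\flat,L},f\rangle=\langle h,{\sf q}_Lf\rangle$, the a priori bound $\mathbf E[\langle h,\psi\rangle^2]\le C_n\norm{\psi}_{L^2}^2$ (the paper phrases this as $\mathbf E[\langle h,\psi\rangle^2]=\norm{\psi}_{H^{-n/2}}^2\le\norm{\psi}_{L^2}^2$, which is the same estimate read off the eigenexpansion of $k$), and the strong convergence ${\sf q}_L\to\mathrm{Id}$ on $L^2(\mathbb{T}^n)$. For the second assertion your route is genuinely different and in fact leaner: the paper expands $h_{\flat,L}$ in the double Fourier sum $\sum_z\xi_z\lambda_z^{-n/4}\sum_w\langle\varphi_w,{\sf q}_L\varphi_z\rangle\varphi_w$, splits $\mathbf E\norm{h-h_{\flat,L}}_{H^{-s}}^2$ into a diagonal term $\sum_z\lambda_z^{-n/2-s}(1-\langle\varphi_z,{\sf q}_L\varphi_z\rangle)^2$ and an off-diagonal term controlled via Parseval, and treats each by dominated convergence; you instead apply the single variance bound mode by mode, $\mathbf E[\langle h-h_{\flat,L},\varphi_z\rangle^2]=\mathbf E[\langle h,\varphi_z-{\sf q}_L\varphi_z\rangle^2]\le C_n\norm{\varphi_z-{\sf q}_L\varphi_z}_{L^2}^2\le C_n$, and sum against the weight $\lambda_z^{-s}$, which is summable precisely for $s>n/2$. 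Your version buys a shorter computation and makes transparent that the only inputs are the $L^2$-boundedness of the covariance operator, the projection property $\norm{\varphi_z-{\sf q}_L\varphi_z}_{L^2}^2=1-\norm{{\sf q}_L\varphi_z}_{L^2}^2\le1$, and pointwise convergence ${\sf q}_L\varphi_z\to\varphi_z$; the paper's version keeps explicit track of how the mass of ${\sf q}_L\varphi_z$ spreads over other modes, which is information your argument discards. The one point you rightly flag --- the identification $h_{\flat,L}={\sf q}_Lh$ as a Fubini/duality statement for a field living only in $\mathring H^{-\epsilon}$ --- is used by the paper with exactly the same one-line justification, so no gap there. (A cosmetic remark: indicators of cubes are not grounded, so they lie in $H^\epsilon$ rather than $\mathring H^\epsilon$; this does not affect anything since the pairing with $h$ only sees the grounded part.)
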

\begin{proof}  
Since $\langle h_{\flat,L},f\rangle_{\mathbb{T}^n} =\langle h,{\sf q}_L f\rangle_{\mathbb{T}^n}$ with $({\sf q}_Lf)(x)\coloneqq \int_{\mathbb{T}^n} q_L(x,y)f(y)\,dy$, we obtain
\begin{align*}
\mathbf E\Big[\langle h_{\flat,L}- h,f\rangle^2\Big]
&=\mathbf E\Big[\langle h,{\sf q}_Lf-f\rangle^2\Big]=\big\|{\sf q}_Lf-f\big\|^2_{H^{-n/2}}\\
&\le \big\|{\sf q}_Lf-f\big\|^2_{L^2}\to0\quad\text{as $L\to\infty$.} 
\end{align*}
To prove the second assertion, let $h$ be given as
\begin{equation*}
  h = \frac{1}{\sqrt{a_{n}}} \sum_{z \in \mathbb{Z}^{n} \setminus \{0\}} \xi_{z} \frac{\varphi_{z}}{\lambda_{z}^{n/4}}
\end{equation*}
from which we get
\begin{equation*}
  h_{\flat,L} = \frac{1}{\sqrt{a_{n}}} \sum_{z \in \mathbb{Z}^{n} \setminus \{0\}}  \xi_{z} \frac{\mathsf{q}_{L} \varphi_{z}}{\lambda_{z}^{n/4}} = \frac{1}{\sqrt{a_{n}}} \sum_{z \in \mathbb{Z}^{n} \setminus \{0\}} \frac{\xi_{z}}{\lambda_{z}^{n/4}} \sum_{w \in \mathbb{Z}^{n} \setminus \{0\}} \hsp{\varphi_{w}}{\mathsf{q}_{L} \varphi_{z}} \varphi_{w}.
\end{equation*}
Consequently
\begin{equation*}
  \begin{split}
    \sqrt{a_{n}} \paren{-\Delta}^{-s/2} (h - h_{\flat,L}) &= \sum_{z \in \mathbb{Z}^{n} \setminus \{0\}} \xi_{z} \lambda_{z}^{-n/4-s/2} \paren{ \varphi_{z} - \hsp{\varphi_{z}}{\mathsf{q}_{L}\varphi_{z}}\varphi_{z} } \\
                                             &- \sum_{z \in \mathbb{Z}^{n} \setminus \{0\}} \xi_{z} \lambda_{z}^{-n/4} \sum_{w \in \mathbb{Z}^{n} \setminus \{0,z\}} \lambda_{w}^{-s/2} \hsp{\varphi_{w}}{\mathsf{q}_{L} \varphi_{z}} \varphi_{w}.
  \end{split}
\end{equation*}
Finally,
\begin{equation}\label{17}
  \begin{split}
  a_{n} \mathbf{E} \bracket[\big]{ \norm{ h - h_{\flat,L}}_{H^{-s}}^{2} } &=  \sum_{z \in \mathbb{Z}^{n} \setminus \{0\}} \lambda_{z}^{-n/2-s} \paren{1 - \hsp{\varphi_{z}}{\mathsf{q}_{L}\varphi_{z}}}^{2} \\
                                                                              &+ \sum_{z \in \mathbb{Z}^{n} \setminus \{0\}} \lambda_{z}^{-n/2} \sum_{w \in \mathbb{Z}^{n} \setminus \{0,z\}} \lambda_{w}^{-s} \hsp{\varphi_{w}}{\mathsf{q}_{L}\varphi_{z}}^{2}.
\end{split}
\end{equation}
Since $|\hsp{\varphi_{z}}{\mathsf{q}_{L}\varphi_{z}}| \le 1$ for all $L$ and 
$\hsp{\varphi_{z}}{\mathsf{q}_{L}\varphi_{z}} \to 1$ as $L \to \infty$ and 
\begin{equation*}
  \sum_{z \in \mathbb{Z}^{n} \setminus \{0\}} \lambda_{z}^{-n/2-s} \paren{ 1 - \hsp{\varphi_{z}}{\mathsf{q}_{L}\varphi_{z}} }^{2} \leq \sum_{z \in \mathbb{Z}^{n} \setminus \{0\}} \lambda_{z}^{-n/2-s} < \infty,
\end{equation*}
we find that the first term on the RHS of \eqref{17} vanishes as $L \to \infty$.
By Parseval's identity and the fact that $\lambda_z\ge1$ for all $z\not=0$, we get that
\begin{align*}
\sum_{z \in \mathbb{Z}^{n} \setminus \{0\}} \lambda_{z}^{-n/2} \sum_{w \in \mathbb{Z}^{n} \setminus \{0,z\}} \lambda_{w}^{-s} \hsp{\varphi_{w}}{\mathsf{q}_{L}\varphi_{z}}^{2}
&\le \sum_{w \in \mathbb{Z}^{n} \setminus \{0\}} \lambda_{w}^{-s} \sum_{z \in \mathbb{Z}^{n} \setminus \{0,w\}} \hsp{\varphi_{w}}{\mathsf{q}_{L}\varphi_{z}}^{2}\\
&=\sum_{w \in \mathbb{Z}^{n} \setminus \{0\}} \lambda_{w}^{-s} \,\left[\norm{\mathsf{q}_{L}\varphi_{w}}^2_{L^{2}}-\hsp{\varphi_{w}}{\mathsf{q}_{L}\varphi_{w}}^{2}
\right]\\
&\le \sum_{w \in \mathbb{Z}^{n} \setminus \{0\}}  \lambda_{w}^{-s}
\end{align*}
which converges since $s > n/2$. Moreover, $0\le\norm{\mathsf{q}_{L}\varphi_{w}}^2_{L^{2}}-\hsp{\varphi_{w}}{\mathsf{q}_{L}\varphi_{w}}^{2}\le 1$ for all $w$ and $L$, and $\norm{\mathsf{q}_{L}\varphi_{w}}^2_{L^{2}}-\hsp{\varphi_{w}}{\mathsf{q}_{L}\varphi_{w}}^{2}\to 0$ for all $w$ as $L\to\infty$. Thus also the second   term on the RHS of \eqref{17} vanishes as $L \to \infty$.
\end{proof}


\subsubsection{Enhanced
Projection of $h$}\label{ss:Projections:D} 

Given a polyharmonic Gaussian field $h$ on the continuous torus, we define its  \emph{enhanced piecewise constant projection} or briefly  \emph{enhanced  projection} 
by 
\begin{equation}\label{h-enhanced-def}
 h_{+,L}(x)\coloneqq 
 \langle h| p_{+,L}(x,\emparg)\rangle, \qquad 
 p_{+,L}\coloneqq  q_{L}\circ r_L^{{+}}
\end{equation}
where as before
$r^{{+}}_L\coloneqq \sum_{z\in\Z^n_L} \, \frac1{\vartheta_{L,z}}\cdot\left(\frac{\lambda_z}{\lambda_{L,z}}\right)^{n/4}\cdot \varphi_z\otimes\varphi_z$ and 
$q_{L}\coloneqq L^n\sum_{v\in\mathbb{T}^n_L}{\bf 1}_{v+Q_L}\otimes {\bf 1}_{v+Q_L}$.
It is  a centered Gaussian field with  covariance function 
\begin{equation}k_{+,L}(x,y)\coloneqq
\frac1{a_n} \sum_{z\in\Z^n_L} \, \frac1{\vartheta_{L,z}^2}\cdot\frac1{\lambda_{L,z}^{n/2}}\cdot {\sf q}_L\varphi_z(x)\cdot{\sf q}_L\varphi_z(y).
\end{equation}
More precisely, it can be regarded as a piecewise constant random field on the continuous torus, or equivalently as a random field on the discrete torus.

\begin{lemma}\label{lemma-enhanced} (i) For all $f\in L^2(\mathbb{T}^n)$,
$${\sf p}_{+,L}f\to f\quad\text{in $L^2$ as $L\to\infty$}.$$

(ii) On $\mathbb{T}^n\times\mathbb{T}^n$,
$$k_{+,L}\to k\quad\text{in $L^0$ as }L\to\infty.$$
\end{lemma}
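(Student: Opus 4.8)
The plan is to deduce both parts from the factorization~${\sf p}_{+,L}={\sf q}_L\circ{\sf r}_L^{{+}}$, from the (already-used) fact that~${\sf q}_L$ is a contraction on~$L^2(\mathbb{T}^n)$ with ${\sf q}_L\to\mathrm{Id}$ strongly there --- it is the conditional expectation onto the $\sigma$-algebra generated by the cubes~$Q_L(v)$, whose mesh tends to~$0$ --- and from the two-sided bounds and limits~$\vartheta_{L,z}\to1$, $\lambda_{L,z}/\lambda_z\to1$ recorded in~\eqref{ratio-lambda}--\eqref{def-theta}.

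\emph{Part (i).} First I would record a uniform operator bound. Since~$\vartheta_{L,z}\in[(2/\pi)^n,1]$ and~$\lambda_z/\lambda_{L,z}\in[1,(\pi/2)^2]$ for every~$z\in\Z^n_L$, the Fourier multiplier~${\sf r}_L^{{+}}$ (which acts by~$(\lambda_z/\lambda_{L,z})^{n/4}\vartheta_{L,z}^{-1}$ on~$\varphi_z$ and kills the modes~$z\notin\Z^n_L$) has operator norm at most~$(\pi/2)^{3n/2}$ on~$L^2(\mathbb{T}^n)$, so that~$\sup_L\norm{{\sf p}_{+,L}}_{L^2\to L^2}\le(\pi/2)^{3n/2}=:C_n$. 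Next, the proof of Lemma~\ref{averages} gives~${\sf q}_L\varphi_z=\vartheta_{L,z}\,\psi_{z,L}$ on~$\mathbb{T}^n$, where~$\psi_{z,L}$ denotes the piecewise-constant function agreeing with~$\varphi_z$ on~$\mathbb{T}^n_L$; hence for every~$z\in\Z^n_L\setminus\{0\}$,
\[
{\sf p}_{+,L}\varphi_z=\paren{\tfrac{\lambda_z}{\lambda_{L,z}}}^{n/4}\psi_{z,L}\longrightarrow\varphi_z\quad\text{in }L^2(\mathbb{T}^n),
\]
because~$(\lambda_z/\lambda_{L,z})^{n/4}\to1$ and~$\norm{\psi_{z,L}-\varphi_z}_{L^2}=O(\abs z/L)\to0$ (the latter since~$\varphi_z$ is Lipschitz and~$\operatorname{diam}Q_L=\sqrt n/L$). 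Consequently~${\sf p}_{+,L}g\to g$ in~$L^2$ for every trigonometric polynomial~$g$ (a finite sum of~$\varphi_z$'s, which lies in~$\mathcal D_L$ once~$L$ is large), and a standard~$3\varepsilon$-argument using density of trigonometric polynomials in~$L^2(\mathbb{T}^n)$ together with the uniform bound~$C_n$ upgrades this to~${\sf p}_{+,L}f\to f$ in~$L^2$ for all~$f\in L^2(\mathbb{T}^n)$, which is~(i); equivalently,~${\sf p}_{+,L}\to\mathrm{Id}$ strongly on~$L^2(\mathbb{T}^n)$.

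\emph{Part (ii).} The observation to make is that $k_{+,L}=({\sf p}_{+,L}\otimes{\sf p}_{+,L})k$, with~${\sf p}_{+,L}$ applied to each of the two variables: indeed, as~$p_{+,L}(x,\emparg)\in\mathcal D_L\subset H^{n/2}(\mathbb{T}^n)$, the defining covariance of~$h$ yields
\[
k_{+,L}(x,y)=\mathbf E\bracket{\pairing{h}{p_{+,L}(x,\emparg)}\,\pairing{h}{p_{+,L}(y,\emparg)}}=\int_{\mathbb{T}^n}\int_{\mathbb{T}^n}p_{+,L}(x,a)\,k(a,b)\,p_{+,L}(y,b)\,da\,db.
\]
By Lemma~\ref{green-kernel-series} we have~$k\in L^2(\mathbb{T}^n\times\mathbb{T}^n)$. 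I would then invoke the elementary tensorization fact --- if~$T_L\to T$ strongly on a Hilbert space with~$\sup_L\norm{T_L}<\infty$, then~$T_L\otimes T_L\to T\otimes T$ strongly on the Hilbert-space tensor product (check it on elementary tensors and extend by density using~$\norm{T_L\otimes T_L}=\norm{T_L}^2$) --- applied with~$T_L={\sf p}_{+,L}$,~$T=\mathrm{Id}$, which is legitimate by Part~(i) and the uniform bound~$C_n$. This gives~$k_{+,L}=({\sf p}_{+,L}\otimes{\sf p}_{+,L})k\to k$ in~$L^2(\mathbb{T}^n\times\mathbb{T}^n)$. Since the product torus carries a finite measure, convergence in~$L^2$ implies convergence in measure, i.e.~$k_{+,L}\to k$ in~$L^0$, which is~(ii).

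\emph{On the main difficulty.} There is no deep obstacle: the whole weight rests on the uniform operator bound of Part~(i), which comes for free from~\eqref{ratio-lambda}--\eqref{def-theta}, and on the genuine~$L^2(\mathbb{T}^n\times\mathbb{T}^n)$-integrability of~$k$ supplied by Lemma~\ref{green-kernel-series}, which is exactly what makes the soft Hilbert-space argument for~(ii) go through --- in particular one never needs a delicate pointwise comparison of~$k_L$ with~$k$ away from the diagonal. It is worth remarking, from~${\sf p}_{+,L}\varphi_z=(\lambda_z/\lambda_{L,z})^{n/4}\psi_{z,L}$, that~$k_{+,L}(x,y)=k_L(v,u)$ whenever~$x\in Q_L(v)$ and~$y\in Q_L(u)$: thus~$k_{+,L}$ is simply the discrete polyharmonic kernel~$k_L$ sampled on~$\mathbb{T}^n_L\times\mathbb{T}^n_L$ and extended piecewise-constantly, which is also why~$h_{+,L}$ restricted to~$\mathbb{T}^n_L$ has the law of~$h_L$.
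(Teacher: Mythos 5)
Your argument is correct, and for part (ii) it takes a genuinely different route from the paper. For (i) the paper also starts from ${\sf p}_{+,L}={\sf q}_L\circ{\sf r}_L^{{+}}$, but it treats the two factors separately: it quotes $\norm{{\sf q}_Lf-f}_{L^2}\to0$ from \cite{DHKS}, bounds $\norm{{\sf q}_L f - {\sf q}_L{\sf r}_L^{{+}}f}_{L^2}\le\norm{f-{\sf r}_L^{{+}}f}_{L^2}$ by Jensen, and kills the latter by Parseval plus dominated convergence using exactly the bounds \eqref{ratio-lambda}--\eqref{def-theta} you invoke; your variant (uniform operator bound plus convergence on trigonometric polynomials, using ${\sf q}_L\varphi_z=\vartheta_{L,z}\psi_{z,L}$ from Lemma~\ref{averages}) is an equivalent repackaging of the same ingredients. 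For (ii) the paper writes $k_{+,L}={\sf q}_L^{\otimes2}k_L^{+}$ with $k_L^{+}$ as in \eqref{kL+} and splits $k_{+,L}-k$ into ${\sf q}_L^{\otimes2}(k_L^{+}-k)$, controlled in $L^2$ by Jensen plus Parseval, and ${\sf q}_L^{\otimes2}k-k$, which converges locally uniformly off the diagonal since $k$ is smooth there; the two modes of convergence are then combined into convergence in measure. You instead observe $k_{+,L}=({\sf p}_{+,L}\otimes{\sf p}_{+,L})k$ and apply the standard tensorization of strong operator convergence, using only $k\in L^2(\mathbb{T}^n\times\mathbb{T}^n)$ from Lemma~\ref{green-kernel-series}. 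Your route is softer and yields the stronger conclusion $k_{+,L}\to k$ in $L^2(\mathbb{T}^n\times\mathbb{T}^n)$, which implies the stated $L^0$ convergence on the finite measure space; what it does not reproduce is the locally uniform off-diagonal convergence of ${\sf q}_L^{\otimes2}k$ that the paper's decomposition makes visible, which is harmless for the lemma as stated but is the kind of pointwise information one sometimes wants when verifying Shamov-type criteria downstream. Your closing identification $k_{+,L}(x,y)=k_L(v,u)$ for $x\in Q_L(v)$, $y\in Q_L(u)$ is also correct and consistent with Lemma~\ref{lem-enhanced-disc}.
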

\begin{proof} (i) Recall that  ${\sf p}_{+,L}={\sf q}_L\circ {\sf r}_L^{{+}}$. From (the proof of) Example 3.12  in \cite{DHKS} we know that $\| {\sf q}_Lf-f\|_{L^2}\to 0$ as $L\to\infty$ and, by Jensen's inequality,
$$\big\| {\sf q}_Lf-{\sf q}_L{\sf r}_L^{{+}}f\big\|_{L^2}\le \big\| f-{\sf r}_L^{{+}}f\big\|_{L^2}.$$
Moreover, the latter goes to 0 as $L\to\infty$ according to
\begin{align*}
\big\| f-{\sf r}_L^{{+}}f\big\|_{L^2}^2&=\left\|
\sum_{\|z\|_\infty<L/2} \left[1- \frac1{\vartheta_{L,z}}\cdot\left(\frac{\lambda_z}{\lambda_{L,z}}\right)^{n/4}\right]\cdot\langle f,\varphi_z\rangle\,\varphi_z+ \sum_{\|z\|_\infty>L/2}\langle f,\varphi_z\rangle\,\varphi_z
\right\|^2_{L^2}\\
&=
\sum_{\|z\|_\infty<L/2} \left[1- \frac1{\vartheta_{L,z}}\cdot\left(\frac{\lambda_z}{\lambda_{L,z}}\right)^{n/4}\right]^2\cdot\langle f,\varphi_z\rangle^2
+\sum_{\|z\|_\infty>L/2}\langle f,\varphi_z\rangle^2\ \to \  0.
\end{align*}
The convergence of the last term here follows from the finiteness of $\sum_{z}\langle f,\varphi_z\rangle^2=\|f\|^2_{L^2}$.
The convergence of the first term in the last displayed formula follows from the facts that
$\left|1- \frac1{\vartheta_{L,z}}\cdot\left(\frac{\lambda_z}{\lambda_{L,z}}\right)^{n/4}\right|\le C\coloneqq (\frac\pi2)^{3n/2}$ for all $L$ and $z$, that $\left|1- \frac1{\vartheta_{L,z}}\cdot\left(\frac{\lambda_z}{\lambda_{L,z}}\right)^{n/4}\right|\to0$ for all $z$ as $L\to\infty$, and that $\sum_{z}\langle f,\varphi_z\rangle^2<\infty$.

(ii) Denote by ${\sf q}_L^{\otimes 2}$ the twofold action of ${\sf q}_L$ on functions of two variables, and put 
\begin{equation}\label{kL+}
k_L^+(x,y)\coloneqq \frac1{a_n} \sum_{z\in\Z^n_L} \, \frac1{\vartheta_{L,z}^2}\cdot\frac1{\lambda_{L,z}^{n/2}}\cdot \varphi_z(x)\cdot\varphi_z(y).
\end{equation}
Then $k_{+,L}={\sf q}_L^{\otimes 2}k_L^+$.
The claimed convergence will follow from the three subsequent convergence assertions:
\begin{enumerate}
\item[(1)] ${\sf q}_L^{\otimes 2}k(x,y)\to k(x,y)$ locally uniformly for all $x\not=y$
\item[(2)] $\iint\left| {\sf q}_L^{\otimes 2}k_L^+-{\sf q}_L^{\otimes 2}k\right|^2dxdy\le \iint\left| k_L^+-k\right|^2dxdy$
\item[(3)] $\iint\left| k_L^+-k\right|^2dxdy\to0$.
\end{enumerate}
Assertion (1) here is trivial since $k$ is smooth outside the diagonal and since $q_L$ acts with bounded support $\le 1/L\to0$. Assertion (2) follows from a simple application of Jensen's inequality.
To see assertion (3), observe that
\begin{align*}\iint\left| k_L^+-k\right|^2dxdy&=\frac1{a_n}\iint\left|
 \sum_{z\in\Z^n\setminus\{0\}} \left( \frac1{\vartheta_{L,z}^2\,\lambda_{L,z}^{n/2}}\,{\bf 1}_{\{\|z\|_\infty<L/2\}}-\frac1{\lambda_z^{n/2}}\right)\,\varphi_z(x)\,\varphi_z(y)\right|^2dx\,dy\\
 &=\frac1{a_n}\sum_{z\in\Z^n\setminus\{0\}}  \left( \frac1{\vartheta_{L,z}^2\,\lambda_{L,z}^{n/2}}\,{\bf 1}_{\{\|z\|_\infty<L/2\}}-\frac1{\lambda_z^{n/2}}\right)^2\\
  &=\frac1{a_n}\sum_{z\in\Z^n\setminus\{0\}} \frac1{(2\pi |z|)^{2n}} \left( \frac{\lambda_z^{n/2}}{\vartheta_{L,z}^2\,\lambda_{L,z}^{n/2}}\,{\bf 1}_{\{\|z\|_\infty<L/2\}}-1\right)^2.
\end{align*}
The latter converges to 0 as $L\to\infty$ since
the term $\left( \frac{\lambda_z^{n/2}}{\vartheta_{L,z}^2\,\lambda_{L,z}^{n/2}}\,{\bf 1}_{\{\|z\|_\infty<L/2\}}-1\right)
$ is bounded uniformly in $L$ and $z$, since it converges to 0 for every $z$ as $L\to\infty$, and since the sum  $\sum_{z\in\Z^n\setminus\{0\}} \frac1{(2\pi |z|)^{2n}}$ is finite.
\end{proof}

\subsubsection{Natural
Projection of $h$}\label{ss:Projections:E} 

Given a polyharmonic Gaussian field $h$ on the continuous torus, we define its    \emph{natural  projection} as the piecewise constant projection of its  Fourier projection:
\begin{equation}\label{h-natural-def}
h_{\circ,L}\coloneqq {\sf q}_L({\sf r}_L(h)).
\end{equation}
In other words, $ h_{\circ,L}(x)\coloneqq 
 \langle h| p_{\circ,L}(x,\emparg)\rangle$ where
 $p_{\circ,L}\coloneqq  q_{L}\circ r_L$
 and 
 as before
$r_L\coloneqq \sum_{z\in\Z^n_L} \,\varphi_z\otimes\varphi_z$ and 
$q_{L}\coloneqq L^n\sum_{v\in\mathbb{T}^n_L}{\bf 1}_{v+Q_L}\otimes {\bf 1}_{v+Q_L}$.
It is  a centered Gaussian field with  covariance function 
\begin{equation}k_{\circ,L}(x,y)\coloneqq
\frac1{a_n} \sum_{z\in\Z^n_L} \, \frac1{\lambda_{z}^{n/2}}\cdot {\sf q}_L\varphi_z(x)\cdot{\sf q}_L\varphi_z(y).
\end{equation}
More precisely, it can be regarded as a piecewise constant random field on the continuous torus and equivalently as a random field on the discrete torus.

\begin{lemma}\label{lemma-natural} (i) For all $f\in L^2(\mathbb{T}^n)$,
$${\sf p}_{\circ,L}f\to f\quad\text{in $L^2$ as $L\to\infty$}.$$

(ii) On $\mathbb{T}^n\times\mathbb{T}^n$,
$$k_{\circ,L}\to k\quad\text{in $L^0$ as }L\to\infty.$$
\end{lemma}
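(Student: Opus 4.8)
The plan is to imitate, step by step, the two parts of the proof of Lemma~\ref{lemma-enhanced}. In fact the present statement is strictly easier, because here the Fourier truncation ${\sf r}_L$ plays the role of the enhanced operator ${\sf r}_L^{{+}}$, so the correction factors $\vartheta_{L,z}^{-1}\paren{\lambda_z/\lambda_{L,z}}^{n/4}$ never appear.

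For part~(i), I would start from ${\sf p}_{\circ,L}={\sf q}_L\circ{\sf r}_L$ and combine two estimates: first, $\norm{{\sf q}_Lf-f}_{L^2}\to 0$ as $L\to\infty$, as in (the proof of) Example~3.12 in \cite{DHKS}; second, since $q_L$ is a Markov kernel, Jensen's inequality gives $\norm{{\sf q}_Lf-{\sf q}_L{\sf r}_Lf}_{L^2}\le\norm{f-{\sf r}_Lf}_{L^2}$. As ${\sf r}_L$ is simply the orthogonal projection onto $\mathcal D_L$, one has $\norm{f-{\sf r}_Lf}_{L^2}^2=\sum_{z\in\Z^n\setminus\Z^n_L}\hsp{f}{\varphi_z}^2\to 0$, the total sum being $\norm{f}_{L^2}^2<\infty$; the triangle inequality then closes part~(i).

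For part~(ii), I would introduce $k_L^\circ\coloneqq\frac1{a_n}\sum_{z\in\Z^n_L\setminus\set{0}}\lambda_z^{-n/2}\,\varphi_z\otimes\varphi_z$ --- which is exactly the covariance $k_{\sharp,L}$ of~\eqref{kaLtil}, the analogue of $k_L^+$ in the enhanced proof --- so that $k_{\circ,L}={\sf q}_L^{\otimes2}k_L^\circ$, with ${\sf q}_L^{\otimes2}$ the twofold action of ${\sf q}_L$. Exactly as in the proof of Lemma~\ref{lemma-enhanced}(ii), it then suffices to show: (1)~${\sf q}_L^{\otimes2}k\to k$ locally uniformly on $\set{x\ne y}$; (2)~$\iint\abs{{\sf q}_L^{\otimes2}k_L^\circ-{\sf q}_L^{\otimes2}k}^2\,dx\,dy\le\iint\abs{k_L^\circ-k}^2\,dx\,dy$; and (3)~$\iint\abs{k_L^\circ-k}^2\,dx\,dy\to 0$. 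Assertion (1) is immediate since $k$ is smooth away from the diagonal and $q_L$ averages over cubes of diameter $\le\sqrt n/L\to 0$; (2) is Jensen's inequality. For (3), Lemma~\ref{green-kernel-series} together with the orthonormality of the family $\paren{\varphi_z\otimes\varphi_z}$ in $L^2(\mathbb{T}^n\times\mathbb{T}^n)$ gives $\iint\abs{k_L^\circ-k}^2\,dx\,dy=a_n^{-2}\sum_{z\in\Z^n\setminus\Z^n_L}\lambda_z^{-n}$, which is the tail of the convergent series $\sum_{z\ne0}\lambda_z^{-n}$ and hence vanishes as $L\to\infty$. Combining (1) (almost-everywhere convergence of ${\sf q}_L^{\otimes2}k$ to $k$, hence in Lebesgue measure, the diagonal being null) with (2)--(3) ($L^2$- hence in-measure convergence of $k_{\circ,L}-{\sf q}_L^{\otimes2}k$ to $0$) yields $k_{\circ,L}\to k$ in $L^0$.

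I do not expect any real obstacle: the statement is weaker than Lemma~\ref{lemma-enhanced}, and the only step needing minor care is the identification $k_L^\circ=k_{\sharp,L}$ together with the bookkeeping in (1)--(3); after that, the decisive estimate (3) is nothing more than the tail of a convergent series.
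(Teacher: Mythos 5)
Your proof is correct and follows exactly the route the paper intends: the paper's own proof of Lemma~\ref{lemma-natural} is the single line ``Analogously to (but simpler than) Lemma~\ref{lemma-enhanced}'', and you have correctly instantiated that analogy. The two simplifications you identify --- that ${\sf r}_L$ is the orthogonal projection onto $\mathcal D_L$ (so part~(i) needs no uniform bound on correction factors) and that $k_L^\circ=k_{\sharp,L}$ (so step~(3) is just the tail of the convergent series $\sum_{z\neq 0}\lambda_z^{-n}$) --- are precisely what makes this case simpler.
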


\begin{proof} Analogously to (but simpler than)  Lemma \ref{lemma-enhanced}.
\end{proof}
\subsection{Convergence Properties of the Polyharmonic Gaussian Field on the Discrete Torus and of its Extensions}

\subsubsection{Polyharmonic Gaussian Field  $h_L$ on the Discrete Torus}

\begin{theorem}\label{thm-hL} For all $f\in \bigcup\limits_{s>n/2} H^{s}(\mathbb{T}^n)$,
\[
\langle  h_L,f\rangle_{\mathbb{T}_L^n} \to \langle h,f\rangle_{\mathbb{T}^n} \quad \text{in $L^2(\mathbf P)$ as }L\to\infty \fstop
\]
\end{theorem}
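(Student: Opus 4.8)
The plan is to couple $h_L$ and $h$ on one probability space and to estimate their $L^2(\mathbf P)$-distance by an explicit Fourier computation; the only delicate point is the aliasing produced by the discrete scalar product, and it is there that the hypothesis $s>n/2$ enters. Since $h_L$ and $h$ are grounded, $\langle h_L,f\rangle_{\mathbb{T}_L^n}=\langle h_L,\mathring f\rangle_{\mathbb{T}_L^n}$ and $\langle h,f\rangle_{\mathbb{T}^n}=\langle h,\mathring f\rangle_{\mathbb{T}^n}$ with $\mathring f\coloneqq f-\int_{\mathbb{T}^n}f\,d\Leb^n$, so I would assume from the outset that $f=\sum_{z\in\Z^n\setminus\{0\}}\alpha_z\varphi_z$ with $\sum_z|z|^{2s}\alpha_z^2<\infty$ for some $s>n/2$; in particular $\sum_z\alpha_z^2<\infty$ since $|z|\ge1$ for $z\ne0$. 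By the Sobolev embedding $H^s(\mathbb{T}^n)\hookrightarrow\mathcal C(\mathbb{T}^n)$, $f$ restricts meaningfully to $\mathbb{T}_L^n$, and since $\sum_{w\in\Z^n}|\alpha_{z+Lw}|<\infty$ for each $z$ by Cauchy--Schwarz together with $2s>n$ (the bound is made quantitative below), Lemma~\ref{high-frequ} yields $\langle\varphi_z,f\rangle_{\mathbb{T}_L^n}=\sum_{w\in\Z^n}\alpha_{z+Lw}$. Realizing $h_L=\tfrac1{\sqrt{a_n}}\sum_{z\in\Z_L^n\setminus\{0\}}\lambda_{L,z}^{-n/4}\xi_z\varphi_z$ (cf.~\eqref{haL}) and $h=\tfrac1{\sqrt{a_n}}\sum_{z\in\Z^n\setminus\{0\}}\lambda_z^{-n/4}\xi_z\varphi_z$ with one common family of iid standard normals $(\xi_z)_{z\in\Z^n\setminus\{0\}}$, I obtain the $L^2(\mathbf P)$-convergent identity
\begin{align*}
\sqrt{a_n}\,\big(\langle h_L,f\rangle_{\mathbb{T}_L^n}-\langle h,f\rangle_{\mathbb{T}^n}\big)
&=\sum_{z\in\Z_L^n\setminus\{0\}}\xi_z\left[\Big(\tfrac1{\lambda_{L,z}^{n/4}}-\tfrac1{\lambda_z^{n/4}}\Big)\alpha_z+\tfrac1{\lambda_{L,z}^{n/4}}\sum_{w\ne0}\alpha_{z+Lw}\right]\\
&\quad-\sum_{z\in\Z^n\setminus\Z_L^n}\tfrac{\xi_z}{\lambda_z^{n/4}}\,\alpha_z.
\end{align*}

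Next I would take second moments. By independence of the $\xi_z$ and $(a+b)^2\le 2a^2+2b^2$, the quantity $a_n\,\mathbf E\big[\big(\langle h_L,f\rangle_{\mathbb{T}_L^n}-\langle h,f\rangle_{\mathbb{T}^n}\big)^2\big]$ is at most $2\,\mathrm{(A_1)}+2\,\mathrm{(A_2)}+\mathrm{(B)}$ with
\begin{align*}
\mathrm{(A_1)}&\coloneqq\sum_{z\in\Z_L^n\setminus\{0\}}\big(\lambda_{L,z}^{-n/4}-\lambda_z^{-n/4}\big)^2\alpha_z^2,\qquad
\mathrm{(B)}\coloneqq\sum_{z\in\Z^n\setminus\Z_L^n}\lambda_z^{-n/2}\alpha_z^2,\\
\mathrm{(A_2)}&\coloneqq\sum_{z\in\Z_L^n\setminus\{0\}}\lambda_{L,z}^{-n/2}\Big(\sum_{w\ne0}\alpha_{z+Lw}\Big)^2.
\end{align*}
Term $\mathrm{(B)}$ is the tail of the convergent series $\sum_z\lambda_z^{-n/2}\alpha_z^2\le\sum_z\alpha_z^2$, hence $\mathrm{(B)}\to0$. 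For $\mathrm{(A_1)}$ I would write $\big(\lambda_{L,z}^{-n/4}-\lambda_z^{-n/4}\big)^2=\lambda_z^{-n/2}\big((\lambda_z/\lambda_{L,z})^{n/4}-1\big)^2$ and invoke \eqref{ratio-lambda}: the last factor is bounded uniformly in $L$ and $z$ and tends to $0$ for each fixed $z$, so dominated convergence against the summable sequence $(\lambda_z^{-n/2}\alpha_z^2)_z$ gives $\mathrm{(A_1)}\to0$.

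The hard part is $\mathrm{(A_2)}$, the aliasing term. By \eqref{ratio-lambda}, $\lambda_{L,z}\ge(2/\pi)^2\lambda_z\ge16$ for $z\ne0$, so $\lambda_{L,z}^{-n/2}$ is bounded. For $z\in\Z_L^n$ and $w\in\Z^n\setminus\{0\}$ one has $\|z+Lw\|_\infty\ge L\|w\|_\infty-\|z\|_\infty>\tfrac L2\|w\|_\infty$, hence $|z+Lw|>\tfrac L2\|w\|_\infty$; since $2s>n$ the constant $C_s\coloneqq\sum_{w\in\Z^n\setminus\{0\}}\|w\|_\infty^{-2s}$ is finite, so Cauchy--Schwarz with the weights $|z+Lw|^{\pm s}$ gives, uniformly in $z\in\Z_L^n$,
\begin{align*}
\Big(\sum_{w\ne0}\alpha_{z+Lw}\Big)^2
&\le\Big(\sum_{w\ne0}|z+Lw|^{-2s}\Big)\Big(\sum_{w\ne0}|z+Lw|^{2s}\alpha_{z+Lw}^2\Big)\\
&\le C_s\,(L/2)^{-2s}\sum_{w\ne0}|z+Lw|^{2s}\alpha_{z+Lw}^2.
\end{align*}
Because $\Z_L^n$ is a fundamental domain for $\Z^n/L\Z^n$ ($L$ odd), the map $(z,w)\mapsto z+Lw$ is injective and, restricted to $z\ne0$, $w\ne0$, has image contained in $\Z^n\setminus\{0\}$; summing over $z\in\Z_L^n\setminus\{0\}$ therefore reassembles the double sum into a subseries of $\sum_{m\in\Z^n\setminus\{0\}}|m|^{2s}\alpha_m^2=\norm{f}_{\mathring H^s}^2$. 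Hence $\mathrm{(A_2)}\le C\,L^{-2s}\norm{f}_{\mathring H^s}^2\to0$, and adding the three bounds yields $\mathbf E\big[\big(\langle h_L,f\rangle_{\mathbb{T}_L^n}-\langle h,f\rangle_{\mathbb{T}^n}\big)^2\big]\to0$. I expect no genuine obstacle beyond $\mathrm{(A_2)}$: the discrete pairing replaces each coefficient $\alpha_z$ by the aliased sum $\sum_w\alpha_{z+Lw}$, and it is precisely the Sobolev regularity $s>n/2$ that both makes these high-frequency contributions absolutely summable and produces the decay $O(L^{-2s})$.
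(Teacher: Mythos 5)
Your proof is correct and follows essentially the same route as the paper's: the same realization of $h_L$ and $h$ through a common family $(\xi_z)$, the same three-term decomposition of the second moment, and the same weighted Cauchy--Schwarz treatment of the aliasing term via Lemma~\ref{high-frequ}. The only (harmless) difference is the last step for $\mathrm{(A_2)}$: the paper bounds $\sum_{w\ne0}|z+Lw|^{-2s}$ by the $L$-independent constant $\sum_{u\ne0}|u|^{-2s}$ and gets the vanishing from the reassembled second factor, which is the tail $\sum_{\|v\|_\infty\ge L/2}|v|^{2s}\alpha_v^2$ of a convergent series, whereas you extract the decay $O(L^{-2s})$ from the first factor and bound the second by $\|f\|_{\mathring H^s}^2$, which even yields a quantitative rate.
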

\begin{proof}  Given $f=\sum_{z\in\Z^n}\alpha_z\varphi_z \in
\bigcup\limits_{s>n/2} H^{s}(\mathbb{T}^n)$, according to Lemma \ref{high-frequ},
\begin{align*}
 \langle h,f\rangle_{\mathbb{T}^n}-\langle  h_L,f\rangle_{\mathbb{T}_L^n} =&\ \frac1{\sqrt{a_n}}
 \sum_{z\in\Z_L^n\setminus\{0\}}\xi_z\cdot\left[\frac1{\lambda_{z}^{n/4}}\alpha_z-\frac1{\lambda_{L,z}^{n/4}}\sum_{w\in\Z^n}\alpha_{z+Lw}
 \right]
 \\
&+\frac1{\sqrt{a_n}}\sum_{z\in\Z^n, \, \|z\|_\infty\ge L/2}\xi_z\cdot\frac1{\lambda_{z}^{n/4}}\alpha_z\fstop
\end{align*}
Thus
\begin{align*}
\lefteqn{a_n\cdot  \mathbf E\Big[\big| \langle h,f\rangle_{\mathbb{T}^n}-\langle  h_L,f\rangle_{\mathbb{T}_L^n}\big|^2\Big]}\\
=&
 \sum_{z\in\Z_L^n\setminus\{0\}}\left[\frac1{\lambda_{z}^{n/4}}\alpha_z-\frac1{\lambda_{L,z}^{n/4}}\sum_{w\in\Z^n }\alpha_{z+Lw}
 \right]^2
 +\sum_{z\in\Z^n, \, \|z\|_\infty\ge L/2}\frac1{\lambda_{z}^{n/2}}\alpha_z^2
 \\
 \leq&\ 2 \sum_{z\in\Z_L^n\setminus\{0\}}\left[\frac1{\lambda_{z}^{n/4}}-\frac1{\lambda_{L,z}^{n/4}}
 \right]^2\alpha_z^2
 \\
 &+ 2 \sum_{z\in\Z_L^n\setminus\{0\}}\frac1{\lambda_{L,z}^{n/2}}\left[\sum_{w\in\Z^n \setminus\{0\}}\alpha_{z+Lw}
 \right]^2
 +\sum_{z\in\Z^n, \, \|z\|_\infty\ge L/2}\frac1{\lambda_{z}^{n/2}}\alpha_z^2
 \\
 \leq&\ \frac2{(2\pi)^{n}} \sum_{z\in\Z_L^n\setminus\{0\}}\left[1-\frac{\lambda_{z}^{n/4}}{\lambda_{L,z}^{n/4}}
 \right]^2\frac1{|z|^n}\alpha_z^2
 \\
 &+ \frac{2}{4^{n}}\sum_{z\in\Z_L^n\setminus\{0\}}\frac1{|z|^n}\left[\sum_{w\in\Z^n \setminus\{0\}}\alpha_{z+Lw}
 \right]^2
 +\frac1{(2\pi)^{n}}\sum_{z\in\Z^n, \, \|z\|_\infty\ge L/2}\frac1{|z|^n}\alpha_z^2\fstop
\end{align*}
Now as $L\to\infty$, the last term vanishes (since $f$ in particular lies in $H^{-n/2}(\mathbb{T}^n)$) and also the first term vanishes,
see the proof of Theorem \ref{thm3} below. 
To estimate the second term, choose $s>n/2$ with $\sum_{z\in\Z^n\setminus\{0\}}|z|^{2s}|\alpha_z|^2<\infty$, which exists by definition of~$f$. 
%
Then,
\begin{align*}
\lefteqn{\sum_{z\in\Z_L^n\setminus\{0\}}\frac1{|z|^n}\left[\sum_{w\in\Z^n \setminus\{0\}}\alpha_{z+Lw}
 \right]^2
\le\sum_{z\in\Z_L^n\setminus\{0\}}\left[\sum_{w\in\Z^n \setminus\{0\}}\alpha_{z+Lw}
 \right]^2}\\
&\le\sum_{z\in\Z_L^n\setminus\{0\}}\left[\sum_{w\in\Z^n \setminus\{0\}}\frac1{|z+Lw|^{2s}}
 \right]\cdot
 \left[\sum_{w\in\Z^n \setminus\{0\}}|z+Lw|^{2s}\cdot|\alpha_{z+Lw}|^2
 \right] \fstop
 \end{align*}
Estimating the term in the first bracket by
\[
\sum_{w\in\Z^n \setminus\{0\}}\frac1{|z+Lw|^{2s}}\leq \sum_{u\in\Z^n\setminus\{0\}}\frac1{|u|^{2s}}<\infty\comma
\]
we then obtain that
\begin{align*}
\sum_{z\in\Z_L^n\setminus\{0\}}\frac1{|z|^n}\left[\sum_{w\in\Z^n \setminus\{0\}}\alpha_{z+Lw}
 \right]^2
\leq& \left[\sum_{u\in\Z^n\setminus\{0\}}\frac1{|u|^{2s}}
 \right]\cdot
\sum_{z\in\Z_L^n\setminus\{0\}}  \left[\sum_{w\in\Z^n \setminus\{0\}}|z+Lw|^{2s}\cdot|\alpha_{z+Lw}|^2
 \right]
\end{align*}
and it therefore suffices to show that the second factor vanishes as~$L\to\infty$. Since~$z,w\neq 0$, we have that~$\abs{z+Lw}\geq L/2$, thus, relabelling~$v\coloneqq z+Lw$,
\begin{align*}
\sum_{z\in\Z_L^n\setminus\{0\}}  \left[\sum_{w\in\Z^n \setminus\{0\}}|z+Lw|^{2s}\cdot|\alpha_{z+Lw}|^2\right] =& \left[\sum_{v\in\Z^n, \, \|v\|_\infty\ge L/2}|v|^{2s}\cdot|\alpha_{v}|^2
 \right]\to 0\qquad\text{as }L\to\infty
\end{align*}
being the remainder of a convergent series.
\end{proof}


\subsubsection{Piecewise Constant Extension of $h_L$}\label{pwc-ext}
Given a polyharmonic Gaussian field $h_L$ on the discrete torus, recall that its piecewise constant extension to the continuous torus
by $h_{L,\flat}(x)\coloneqq h_L(v)$ if $x\in v+Q_L$ where $Q_L=[-\frac1{2L},\frac1{2L})^n$. Then 
\[
\langle h_{L,\flat},f\rangle_{\mathbb{T}^n} = \langle h_L,{\sf q}_Lf\rangle_{\mathbb{T}_L^n} 
\]
with ${\sf q}_Lf\in L^2(\mathbb{T}^n_L)$  for  $f\in L^2(\mathbb{T}^n)$. 
Note that ${\sf q}_Lf(v)= L^{n}\int_{v+Q_L} f(y)dy$ for $v\in\mathbb{T}^n_L$.

\begin{theorem}\label{thm-flat-L} For all $f\in \bigcup\limits_{s>n/2} H^{s}(\mathbb{T}^n)$,
\begin{equation}\label{eq:t:flat-L:0}
\langle h_{L,\flat},f\rangle_{\mathbb{T}^n} \to \langle h,f\rangle_{\mathbb{T}^n} \quad \text{in $L^2(\mathbf P)$ as }L\to\infty \fstop
\end{equation}
Furthermore, for every~$s>n/2$,
\begin{equation}
\big\|h_{L,\flat}-h\big\|_{\mathring{H}^{-s}}^2\to 0 \quad \text{in $L^2(\mathbf P)$ as }L\to\infty\ .
\end{equation}
\end{theorem}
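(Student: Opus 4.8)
The plan is to deduce \eqref{eq:t:flat-L:0} from Theorem~\ref{thm-hL} and to prove the second assertion by an explicit Fourier computation. Throughout I realize $h$ and all the $h_L$ on a common probability space through a single i.i.d.\ family $(\xi_z)_{z\in\Z^n\setminus\{0\}}$ of standard normals, exactly as in the proof of Theorem~\ref{thm-hL}, so that $\langle h,\varphi_w\rangle_{\mathbb{T}^n}=a_n^{-1/2}\lambda_w^{-n/4}\xi_w$ and $\langle h_L,\varphi_z\rangle_{\mathbb{T}^n_L}=a_n^{-1/2}\lambda_{L,z}^{-n/4}\xi_z$ for $z\in\Z^n_L\setminus\{0\}$. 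For \eqref{eq:t:flat-L:0}, using the identity $\langle h_{L,\flat},f\rangle_{\mathbb{T}^n}=\langle h_L,\mathsf{q}_Lf\rangle_{\mathbb{T}^n_L}$, I split
\[
\langle h_{L,\flat}-h,f\rangle_{\mathbb{T}^n}=\langle h_L,\mathsf{q}_Lf-f\rangle_{\mathbb{T}^n_L}+\bigl(\langle h_L,f\rangle_{\mathbb{T}^n_L}-\langle h,f\rangle_{\mathbb{T}^n}\bigr).
\]
The second summand tends to $0$ in $L^2(\mathbf P)$ by Theorem~\ref{thm-hL} (the discrete pairing makes sense because $f\in H^s(\mathbb{T}^n)$ for some $s>n/2$ is continuous, by Sobolev embedding). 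For the first summand, expand the deterministic function $g_L:=(\mathsf{q}_Lf-f)|_{\mathbb{T}^n_L}$ in the discrete eigenbasis, $g_L=\sum_{z\in\Z^n_L}c_z\varphi_z$; then $\mathbf E\bigl[\langle h_L,g_L\rangle_{\mathbb{T}^n_L}^2\bigr]=a_n^{-1}\sum_{z\ne 0}\lambda_{L,z}^{-n/2}c_z^2\le a_n^{-1}16^{-n/2}\|g_L\|_{\mathbb{T}^n_L}^2$, since $\lambda_{L,z}\ge 4L^2\sin^2(\pi/L)\ge 16$ for every $z\ne 0$. Finally $\|g_L\|_{\mathbb{T}^n_L}^2\le\sup_v|\mathsf{q}_Lf(v)-f(v)|^2\le\omega_f(\sqrt n/(2L))^2\to 0$ by uniform continuity of $f$, which proves \eqref{eq:t:flat-L:0}.

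For the second assertion the key step is the Fourier expansion of $h_{L,\flat}$. Computing as in the proof of Lemma~\ref{averages} but for arbitrary $w\in\Z^n$, and using that $\Phi_w$ restricted to $\tfrac1L\Z^n$ depends only on $w\bmod L\Z^n$, one finds that, as functions on $\mathbb{T}^n_L$, $\mathsf{q}_L\varphi_w=\sigma_{L,w}\,\vartheta_{L,w}\,\varphi_{[w]}$, where $[w]\in\Z^n_L$ is the frequency onto which $w$ is aliased ($[w]=w$ iff $w\in\Z^n_L$), $\sigma_{L,w}\in\{\pm 1\}$ for $w\notin L\Z^n$, and $\vartheta_{L,w}:=\prod_{k=1}^n\frac{L}{\pi w_k}\sin(\pi w_k/L)$ extends \eqref{def-theta}, satisfies $|\vartheta_{L,w}|\le 1$, and vanishes for $w\in L\Z^n\setminus\{0\}$. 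By $\langle h_{L,\flat},\varphi_w\rangle_{\mathbb{T}^n}=\langle h_L,\mathsf{q}_L\varphi_w\rangle_{\mathbb{T}^n_L}$ this yields $\langle h_{L,\flat},\varphi_w\rangle_{\mathbb{T}^n}=a_n^{-1/2}\sigma_{L,w}\vartheta_{L,w}\lambda_{L,[w]}^{-n/4}\xi_{[w]}$ for $w\notin L\Z^n$ and $0$ otherwise; in particular $h_{L,\flat}$ is grounded. Since $Y:=\|h_{L,\flat}-h\|_{\mathring H^{-s}}^2=\sum_{w\ne 0}\lambda_w^{-s}\langle h_{L,\flat}-h,\varphi_w\rangle^2$ is a sum of squares of jointly Gaussian variables, $\mathbf E[Y^2]\le 3(\mathbf E Y)^2$ (from $\mathbf E[G^2G'^2]\le 3\,\mathbf E[G^2]\,\mathbf E[G'^2]$ for centered jointly Gaussian $G,G'$), so it suffices to show $\mathbf E Y\to 0$. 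Using that $\xi_{[w]}\perp\xi_w$ whenever $[w]\ne w$, one computes that $a_n\,\mathbf E Y$ equals
\[
\sum_{w\in\Z^n_L\setminus\{0\}}\!\!\lambda_w^{-s}\Bigl(\tfrac{\vartheta_{L,w}}{\lambda_{L,w}^{n/4}}-\tfrac1{\lambda_w^{n/4}}\Bigr)^{2}+\sum_{w\notin\Z^n_L,\ w\notin L\Z^n}\!\!\lambda_w^{-s}\,\frac{\vartheta_{L,w}^2}{\lambda_{L,[w]}^{n/2}}+\sum_{w\in\Z^n\setminus\Z^n_L}\!\!\lambda_w^{-s-n/2}.
\]

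I would then estimate the three sums. The third is a tail of the convergent series $\sum_{w\ne 0}\lambda_w^{-s-n/2}$ and tends to $0$. In the first, \eqref{ratio-lambda}--\eqref{def-theta} bound each summand by $C\,\lambda_w^{-s-n/2}$ uniformly in $L$ while each summand tends to $0$ pointwise in $w$ (as $\vartheta_{L,w}\to 1$ and $\lambda_{L,w}\to\lambda_w$), so dominated convergence yields the limit $0$. The middle sum is the genuinely new term and the main obstacle: bounding $\vartheta_{L,w}^2\le 1$ and $\lambda_{L,[w]}\ge 16$ (as $[w]\in\Z^n_L\setminus\{0\}$), it is at most $16^{-n/2}\sum_{w\in\Z^n\setminus\Z^n_L}\lambda_w^{-s}$, a tail of $\sum_{w\ne 0}\lambda_w^{-s}$, which converges \emph{precisely} because $s>n/2$---and this is the one point where the hypothesis enters. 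Conceptually this term encodes the aliasing produced by the piecewise-constant extension, which folds every high mode $w\notin\Z^n_L$ onto the low mode $[w]$; the extra decay afforded by $s>n/2$ is exactly what makes the accumulated mass of the folded modes negligible. Adding the three limits proves the second assertion, which in turn re-proves the first, since $h_{L,\flat}-h$ is grounded and hence $|\langle h_{L,\flat}-h,f\rangle|\le\|h_{L,\flat}-h\|_{\mathring H^{-s}}\,\|f-\int_{\mathbb{T}^n}f\|_{\mathring H^{s}}$.
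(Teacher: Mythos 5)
Your proof is correct. For the first assertion you use exactly the paper's decomposition $\langle h_{L,\flat}-h,f\rangle_{\mathbb{T}^n}=\langle h_L,\mathsf{q}_Lf-f\rangle_{\mathbb{T}^n_L}+\bigl(\langle h_L,f\rangle_{\mathbb{T}^n_L}-\langle h,f\rangle_{\mathbb{T}^n}\bigr)$, invoke Theorem~\ref{thm-hL} for the second term, and kill the first by the uniform bound $\lambda_{L,z}\ge 16$ together with uniform continuity of $f$; the paper does the same, only routing the last step through $\|\mathsf{q}_Lf-f\|_{H^{-n/2}}\le\|\mathsf{q}_Lf-f\|_{L^2}$ rather than the sup norm. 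For the second assertion your route genuinely differs: the paper expands $\mathbf E\|h_{L,\flat}-h\|^2_{\mathring H^{-s}}$ as $\mathbf E\|h_{L,\flat}\|^2-2\,\mathbf E\langle h,h_{L,\flat}\rangle+\mathbf E\|h\|^2$ and shows each piece converges to $\mathbf E\|h\|^2_{\mathring H^{-s}}$ via the Parseval bound $\sum_{w}\lambda_{L,w}^{-n/2}\langle\varphi_w,\mathsf{q}_L\varphi_z\rangle^2\le 1$ and dominated convergence in $z$, whereas you compute the difference directly by making the aliasing identity $\mathsf{q}_L\varphi_w=\sigma_{L,w}\vartheta_{L,w}\varphi_{[w]}$ on $\mathbb{T}^n_L$ explicit and splitting the resulting sum into low modes, folded high modes, and a pure tail. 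The two arguments use the hypothesis $s>n/2$ at the same spot (summability of $\sum_w\lambda_w^{-s}$ over the high/aliased frequencies), but yours isolates \emph{why} it is needed --- the accumulated mass of the folded modes --- which the paper's Parseval bound hides. You also add something the paper's proof does not supply: the statement asserts convergence of $\|h_{L,\flat}-h\|^2_{\mathring H^{-s}}$ in $L^2(\mathbf P)$, while the paper only establishes convergence of its expectation; your Gaussian fourth-moment bound $\mathbf E[Y^2]\le 3(\mathbf E Y)^2$ closes that gap cleanly. Only cosmetic caveats: $\vartheta_{L,w}$ vanishes whenever some $w_k\in L\Z\setminus\{0\}$, not only for $w\in L\Z^n\setminus\{0\}$ (harmless, since you only use $|\vartheta_{L,w}|\le1$ there), and the sign/representative bookkeeping for $[w]$ in the real basis $\{\varphi_z\}$ deserves one sentence, but neither affects the estimates since only $\vartheta_{L,w}^2$ and $\lambda_{L,[w]}$ enter.
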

\begin{proof}  
By construction,
\[
\langle h_{L,\flat},f\rangle_{\mathbb{T}^n} - \langle h,f\rangle_{\mathbb{T}^n} = \langle h_L,{\sf q}_Lf-f\rangle_{\mathbb{T}_L^n}+
 \langle h_L,f\rangle_{\mathbb{T}_L^n}
 - \langle h,f\rangle_{\mathbb{T}^n} 
\]
and according to the previous Theorem \ref{thm-hL}, $ \langle h_L,f\rangle_{\mathbb{T}_L^n}
 - \langle h,f\rangle_{\mathbb{T}^n} \to0$ as $L\to\infty$.
 The first claim thus follows from
 \begin{align*}
\mathbf E\Big[\langle h_L,{\sf q}_Lf-f\rangle_{\mathbb{T}_L^n}^2\Big]&=\frac1{a_n}\sum_{z\in\Z_L^n\setminus\{0\}}
\frac1{\lambda_{L,z}^{n/2}}\langle \varphi_z, {\sf q}_Lf-f\rangle_{\mathbb{T}_L^n}^2\\
&\le\frac1{4^n\, a_n}\sum_{z\in\Z_L^n\setminus\{0\}}
\frac1{|z|^n}\langle \varphi_z, {\sf q}_Lf-f\rangle_{\mathbb{T}_L^n}^2\\
&=
\frac1{4^n\, a_n}\,\big\|{\sf q}_Lf-f\big\|^2_{H^{-n/2}}\\
&\le \frac1{4^n\, a_n}\,\big\|{\sf q}_Lf-f\big\|^2_{L^2}\to0\quad\text{as $L\to\infty$}
\end{align*}
since by Sobolev embedding
\[
\bigcup\limits_{s>n/2} H^{s}(\mathbb{T}^n)\subset \mathcal C(\mathbb{T}^n) \fstop
\]

%
For the second claim, observe that
\begin{equation*}
  h_{L,\flat} = \frac{1}{\sqrt{a_{n}}} \sum_{z \in \mathbb{Z}^{n} \setminus \{0\}} \varphi_{z} \sum_{w \in \mathbb{Z}^{n}_{L} \setminus \{0\}} \lambda_{w,L}^{-n/4} \xi_{w} \hsp{\varphi_{w}}{\mathsf{q}_{L}\varphi_{z}}.
\end{equation*}
From this, we get
\begin{equation*}
  a_{n} \norm{h_{L,\flat}}^{2}_{H^{-s}} = \sum_{z \in \mathbb{Z}^{n} \setminus \{0\}} \lambda_{z}^{-s} \paren{ \sum_{w \in \mathbb{Z}^{n}_{L} \setminus \{0\}} \lambda_{w,L}^{-n/4} \xi_{w} \hsp{\varphi_{w}}{\mathsf{q}_{L}\varphi_{z}} }^{2}.
\end{equation*}
And thus
\begin{equation*}
  a_{n} \mathbf{E} \norm{h_{L,\flat}}^{2}_{H^{-s}} = \sum_{z \in \mathbb{Z}^{n} \setminus \{0\}} \lambda_{z}^{-s} \sum_{w \in \mathbb{Z}^{n}_{L} \setminus \{0\}} \lambda_{w,L}^{-n/2} \hsp{\varphi_{w}}{\mathsf{q}_{L}\varphi_{z}}^{2} \\
\end{equation*}
On the one hand, as $L \to \infty$, the summand converges to $\lambda_{z}^{-s-n/2} 1_{z = w}$.
On the other hand, by Parseval's indentity, we find that
\begin{equation*}
  \sum_{w \in \mathbb{Z}^{n}_{L} \setminus \{0\}} \lambda_{w,L}^{-n/2} \hsp{\varphi_{w}}{\mathsf{q}_{L}\varphi_{z}}^{2} = \norm{{(-\Delta_{L})}^{-n/2} \mathsf{q}_{L}\varphi_{z}}_{L^{2}(\mathbb{T}^{n}_{L})}^{2} \leq 1.
\end{equation*}
Thus the sum is uniformly bounded since $s > n/2$.
This shows that
\begin{equation*}
  \mathbf{E} \norm{h_{L,\flat}}^{2}_{H^{-s}} \to \mathbf{E} \norm{h}^{2}_{H^{-s}}.
\end{equation*}
A similar computation shows that
\begin{equation*}
  \mathbf{E} \hsp{h}{h_{L,\flat}}_{H^{-s}} \to \mathbb{E} \norm{h}_{H^{-s}}^{2}.
\end{equation*}
This concludes the proof of the second claim.
\end{proof}

%
%
%
\subsubsection{Fourier Extension of $h_L$}\label{sss:FourierExtension}

Given a polyharmonic Gaussian field $h_L$ on the discrete torus, we define its Fourier extension to the continuous torus as 
$$h_{L,\sharp}(x)\coloneqq \dot{\sf r}_L h(x)\coloneqq \langle h, r_L(x,\emparg)\rangle_{\mathbb{T}^N_L} \qquad (\forall x\in\mathbb{T}^n)\fstop$$
In other words, for every $\omega$ the function $h^\omega_{L,\sharp}$ is the unique function in ${\mathcal D}_L$ with $h^\omega_{L,\sharp}=h^\omega_L$ on $\mathbb{T}^n_L$, cf.~\eqref{haL}.
If $h_L$ is given as 
\begin{equation*}
 h_L(v)\coloneqq \frac1{\sqrt{a_n}}\sum_{z\in\Z_L^n\setminus\{0\}} 
\frac1{\lambda_{L,z}^{n/4}}
\cdot 
\xi_z\cdot \varphi_z(v) \qquad (\forall v\in\mathbb{T}_L^n)
\end{equation*}
then the same formula provides the representation of $h_{L,\sharp}$:
\begin{equation}\label{haLtil2}
 h_{L,\sharp}(x)\coloneqq \frac1{\sqrt{a_n}}\sum_{z\in\Z_L^n\setminus\{0\}} 
\frac1{\lambda_{L,z}^{n/4}}
\cdot 
\xi_z\cdot \varphi_z(x) \qquad (\forall x\in\mathbb{T}^n)\fstop
\end{equation}
This is a centered Gaussian field on $\mathbb{T}^n$ with covariance function
\begin{equation}\label{kL-sharp}
k_{L,\sharp}(x,y)=\frac1{a_n}\sum_{z\in\Z^n_L\setminus\{0\}} \frac1{\lambda_{L,z}^{n/2}}\cdot 
\cos\Big(2\pi \, z\cdot (x-y)\Big) \qquad(\forall x,y\in\mathbb{T}^n)
\end{equation}
 
%

\begin{figure}[htb!]
\includegraphics[scale=.5]{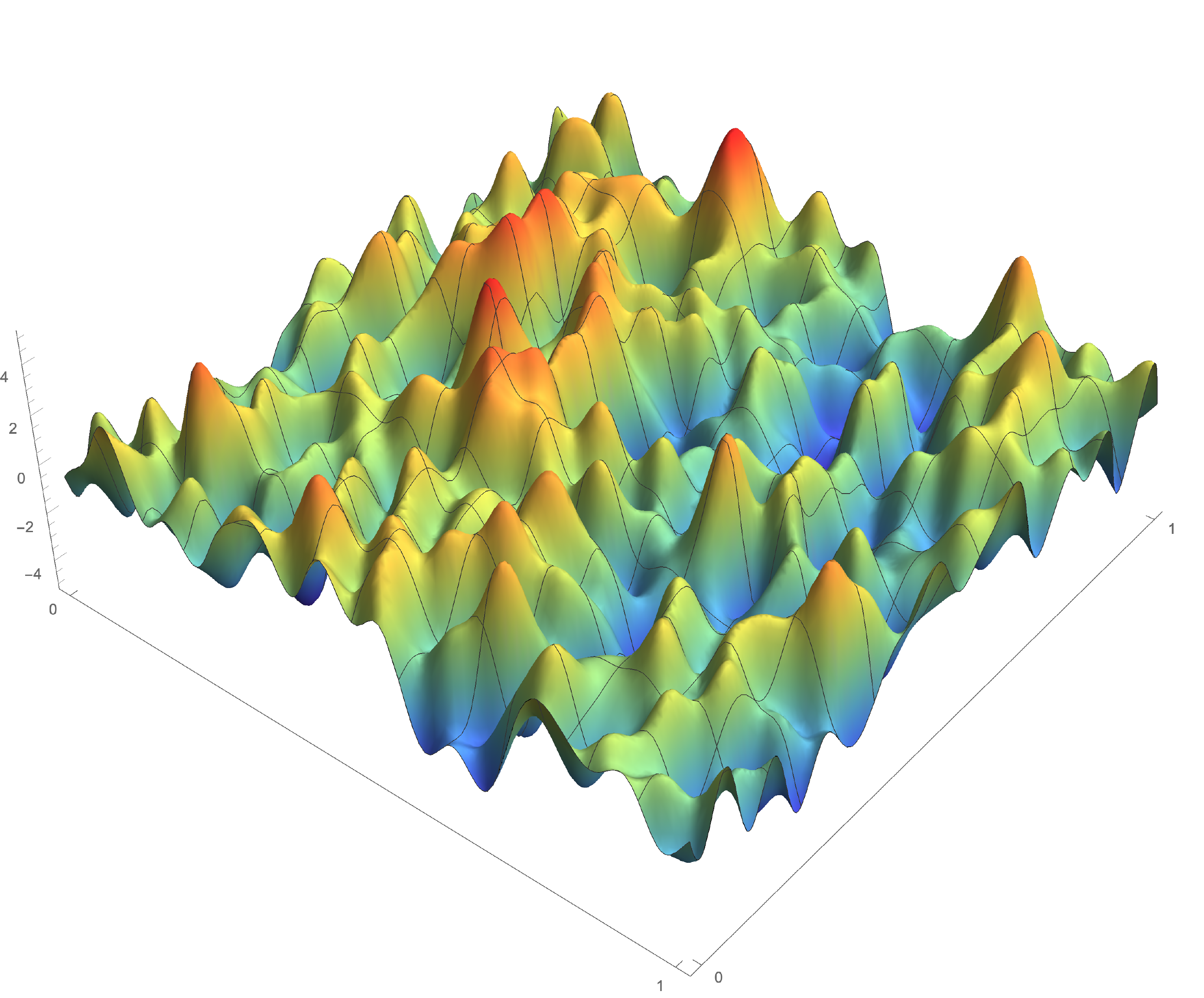}
\caption{A realization of~$h_{25,\sharp}$}
\end{figure}

\begin{theorem}\label{thm3} For all $f\in \mathring H^{-n/2}(\mathbb{T}^n)$,
\begin{equation}
\langle h_{L,\sharp},f\rangle_{\mathbb{T}^n} \to \langle h,f\rangle_{\mathbb{T}^n} \quad \text{in $L^2(\mathbf P)$ as }L\to\infty \comma
\end{equation}
and for all $\epsilon>0$,
\begin{equation}\big\|h_{L,\sharp}-h\big\|_{H^{-\epsilon}}^2\to 0 \quad \text{in $L^2(\mathbf P)$ as }L\to\infty\fstop
\end{equation}
\end{theorem}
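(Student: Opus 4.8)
The plan is to compare the Fourier extension $h_{L,\sharp}$ directly with the Fourier projection $h_{\sharp,L}$, whose convergence to $h$ is already guaranteed by Proposition \ref{thm-four-L}, and to control the discrepancy using the fact that $\lambda_{L,z}/\lambda_z \to 1$ uniformly on compacta together with \eqref{ratio-lambda}. Both fields are built from the \emph{same} i.i.d.\ standard normals $\xi_z$, $z \in \Z^n_L \setminus \{0\}$: indeed $h_{L,\sharp} = \frac{1}{\sqrt{a_n}}\sum_{z \in \Z^n_L\setminus\{0\}} \lambda_{L,z}^{-n/4}\,\xi_z\,\varphi_z$ by \eqref{haLtil2}, while $h_{\sharp,L} = \frac{1}{\sqrt{a_n}}\sum_{z \in \Z^n_L\setminus\{0\}} \lambda_z^{-n/4}\,\xi_z\,\varphi_z$. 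Hence, for $f = \sum_{z}\alpha_z\varphi_z \in \mathring H^{-n/2}(\mathbb{T}^n)$,
\begin{equation*}
a_n\,\mathbf E\Big[\big|\langle h_{L,\sharp} - h_{\sharp,L}, f\rangle_{\mathbb{T}^n}\big|^2\Big] = \sum_{z \in \Z^n_L\setminus\{0\}} \Big(\lambda_{L,z}^{-n/4} - \lambda_z^{-n/4}\Big)^2 \alpha_z^2 = \sum_{z \in \Z^n_L\setminus\{0\}}\frac{1}{(2\pi|z|)^n}\Big(1 - \big(\lambda_z/\lambda_{L,z}\big)^{n/4}\Big)^2\alpha_z^2.
\end{equation*}
Since $f \in \mathring H^{-n/2}$ exactly means $\sum_z (2\pi|z|)^{-n}\alpha_z^2 < \infty$, and since by \eqref{ratio-lambda} the factor $\big(1 - (\lambda_z/\lambda_{L,z})^{n/4}\big)^2$ is bounded by a constant uniformly in $L,z$ and tends to $0$ for each fixed $z$ as $L\to\infty$, dominated convergence for series gives that this quantity vanishes as $L\to\infty$. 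Combining with Proposition \ref{thm-four-L} and the triangle inequality in $L^2(\mathbf P)$ yields the first assertion.

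For the second assertion, the same comparison works at the level of $H^{-\epsilon}$-norms. Both $h_{L,\sharp}$ and $h_{\sharp,L}$ are finite sums, so there is no subtlety in the random series; one computes
\begin{equation*}
a_n\,\mathbf E\big\|h_{L,\sharp} - h_{\sharp,L}\big\|_{H^{-\epsilon}}^2 = \sum_{z \in \Z^n_L\setminus\{0\}} \lambda_z^{-\epsilon}\Big(\lambda_{L,z}^{-n/4} - \lambda_z^{-n/4}\Big)^2 = \sum_{z \in \Z^n_L\setminus\{0\}} \frac{1}{(2\pi|z|)^{2\epsilon+n}}\Big(1-\big(\lambda_z/\lambda_{L,z}\big)^{n/4}\Big)^2,
\end{equation*}
which again goes to $0$ as $L\to\infty$ by dominated convergence, the dominating series $\sum_{z\neq 0}(2\pi|z|)^{-2\epsilon-n}$ being finite for every $\epsilon>0$. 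Together with the convergence $h_{\sharp,L}\to h$ in $L^2(H^{-\epsilon}(\mathbb{T}^n),\mathbf P)$ from Proposition \ref{thm-four-L} and the triangle inequality, this gives $\|h_{L,\sharp}-h\|_{H^{-\epsilon}}^2 \to 0$ in $L^2(\mathbf P)$; note that since all the Gaussian random variables involved live in a fixed Wiener chaos, $L^2(\mathbf P)$-convergence of the norms follows from $L^1(\mathbf P)$-convergence via hypercontractivity if one prefers to argue that way, but here the direct second-moment computation already suffices.

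The only mild obstacle is bookkeeping: one must be careful that the index set $\Z^n_L\setminus\{0\}$ for $h_{L,\sharp}$ grows with $L$, so the ``error'' sum is not over a fixed set; this is handled by writing the summand as $\mathbf 1_{\{\|z\|_\infty < L/2\}}$ times a fixed summand indexed by all $z \in \Z^n\setminus\{0\}$, so that the dominated convergence argument applies on the fixed index set $\Z^n\setminus\{0\}$, with pointwise limit $0$ (the indicator tends to $1$ but the prefactor $\big(1-(\lambda_z/\lambda_{L,z})^{n/4}\big)^2$ tends to $0$ for each $z$). No estimate on the diagonal behaviour of the kernels is needed here, in contrast with the more delicate measure-convergence statements later; the field-level convergence is soft once the eigenvalue ratios are controlled.
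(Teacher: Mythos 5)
Your proposal is correct and follows essentially the same route as the paper: both compare $h_{L,\sharp}$ with the Fourier projection $h_{\sharp,L}$ (built from the same Gaussians), reduce each claim to a deterministic series in the eigenvalue ratios $\lambda_z/\lambda_{L,z}$, and conclude by dominated convergence together with Proposition~\ref{thm-four-L}. Your second-moment computation $\sum_z\bigl(\lambda_{L,z}^{-n/4}-\lambda_z^{-n/4}\bigr)^2\alpha_z^2$ is in fact the exact expression (the paper's first display writes the difference of squares $\lambda_{L,z}^{-n/2}-\lambda_z^{-n/2}$, which dominates it), and the limit argument is identical.
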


\begin{proof} According to the Proposition \ref{thm-four-L}, we already know that $\langle h_{\sharp,L},f\rangle \to \langle h,f\rangle$. Thus it suffices to prove
 $\langle h_{L,\sharp}-h_{\sharp,L},f\rangle \to 0$.
This follows according to
\begin{align*}
\lefteqn{\mathbf E\Big[\langle h_{L,\sharp}-h_{\sharp,L},f\rangle^2\Big]}\\
&=
\frac1{a_n}\sum_{z\in\Z_L^n\setminus\{0\}} \Bigg(\frac1{\lambda_{L,z}^{n/2}}
-\frac1{\lambda_z^{n/2}}\Bigg)\, \langle\varphi_z,f\rangle^2\\
&=
\frac1{a_n}\sum_{z\in\Z_L^n\setminus\{0\}} \Bigg(\bigg(\frac{\lambda_z}{\lambda_{L,z}}\bigg)^{n/2}-1\Bigg)\, \frac{\langle\varphi_z,f\rangle^2}{\lambda_z^{n/2}}\to 0
\end{align*}
as~$L\to\infty$ by the Dominated Convergence Theorem since 
\begin{gather*}
\sum_{z\in\Z^n\setminus\{0\}}  \frac{1}{\lambda_z^{n/2}}\langle\varphi_z,f\rangle^2<\infty\comma
\\
0\le\bigg(\frac{\lambda_z}{\lambda_{L,z}}\bigg)^{n/2}-1\le 2^n
\end{gather*}
for all $z$ and $L$, and
\[
\bigg(\frac{\lambda_z}{\lambda_{L,z}}\bigg)^{n/2}-1 \to 0
\]
as~$L\to\infty$ for every $z$.

\medskip

For the second claim, we use the fact that  again by Proposition \ref{thm-four-L} for every $\epsilon>0$,
$$\mathbf E\Big[\big\| h-h_{\sharp, L}\big\|_{\mathring H^{-\epsilon}}^2
\Big]\to0\quad\text{as }L\to\infty\fstop$$
Furthermore,
\begin{align*}
\mathbf E\Big[\big\| h_{L,\sharp}-h_{\sharp, L}\big\|_{\mathring H^{-\epsilon}}^2
\Big]
&=
\frac1{a_n}\sum_{z\in\Z_L^n\setminus\{0\}} \Bigg(\frac1{\lambda_{L,z}^{n/4}}
-\frac1{{(2\pi|z|)}^{n/2}}\Bigg)^2\, \frac1{|z|^{2\epsilon}}\\
&=
\frac1{a_n(2\pi)^n}\sum_{z\in\Z_L^n\setminus\{0\}} \Bigg(\bigg(\frac{\lambda_z}{\lambda_{L,z}}\bigg)^{n/4}-1\Bigg)^2\, \frac{1}{{|z|}^{n-2\epsilon}}\to0
\end{align*}
as $L\to\infty$ by the same Dominated Convergence arguments as for the first claim.
\end{proof}

\begin{figure}[htb!]
     \centering
     \begin{subfigure}[b]{0.45\textwidth}
         \centering
         \includegraphics[width=\textwidth]{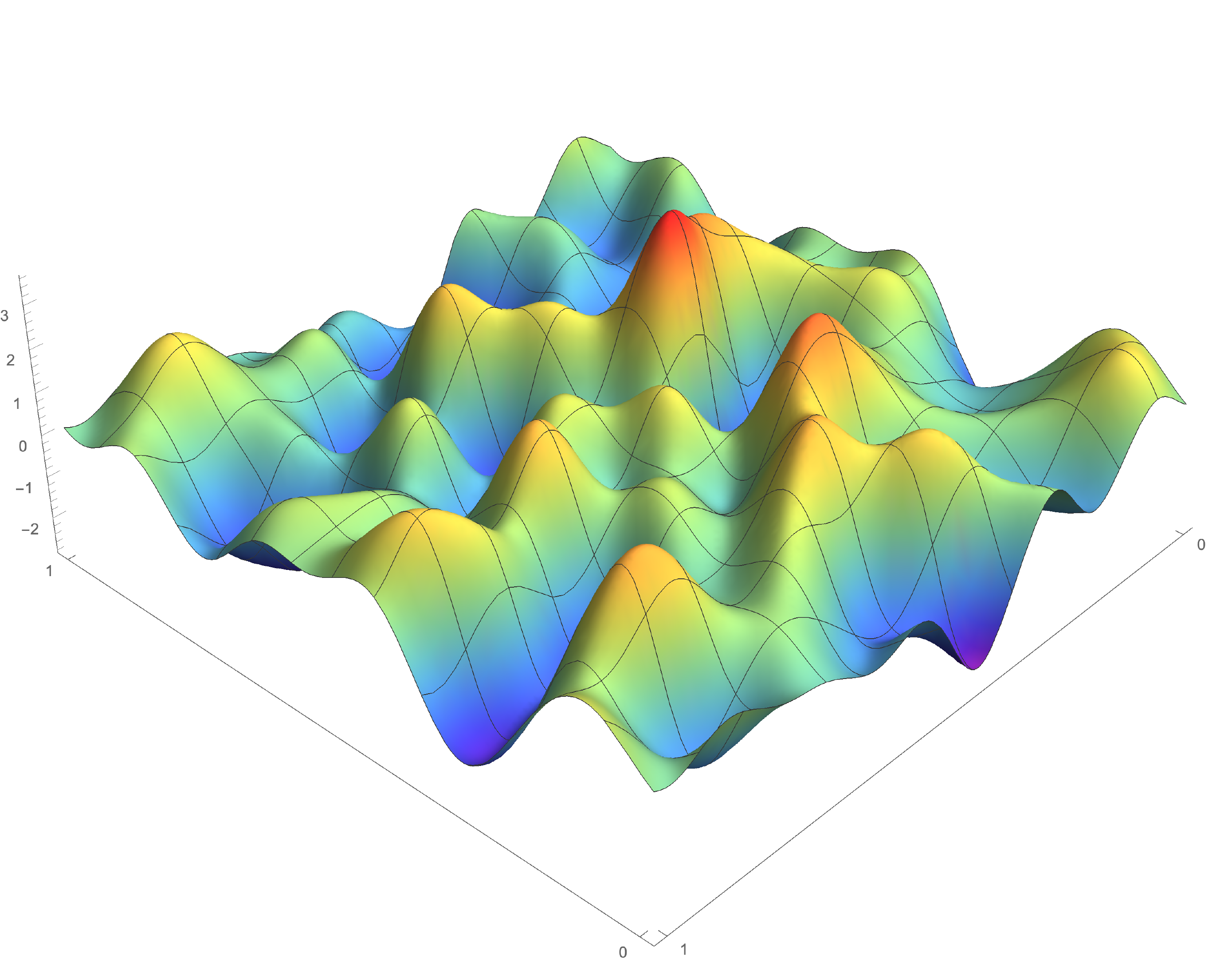}
         \caption{$h_{11,\sharp}$}
     \end{subfigure}
     \hfill
     \begin{subfigure}[b]{0.45\textwidth}
         \centering
         \includegraphics[width=\textwidth]{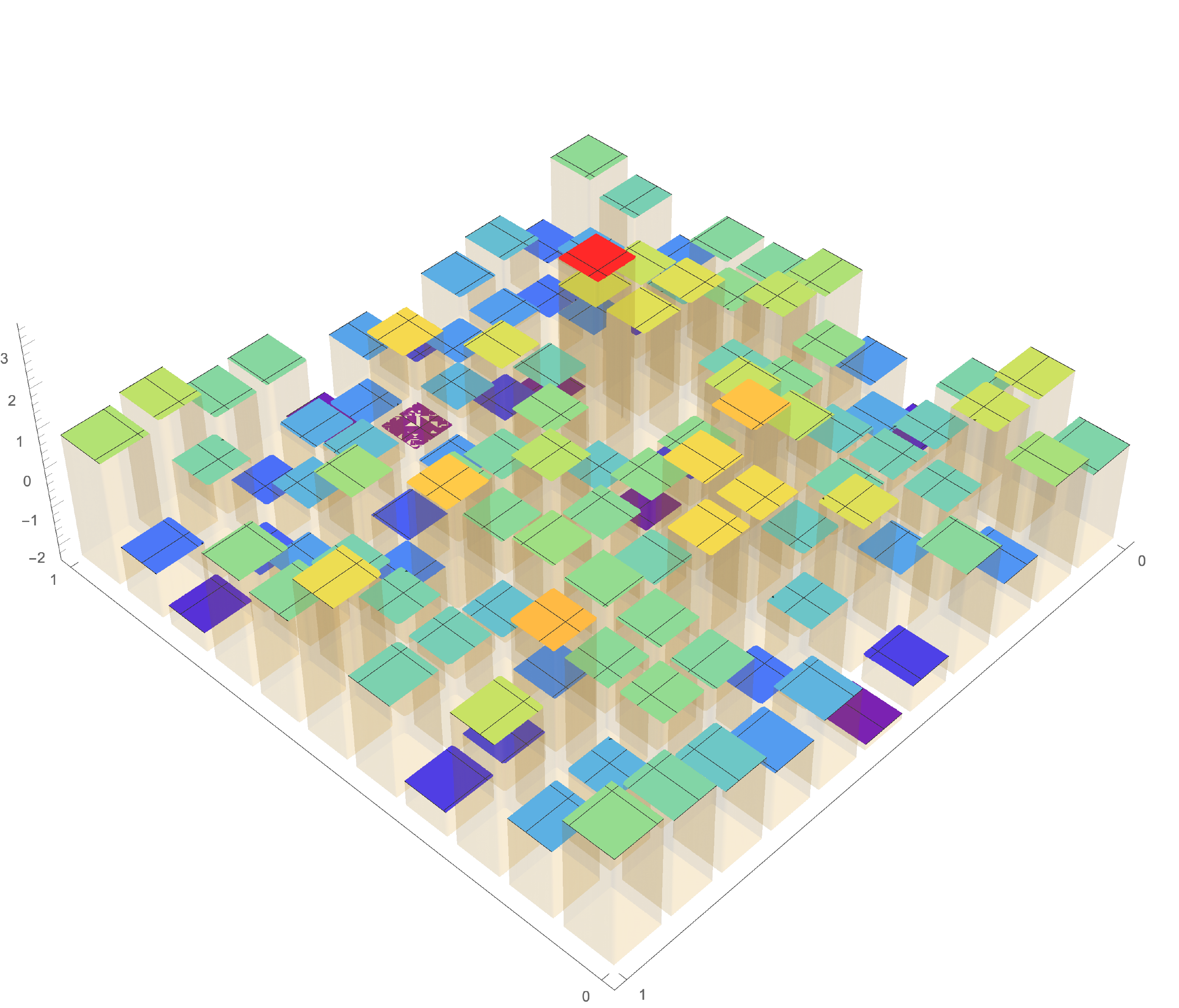}
         \caption{$h_{11,\flat}$}
     \end{subfigure}
        \caption{\footnotesize Two different approximations/extensions of the field~$h$ with same realization of the randomness.}
\end{figure}

\subsubsection{Reduced and Spectrally Reduced Polyharmonic Gaussian Fields  on the Discrete Torus}\label{sss:ReducedDiscrete}

The previous assertions Theorem \ref{thm-hL}, \ref{thm-flat-L}, and \ref{thm3}  on the polyharmonic field  and its (Fourier or piecewise constant, resp.) extensions --- as well as their proofs --- hold true in analogous form 
for the reduced and spectrally reduced polyharmonic fields $h_L^{{-}}$ and $h_L^{{-\circ}}$ and their (Fourier or piecewise constant, resp.) extensions $h_{L,\sharp}^{{-}}$, $h_{L,\flat}^{{-}}$ and $h_{L,\sharp}^{{-\circ}}$, $h_{L,\flat}^{{-\circ}}$.

\begin{proposition}\label{thm-mod-hL} (i) For all $f\in \bigcup\limits_{s>n/2} H^{s}(\mathbb{T}^n)$,
\[
\langle h^{{-\circ}}_L,f\rangle_{\mathbb{T}_L^n} \to \langle  h,f\rangle_{\mathbb{T}^n} \quad \text{in $L^2(\mathbf P)$ as }L\to\infty 
\]
as well as
\[
\langle h^{{-\circ}}_{L,\flat},f\rangle_{\mathbb{T}^n} \to \langle h,f\rangle_{\mathbb{T}^n} \quad \text{in $L^2(\mathbf P)$ as }L\to\infty \fstop
\]
(ii) 
For all $f\in \mathring H^{-n/2}(\mathbb{T}^n)$,
\[
\langle h^{{-\circ}}_{L,\sharp},f\rangle_{\mathbb{T}^n} \to \langle h,f\rangle_{\mathbb{T}^n} \quad \text{in $L^2(\mathbf P)$ as }L\to\infty \fstop
\]
(iii) Analogously with $h_{L}^{{-}}$, $h_{L,\flat}^{{-}}$, $h_{L,\sharp}^{{-}}$ in the place of $h_{L}^{{-\circ}}$, $h_{L,\flat}^{{-\circ}}$, $h_{L,\sharp}^{{-\circ}}$.
\end{proposition}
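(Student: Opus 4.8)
The plan is to exploit that $h_L^{{-\circ}}$ and $h_L^{{-}}$ are produced by exactly the recipe \eqref{haL} defining $h_L$, but with the discrete eigenvalues $\lambda_{L,z}$ replaced by $\lambda_z$ and by $\lambda_z\,\vartheta_{L,z}^{-4/n}$, respectively, and then to rerun the proofs of Theorems \ref{thm-hL}, \ref{thm-flat-L} and \ref{thm3} with this substitution. Inspection of those proofs shows that the only features of $\lambda_{L,z}$ they use are: (a) a uniform two-sided comparison $c\,\lambda_z\le\mu_{L,z}\le\lambda_z$ with $c>0$ independent of $L$ and $z$ --- equivalently, $\lambda_z/\mu_{L,z}$ bounded uniformly in $L,z$; (b) $\lambda_z/\mu_{L,z}\to1$ as $L\to\infty$ for each fixed $z\neq0$; (c) the lower bound $\lambda_z\ge1$ for $z\neq0$, which makes series like $\sum_{z\neq0}\lambda_z^{-n/2-\epsilon}$ converge; and (d) the aliasing identity of Lemma \ref{high-frequ}, used to evaluate the discrete pairing $\langle\,\cdot\,,f\rangle_{\mathbb{T}_L^n}$. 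For the spectrally reduced field $\mu_{L,z}=\lambda_z$, so (a) holds with $c=1$ and (b) is vacuous; for the reduced field $\mu_{L,z}=\lambda_z\,\vartheta_{L,z}^{-4/n}$, and (a)--(b) follow from \eqref{def-theta}, since $\vartheta_{L,z}\in[(2/\pi)^n,1]$ and $\vartheta_{L,z}\to1$. Hence every estimate in those three proofs survives the substitution, up to harmless constants.

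Concretely, for statement (i) with $h_L^{{-\circ}}$ I would first observe that the ``mismatch term'' $\sum_{z\in\Z_L^n\setminus\{0\}}[\lambda_z^{-n/4}-\lambda_{L,z}^{-n/4}]^2\alpha_z^2$ occurring in the proof of Theorem \ref{thm-hL} is now absent, leaving --- via Lemma \ref{high-frequ} and independence of the relevant blocks of $\xi_z$'s --- only the aliasing term $\sum_{z\in\Z_L^n\setminus\{0\}}\lambda_z^{-n/2}\big(\sum_{w\neq0}\alpha_{z+Lw}\big)^2$ and the tail $\sum_{\|z\|_\infty\ge L/2}\lambda_z^{-n/2}\alpha_z^2$; both vanish as $L\to\infty$ precisely as in the proof of Theorem \ref{thm-hL} (Cauchy--Schwarz against $\sum_{u\neq0}|u|^{-2s}<\infty$ plus the tail estimate $\sum_{\|v\|_\infty\ge L/2}|v|^{2s}|\alpha_v|^2\to0$ for the former, and $f\in H^{-n/2}(\mathbb{T}^n)$ for the latter). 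The piecewise-constant-extension claim then follows by writing $\langle h_{L,\flat}^{{-\circ}},f\rangle_{\mathbb{T}^n}=\langle h_L^{{-\circ}},\mathsf q_Lf\rangle_{\mathbb{T}_L^n}$ and combining the above with $\|\mathsf q_Lf-f\|_{L^2}\to0$, exactly as in the proof of Theorem \ref{thm-flat-L}, the companion $\mathring H^{-s}$-convergence being obtained by the second-moment computation there, which only simplifies once $\lambda_{L,z}$ is replaced by $\lambda_z$.

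For statement (ii) with $h_L^{{-\circ}}$, the cleanest route is to note that the Fourier extension $h_{L,\sharp}^{{-\circ}}(x)=\frac1{\sqrt{a_n}}\sum_{z\in\Z_L^n\setminus\{0\}}\lambda_z^{-n/4}\xi_z\varphi_z(x)$ coincides --- for a common realization of the randomness --- with the Fourier projection $h_{\sharp,L}$ of $h$, so this case is literally Proposition \ref{thm-four-L}, which moreover yields $H^{-\epsilon}$-convergence. For (iii), the reduced field $h_L^{{-}}$, I would rerun (i) and (ii) with the coefficient $\vartheta_{L,z}\lambda_z^{-n/4}$ in place of $\lambda_{L,z}^{-n/4}$: a mismatch term reappears, but now controlled by $|1-\vartheta_{L,z}|\le 1-(2/\pi)^n$ with $\vartheta_{L,z}\to1$, so the dominated-convergence arguments of Theorems \ref{thm-hL} and \ref{thm3} apply verbatim; in particular $\mathbf E\big[|\langle h_{L,\sharp}^{{-}}-h_{\sharp,L},f\rangle|^2\big]=\frac1{a_n}\sum_{z\in\Z_L^n\setminus\{0\}}(1-\vartheta_{L,z})^2\lambda_z^{-n/2}\langle\varphi_z,f\rangle^2\to0$, and Proposition \ref{thm-four-L} closes the Fourier-extension claim.

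I do not expect any genuine obstacle here; the only point demanding care is the bookkeeping in (i) --- verifying that each appeal to the eigenvalue asymptotics in the proofs of Theorems \ref{thm-hL}, \ref{thm-flat-L}, \ref{thm3} truly factors through (a)--(d) above, so that replacing $\lambda_{L,z}$ by $\lambda_z$ or by $\lambda_z\,\vartheta_{L,z}^{-4/n}$ changes nothing essential --- which is routine.
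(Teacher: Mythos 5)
Your proposal is correct and follows essentially the same route as the paper, which states Proposition \ref{thm-mod-hL} without a separate proof, asserting only that Theorems \ref{thm-hL}, \ref{thm-flat-L} and \ref{thm3} ``as well as their proofs hold true in analogous form'' for $h_L^{{-\circ}}$ and $h_L^{{-}}$; your verification that those proofs only use uniform comparability of the modified eigenvalues with $\lambda_z$, their pointwise convergence, and the aliasing identity of Lemma \ref{high-frequ} is precisely the bookkeeping the authors leave implicit. Your shortcut for part (ii) --- identifying $h_{L,\sharp}^{{-\circ}}$ with $h_{\sharp,L}$ for a common realization of the $\xi_z$ and invoking Proposition \ref{thm-four-L} --- is also exactly the content of the paper's Lemma \ref{fourier-ext/rest}.
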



\subsubsection{Related Approaches and  Results}\label{remarks}
\begin{remark}[Log-correlated Gaussian fields in the continuum]
For~$n=1$, the field~$h$ is the lower limiting case of the fractional Brownian motion with (regularity) parameter in~$(\tfrac{1}{2},\tfrac{3}{2})$, see e.g.~\cite{LodSheSunWat16,DuplantierRhodesSheffieldVargas}.
For~$n=2$, it is the celebrated \emph{Gaussian Free Field} (GFF) on~$\mathbb{T}^2$, surveyed in~\cite{Sheffield}.
For~$n=1$, it coincides in distribution with the restriction of a GFF to a line ($\mathbb{T}^1\subset \mathbb{T}^2$).
For~$n\geq 3$, it is a log-correlated Gaussian field surveyed in~\cite{DuplantierRhodesSheffieldVargas}.
The conformal invariance of~$h$ on~$\mathbb{T}^n$ is a consequence of the conformal invariance of the Laplace--Beltrami operator on flat geometries.
The correct (i.e., conformally covariant) construction of log-correlated Gaussian fields in general non-flat geometries may be found in~\cite{DHKS}.
\end{remark}

\begin{remark}[Discrete-to-continuum approximation]
To the best of our knowledge, no discretization/discrete approximation results are available for log-correlated Gaussian fields in dimension~$n\geq 5$. In small dimension however, Gaussian fields analogous to~$h$ are known to be scaling limits of different discrete models.
For $n=2$, in light of the celebrated universality property of GFFs, the field~$h$ is a scaling limit for a huge number of different discrete Gaussian and non-Gaussian fields defined in various settings, from lattices to random environments, as, e.g., the random conductance model~\cite{Biskup}.
For~$n=4$, the field~$h$ is generated by the Neumann bi-Laplacian; the analogous field generated by the Dirichlet bi-Laplacian on~$[0,1]^4$ is the scaling limit of the membrane model~\cite{Sakagawa,Kurt}, see~\cite{CiprianiDanHazra,Schweiger}, as well as of the odometer for the divisible sandpile model~\cite{LevineMuruganPeresUgurcan}, see~\cite{CiprianiHazraRuszel}.
We stress that our convergence results for different discretizations of~$h$ hold in~$\mathring{H}^{-s}$ with~$s>2$, thus matching the same range of exponents as for the scaling limit of the sandpile odometer, see~\cite[Prop.~14]{CiprianiGraaffRuszel}.
On the other hand, the analogous scaling limit for the membrane model has so far been proven only in~$H^{-s}$ for~$s>6$, see \cite[Thm.~3.11]{CiprianiDanHazra}.
\end{remark}
\subsection{Identifications}
\begin{lemma}\label{fourier-ext/rest} The spectral enhancement $h^{{+\circ}}_{\sharp,L}$ of the Fourier projection $h_{\sharp,L}$ of the polyharmonic field~$h$ coincides in distribution with the Fourier extension $h_{L,\sharp}$ of the discrete polyharmonic field $h_L$. 

Similarly,  the Fourier approximation $ h_{\sharp,L}$ of the polyharmonic field $h$ coincides in distribution with the spectral reduction $h^{{-\circ}}_{L,\sharp}$ of the Fourier extension $h_{L,\sharp}$ of the discrete polyharmonic field $h_L$ (which in turn coincides with the Fourier extension of the spectrally reduced discrete polyharmonic field).
Or, in other terms,
the restriction of $h_{\sharp,L}$   onto $\mathbb{T}^n_L$ defines a spectrally reduced polyharmonic field~$ h_L^{{-\circ}}$. 
 \end{lemma}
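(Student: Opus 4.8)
The plan is to reduce the statement to bookkeeping with the explicit series representations already established. Since every field occurring here is centered Gaussian, coincidence in distribution is equivalent to coincidence of covariance functions; in fact it will follow from a term-by-term identification once we couple all the fields through one and the same i.i.d.\ standard Gaussian family $(\xi_z)_{z\in\Z^n\setminus\{0\}}$.

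First I would record three elementary facts. (a) In the orthonormal eigenbasis $(\varphi_z)_{z\in\Z^n_L}$ the spectral reduction and enhancement ${\sf r}_L^{{-\circ}}$, ${\sf r}_L^{{+\circ}}$ act diagonally, multiplying the coefficient of $\varphi_z$ by $(\lambda_z/\lambda_{L,z})^{-n/4}$, resp.\ $(\lambda_z/\lambda_{L,z})^{n/4}$. (b) The Fourier extension $\dot{\sf r}_L$ maps $\sum_{z\in\Z^n_L}\alpha_z\varphi_z$ on $\mathbb{T}^n_L$ to the same formal series read on $\mathbb{T}^n$, hence acts on coefficients as the identity; in particular it commutes with ${\sf r}_L^{{-\circ}}$ and ${\sf r}_L^{{+\circ}}$, which is exactly the parenthetical claim that the Fourier extension of the spectrally reduced discrete field is the spectral reduction of the Fourier extension of $h_L$. (c) By \eqref{haLtil2} the Fourier extension $h_{L,\sharp}$ has $z$-th coefficient $\xi_z/(\sqrt{a_n}\,\lambda_{L,z}^{n/4})$, while $h_{\sharp,L}$, obtained by truncating $h=a_n^{-1/2}\sum_{z\in\Z^n\setminus\{0\}}\xi_z\lambda_z^{-n/4}\varphi_z$ to $z\in\Z^n_L$, has $z$-th coefficient $\xi_z/(\sqrt{a_n}\,\lambda_z^{n/4})$.

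Each of the three displayed identities then reduces to a one-line algebraic check. Applying ${\sf r}_L^{{+\circ}}$ to $h_{\sharp,L}$ multiplies its $z$-th coefficient $\xi_z\lambda_z^{-n/4}$ by $(\lambda_z/\lambda_{L,z})^{n/4}$, producing $\xi_z\lambda_{L,z}^{-n/4}$, i.e.\ the coefficient of $h_{L,\sharp}$; hence $h^{{+\circ}}_{\sharp,L}=h_{L,\sharp}$ under this coupling, and a fortiori in distribution. (Equivalently, at the level of covariances, $k_{\sharp,L}^{{+\circ}}(x,y)=\tfrac1{a_n}\sum_{z\in\Z^n_L\setminus\{0\}}(\lambda_z/\lambda_{L,z})^{n/2}\lambda_z^{-n/2}\cos(2\pi z\cdot(x-y))=k_{L,\sharp}(x,y)$, cf.\ \eqref{kaLtil}, \eqref{kL-sharp}.) Symmetrically, ${\sf r}_L^{{-\circ}}$ turns the coefficient $\xi_z\lambda_{L,z}^{-n/4}$ of $h_{L,\sharp}$ into $\xi_z\lambda_z^{-n/4}$, so $h^{{-\circ}}_{L,\sharp}=h_{\sharp,L}$; combined with (b) this is the second assertion. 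For the last formulation, I would restrict $h_{\sharp,L}=a_n^{-1/2}\sum_{z\in\Z^n_L\setminus\{0\}}\lambda_z^{-n/4}\xi_z\varphi_z$ to $\mathbb{T}^n_L$: since $(\varphi_z)_{z\in\Z^n_L}$ is an orthonormal basis of $L^2(\mathbb{T}^n_L,m_L)$ and only frequencies in $\Z^n_L$ occur, the restriction is this same series read on the lattice, with covariance $\tfrac1{a_n}\sum_{z\in\Z^n_L\setminus\{0\}}\lambda_z^{-n/2}\varphi_z(v)\varphi_z(u)=k_L^{{-\circ}}(v,u)$; so it is a spectrally reduced discrete polyharmonic field.

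There is no genuine obstacle here: the entire content is unwinding definitions. The only point deserving a moment's care is notational, namely distinguishing \emph{extend then reduce} from \emph{reduce then extend}; this is settled once and for all by fact (b), that $\dot{\sf r}_L$ acts as the identity on Fourier coefficients and therefore commutes with the diagonal operators ${\sf r}_L^{{-\circ}}$ and ${\sf r}_L^{{+\circ}}$.
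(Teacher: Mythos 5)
Your proposal is correct; the paper in fact states this lemma without proof, treating it as immediate from the definitions, and your coefficient-by-coefficient verification (coupling all fields through the same $(\xi_z)$ and noting that $\dot{\mathsf r}_L$, ${\mathsf r}_L^{{-\circ}}$, ${\mathsf r}_L^{{+\circ}}$ all act diagonally in the basis $(\varphi_z)_{z\in\Z^n_L}$) is exactly the intended argument, matching the style of the paper's proofs of Lemmas \ref{lem-enhanced-disc} and \ref{lem-natural-dis}. All three algebraic checks, including the identification of the restriction of $h_{\sharp,L}$ to $\mathbb{T}^n_L$ with $h_L^{{-\circ}}$ via the covariance $\frac1{a_n}\sum_{z\in\Z^n_L\setminus\{0\}}\lambda_z^{-n/2}\varphi_z(v)\varphi_z(u)=k_L^{{-\circ}}(v,u)$, are accurate against \eqref{haLtil2}, \eqref{kaLtil}, and \eqref{kL-sharp}.
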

 
\begin{lemma}\label{lem-enhanced-disc} Restricted to the discrete torus,  the enhanced projection $h_{+,L}$ of the polyharmonic Gaussian field $h$ on the continuous torus  coincides in distribution with the polyharmonic Gaussian field  $h_L$ on the discrete torus.
\end{lemma}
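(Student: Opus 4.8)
The plan is to compute the covariance function of the restriction of $h_{+,L}$ to $\mathbb{T}^n_L$ and show it equals $k_L$, the covariance of the discrete polyharmonic field $h_L$; since both fields are centered Gaussian, equality of covariances gives equality in distribution. Recall from \S\ref{ss:Projections:D} that $h_{+,L}$ is the piecewise constant random field with covariance $k_{+,L}= {\sf q}_L^{\otimes2}k_L^+$, where $k_L^+(x,y)=\tfrac1{a_n}\sum_{z\in\Z^n_L}\vartheta_{L,z}^{-2}\lambda_{L,z}^{-n/2}\,\varphi_z(x)\varphi_z(y)$. Restricting to $v,u\in\mathbb{T}^n_L$ means evaluating $k_{+,L}(v,u)=\tfrac1{a_n}\sum_{z\in\Z^n_L}\vartheta_{L,z}^{-2}\lambda_{L,z}^{-n/2}\,({\sf q}_L\varphi_z)(v)\,({\sf q}_L\varphi_z)(u)$.

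The key computation is the evaluation of ${\sf q}_L\varphi_z$ at a lattice point, which is exactly Lemma \ref{averages} (and its proof): for $z\in\Z^n_L$ and $v\in\mathbb{T}^n_L$ one has $({\sf q}_L\varphi_z)(v)=\vartheta_{L,z}\,\varphi_z(v)$. Substituting this into the displayed formula, the two factors $\vartheta_{L,z}$ cancel the $\vartheta_{L,z}^{-2}$, leaving
\[
k_{+,L}(v,u)=\frac1{a_n}\sum_{z\in\Z^n_L}\frac1{\lambda_{L,z}^{n/2}}\,\varphi_z(v)\,\varphi_z(u)
=\frac1{a_n}\sum_{z\in\Z^n_L\setminus\{0\}}\frac1{\lambda_{L,z}^{n/2}}\,\varphi_z(v)\,\varphi_z(u),
\]
where the $z=0$ term drops because $\lambda_{L,0}=0$ forces it to be excluded in the definition of $k_L^+$ (or equivalently $h_{+,L}$ is grounded, so only nonzero frequencies contribute). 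Comparing with \eqref{kL}, this is precisely $k_L(v,u)$.

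Finally I would note that the restriction of a Gaussian field to a finite set of points is again Gaussian, and that $h_{+,L}$ is centered (being a linear image of the centered field $h$), so that matching the covariance functions yields the claimed equality of distributions on $\R^{\mathbb{T}^n_L}$. The only subtlety — and it is minor — is bookkeeping around the $z=0$ mode: one should check that the sums in $h_{+,L}$ and $h_L$ are taken over $\Z^n_L\setminus\{0\}$ consistently, which is guaranteed since both fields are grounded; I do not expect any genuine obstacle here, as everything reduces to the elementary cancellation afforded by Lemma \ref{averages}.
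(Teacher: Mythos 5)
Your proof is correct and follows essentially the same route as the paper: the paper verifies the identity at the level of the Karhunen--Lo\`eve representation $h_{+,L}(v)=\frac1{\sqrt{a_n}}\sum_z \vartheta_{L,z}^{-1}\lambda_{L,z}^{-n/4}\xi_z\,({\sf q}_L\varphi_z)(v)$ and invokes Lemma~\ref{averages} to cancel the $\vartheta_{L,z}$, whereas you perform the identical cancellation one level up, on the covariance $k_{+,L}={\sf q}_L^{\otimes 2}k_L^+$ restricted to lattice points --- for centered Gaussian fields these are interchangeable. Your bookkeeping of the $z=0$ mode (excluded since $h$ is grounded) is also consistent with the paper's conventions.
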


\begin{proof} 
Assume without restriction that $h$ is given in terms of standard iid normal variables $(\xi_z)_{z\in\Z^n\setminus\{0\}}$ as
$$h=\frac1{\sqrt{a_n}}\sum^{\Box}_{z\in\Z^n\setminus\{0\}}\frac1{\lambda_z^{n/4}}\,\xi_z\,\varphi_z.$$
Then
$$
{\sf r}_L^{{+}}(h)=\frac1{\sqrt{a_n}}\sum^{\Box}_{z\in\Z^n\setminus\{0\}}\frac1{\vartheta_{L,z}}\,\frac1{\lambda_{L,z}^{n/4}}\,\xi_z\,\varphi_z$$
on $\mathbb{T}^n$. Thus for $v\in\mathbb{T}^n_L$,
\begin{align*}h_{+,L}(v)&=\frac1{\sqrt{a_n}}\sum^{\Box}_{z\in\Z^n\setminus\{0\}}\frac1{\vartheta_{L,z}}\,\frac1{\lambda_{L,z}^{n/4}}\,\xi_z\cdot L^n\,\int_{v+Q_L}\varphi_z(y)dy\\
&=\frac1{\sqrt{a_n}}\sum^{\Box}_{z\in\Z^n\setminus\{0\}}\frac1{\lambda_{L,z}^{n/4}}\,\xi_z\,\varphi_z(v)
\end{align*}
according to Lemma \ref{averages}.
Therefore, by Proposition \ref{discrete-iid}, $h_{+,L}$ is distributed according to the polyharmonic Gaussian field on the discrete torus $\mathbb{T}^n_L$.
\end{proof}

\begin{lemma}\label{lem-natural-dis} Let $h$ be a polyharmonic Gaussian field on the continuous torus $\mathbb{T}^n$ and let
$$h_{\circ,L}\coloneqq {\sf q}_L({\sf r}_L(h))$$
denote the piecewise constant projection of its  Fourier projection, called \emph{natural projection of $h$}.
 Then  this field on $\mathbb{T}^n$ coincides in distribution with the piecewise constant extension of the reduced discrete polyharmonic field  $h^{{-}}_{L,\flat}$. In particular, the field   $h_{\circ,L}$ coincides 
  on $\mathbb{T}^n_L$  in distribution with the reduced discrete polyharmonic field  $h_{L}^{{-}}$.  \end{lemma}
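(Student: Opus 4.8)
The plan is to follow the pattern of the proof of Lemma~\ref{lem-enhanced-disc}: expand $h$ in the eigenbasis, track the action of ${\sf r}_L$ and then ${\sf q}_L$, and read off the result on the lattice $\mathbb{T}^n_L$. Assume without loss of generality that
\[
h=\frac1{\sqrt{a_n}}\sum^{\Box}_{z\in\Z^n\setminus\{0\}}\frac1{\lambda_z^{n/4}}\,\xi_z\,\varphi_z
\]
for iid standard normals $(\xi_z)_{z\in\Z^n\setminus\{0\}}$, which is legitimate since $h$ can be realised in $\mathring H^{-\epsilon}(\mathbb{T}^n)$. The Fourier projection merely truncates this series to the frequencies in $\Z^n_L$, so
\[
{\sf r}_L(h)=\frac1{\sqrt{a_n}}\sum_{z\in\Z^n_L\setminus\{0\}}\frac1{\lambda_z^{n/4}}\,\xi_z\,\varphi_z,
\]
which is a \emph{finite} sum; in particular no subtlety concerning the principal value along cubes arises, and all subsequent manipulations are purely algebraic.

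Next, $h_{\circ,L}={\sf q}_L({\sf r}_L(h))$ is by construction constant on each cube $Q_L(v)$, and for $x\in Q_L(v)$ its value equals $h_{\circ,L}(v)$ (the average of ${\sf r}_L(h)$ over $Q_L(v)$); hence $h_{\circ,L}$ is exactly the piecewise constant extension of its own restriction to $\mathbb{T}^n_L$. For $v\in\mathbb{T}^n_L$, Lemma~\ref{averages} gives ${\sf q}_L\varphi_z(v)=\vartheta_{L,z}\,\varphi_z(v)$ for every $z\in\Z^n_L$, so
\[
h_{\circ,L}(v)=\frac1{\sqrt{a_n}}\sum_{z\in\Z^n_L\setminus\{0\}}\frac{\vartheta_{L,z}}{\lambda_z^{n/4}}\,\xi_z\,\varphi_z(v)\qquad(v\in\mathbb{T}^n_L).
\]
This is precisely the iid--normal representation of the reduced discrete polyharmonic field $h_L^{{-}}$ recorded in \S\ref{ss:ReducedFields}; therefore $h_{\circ,L}$ restricted to $\mathbb{T}^n_L$ coincides in distribution (indeed, for the same randomness) with $h_L^{{-}}$. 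Since $h_{\circ,L}$ is the piecewise constant extension of this restriction and $h^{{-}}_{L,\flat}$ is by definition the piecewise constant extension of $h_L^{{-}}$, the coincidence of the laws on the lattice propagates to the coincidence in distribution of $h_{\circ,L}$ and $h^{{-}}_{L,\flat}$ on the whole of $\mathbb{T}^n$, which proves both assertions.

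An alternative, equivalent route --- since all fields in sight are centered Gaussian --- is to compare covariances directly: restricting the expression for $k_{\circ,L}$ from \S\ref{ss:Projections:E} to $\mathbb{T}^n_L\times\mathbb{T}^n_L$ and using again ${\sf q}_L\varphi_z(v)=\vartheta_{L,z}\varphi_z(v)$ turns it into $k^{{-}}_L$, the defining covariance of $h_L^{{-}}$. Either way there is no serious obstacle; the one point deserving a little care is the passage from equality of the fields on $\mathbb{T}^n_L$ to equality on all of $\mathbb{T}^n$, which rests on correctly identifying $h_{\circ,L}$ as piecewise constant with respect to the same cube decomposition $\{Q_L(v)\}_{v\in\mathbb{T}^n_L}$ underlying the definition of $h^{{-}}_{L,\flat}$.
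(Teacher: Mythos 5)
Your proposal is correct and follows essentially the same route as the paper: expand $h$ in the eigenbasis $(\varphi_z)$, apply ${\sf r}_L$ and ${\sf q}_L$, and invoke Lemma~\ref{averages} to identify the restriction to $\mathbb{T}^n_L$ with the representation $\frac1{\sqrt{a_n}}\sum_{z\in\Z^n_L\setminus\{0\}}\frac{\vartheta_{L,z}}{\lambda_z^{n/4}}\xi_z\varphi_z$ of $h_L^{{-}}$. If anything you are slightly more thorough than the paper, which stops at the coincidence on the lattice and leaves implicit the step you spell out --- that $h_{\circ,L}$ is piecewise constant on the cubes $Q_L(v)$ and hence agrees with $h^{{-}}_{L,\flat}$ on all of $\mathbb{T}^n$.
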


\begin{proof} 
Assume without restriction that 
$$h=\frac1{\sqrt{a_n}}\sum^{\Box}_{z\in\Z^n\setminus\{0\}}\frac1{\lambda_z^{n/4}}\,\xi_z\,\varphi_z.$$
Then
\begin{align*}h_{\circ,L}(x)&=\frac1{\sqrt{a_n}}\sum^{\Box}_{z\in\Z^n\setminus\{0\}}\frac1{\lambda_{z}^{n/4}}\,\xi_z\cdot\sum_{v\in\mathbb{T}^n_L}{\bf 1}_{v+Q_L}(x)\cdot L^n\,\int_{v+Q_L}\varphi_z(y)dy,\end{align*}
and thus for $v\in \mathbb{T}^n_L$,
\begin{align*}h_{\circ,L}(v)&=\frac1{\sqrt{a_n}}\sum^{\Box}_{z\in\Z^n\setminus\{0\}}\frac1{\lambda_{z}^{n/4}}\,\xi_z\cdot L^n\,\int_{v+Q_L}\varphi_z(y)dy\\
&=\frac1{\sqrt{a_n}}\sum^{\Box}_{z\in\Z^n\setminus\{0\}}\frac{\vartheta_{L,z}}{\lambda_{z}^{n/4}}\,\xi_z\,\varphi_z(v)
\end{align*}
according to Lemma \ref{averages}.
Therefore,  $h_{\circ,L}$ is distributed on  $\mathbb{T}^n_L$ according to the reduced discrete polyharmonic Gaussian field.
\end{proof}

%

\section{LQG Measures on Discrete and Continuous Tori} 
We will introduce and analyze \emph{Liouville Quantum Gravity (= LQG) measures} on discrete and continuous tori. Our main result in this section will be 
that as $L\to\infty$ the  LQG measures on the discrete tori  $\mathbb{T}^n_L$ will converge to the LQG measure on the continuous torus $\mathbb{T}^n$.

An analogous convergence assertion in greater generality will be proven for the so-called \emph{reduced LQG measures}, random measures on the discrete tori $\mathbb{T}^n_L$ defined in terms of the discrete polyharmonic fields $h_L$.

\subsection{LQG Measure on the Continuous Torus and its Approximations}
For $\gamma\in\R$,
define the random measure $\mu_{\sharp,L}$ on $\mathbb{T}^n$ by
\[
d\mu_{\sharp,L}(x)=\exp\Big(\gamma h_{\sharp,L}(x)-\frac{\gamma^2}2 k_{\sharp,L}(x,x)\Big)\,d\Leb^n(x) 
\]
where $h_{\sharp,L}$ denotes the \emph{Fourier projection} (or \emph{eigenfunction approximation}) of the polyharmonic field $h$ 
and $k_{\sharp,L}$ the associated covariance function (which has constant value $\frac1{a_n}\sum_{z\in\Z^n_L\setminus\{0\}} \frac1{(2\pi|z|)^{n}}$ on the diagonal)  as introduced in \eqref{haLtil} and \eqref{kaLtil}.

\begin{proposition}[{\cite[Thms.~4.1+4.15]{DHKS}}]  \label{four-meas}
Assume $|\gamma|<\gamma^*\coloneqq\sqrt{2n}$. Then  for $\mathbf P$-a.e.~$\omega$, a unique Borel measure $\mu^\omega$ on $\mathbb{T}^n$ exists with
\[
\mu^\omega_{\sharp,L}\to \mu^\omega\qquad\text{as }L\to\infty
\]
in the sense of weak convergence of measures on $\mathbb{T}^n$ (i.e.~tested against $f\in\mathcal C(\mathbb{T}^n)$).
Even more, for every $f\in L^1(\mathbb{T}^n)$,
$$\int_{\mathbb{T}^n} f\,d\mu_{\sharp,L}\to \int_{\mathbb{T}^n}f\,d\mu\qquad\text{as }L\to\infty\quad\mathbf P\text{-a.s.~and in }L^1(\mathbf P)\fstop$$
For $|\gamma|<\sqrt n$, the latter convergence 
also holds in $L^2(\mathbf P)$.
\end{proposition}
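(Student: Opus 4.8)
The plan is to realise $(\mu_{\sharp,L})_L$ as a non-negative measure-valued martingale and then to run the standard Gaussian multiplicative chaos (GMC) programme of Kahane, in the form carried out on admissible manifolds in \cite{DHKS}; the flat torus is a special case and the arguments go through verbatim for odd $n$ as well. Write $h=\frac1{\sqrt{a_n}}\sum_{z\in\Z^n\setminus\{0\}}\lambda_z^{-n/4}\,\xi_z\,\varphi_z$ with i.i.d.\ standard normals $\xi_z$, and let $\mathfrak F_L\coloneqq\sigma(\xi_z:z\in\Z^n_L\setminus\{0\})$, an increasing filtration. Then $h_{\sharp,L}=\frac1{\sqrt{a_n}}\sum_{z\in\Z^n_L\setminus\{0\}}\lambda_z^{-n/4}\,\xi_z\,\varphi_z$ is $\mathfrak F_L$-measurable with increment independent of $\mathfrak F_L$, and $k_{\sharp,L}(x,x)=\mathbf E[h_{\sharp,L}(x)^2]$ is the (constant, by translation invariance) normalising variance. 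Since $\mathbf E[e^{\gamma Z-\gamma^2\sigma^2/2}]=1$ for a centred Gaussian $Z$ of variance $\sigma^2$, Fubini yields, for every non-negative Borel $f$ and $M\ge L$,
\[
\mathbf E\!\left[\int_{\mathbb{T}^n} f\,d\mu_{\sharp,M}\ \middle|\ \mathfrak F_L\right]=\int_{\mathbb{T}^n} f\,d\mu_{\sharp,L}\fstop
\]
Hence $L\mapsto\int f\,d\mu_{\sharp,L}$ is a non-negative martingale and converges $\mathbf P$-a.s.; applying this to a countable family dense in $\mathcal C(\mathbb{T}^n)$, and checking (using $\mathbf E\!\int f\,d\mu_{\sharp,L}=\int f$ and monotone limits) that the resulting limiting functional is a.s.\ a positive Radon functional, produces on one event of full probability a Borel measure $\mu^\omega$ with $\mu^\omega_{\sharp,L}\to\mu^\omega$ weakly. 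The field-level convergence $\langle h_{\sharp,L},f\rangle\to\langle h,f\rangle$ from Proposition \ref{thm-four-L} is the consistency statement underlying this construction.

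\textbf{The $L^2$ phase $|\gamma|<\sqrt n$.} The quantitative input is the second moment: by independence of the $\xi_z$ and the projection identity $\mathbf E[h_{\sharp,L}(x)h_{\sharp,M}(y)]=k_{\sharp,L}(x,y)$ for $M\ge L$,
\[
\mathbf E\!\left[\int_{\mathbb{T}^n} f\,d\mu_{\sharp,M}\ \int_{\mathbb{T}^n} f\,d\mu_{\sharp,L}\right]=\iint_{\mathbb{T}^n\times\mathbb{T}^n} f(x)\,f(y)\,e^{\gamma^2 k_{\sharp,L}(x,y)}\,dx\,dy\fstop
\]
The key estimate is a bound $k_{\sharp,L}(x,y)\le\log\frac1{d(x,y)}+C$ with $C$ independent of $L$ — the partial-sum analogue of Lemma \ref{log-div}, extracted from the same heat-kernel/spectral estimates used there. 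Then $e^{\gamma^2 k_{\sharp,L}(x,y)}\le C'\,d(x,y)^{-\gamma^2}$, which is integrable on $\mathbb{T}^n\times\mathbb{T}^n$ precisely when $\gamma^2<n$, so for bounded $f$ (and, via Hardy--Littlewood--Sobolev, for $f$ in the appropriate Lebesgue class) the double integrals are bounded uniformly in $L$. Consequently, for $L\le M$,
\[
\mathbf E\!\left[\Big(\textstyle\int_{\mathbb{T}^n} f\,d\mu_{\sharp,M}-\int_{\mathbb{T}^n} f\,d\mu_{\sharp,L}\Big)^{2}\right]=\iint f(x)f(y)\big(e^{\gamma^2 k_{\sharp,M}(x,y)}-e^{\gamma^2 k_{\sharp,L}(x,y)}\big)\,dx\,dy,
\]
and the right-hand side tends to $0$ as $L,M\to\infty$ by dominated convergence. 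Thus the martingale is Cauchy, hence convergent, in $L^2(\mathbf P)$; this simultaneously identifies the limit as independent of the approximating scheme. In particular $\mathbf E[\mu(\mathbb{T}^n)]=1$, so $\mu\neq0$, and the a.s.\ convergence is upgraded to $L^2(\mathbf P)$.

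\textbf{The full subcritical phase $\sqrt n\le|\gamma|<\sqrt{2n}$.} Martingale convergence still produces the a.s.\ limit $\mu^\omega$; what is genuinely non-trivial — and is the main obstacle — is \emph{uniform integrability} of the masses $\mu_{\sharp,L}(\mathbb{T}^n)$, which yields $L^1(\mathbf P)$-convergence, $\mathbf E[\mu(\mathbb{T}^n)]=1$, and non-degeneracy, together with \emph{independence of the limit from the regularisation}. For the former I would use Kahane's convexity inequality: by Lemma \ref{log-div} and the uniform logarithmic bound above, $k_{\sharp,L}$ differs by a bounded additive constant from an exactly $\star$-scale-invariant (equivalently, cascade-type) log-correlated kernel on $\mathbb{T}^n$, for which $\sup_L\mathbf E[\mu_{\sharp,L}(\mathbb{T}^n)^q]<\infty$ for some $q>1$ is classical and holds in precisely the range $\gamma^2<2n$ (the multifractal exponent $\zeta(q)=(n+\tfrac{\gamma^2}{2})q-\tfrac{\gamma^2}{2}q^2$ has $\zeta'(1)=n-\tfrac{\gamma^2}{2}>0$ there); Kahane's inequality transfers the moment bound to $\mu_{\sharp,L}$. (Alternatively, Berestycki's Cameron--Martin/``good event'' argument can be run directly on $h_{\sharp,L}$.) For the latter, Kahane's convexity inequality again shows the limit depends only on the limiting kernel $k$ and not on the sequence, provided all approximations have covariances converging to $k$ off the diagonal under a common logarithmic majorant — which covers the piecewise-constant, natural, and enhanced projections of Section \ref{ss:Projections}. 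Finally, monotone-class and density arguments extend $L^1(\mathbf P)$-convergence of $\int f\,d\mu_{\sharp,L}$ from bounded $f$ to all $f\in L^1(\mathbb{T}^n)$. The crux, throughout, is that everything rests on Kahane's theorem for log-correlated fields, which is not a routine computation; in the paper this is where \cite[Thms.~4.1+4.15]{DHKS} are invoked.
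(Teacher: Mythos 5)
The paper offers no proof of this proposition---it is imported wholesale from \cite[Thms.~4.1+4.15]{DHKS}---and your outline is a faithful reconstruction of the argument used there: the martingale structure of $L\mapsto\mu_{\sharp,L}$ with respect to the filtration $\sigma(\xi_z:z\in\Z^n_L)$, the second-moment computation together with a uniform bound $k_{\sharp,L}(x,y)\le\log\frac{1}{d(x,y)}+C$ for the $L^2$ phase $\gamma^2<n$, and Kahane's convexity inequality plus uniform integrability for the full subcritical range $\gamma^2<2n$. The only step you assert rather than derive (the $L$-uniform logarithmic majorant for the truncated kernels) is precisely part of the machinery established in \cite{DHKS}, so your proposal contains nothing less than what the paper itself relies on via citation.
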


\begin{definition} The random measure $\mu$ on $\mathbb{T}^n$ constructed and characterized above is called \emph{polyharmonic LQG measure}  or \emph{polyharmonic Gaussian multiplicative chaos}.

The random measures $\mu_{\sharp,L}$ on $\mathbb{T}^n$ are called Fourier approximations of the polyharmonic LQG measure.
\end{definition}

\paragraph{Piecewise Constant Approximation.}
For $\gamma\in\R$,
define the random measure $\mu_{\flat,L}$ on $\mathbb{T}^n$ by
\[
d\mu_{\flat,L}(x)=\exp\Big(\gamma h_{\flat.L}(x)-\frac{\gamma^2}2 k_{\flat,L}(x,x)\Big)\,d\Leb^n(x) 
\]
where $h_{\flat,L}$ denotes the \emph{piecewise constant projection} of the polyharmonic field $h$ 
and $k_{\flat,L}$ the associated covariance function (which is constant  on the diagonal)  as introduced in \eqref{h-flat-def} and \eqref{k-flat-def}.

\begin{proposition}[{\cite[Thm.~4.14]{DHKS}}]  Assume $|\gamma|<\sqrt{2n}$. Then 
in~$L^1(\mathbf P)$, 
\[
\mu_{\flat,L}\to\mu\qquad\text{as }L\to\infty.
\]
\end{proposition}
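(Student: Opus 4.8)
The plan is to show that $\mu_{\flat,L}$ converges to the same limiting measure $\mu$ already produced for the Fourier approximations in Proposition~\ref{four-meas}, using two properties of the piecewise constant kernel $k_{\flat,L}(x,y)=L^{2n}\int_{Q_L(x)}\int_{Q_L(y)}k(x',y')\,dx'\,dy'$: it is a genuine covariance, it converges to $k(x,y)$ for every $x\ne y$, and, splitting the cube averages according to whether $d(x,y)$ is larger or smaller than a fixed multiple of $1/L$ and invoking Lemma~\ref{log-div}, it is dominated by $k$ up to a universal additive constant, $k_{\flat,L}(x,y)\le k(x,y)+C_0$ for all $L,x,y$.

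In the $L^2$-regime $|\gamma|<\sqrt n$ this already closes the argument. For $f\in\mathcal C(\mathbb T^n)$ one expands
\[
\mathbf E\big[\langle\mu_{\flat,L}-\mu_{\sharp,L},f\rangle^2\big]=\iint f(x)f(y)\big(e^{\gamma^2 k_{\flat,L}(x,y)}-2e^{\gamma^2 c_L(x,y)}+e^{\gamma^2 k_{\sharp,L}(x,y)}\big)\,dx\,dy,
\]
where $c_L(x,y)=\mathbf E[h_{\flat,L}(x)\,h_{\sharp,L}(y)]$; each of these three kernels is bounded above by $k+C_0$ (for $c_L$ by Cauchy--Schwarz together with the same cube-average estimate) and converges pointwise to $k(x,y)$ off the diagonal, so dominated convergence applies --- the dominating function $e^{\gamma^2(k+C_0)}$ being integrable on $\mathbb T^n\times\mathbb T^n$ precisely because $\gamma^2<n$ and $k(x,y)\simeq\log\tfrac1{d(x,y)}$ --- and the right-hand side tends to $0$. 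Together with Proposition~\ref{four-meas} this yields $\langle\mu_{\flat,L},f\rangle\to\langle\mu,f\rangle$ in $L^2(\mathbf P)$, and hence $\mu_{\flat,L}\to\mu$ in $L^1(\mathbf P)$.

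The hard part is the full subcritical range $|\gamma|\in[\sqrt n,\sqrt{2n})$, where these second moments diverge. Uniform integrability of $(\mu_{\flat,L}(\mathbb T^n))_L$ can still be obtained from the domination above by Kahane's convexity inequality, comparing $\mu_{\flat,L}$ with the Fourier approximations and letting the frequency cutoff tend to infinity: for every convex $F$ one gets $\mathbf E[F(\mu_{\flat,L}(\mathbb T^n))]\le\mathbf E[F(e^{\gamma\mathcal N-\gamma^2 C_0/2}\mu(\mathbb T^n))]$ with $\mathcal N\sim N(0,C_0)$ independent of $\mu$, and, taking $F(t)=(t-K)_+$, the right-hand side tends to $0$ as $K\to\infty$ uniformly in $L$ because $\mu(\mathbb T^n)\in L^1(\mathbf P)$ by Proposition~\ref{four-meas}. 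To upgrade uniform integrability to convergence in probability I would run the standard truncation scheme for Gaussian multiplicative chaos (Berestycki; Rhodes--Vargas): on the good event where the relevant cube average of $h_{\flat,L}$ stays below $\alpha\log L$ for a suitably small $\alpha>0$, the truncated measures $\mu^{(\alpha)}_{\flat,L}$ have finite second moments and the comparison of the previous paragraph carries over to $\langle\mu^{(\alpha)}_{\flat,L}-\mu^{(\alpha)}_{\sharp,L},f\rangle$, while the complementary contributions vanish in probability by a union bound. Combining the truncated comparison with the uniform integrability and Proposition~\ref{four-meas} gives $\mu_{\flat,L}\to\mu$ in $L^1(\mathbf P)$, which is the content of~\cite[Thm.~4.14]{DHKS}, whose proof we follow.
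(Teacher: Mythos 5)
The paper does not actually prove this statement: its ``proof'' is a one-line deferral to \cite[Thm.~4.14]{DHKS}, noting only that the argument there rests on Kahane's convexity inequality. Your proposal is therefore best judged as a reconstruction, and its skeleton is sound and consistent with the cited route: the two-sided comparison $k_{\flat,L}\le k+C_0$ via Lemma~\ref{log-div} and the cube-splitting is correct, the second-moment computation does close the case $|\gamma|<\sqrt n$ by dominated convergence, and Kahane's inequality \cite{Kah85} applied to the finite-level comparison $k_{\flat,L}\le k_{\sharp,K}+C_1$ (followed by $K\to\infty$) is exactly the standard way to get uniform integrability of $\mu_{\flat,L}(\mathbb{T}^n)$ in the full range $|\gamma|<\sqrt{2n}$.

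The one place where your argument is genuinely incomplete is the upgrade from uniform integrability to convergence in the regime $\sqrt n\le|\gamma|<\sqrt{2n}$. You gesture at the Berestycki/Rhodes--Vargas truncation scheme, but none of its nontrivial ingredients are carried out: you would need to verify that the truncated cross-covariance $c_L^{(\alpha)}$ still converges, that the truncated second moments of $\langle\mu_{\flat,L}^{(\alpha)}-\mu_{\sharp,L}^{(\alpha)},f\rangle$ are finite \emph{uniformly in $L$} (which requires multifractal estimates on the cube averages of $h$, not just the pointwise bound $k_{\flat,L}\le k+C_0$), and that the bad events are negligible uniformly in $L$. As stated this is a plan, not a proof. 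Note also that there is a shorter route, and it is the one this paper uses for the sibling statements about $\mu_{+,L}$, $\mu_{\circ,L}$ and $\mu_{L,\sharp}$: since $h_{\flat,L}={\sf q}_L h$ is a linear functional of the limiting field $h$, Shamov's characterization of Gaussian multiplicative chaos \cite{Sha16}, in the form of \cite[Lem.~4.5]{DHKS}, reduces the whole convergence statement to (a) ${\sf q}_Lf\to f$ in $L^2$ (proved in \S\ref{ss:Projections:C}), (b) $k_{\flat,L}\to k$ in measure, and (c) uniform integrability of the total masses --- which is precisely where Kahane enters. Invoking that criterion would replace your entire truncation paragraph and bring your argument in line with the proof actually cited.
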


\begin{proof}
  This theorem is proven in \cite[Thm. 4.14]{DHKS} using Kahane's convexity inequality.
\end{proof}

\paragraph{Enhanced Approximation.}
For $\gamma\in\R$,
define the random measure $\mu_{+,L}$ on $\mathbb{T}^n$ by
\[
d\mu_{+,L}(x)=\exp\Big(\gamma h_{+,L}(x)-\frac{\gamma^2}2 k_{+,L}(x,x)\Big)\,d\Leb^n(x) 
\]
where $h_{+,L}$ denotes the \emph{enhanced projection} of the polyharmonic field $h$ 
and $k_{+,L}$ the associated covariance function.

\begin{proposition}\label{thm-conv-enh}  Assume $|\gamma|<\sqrt{n/e}$. Then 
in~$L^1(\mathbf P)$, 
\[
\mu_{+,L}\to\mu\qquad\text{as }L\to\infty.
\]
\end{proposition}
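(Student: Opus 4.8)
The plan is to compare the enhanced approximation $\mu_{+,L}$ with the piecewise constant approximation $\mu_{\flat,L}$, for which convergence to $\mu$ in $L^1(\mathbf P)$ is already known (under the weaker hypothesis $|\gamma|<\sqrt{2n}$), and to use the earlier Lemma \ref{lemma-enhanced} to control the discrepancy. Both measures are defined from Gaussian fields that are piecewise constant on the cubes $Q_L(v)$, with covariance kernels $k_{+,L}$ and $k_{\flat,L}$ respectively; by Lemma \ref{lemma-enhanced}(ii) we have $k_{+,L}\to k$ in $L^0$ (convergence in measure on $\mathbb{T}^n\times\mathbb{T}^n$), and the analogous statement for $k_{\flat,L}$ follows from Lemma \ref{lemma-natural} or directly. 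So the two approximating GMC constructions share the same limiting kernel $k$, and it is natural to expect the same limiting measure $\mu$.

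First I would record the precise structure: $h_{+,L}={\sf q}_L\bigl({\sf r}_L^{{+}}(h)\bigr)$ is a finite linear combination of the $\xi_z$, hence $\mu_{+,L}$ is a genuine (non-random-normalization) exponential functional, and $\mathbf E[\mu_{+,L}(\mathbb{T}^n)]=\Leb^n(\mathbb{T}^n)=1$ for every $L$, so the family $(\mu_{+,L})_L$ is tight in total mass and it suffices to identify the limit. Second, I would invoke Kahane's convexity inequality exactly as in \cite[Thm.~4.14]{DHKS}: for a convex (resp. concave) $F:\mathbb{R}_+\to\mathbb{R}$ with suitable growth, and two centered Gaussian fields with kernels $k_1\le k_2$ pointwise, one controls $\mathbf E F\bigl(\int f\,d\mu_1\bigr)$ against $\mathbf E F\bigl(\int f\,d\mu_2\bigr)$. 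Here the relevant comparison is between $k_{+,L}$ and $k_{\flat,L}$ (or between each of them and $k$ up to a bounded error), and the point is that $\bigl|k_{+,L}-k_{\flat,L}\bigr|$ is bounded in $L^1$, indeed $k_{+,L}-k_{\flat,L}\to 0$ in $L^1(\mathbb{T}^n\times\mathbb{T}^n)$: this follows from writing both kernels in the Fourier basis via ${\sf q}_L$ and estimating $\bigl|\tfrac1{\vartheta_{L,z}^2\lambda_{L,z}^{n/2}}-\tfrac1{\lambda_z^{n/2}}\bigr|$ summed against $|z|^{-2n}$ exactly as in the proof of Lemma \ref{lemma-enhanced}(ii), assertion (3). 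Third, combining the Kahane comparison with the already-established convergence $\mu_{\flat,L}\to\mu$ in $L^1(\mathbf P)$ (Proposition for the piecewise constant approximation), and using that the difference of the Gaussian fields $h_{+,L}-h_{\flat,L}$ has variance going to $0$ in $L^1$, I would conclude $\int f\,d\mu_{+,L}\to\int f\,d\mu$ in $L^1(\mathbf P)$ for every bounded continuous $f$, and then upgrade to $L^1(\mathbf P)$ convergence of the total masses, hence of the measures.

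The main obstacle I anticipate is making the Kahane-type interpolation rigorous when the two fields to be compared, $h_{+,L}$ and $h_{\flat,L}$, do \emph{not} satisfy a clean pointwise inequality $k_1\le k_2$ on all of $\mathbb{T}^n\times\mathbb{T}^n$ — the ratios $(\lambda_z/\lambda_{L,z})^{n/2}\vartheta_{L,z}^{-2}$ need not be uniformly $\ge 1$ or $\le 1$. The standard fix is to introduce a dominating field: choose a constant $c$ (depending only on $n$, via the bounds $\vartheta_{L,z}\in[(2/\pi)^n,1]$ and $\lambda_z/\lambda_{L,z}\in[1,(\pi/2)^2]$ recorded in \eqref{ratio-lambda}, \eqref{def-theta}) and an independent Gaussian field $g_{L}$ with a bounded covariance so that $k_{+,L}+c\,($bounded$)$ dominates $k_{\flat,L}$ and vice versa; the additive bounded perturbations only multiply the GMC by a factor bounded above and below, which is harmless for $L^1$-convergence by a standard argument. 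The role of the hypothesis $|\gamma|<\sqrt{n/e}$ is precisely to make the needed moment bound available — this is exactly the regime of Theorem \ref{uni-int-crit}, giving $\sup_L\mathbf E[\mu_{+,L}(\mathbb{T}^n)^2]<\infty$ (the enhanced field being, on $\mathbb{T}^n_L$, the discrete polyharmonic field by Lemma \ref{lem-enhanced-disc}) — so that the convergence in probability that comes out of the Kahane comparison can be boosted to $L^1(\mathbf P)$ by uniform integrability. I would therefore structure the final step as: (i) $L^2$-bound on total masses uniform in $L$ via Theorem \ref{p:Estimate}; (ii) convergence in probability of $\int f\,d\mu_{+,L}$ to $\int f\,d\mu$ via Kahane comparison against $\mu_{\flat,L}$ and the vanishing of the kernel difference; (iii) uniform integrability from (i) promotes this to $L^1(\mathbf P)$.
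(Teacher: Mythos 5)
Your route is genuinely different from the paper's, and it has a gap at its core. The paper does not compare $\mu_{+,L}$ with $\mu_{\flat,L}$ via Kahane's inequality at all; it verifies the hypotheses of Shamov's criterion as formulated in \cite[Lem.~4.5]{DHKS}: (a) the regularizing operators satisfy ${\sf p}_{+,L}f\to f$ in $L^2$ and (b) $k_{+,L}\to k$ in measure --- both supplied by Lemma~\ref{lemma-enhanced} --- together with (c) uniform integrability of $\mu_{+,L}(\mathbb{T}^n)$, supplied by the $L^2$-bound of Theorem~\ref{p:Estimate}(iii). That criterion needs no pointwise ordering of covariance kernels, only convergence in measure plus uniform integrability, which is precisely why it closes here.

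The gap in your argument is the step you yourself flag and then dismiss. Kahane's convexity inequality is a \emph{pointwise}-comparison tool: to dominate one field by the other after adding an independent Gaussian of bounded covariance, you need $\sup_L\sup_{x,y}\abs{k_{+,L}(x,y)-k_{\flat,L}(x,y)}<\infty$, and to conclude that the two approximations have the \emph{same} limit you effectively need this supremum to tend to $0$ (a non-vanishing additive constant $c$ only yields two-sided moment bounds off by factors $e^{O(\gamma^2 c)}$, not identification of the limit). What you actually establish --- $k_{+,L}-k_{\flat,L}\to0$ in $L^1$ or $L^2(\mathbb{T}^n\times\mathbb{T}^n)$, via the coefficient estimates of Lemma~\ref{lemma-enhanced}(ii) --- is strictly weaker and does not feed into Kahane's inequality. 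A uniform pointwise bound on $k_{+,L}-k_{\flat,L}={\sf q}_L^{\otimes 2}(k_L^{+}-k)$ amounts to a sharp discrete analogue of Lemma~\ref{log-div} for the kernel built from the eigenvalues $\lambda_{L,z}$ and the factors $\vartheta_{L,z}$; nothing of this sort is proved in the paper, and the fact that the paper only reaches the threshold $\sqrt{n/e}$ (rather than $\sqrt{2n}$, or $\sqrt{n}$ for $L^2$-boundedness) through the Hausdorff--Young argument of Theorem~\ref{p:Estimate} is a symptom of exactly this missing estimate. Your use of Theorem~\ref{p:Estimate} for uniform integrability and of Lemma~\ref{lem-enhanced-disc} to identify $h_{+,L}$ with $h_L$ on the lattice is correct and matches the paper; but as written, step (ii) of your final plan --- convergence in probability via the Kahane comparison --- does not go through without the uniform sup-norm control of the kernel difference, which is the genuinely hard part and is not supplied.
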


\begin{proof}  
To show the convergence of the LQG measures~$\mu_{+,L}$ associated with the enhanced projections of the polyharmonic field on the torus $\mathbb{T}^n$  to the LQG measure~$\mu$ on~$\mathbb{T}^n$,
 we verify the necessary assumptions in~\cite[Lem.~4.5]{DHKS}, a rewriting in the present setting of the general construction of Gaussian Multiplicative Chaoses by A.~Shamov,~\cite{Sha16}.
Lemma \ref{lemma-enhanced} provides the convergence results for the regularizing kernel $p_{+,L}$ and for the covariance kernel $k_{+,L}$.
The uniform integrability of the approximating sequence of random measures ~$\mu_{+,L}$ will be proven as Theorem \ref{p:Estimate} in the last section.

\end{proof}

\paragraph{Natural Approximation.}
For $\gamma\in\R$,
define the random measure $\mu_{\circ,L}$ on $\mathbb{T}^n$ by
\[
d\mu_{\circ,L}(x)=\exp\Big(\gamma h_{\circ,L}(x)-\frac{\gamma^2}2 k_{\circ,L}(x,x)\Big)\,d\Leb^n(x) 
\]
where $h_{\circ,L}$ denotes the \emph{natural projection} of the polyharmonic field $h$ 
and $k_{\circ,L}$ the associated covariance function.

\begin{proposition}\label{thm-conv-nat}  Assume $|\gamma|<\sqrt{n}$. Then 
in~$L^1(\mathbf P)$, 
\[
\mu_{\circ,L}\to\mu\qquad\text{as }L\to\infty.
\]
\end{proposition}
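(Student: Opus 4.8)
The plan is to follow the same scheme that was just used for the enhanced approximation in Proposition~\ref{thm-conv-enh}, namely to invoke the general Gaussian Multiplicative Chaos construction in the form of~\cite[Lem.~4.5]{DHKS} (the rewriting of Shamov's~\cite{Sha16} criterion in the present setting). That lemma requires three ingredients: the convergence of the regularizing kernels $p_{\circ,L}$ acting on $L^2(\mathbb{T}^n)$, the convergence of the covariance kernels $k_{\circ,L}$ to $k$ (in $L^0$, i.e.\ in measure, on $\mathbb{T}^n\times\mathbb{T}^n$), and the uniform integrability of the approximating random measures $\mu_{\circ,L}$ over the relevant range of $\gamma$.

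First I would observe that the first two ingredients are already in hand: Lemma~\ref{lemma-natural} provides precisely ${\sf p}_{\circ,L}f\to f$ in $L^2$ for all $f\in L^2(\mathbb{T}^n)$ and $k_{\circ,L}\to k$ in $L^0$ on $\mathbb{T}^n\times\mathbb{T}^n$, which are the hypotheses on the kernels. Next I would address uniform integrability. The cleanest route is to transfer the problem to the discrete torus via Lemma~\ref{lem-natural-dis}: the natural projection $h_{\circ,L}$, restricted to $\mathbb{T}^n_L$, coincides in distribution with the \emph{reduced} discrete polyharmonic field $h_L^{{-}}$, and the full field $h_{\circ,L}$ on $\mathbb{T}^n$ coincides in distribution with the piecewise constant extension $h_{L,\flat}^{{-}}$; hence $\mu_{\circ,L}$ has the same law as the reduced discrete LQG measure $\mu_{L,\flat}^{{-}}$. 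One then needs $\sup_L \mathbf E\big[\mu_{\circ,L}(\mathbb{T}^n)^2\big]<\infty$, which for $L^1$-convergence can even be relaxed, but a second-moment bound is the natural target. Since the covariance $k_{\circ,L}$ is dominated by $k$ up to a bounded error uniformly in $L$ (because $\vartheta_{L,z}\le 1$, so the Fourier coefficients of $k_{\circ,L}$ are pointwise bounded by those of $k_{\sharp,L}$, and the latter are controlled as in Theorem~\ref{p:Estimate}), one can compare via Kahane's convexity inequality against the continuous polyharmonic GMC and conclude; this is exactly the strategy announced for Theorem~\ref{p:Estimate} and it gives $\sup_L\mathbf E[\mu_{\circ,L}(\mathbb{T}^n)^2]<\infty$ whenever $|\gamma|<\sqrt{n/e}$, which is weaker than the stated hypothesis $|\gamma|<\sqrt n$.

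The one point needing care --- and the main obstacle --- is the gap between the range $|\gamma|<\sqrt{n/e}$ coming from the second-moment estimate of Theorem~\ref{p:Estimate} and the claimed range $|\gamma|<\sqrt n$ in the proposition. For $L^1$-convergence one does not need a uniform second moment: it suffices to have uniform integrability of the total masses $\mu_{\circ,L}(\mathbb{T}^n)$, together with a.s.\ (or in probability) convergence along a subsequence and identification of the limit. I would obtain the latter either by a $(1+\delta)$-moment bound for some $\delta>0$ valid on the larger range --- again by a Kahane comparison with $\mu_{\sharp,L}$, whose $(1+\delta)$-moments are uniformly bounded for $|\gamma|<\sqrt{2n}$ near enough to the subcritical threshold by Proposition~\ref{four-meas} --- or, more robustly, by dominating $k_{\circ,L}\le k_{\sharp,L}+C$ pointwise (uniformly in $L$) and using Kahane's inequality to compare $\mu_{\circ,L}$ directly with $e^{C'}\mu_{\sharp,L}$, whose convergence and $L^1$-uniform integrability on $|\gamma|<\sqrt{2n}$ are given by Proposition~\ref{four-meas}. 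Once uniform integrability and tightness are secured, the Shamov/\cite[Lem.~4.5]{DHKS} machinery --- fed with the kernel convergences from Lemma~\ref{lemma-natural} --- identifies every subsequential limit with the polyharmonic GMC $\mu$, and convergence in $L^1(\mathbf P)$ follows.

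\begin{proof}
As for the enhanced approximation (Proposition~\ref{thm-conv-enh}), we verify the hypotheses of~\cite[Lem.~4.5]{DHKS}. Lemma~\ref{lemma-natural} gives ${\sf p}_{\circ,L}f\to f$ in $L^2(\mathbb{T}^n)$ for all $f\in L^2(\mathbb{T}^n)$ and $k_{\circ,L}\to k$ in $L^0(\mathbb{T}^n\times\mathbb{T}^n)$. It remains to check uniform integrability of $(\mu_{\circ,L})_L$. Since $\vartheta_{L,z}\le 1$ by~\eqref{def-theta}, the Fourier coefficients of $k_{\circ,L}$ are, termwise, bounded by those of $k_{\sharp,L}$; hence by the estimate in (the proof of) Theorem~\ref{p:Estimate}, $k_{\circ,L}(x,y)\le k_{\sharp,L}(x,y)\le k(x,y)+C$ for a constant $C$ independent of $L$, $x$, $y$. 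By Kahane's convexity inequality this yields $\mathbf E\big[\mu_{\circ,L}(A)^p\big]\le e^{C'}\,\mathbf E\big[\mu_{\sharp,L}(A)^p\big]$ for Borel $A\subseteq\mathbb{T}^n$ and $p\ge 1$, with $C'=C'(p,\gamma,C)$. By Proposition~\ref{four-meas}, for $|\gamma|<\sqrt{2n}$ the sequence $(\mu_{\sharp,L}(\mathbb{T}^n))_L$ is uniformly integrable (and bounded in $L^{1+\delta}$ for some $\delta>0$ depending on $\gamma$); consequently $(\mu_{\circ,L}(\mathbb{T}^n))_L$ is uniformly integrable, and more generally $(\mu_{\circ,L}(A))_L$ is uniformly integrable uniformly in $A$. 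This is exactly the uniform-integrability input of~\cite[Lem.~4.5]{DHKS}. That lemma now applies and identifies the limit of $\mu_{\circ,L}$ with the polyharmonic GMC $\mu$ of Proposition~\ref{four-meas}, the convergence holding in $L^1(\mathbf P)$. In particular the conclusion holds for all $|\gamma|<\sqrt n$.
\end{proof}
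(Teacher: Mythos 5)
Your overall architecture is the same as the paper's: verify the hypotheses of~\cite[Lem.~4.5]{DHKS} using Lemma~\ref{lemma-natural} for the kernel convergences, and reduce uniform integrability to a comparison with~$\mu_{\sharp,L}$. However, the step on which your uniform-integrability argument rests is not justified. You claim that because $\vartheta_{L,z}\le 1$ the ``Fourier coefficients of $k_{\circ,L}$ are termwise bounded by those of $k_{\sharp,L}$'' and conclude $k_{\circ,L}(x,y)\le k_{\sharp,L}(x,y)$ pointwise. Termwise domination of coefficients does not imply pointwise domination of the kernels, because the basis functions $\varphi_z(x)\varphi_z(y)$ (equivalently $\cos(2\pi z\cdot(x-y))$) change sign off the diagonal; moreover $k_{\circ,L}$ is not a function of $x-y$, so the ``coefficients'' you compare are not even the full Fourier data of $k_{\circ,L}$. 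The subsequent inequality $k_{\sharp,L}\le k+C$ is likewise asserted with a reference to Theorem~\ref{p:Estimate}, which proves uniform exponential integrability of a different kernel ($k_{L,\sharp}$) and contains no pointwise bound of this kind. Since Kahane's convexity inequality requires exactly such a pointwise covariance comparison, your route to uniform integrability — and hence to the conclusion, let alone for the over-claimed range $|\gamma|<\sqrt{2n}$ — is not established.

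The gap is repaired by exploiting the structural identity $k_{\circ,L}={\sf q}_L^{\otimes 2}k_{\sharp,L}$ (the natural projection is the piecewise constant average of the Fourier projection under the Markov kernel $q_L$) and applying Jensen's inequality rather than Kahane's. This is what the paper does: either at the level of covariances,
\begin{equation*}
\mathbf E\big[\mu_{\circ,L}(\mathbb{T}^n)^2\big]
=\iint e^{\gamma^2\,{\sf q}_L^{\otimes 2}k_{\sharp,L}(x,y)}\,dx\,dy
\le\iint {\sf q}_L^{\otimes 2}\big(e^{\gamma^2 k_{\sharp,L}}\big)(x,y)\,dx\,dy
=\mathbf E\big[\mu_{\sharp,L}(\mathbb{T}^n)^2\big]\comma
\end{equation*}
or pathwise via $h_{\circ,L}={\sf q}_L h_{\sharp,L}$, which gives $\mu_{\circ,L}(\mathbb{T}^n)\le\mu_{\sharp,L}(\mathbb{T}^n)$ almost surely. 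No pointwise kernel comparison and no Kahane inequality are needed, and the hypothesis $|\gamma|<\sqrt n$ enters precisely because Proposition~\ref{four-meas} guarantees $L^2$-boundedness of $\mu_{\sharp,L}$ only in that range. You should replace your comparison step by this Jensen argument; the rest of your outline then goes through as in the paper.
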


\begin{proof}  
To show the convergence of the LQG measures~$\mu_{\circ,L}$ associated with the natural projections of the polyharmonic field $h$ to the LQG measure~$\mu$ on~$\mathbb{T}^n$,
 we again verify the necessary assumptions in~\cite[Lem.~4.5]{DHKS}.
Lemma \ref{lemma-natural} provides the convergence results for the regularizing kernel $p_{\circ,L}$ and for the covariance kernel $k_{\circ,L}$.
The uniform integrability --- even $L^2$-boundedness --- of the approximating sequence of random measures~$\mu_{\circ,L}$ follows from the $L^2$-boundedness of the sequence of random measures~$\mu_{\sharp,L}$ as stated in Proposition \ref{four-meas} and a straightforward application of Jensen's inequality with the Markov kernel $q_L$:
\begin{align*}
\sup_{L}\mathbb E\left[\left| \mu_{\circ,L}(\mathbb{T}^n)\right|^2\right]&=
\sup_{L}\mathbb E\left[\left| \int_{\mathbb{T}^n} \exp\left(\gamma h_{\circ,L}(x)-\frac{\gamma^2}2k_{\circ,L}(x)\right)dx\right|^2\right]\\
&=\sup_{L}\iint_{\mathbb{T}^n\times\mathbb{T}^n}\exp\left(\gamma^2\,k_{\circ,L}(x,y)\right)dy\,dx\\
&\le\sup_{L}\iint_{\mathbb{T}^n\times\mathbb{T}^n}\iint_{\mathbb{T}^n\times\mathbb{T}^n}\exp\left(\gamma^2\,k_{\sharp,L}(x',y')\right) q_L(x,x')\,q_L(y,y')\,dy'\,dx'\,dy\,dx\\
&=\sup_{L}\iint_{\mathbb{T}^n\times\mathbb{T}^n}\exp\left(\gamma^2\,k_{\sharp,L}(x,y)\right)dy\,dx\\
&=\sup_{L}\mathbb E\left[\left| \mu_{\sharp,L}(\mathbb{T}^n)\right|^2\right]=\mathbb E\left[\left| \mu(\mathbb{T}^n)\right|^2\right]<\infty\fstop
\end{align*}
{\em 
Alternatively,} we can also use Jensen's inequality directly at the level of $h_{\circ,L}$.
Namely, we find that
\begin{equation*}
  \begin{split}
    \exp \left( \gamma h_{\circ,L}(x) - \frac{\gamma^{2}}{2} k_{\circ,L}(x,x)\right) &= \exp \left( \int \left(\gamma h_{\sharp,L}(x') - \frac{\gamma^{2}}{2} k_{\sharp,L}(x', x'') \right) q_{L}(x, x') q_{L}(x, x'') dx dx' dx'' \right) \\
                                                                                            &\leq \int \exp \left( \gamma h_{\sharp,L}(x') - \frac{\gamma^{2}}{2} k_{\sharp,L}(x', x'') \right) q_{L}(x,x') q_{L}(x,x'').
  \end{split}
\end{equation*}
From which, we get
\begin{equation*}
  \mu_{\circ,L}(\mathbb{T}^{n}) \leq \mu_{\sharp,L}(\mathbb{T}^{n}).
\end{equation*}
\end{proof}

\subsection{LQG Measures on the Discrete Tori and their Convergence}

In this section, we will prove our main result: the convergence of the  discrete LQG measures~$\mu_L$ to the LQG measure~$\mu$ on~$\mathbb{T}^n$. 

Whereas this convergence  only holds for a restricted range of parameters $\gamma$, 
the convergence of the so-called reduced discrete LQG measures~$\mu_L$ to the LQG measure~$\mu$ on~$\mathbb{T}^n$ will hold in 
greater generality.

\begin{definition}  For given $\gamma\in\R$, the \emph{polyharmonic LQG measure on the discrete torus  $\mathbb{T}_L^n$} or \emph{discrete LQG measure}
is the random measure $\mu_L$ on  $\mathbb{T}_L^n$ defined by
\[
d\mu_L(v)=\exp\Big(\gamma h_L(v)-\frac{\gamma^2}2 k_L(v,v)\Big)\,dm_L(v)
\fstop
\]
\end{definition}
Here $h_{L}$ is the 
polyharmonic Gaussian field on the discrete torus $\mathbb{T}^n_L$,
$k_{L}$ its covariance function   as introduced in \eqref{haL} and \eqref{kL}, and $m_L$ the normalized counting measure
$\frac1{L^n}\sum_{u\in\mathbb{T}^n_L}\ \delta_u$ on the discrete torus.
Recall that  
$k_L(v,v)=\frac1{a_n}\sum_{z\in\Z^n_L\setminus\{0\}} \frac1{\lambda_{L,z}^{n/2}}$
for all $v\in \mathbb{T}^n_L$.


For proving  convergence of the random measures $\mu_L$ on the discrete tori $\mathbb{T}_L^n$ as $L\to\infty$, we will restrict ourselves to subsequences for which the discrete tori are hierarchically  ordered, say $L=a^\ell$ as  $\ell\to\infty$ for some fixed integer $a\ge 2$ and $\ell\in\N$. For convenience, we will assume that $a$ is odd.

\begin{theorem}\label{conv-disc-meas} 
Assume $|\gamma|<\gamma_*$, and let $a$ be an odd integer $\ge2$. Then 
in $L^1(\mathbf P)$,   
$$\mu_{a^\ell}\to\mu\qquad\text{as }\ell\to\infty.$$ 
\end{theorem}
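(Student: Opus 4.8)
The plan is to transfer the statement to the continuous torus, where the corresponding convergence is already available as Proposition~\ref{thm-conv-enh}, by exploiting the identification between the discrete field and the enhanced projection of~$h$ provided by Lemma~\ref{lem-enhanced-disc}.

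First I would fix a common probability space. The cleanest choice is to take a single polyharmonic field $h$ on $\mathbb{T}^n$ and, by Lemma~\ref{lem-enhanced-disc}, realize each $h_{a^\ell}$ as the restriction to $\mathbb{T}^n_{a^\ell}$ of the enhanced projection $h_{+,a^\ell}$ of $h$. (Equivalently, since $a^\ell\mid a^{\ell+1}$ one has $\mathbb{T}^n_{a^\ell}\subset\mathbb{T}^n_{a^{\ell+1}}$, so the grounded white noises of successive lattices can be coupled consistently by block-averaging and the fields $h_{a^\ell}$ recovered through Theorem~\ref{repres} and~\eqref{bijection}; this hierarchical coupling is what makes the choice $L=a^\ell$ natural, and would be the starting point of a direct martingale treatment à la Kahane.) Under the first coupling I would then observe that $h_{+,L}$ is constant on each cube $Q_L(v)$ --- because the kernel $p_{+,L}=q_L\circ r_L^{{+}}$ is piecewise constant in its first argument --- and that, by Lemma~\ref{averages},
\[
k_{+,L}(x,x)=\frac1{a_n}\sum_{z\in\Z^n_L\setminus\{0\}}\frac{\varphi_z(v)^2}{\lambda_{L,z}^{n/2}}=k_L(v,v)\qquad\text{for }x\in Q_L(v).
\]
Together with Lemma~\ref{lem-enhanced-disc}, this identifies $\mu_{+,L}$ with the piecewise-constant extension of $\mu_L$: for every $f\in\mathcal C(\mathbb{T}^n)$ one has $\int_{\mathbb{T}^n}f\,d\mu_{+,L}=\int_{\mathbb{T}^n_L}({\sf q}_Lf)\,d\mu_L$, and in particular $\mu_{+,L}(\mathbb{T}^n)=\mu_L(\mathbb{T}^n_L)$.

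Next I would bound the discretization error $\int_{\mathbb{T}^n_L}f\,d\mu_L-\int_{\mathbb{T}^n}f\,d\mu_{+,L}=\int_{\mathbb{T}^n_L}(f-{\sf q}_Lf)\,d\mu_L$. Since $f$ is uniformly continuous on $\mathbb{T}^n$ and $Q_L(v)$ has diameter at most $\sqrt n/L$, we get $\sup_v|f(v)-{\sf q}_Lf(v)|\to0$, while by Proposition~\ref{thm-conv-enh} the masses $\mu_L(\mathbb{T}^n_L)=\mu_{+,L}(\mathbb{T}^n)$ converge in $L^1(\mathbf P)$ and are therefore bounded in $L^1(\mathbf P)$. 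Hence
\[
\Big|\int_{\mathbb{T}^n_L}f\,d\mu_L-\int_{\mathbb{T}^n}f\,d\mu_{+,L}\Big|\le\sup_v|f(v)-{\sf q}_Lf(v)|\cdot\mu_L(\mathbb{T}^n_L)\longrightarrow0\quad\text{in }L^1(\mathbf P).
\]
Combining this with $\mu_{+,a^\ell}\to\mu$ in $L^1(\mathbf P)$ (Proposition~\ref{thm-conv-enh}, valid precisely for $|\gamma|<\gamma_*=\sqrt{n/e}$) yields $\int_{\mathbb{T}^n_{a^\ell}}f\,d\mu_{a^\ell}\to\int_{\mathbb{T}^n}f\,d\mu$ in $L^1(\mathbf P)$ for every $f\in\mathcal C(\mathbb{T}^n)$, which is the asserted convergence; convergence in $\mathbf P$-probability follows a fortiori.

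The reduction above is elementary; the substance sits entirely in Proposition~\ref{thm-conv-enh}, and hence in the uniform-integrability estimate of Theorem~\ref{p:Estimate}: one must show $\sup_L\mathbf E\big[\mu_L(\mathbb{T}^n_L)^2\big]=\sup_L\iint\exp\!\big(\gamma^2k_L(x,y)\big)\,dm_L(x)\,dm_L(y)<\infty$, which requires uniform-in-$L$ control of the discrete kernels $k_L$ near the diagonal matching the logarithmic divergence of $k$ with its sharp constant (Lemma~\ref{log-div}). This is the step I expect to be the main obstacle, and it is exactly what forces the restricted range $|\gamma|<\sqrt{n/e}$ --- in contrast to the reduced and Fourier-extension measures, which converge over the whole subcritical range $|\gamma|<\sqrt{2n}$.
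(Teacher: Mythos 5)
Your proposal is correct and follows essentially the same route as the paper: both reduce the claim to Proposition~\ref{thm-conv-enh} via the identification of $h_{a^\ell}$ with the restriction of the enhanced projection $h_{+,a^\ell}$ (Lemma~\ref{lem-enhanced-disc}), under which $\mu_{+,a^\ell}$ becomes the piecewise-constant extension of $\mu_{a^\ell}$. The only (immaterial) difference is the final approximation step: the paper approximates a continuous $f$ by piecewise-constant functions $f_j$ and uses $\mathbf E\big[\int|f-f_j|\,d\mu_{a^\ell}\big]=\int|f-f_j|\,dm_{a^\ell}$ uniformly in $\ell$, whereas you keep $f$ fixed and bound $\big|\int (f-\mathsf{q}_{L}f)\,d\mu_L\big|$ by $\sup_v|f(v)-\mathsf{q}_Lf(v)|\cdot\mu_L(\mathbb{T}^n_L)$ --- the same estimate organized in a different order.
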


\begin{proof} 
 Given $a$ as above, let us call a function $f$ on $\mathbb{T}^n$  \emph{piecewise constant} if it is constant on all cubes $v+Q_{L}$, $v\in \mathbb{T}^n_{L}$, for some $L=a^{\ell'}$. 
For such $f$ and all $\ell\ge \ell'$,  
\begin{equation}\int f\, d\mu_{a^{\ell}}=\int f\, d\mu_{+,a^{\ell}}.\end{equation}
Indeed, the field $h_{+,a^{\ell}}$ is constant all cubes $v+Q_{a^\ell}$, $v\in \mathbb{T}^n_{a^\ell}$, and Lemma \ref{lem-enhanced-disc} yields that the fields $h_{a^{\ell}}$ and $h_{+,a^{\ell}}$ coincide (in distribution) on the discrete torus $\mathbb{T}^n_{a^\ell}$. 
Thus also the associated LQG measures of all cubes $v+Q_{a^\ell}$, $v\in \mathbb{T}^n_{a^\ell}$,  coincide.

Hence, for piecewise constant functions $f$, the convergence
$$\int f\, d\mu_{a^\ell}\to \int f\, d\mu\qquad\text{as }\ell\to\infty$$
follows from the previous Proposition \ref{thm-conv-enh}.

 For continuous $f$, the claim follows by approximation of $f$ by piecewise constant $f_j$, $j\in \N$.
Indeed,
$$\mathbf E\Big[\Big|\int f\, d\mu_{a^\ell}-\int f_j\, d\mu_{a^\ell}\Big|\Big]\le \mathbf E\Big[\int \Big|f- f_j\Big|\, d\mu_{a^\ell}\Big]=
\int \Big|f- f_j\Big|\, dx\ \to \ 0
$$
as $j\to\infty$, uniformly in $\ell\in\N$, and similarly with $\mu$ in the place of $\mu_{a^\ell}$.
\end{proof}

\subsection{Reduced LQG Measures on the Discrete Tori and their Convergence}

Recall that if $h_L$ is a polyharmonic field on the discrete torus then
\[
h_L^{{-}}\coloneqq {\sf r}_L^{{-}}(h_L)
\]
defines a reduced polyharmonic field on the discrete torus. 
If  $h_L$ is given as 
\[
h_L(v)=\frac1{\sqrt{a_n}}\sum_{z\in\Z^n_L}\frac{1}{\lambda_{L,z}^{n/4}}\,\xi_z\,\varphi_z(v)
\]
 then
\[
h_L^{{-}}(v)=\frac1{\sqrt{a_n}}\sum_{z\in\Z^n_L}\frac{\vartheta_{L,z}}{\lambda_z^{n/4}}\,\xi_z\,\varphi_z(v).
\]

For $\gamma\in\R$,
define the random measure $\mu^{{-}}_{L}$ on $\mathbb{T}_L^n$, called \emph{reduced discrete LQG measure}, by
\[
d\mu^{{-}}_{L}(v)=\exp\Big(\gamma h^{{-}}_{L}(v)-\frac{\gamma^2}2 k^{{-}}_{L}(v,v)\Big)\,dm_L(v) 
\]

\begin{theorem}\label{conv-red-disc} 
Assume $|\gamma|<\sqrt n$  
and let $a$ be an odd integer $\ge2$. Then 
 in $L^1(\mathbf P)$,   
$$\mu^{{-}}_{a^\ell}\to\mu\qquad\text{as }\ell\to\infty.$$ 

\end{theorem}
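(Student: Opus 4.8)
The plan is to derive this from the already-established convergence of the \emph{natural approximations} $\mu_{\circ,L}$ of the polyharmonic LQG measure (Proposition~\ref{thm-conv-nat}, which holds precisely in the range $|\gamma|<\sqrt n$), by means of the distributional identification in Lemma~\ref{lem-natural-dis}. As in the proof of Theorem~\ref{conv-disc-meas}, the hierarchical choice $L=a^{\ell}$ enters only through the elementary observation that any function which is constant on all cubes $v+Q_{a^{\ell'}}$, $v\in\mathbb{T}^n_{a^{\ell'}}$, is automatically constant on all cubes $v+Q_{a^{\ell}}$ for every $\ell\ge\ell'$, since $a^{\ell'}$ divides $a^{\ell}$.

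First I would treat a \emph{piecewise constant} test function $f$, say constant on the cubes $v+Q_{a^{\ell'}}$, and show that for every $\ell\ge\ell'$
\begin{equation*}
\int f\,d\mu^{{-}}_{a^{\ell}}=\int f\,d\mu_{\circ,a^{\ell}}\fstop
\end{equation*}
Indeed $h_{\circ,a^{\ell}}={\sf q}_{a^{\ell}}\big({\sf r}_{a^{\ell}}h\big)$ lies in the range of the projection ${\sf q}_{a^{\ell}}$, so both $h_{\circ,a^{\ell}}$ and $x\mapsto k_{\circ,a^{\ell}}(x,x)$ are constant on every cube $v+Q_{a^{\ell}}$; moreover ${\sf q}_{a^{\ell}}\varphi_z(v)=\vartheta_{a^{\ell},z}\,\varphi_z(v)$ for $v\in\mathbb{T}^n_{a^{\ell}}$ by Lemma~\ref{averages}, whence $k_{\circ,a^{\ell}}(v,v)=k^{{-}}_{a^{\ell}}(v,v)$; and when $h$, $h^{{-}}_{a^{\ell}}$ and $h_{\circ,a^{\ell}}$ are all realized from one common Gaussian family $(\xi_z)_{z\in\mathbb{Z}^n\setminus\{0\}}$, the computation in the proof of Lemma~\ref{lem-natural-dis} shows that the restriction of $h_{\circ,a^{\ell}}$ to $\mathbb{T}^n_{a^{\ell}}$ coincides with $h^{{-}}_{a^{\ell}}$. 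Hence $\mu^{{-}}_{a^{\ell}}$ and $\mu_{\circ,a^{\ell}}$ assign, almost surely, the same mass $\vol(Q_{a^{\ell}})\,e^{\gamma h^{{-}}_{a^{\ell}}(v)-\frac{\gamma^2}2 k^{{-}}_{a^{\ell}}(v,v)}$ to each cube $v+Q_{a^{\ell}}$, and since $f$ is constant on these cubes the displayed identity follows. Proposition~\ref{thm-conv-nat} then gives $\int f\,d\mu_{\circ,a^{\ell}}\to\int f\,d\mu$ in $L^1(\mathbf P)$, hence the same for $\mu^{{-}}_{a^{\ell}}$.

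Next I would pass to an arbitrary continuous $f$ by approximating it uniformly with piecewise constant functions $f_j$, constant on the cubes $v+Q_{a^{j}}$. Since the reduced discrete LQG measure is normalized so that its expectation, viewed as a measure on $\mathbb{T}^n$ via piecewise constant extension, is Lebesgue measure (because $\mathbf E\big[e^{\gamma h^{{-}}_{a^{\ell}}(v)-\frac{\gamma^2}2 k^{{-}}_{a^{\ell}}(v,v)}\big]=1$), one has
\begin{equation*}
\mathbf E\Big[\Big|\int f\,d\mu^{{-}}_{a^{\ell}}-\int f_j\,d\mu^{{-}}_{a^{\ell}}\Big|\Big]\le\mathbf E\Big[\int|f-f_j|\,d\mu^{{-}}_{a^{\ell}}\Big]=\int|f-f_j|\,dx\longrightarrow 0
\end{equation*}
as $j\to\infty$, uniformly in $\ell$, and likewise with $\mu$ in place of $\mu^{{-}}_{a^{\ell}}$. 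Combining this with the piecewise constant case through a three-$\varepsilon$ argument yields $\int f\,d\mu^{{-}}_{a^{\ell}}\to\int f\,d\mu$ in $L^1(\mathbf P)$ for every continuous $f$, which is the assertion.

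The only genuinely delicate point, I expect, is the cube-by-cube identity in the first step: one must be careful that the ``in distribution'' coincidence of Lemma~\ref{lem-natural-dis}, together with the matching of the diagonal covariances provided by Lemma~\ref{averages}, can be upgraded---under the canonical coupling in which all fields are built from a single Gaussian family---to an almost-sure equality of the full random measures $\mu^{{-}}_{a^{\ell}}$ and $\mu_{\circ,a^{\ell}}$ when tested against piecewise constant functions. Once this is secured, the remainder is a verbatim transcription of the argument for Theorem~\ref{conv-disc-meas}, with Proposition~\ref{thm-conv-nat} and Lemma~\ref{lem-natural-dis} playing the roles of Proposition~\ref{thm-conv-enh} and Lemma~\ref{lem-enhanced-disc}.
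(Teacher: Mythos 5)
Your proposal is correct and follows essentially the same route as the paper: reduce to piecewise constant test functions via the identity $\int f\,d\mu^{-}_{a^{\ell}}=\int f\,d\mu_{\circ,a^{\ell}}$ supplied by Lemma~\ref{lem-natural-dis}, invoke Proposition~\ref{thm-conv-nat} for the range $|\gamma|<\sqrt{n}$, and conclude for continuous $f$ by the uniform-in-$\ell$ approximation estimate. The coupling point you flag as delicate is handled in the paper exactly as you suggest, by realizing $h$, $h_{\circ,L}$ and $h^{-}_{L}$ from one common Gaussian family $(\xi_z)$ so that the cube-by-cube equality of the measures holds pathwise.
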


\begin{proof} 
Let $a$ be given as above and let  $f$ be a function on $\mathbb{T}^n$ which is constant on all cubes $v+Q_{L}$, $v\in \mathbb{T}^n_{L}$, for some $L=a^{\ell'}$. 
Then according to Lemma \ref{lem-natural-dis} for all $\ell\ge \ell'$,  
\begin{equation}\int f\, d\mu^{{-}}_{a^{\ell}}=\int f\, d\mu_{\circ,a^{\ell}}.\end{equation}
Hence, for piecewise constant functions $f$, the convergence
$$\int f\, d\mu^{{-}}_{a^\ell}\to \int f\, d\mu\qquad\text{as }\ell\to\infty$$
follows from the previous Proposition \ref{thm-conv-nat}.
 For continuous $f$, the claim follows by approximation of $f$ by piecewise constant $f_j$, $j\in \N$.
\end{proof}

%
\subsection{Convergence Results for Semi-discrete LQG Measures}
So far, we have studied the LQG measure and the reduced LQG measure on the discrete torus $\mathbb{T}^n_L$ and their convergence properties as $L\to\infty$.
In terms of the polyharmonic field $h_L$ on the discrete torus, we can also define the so-called semi-discrete LQG measure as well as the spectrally reduced semi-discrete LQG measure on the continuous torus. These are the LQG measures associated with the Fourier extension of the discrete field $h_L$ and of the spectrally reduced discrete field $h_L^{{-\circ}}$. All these random measures are  functions of the discrete field $h_L$.

\subsubsection{Semi-discrete LQG Measure}
For $\gamma\in\R$,
define the random measure $\mu_{L,\sharp}$ on $\mathbb{T}^n$, called \emph{semi-discrete LQG measure}, by
\[
d\mu_{L,\sharp}(x)=\exp\Big(\gamma h_{L,\sharp}(x)-\frac{\gamma^2}2 k_{L,\sharp}(x,x)\Big)\,d\Leb^n(x) 
\]
where $h_{L,\sharp}$ denotes the \emph{Fourier extension} of the discrete polyharmonic field $h_L$
and $k_{L,\sharp}$ the associated covariance function   as introduced in~\S\ref{sss:FourierExtension}.

\begin{theorem}\label{conv-four-meas} 
Assume $|\gamma|<\gamma_*\coloneqq\sqrt\frac{n}{e}$. Then 
in $L^1(\mathbf P)$,   $$\mu_{L,\sharp}\to\mu\qquad\text{as }L\to\infty.$$ 
\end{theorem}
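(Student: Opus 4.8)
The plan is to realize $\mu_{L,\sharp}$ as the Gaussian multiplicative chaos attached to a regularization of the continuous polyharmonic field $h$, and then to invoke the general convergence criterion \cite[Lem.~4.5]{DHKS}, following verbatim the strategy of the proofs of Propositions~\ref{thm-conv-enh} and~\ref{thm-conv-nat}. Assume without restriction that $h$ is given in terms of i.i.d.\ standard Gaussians $(\xi_z)_{z\in\Z^n\setminus\{0\}}$ as $h=\frac1{\sqrt{a_n}}\sum^\Box_{z\in\Z^n\setminus\{0\}}\lambda_z^{-n/4}\,\xi_z\,\varphi_z$, and couple $h_L$ to $h$ by using the same Gaussians. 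By~\eqref{haLtil2}, the Fourier extension is then $h_{L,\sharp}(x)=\frac1{\sqrt{a_n}}\sum_{z\in\Z^n_L\setminus\{0\}}\lambda_{L,z}^{-n/4}\,\xi_z\,\varphi_z(x)=\langle h\,|\,p_{L,\sharp}(x,\emparg)\rangle$ with regularizing kernel $p_{L,\sharp}\coloneqq\sum_{z\in\Z^n_L\setminus\{0\}}(\lambda_z/\lambda_{L,z})^{n/4}\,\varphi_z\otimes\varphi_z$ on $\mathbb{T}^n\times\mathbb{T}^n$; equivalently, $h_{L,\sharp}$ is the spectral enhancement of the Fourier projection $h_{\sharp,L}$, cf.\ Lemma~\ref{fourier-ext/rest}. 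Its covariance is $k_{L,\sharp}$ of~\eqref{kL-sharp}, which is constant on the diagonal, so that $\mathbf E[\mu_{L,\sharp}]=\Leb^n$ and $\mu_{L,\sharp}$ is normalized in the GMC sense.

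Two of the three hypotheses of \cite[Lem.~4.5]{DHKS} are elementary and I would dispatch them first. For the regularizing kernel: given $f=\sum_z\alpha_z\varphi_z\in L^2(\mathbb{T}^n)$ one has ${\sf p}_{L,\sharp}f=\sum_{z\in\Z^n_L\setminus\{0\}}(\lambda_z/\lambda_{L,z})^{n/4}\alpha_z\varphi_z$, and since by~\eqref{ratio-lambda} the factors $(\lambda_z/\lambda_{L,z})^{n/4}$ lie in $[1,(\pi/2)^{n/2}]$ and tend to $1$ for each fixed $z$, dominated convergence in $\ell^2$ (using $\sum_z\alpha_z^2<\infty$) yields ${\sf p}_{L,\sharp}f\to f$ in $L^2(\mathbb{T}^n)$. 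For the covariance kernel: write $k_{L,\sharp}=(k_{L,\sharp}-k_{\sharp,L})+k_{\sharp,L}$, where $k_{\sharp,L}=\frac1{a_n}\sum_{z\in\Z^n_L\setminus\{0\}}\lambda_z^{-n/2}\varphi_z\otimes\varphi_z$ is the cube–partial sum of $k$ and hence converges to $k$ in $L^2(\mathbb{T}^n\times\mathbb{T}^n)$ by Lemma~\ref{green-kernel-series}, while
\[
\big\|k_{L,\sharp}-k_{\sharp,L}\big\|_{L^2}^2=\frac1{a_n^2}\sum_{z\in\Z^n_L\setminus\{0\}}\big(\lambda_{L,z}^{-n/2}-\lambda_z^{-n/2}\big)^2=\frac1{a_n^2}\sum_{z\in\Z^n_L\setminus\{0\}}\lambda_z^{-n}\big((\lambda_z/\lambda_{L,z})^{n/2}-1\big)^2\longrightarrow 0
\]
by dominated convergence, using $0\le(\lambda_z/\lambda_{L,z})^{n/2}-1\le 2^n$ and $\sum_{z\ne0}\lambda_z^{-n}<\infty$; this is the same estimate already performed in the proof of Theorem~\ref{thm3}. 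In particular $k_{L,\sharp}\to k$ in $L^2$, hence in $L^0(\mathbb{T}^n\times\mathbb{T}^n)$.

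The remaining, and the only substantial, ingredient is the uniform integrability of the family $\{\mu_{L,\sharp}\}_L$, and this is exactly where the hypothesis $|\gamma|<\sqrt{n/e}$ enters: it is the content of Theorem~\ref{p:Estimate} (stated in the introduction as Theorem~\ref{uni-int-crit}) that $\sup_L\mathbf E\big[|\mu_{L,\sharp}(\mathbb{T}^n)|^2\big]<\infty$ in that range, and testing against functions in $L^\infty(\mathbb{T}^n)$ upgrades this $L^2$-bound to uniform integrability of $\{\int f\,d\mu_{L,\sharp}\}_L$. With the convergence of the regularizing kernels, the convergence of the covariance kernels, and this uniform integrability all in hand, \cite[Lem.~4.5]{DHKS} applies and identifies the $L^1(\mathbf P)$-limit of $\mu_{L,\sharp}$ as the GMC of $h$, namely $\mu$, which is the assertion.

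The hard part is therefore not located in this argument at all but is deferred to Theorem~\ref{p:Estimate}: everything here is bookkeeping parallel to Propositions~\ref{thm-conv-enh}--\ref{thm-conv-nat}, and the restriction $|\gamma|<\sqrt{n/e}$ is inherited entirely from that $L^2$-estimate (which is sharp for the second-moment method but not for subcriticality, consistently with the wider range $|\gamma|<\sqrt{2n}$ obtained below for the spectrally reduced field, where no discrete-to-continuous eigenvalue mismatch has to be absorbed).
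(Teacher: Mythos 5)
Your proposal is correct and follows essentially the same route as the paper's own proof: identify $h_{L,\sharp}$ as the regularization of $h$ by the kernel $r_L^{{+\circ}}$ (Lemma~\ref{fourier-ext/rest}), verify the kernel-convergence hypotheses of \cite[Lem.~4.5]{DHKS} as in Lemma~\ref{lemma-enhanced}, and obtain uniform integrability from the $L^2$-bound of Theorem~\ref{p:Estimate}. The only difference is that you spell out the kernel convergences explicitly (via Lemma~\ref{green-kernel-series} and the estimate from Theorem~\ref{thm3}) where the paper merely cites the analogous computation.
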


\begin{proof} According to Remark \ref{fourier-ext/rest}, the Fourier extension $h_{L,\sharp}$ of the discrete random field $h_L$ coincides in distribution with the field obtained from the continuous field $h$ by regularization with the kernel  $r_{L}^{{+\circ}}$.

To show the convergence of the LQG measures~$\mu_{L,\sharp}$ associated with the Fourier extensions of the polyharmonic field on the torus $\mathbb{T}^n$  to the LQG measure~$\mu$ on~$\mathbb{T}^n$,
 we again verify the necessary assumptions in~\cite[Lem.~4.5]{DHKS}.
 Criteria (ii) and (iii) of Lemma 4.5 in \cite{DHKS}, can be verified exactly as in the proof of Lemma~\ref{lemma-enhanced}.
The uniform integrability of the approximating sequence of random measures follows from Theorem \ref{p:Estimate} in the next section.
\end{proof}

\begin{figure}[htb!]
     \centering
     \begin{subfigure}[b]{0.48\textwidth}
         \centering
         \includegraphics[width=\textwidth]{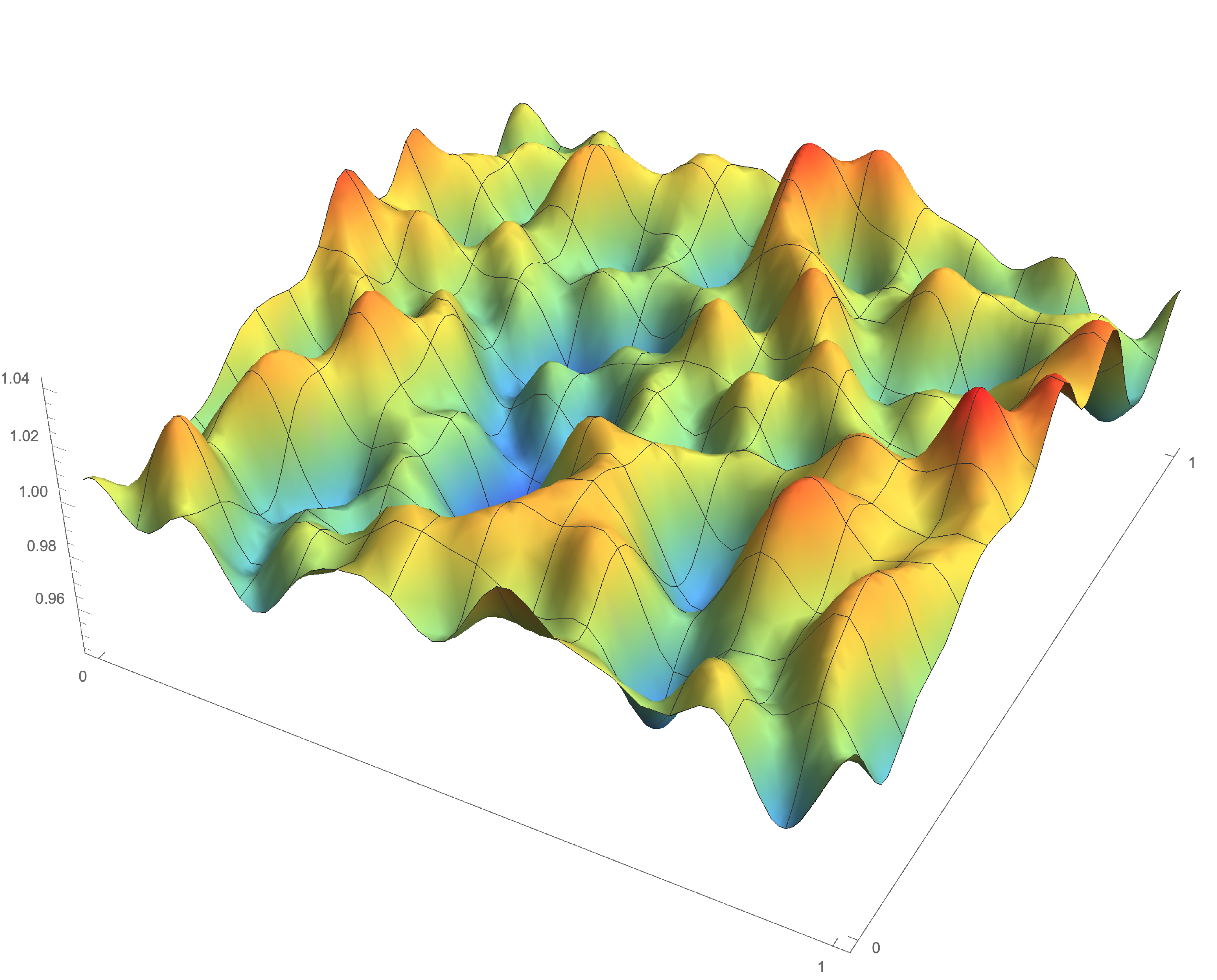}
         \caption{$\gamma=0.01$}
     \end{subfigure}
     \hfill
     \begin{subfigure}[b]{0.48\textwidth}
         \centering
        \includegraphics[width=\textwidth]{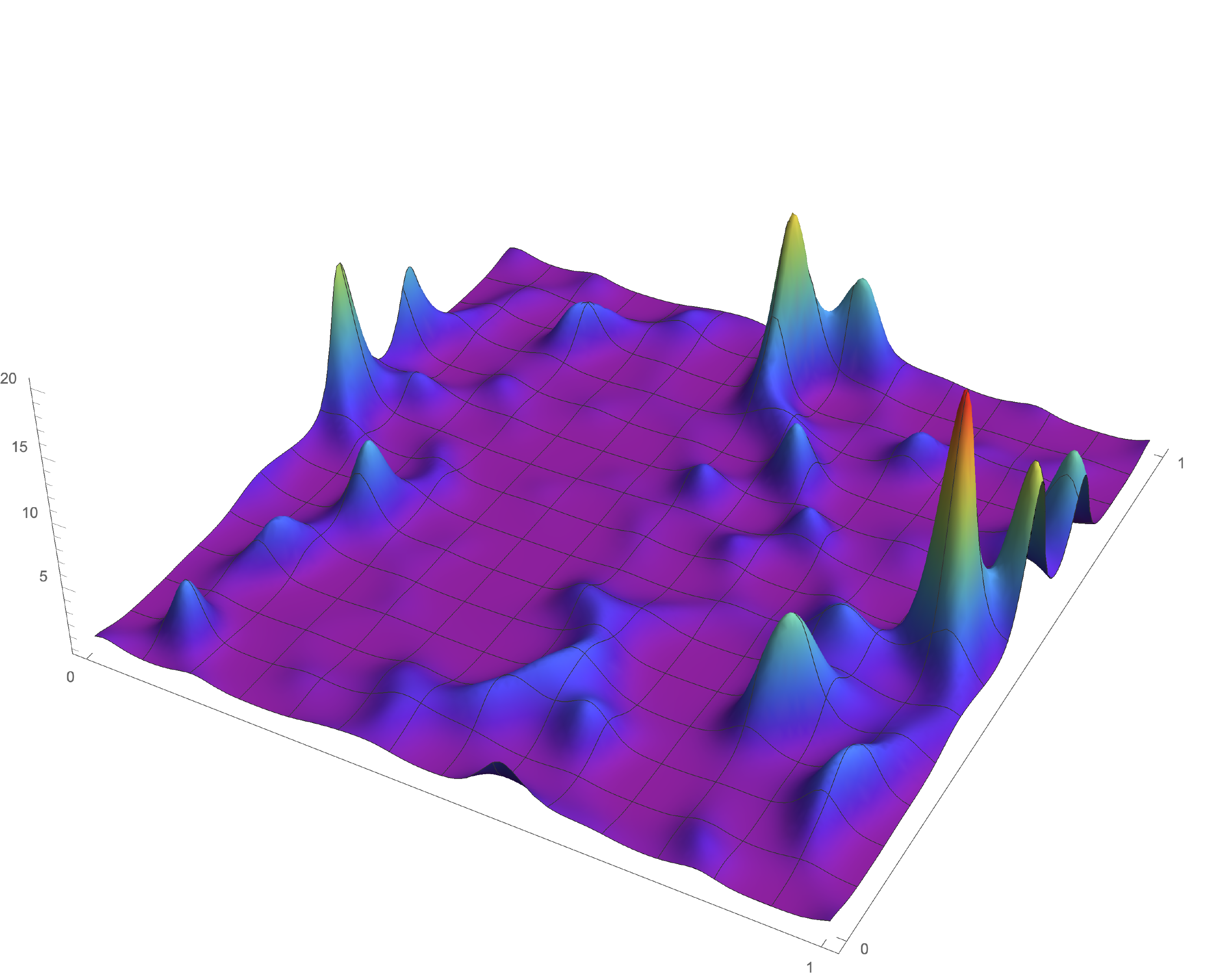}
         \caption{$\gamma=1$}
     \end{subfigure}
        \caption{\footnotesize The Gaussian Multiplicative Chaos~$\mu_{L,\sharp}$ on~$\mathbb{T}^2$ for~$L=15$, different values of~$\gamma$ and same realization of the randomness.}
\end{figure}

\subsubsection{Spectrally Reduced Semi-discrete LQG Measure} 

For $\gamma\in\R$,
define the random measure $\mu^{{-\circ}}_{L,\sharp}$ on $\mathbb{T}^n$, called \emph{spectrally reduced semi-discrete LQG measure}, by
\[
d\mu^{{-\circ}}_{L,\sharp}(x)=\exp\Big(\gamma h^{{-\circ}}_{L,\sharp}(x)-\frac{\gamma^2}2 k^{{-\circ}}_{L,\sharp}(x,x)\Big)\,d\Leb^n(x) 
\]
where $h^{{-\circ}}_{L,\sharp}$ denotes the \emph{Fourier extension} of the spectrally reduced discrete polyharmonic field~$h^{{-\circ}}_L$
and~$k^{{-\circ}}_{L,\sharp}$ the associated covariance function  as introduced in \S\ref{sss:ReducedDiscrete}.
As a corollary to Lemma \ref{fourier-ext/rest} and Proposition \ref{four-meas}, we directly obtain
\begin{theorem}\label{conv-mod-meas} 
Assume $|\gamma|<\sqrt{2n}$. Then in $\mathbf P$-probability and in $L^1(\mathbf P)$,   $$\mu^{{}{-\circ}}_{L,\sharp}\to\mu\qquad\text{as }L\to\infty.$$ 
\end{theorem}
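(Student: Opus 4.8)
The plan is to deduce the statement directly from Proposition~\ref{four-meas} by means of the identification recorded in Lemma~\ref{fourier-ext/rest}. Realize the continuous polyharmonic field~$h$ in terms of iid standard normals~$(\xi_z)_{z\in\Z^n\setminus\{0\}}$, exactly as in the proof of Lemma~\ref{lem-enhanced-disc}, namely
\[
h=\frac1{\sqrt{a_n}}\sum^{\Box}_{z\in\Z^n\setminus\{0\}}\frac1{\lambda_z^{n/4}}\,\xi_z\,\varphi_z,
\]
and build the discrete field~$h_L$, its spectral reduction~$h_L^{{-\circ}}$, and the Fourier extension~$h^{{-\circ}}_{L,\sharp}$ of the latter from the \emph{same} noise $(\xi_z)_{z\in\Z^n_L\setminus\{0\}}$. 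Under this coupling, Lemma~\ref{fourier-ext/rest} gives the pathwise identity
\[
h^{{-\circ}}_{L,\sharp}=\frac1{\sqrt{a_n}}\sum_{z\in\Z^n_L\setminus\{0\}}\frac1{\lambda_z^{n/4}}\,\xi_z\,\varphi_z=h_{\sharp,L}\qquad\text{on }\mathbb{T}^n,
\]
with~$h_{\sharp,L}$ the Fourier projection of~$h$ from~\S\ref{ss:Projections:B}. In particular the diagonals of the covariances agree, $k^{{-\circ}}_{L,\sharp}(x,x)=k_{\sharp,L}(x,x)$, and hence the two random measures coincide: $\mu^{{-\circ}}_{L,\sharp}=\mu_{\sharp,L}$ $\mathbf P$-almost surely, for every~$L$.

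It then remains only to quote Proposition~\ref{four-meas}: since~$|\gamma|<\sqrt{2n}$, for $\mathbf P$-a.e.~$\omega$ one has $\mu_{\sharp,L}^\omega\to\mu^\omega$ in the weak topology of measures on~$\mathbb{T}^n$, and $\int_{\mathbb{T}^n}f\,d\mu_{\sharp,L}\to\int_{\mathbb{T}^n}f\,d\mu$ both $\mathbf P$-a.s.\ and in~$L^1(\mathbf P)$ for every~$f\in L^1(\mathbb{T}^n)$. Transporting these assertions along the identity $\mu^{{-\circ}}_{L,\sharp}=\mu_{\sharp,L}$ yields $\mu^{{-\circ}}_{L,\sharp}\to\mu$ in the same senses, and in particular in~$\mathbf P$-probability and in~$L^1(\mathbf P)$, which is exactly the claim.

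Because the result is a formal corollary of facts already established, there is no substantive obstacle. The only point meriting care is that Lemma~\ref{fourier-ext/rest} must be invoked at the level of the driving white noise --- i.e.\ as the pathwise identity displayed above, rather than merely as equality of the laws of the two fields taken separately --- so that the $\mathbf P$-a.s.\ convergence furnished by Proposition~\ref{four-meas} transfers verbatim; the $L^1(\mathbf P)$ and in-probability statements, by contrast, depend only on the joint law of $\bigl(\mu^{{-\circ}}_{L,\sharp},\mu\bigr)$, which matches that of $(\mu_{\sharp,L},\mu)$ under the canonical coupling, and so would already follow from the distributional form of the identification.
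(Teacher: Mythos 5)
Your proposal is correct and follows exactly the route the paper takes: the theorem is stated there as a direct corollary of Lemma~\ref{fourier-ext/rest} and Proposition~\ref{four-meas}, which is precisely the identification $\mu^{{-\circ}}_{L,\sharp}=\mu_{\sharp,L}$ (under the common realization via the noise $(\xi_z)$) followed by the convergence of the Fourier approximations. Your additional remark about invoking the identification pathwise rather than merely in law is a sensible clarification of a point the paper leaves implicit, but it does not change the argument.
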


\subsection{Uniform Integrability of Discrete and Semi-discrete LQG Measures}

Finally, we address the question of uniform integrability of approximating sequences of LQG measures.
We provide a self-contained  argument for $L^2$-boundedness, independent of Kahane's work \cite{Kah85}.

\begin{theorem}\label{p:Estimate}
Assume $|\gamma|<\gamma_*\coloneqq\sqrt{\frac{n}e}$. Then
\begin{itemize}
\item[(i)] 
\begin{equation}\label{eq: cont}
 \sup_L\int_{\mathbb{T}^n}\exp\Big(\gamma^2 k_{L,\sharp}(0,y)\Big)\,d\Leb^n(y)<\infty\comma\end{equation}

\item[(ii)] 
\begin{equation}\label{eq: disc}  
\sup_L\int_{\mathbb{T}^n}\exp\Big(\gamma^2 k_{L,\flat}(0,y)\Big)\,d\Leb^n(y)<\infty\fstop\end{equation}

\item[(iii)] 
\begin{equation}\label{eq: nat}  
\sup_L\int_{\mathbb{T}^n}\exp\Big(\gamma^2 k_{+,L}(0,y)\Big)\,d\Leb^n(y)<\infty\fstop\end{equation}

\end{itemize}
\end{theorem}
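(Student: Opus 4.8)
\emph{Proof plan.} All three bounds are of the same nature, so I treat (i) in detail and indicate the (easier) reductions for (ii) and (iii). By translation invariance the integral in \eqref{eq: cont} equals $\iint_{\mathbb{T}^n\times\mathbb{T}^n}e^{\gamma^2 k_{L,\sharp}(x,y)}\,dx\,dy=\mathbf E\big[\mu_{L,\sharp}(\mathbb{T}^n)^2\big]$, so (i) is precisely the $L^2$-boundedness of the semi-discrete chaos. The heart of the matter is a single uniform-in-$L$ logarithmic upper bound for the covariance: the plan is to show that there are a constant $c_0$ — which the estimates below allow to be taken equal to $e$ — and a constant $C=C(n)$ with
\[
k_{L,\sharp}(0,y)\ \le\ c_0\,\log\frac{1}{d(0,y)\vee L^{-1}}\ +\ C\qquad\text{for all }y\in\mathbb{T}^n\text{ and all }L.
\]
Granting this, one splits $\int_{\mathbb{T}^n}e^{\gamma^2 k_{L,\sharp}(0,y)}\,d\Leb^n(y)$ at the scale $L^{-1}$. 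On $\{d(0,y)<L^{-1}\}$ the integrand is $\le e^{\gamma^2 C}L^{c_0\gamma^2}$ while the region has volume $\lesssim L^{-n}$, contributing $\lesssim L^{c_0\gamma^2-n}$; on $\{d(0,y)\ge L^{-1}\}$ the integrand is $\le e^{\gamma^2 C}\,d(0,y)^{-c_0\gamma^2}$, whose integral over $\mathbb{T}^n$ is finite and independent of $L$. Both quantities stay bounded exactly when $c_0\gamma^2<n$, i.e.\ for $|\gamma|<\sqrt{n/c_0}=\sqrt{n/e}$.

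For the kernel bound I would start from the subordination formula $\lambda^{-n/2}=\tfrac1{\Gamma(n/2)}\int_0^\infty e^{-t\lambda}t^{n/2-1}\,dt$, which gives
\[
k_{L,\sharp}(0,y)=\frac{(4\pi)^{n/2}}{2}\int_0^\infty \bigg(\sum_{z\in\Z^n_L\setminus\{0\}}e^{-t\lambda_{L,z}}\cos(2\pi z\cdot y)\bigg)t^{n/2-1}\,dt .
\]
Since $\lambda_{L,z}=\sum_{k=1}^n4L^2\sin^2(\pi z_k/L)$, the sum over $\Z^n_L$ factorizes into $n$ one-dimensional sums $\sum_{|j|<L/2}e^{-4tL^2\sin^2(\pi j/L)}$, each of which is, up to normalization, the heat kernel at the origin of the simple random walk on the cycle $\Z/L\Z$ run for a time proportional to $tL^2$; uniform Gaussian upper bounds for that heat kernel give $\sum_{z\in\Z^n_L}e^{-t\lambda_{L,z}}\le c_1\min\!\big(L^n,(4\pi t)^{-n/2}\big)$ for $t\lesssim1$, and for $t\gtrsim1$ the grounded sum decays exponentially because every nonzero $\lambda_{L,z}$ is bounded below by a positive constant. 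Bounding $|\cos(2\pi z\cdot y)|\le1$ on the low-frequency part, exploiting cancellation on the high-frequency part (summation by parts, or the off-diagonal decay of those same heat kernels), and cutting the $t$-integral at the scales $t\sim d(0,y)^2$ and $t\sim L^{-2}$ produces the displayed bound, with $c_0$ the constant emerging from these estimates. A more hands-on alternative is a dyadic decomposition of $k_{L,\sharp}(0,\cdot)$ into frequency shells $\{2^{j-1}\le|z|<2^j\}\cap\Z^n_L$: each shell contributes a bounded amount (this is what the normalization $a_n$ is designed to ensure), improved to $O\big((2^jd(0,y))^{-1}\big)$ once $2^j\gg d(0,y)^{-1}$ by summation by parts, whereupon the geometric tail sums to $O(1)$.

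The cases (ii) and (iii) are shorter. For (ii), $k_{L,\flat}(0,y)$ is by \eqref{k-flat-def} an average of $k$ over a product of two cubes of side $L^{-1}$, so Jensen's inequality dominates $e^{\gamma^2 k_{L,\flat}(0,y)}$ by the corresponding average of $e^{\gamma^2 k}$; integrating in $y$ and applying Fubini collapses the expression to $\int_{\mathbb{T}^n}e^{\gamma^2 k(0,u)}\,d\Leb^n(u)$, which is finite and independent of $L$ for $\gamma^2<n$ by Lemma~\ref{log-div}. For (iii), the identity $k_{+,L}=\mathsf{q}_L^{\otimes 2}k_L^+$ from Lemma~\ref{lemma-enhanced}, together with the same Jensen--Fubini step, reduces the claim to $\sup_L\int_{\mathbb{T}^n}e^{\gamma^2 k_L^+(0,u)}\,d\Leb^n(u)<\infty$; since $k_L^+$ has exactly the structure of $k_{L,\sharp}$ with coefficients $\big(a_n\vartheta_{L,z}^2\lambda_{L,z}^{n/2}\big)^{-1}$, which by \eqref{def-theta} and \eqref{ratio-lambda} differ from $\big(a_n\lambda_{L,z}^{n/2}\big)^{-1}$ only by factors bounded above and below uniformly in $L$ and $z$, the argument for (i) applies verbatim.

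The main obstacle is the uniform-in-$L$ kernel estimate with a usable constant. That the hard Fourier truncation and the discrete-versus-continuous eigenvalue mismatch cannot inflate the logarithmic singularity by more than a bounded multiplicative factor is reasonably transparent; the delicate point is controlling that factor — bringing $c_0$ down to $e$ rather than to some larger dimension-dependent constant — since it is precisely this constant that fixes the admissible range $|\gamma|<\sqrt{n/e}$. The optimal range $|\gamma|<\sqrt n$ is recovered by other means, via Kahane's convexity inequality, in Proposition~\ref{four-meas} and Theorem~\ref{conv-disc-meas}.
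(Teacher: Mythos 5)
Your reduction of (i) to a pointwise bound $k_{L,\sharp}(0,y)\le c_0\log\frac{1}{d(0,y)\vee L^{-1}}+C$ is a genuinely different route from the paper's, but the key estimate is left unproven and, as stated, is not credible in the form you need. First, the constant $c_0=e$ has no source in the argument you sketch: a subordination/heat-kernel or dyadic-shell analysis of $\sum_{z\in\Z^n_L\setminus\{0\}}\lambda_{L,z}^{-n/2}\cos(2\pi z\cdot y)$ would, if it works at all, naturally produce $c_0=1+\epsilon$ (since $k\approx\log\frac1d$ by Lemma~\ref{log-div} and the eigenvalue ratios $\lambda_{L,z}/\lambda_z$ tend to $1$ for $\abs{z}\ll L$), which would yield the stronger range $\abs{\gamma}<\sqrt{n}$ — whereas the paper's constant $e$ arises from an entirely different mechanism, namely Stirling's formula in a power-series expansion of $\exp(\gamma^2 k_{L,\sharp})$ combined with the Hausdorff--Young bound $\int\abs{k_{L,R,S}(0,y)}^p\,dy\le\big(\sum\abs{c(z)}^{p'}\big)^{p-1}$. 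Second, the step you defer to ``summation by parts / off-diagonal heat-kernel decay'' is exactly the hard point: $k_{\sharp,L}(0,\cdot)$ is the convolution of $k(0,\cdot)$ with the cubical Dirichlet kernel $\prod_k\frac{\sin(\pi Ly_k)}{\sin(\pi y_k)}$, whose $L^1$-norm grows like $(\log L)^n$, so uniform-in-$L$ pointwise upper bounds do not follow from soft arguments; the authors themselves only establish $L^p$ and a.e.\ control of such cube-truncated series and explicitly state they cannot prove everywhere convergence. The whole design of the paper's proof (working with $L^p$-norms of the kernel rather than $L^\infty$-bounds, at the price of losing a factor $e$ in the admissible range of $\gamma$) is a way around precisely the estimate your plan takes for granted.

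There is also a concrete error in your treatment of (ii): the kernel in \eqref{eq: disc} is $k_{L,\flat}$, the piecewise-constant \emph{extension} of the discrete kernel $k_L(v,u)=\frac1{a_n}\sum_{z\in\Z^n_L\setminus\{0\}}\lambda_{L,z}^{-n/2}\varphi_z(v)\varphi_z(u)$, not $k_{\flat,L}$ from \eqref{k-flat-def}, which is the double cube-average of the continuum kernel $k$ (the covariance of the \emph{projection} $h_{\flat,L}$). Your Jensen--Fubini collapse to $\int_{\mathbb{T}^n}e^{\gamma^2 k(0,u)}\,du$ is valid for $k_{\flat,L}$ but not for $k_{L,\flat}$, whose Fourier coefficients involve the discrete eigenvalues $\lambda_{L,z}$; the paper instead handles (ii) by reducing to $\int_{\mathbb{T}^n_L}\exp(\gamma^2 k_L(0,v))\,dm_L(v)$ and rerunning the $L^p$-interpolation (Parseval plus Riesz--Thorin) and power-series argument on the discrete torus. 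Your outline for (iii) — reducing via $k_{+,L}=\mathsf{q}_L^{\otimes2}k_L^+$ and Jensen to a kernel of the same type as $k_{L,\sharp}$ — does match the paper's strategy, but it inherits the unproven estimate from (i), and the ``verbatim'' transfer needs the observation (made quantitative in the paper via the cutoff $S$) that the extra factors $\vartheta_{L,z}^{-2}$ are within $1+\epsilon$ of $1$ on the frequency range that produces the logarithmic singularity.
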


\begin{proof}
(i) Recall from \eqref{kL-sharp} that for $x,y\in\mathbb{T}^n$, 
\begin{align*}
k_{L,\sharp}(x,y)=&\frac1{a_n}\sum_{z\in\Z^n_L\setminus\{0\}} \frac1{\lambda_{L,z}^{n/2}}\cdot 
\exp\Big(2\pi i\, z\cdot (x-y)\Big)
\end{align*}
Here as before, $\lambda_{L,z}=4L^2\,\sum_{k=1}^n \sin^2\big(\pi z_k/L\big)$.
Given $\epsilon>0$, choose $R>2$ such that $|z|+\frac12\sqrt n\le (1+\epsilon)|z|$ for all $z\in\Z^n$ with $\|z\|_\infty\ge R/2$,
and $S>1$ such that $t\le (1+\epsilon) \sin(t)$ for all $t\in [0, \frac\pi{2S}]$.
Decompose $k_{L,\sharp}$ for $L>R\,S$ into $k_{L,R,S}+ g_{L,R}+f_{L,S}$ with
\begin{align*}
f_{L,S}(x,y)=&\frac1{a_n}\sum_{z\in\Z^n_L\setminus\Z^n_{L/S}} \frac1{\lambda_{L,z}^{n/2}}\cdot 
e^{2\pi i\, z\cdot (x-y)}\comma\qquad
g_{L,R}(x,y)=&\frac1{a_n}\sum_{z\in\Z^n_R\setminus\{0\}} \frac1{\lambda_{L,z}^{n/2}}\cdot 
e^{2\pi i\, z\cdot (x-y)}
\end{align*}
and
\begin{align*}
k_{L,R,S}(x,y)=&\frac1{a_n}\sum_{z\in\Z^n_{L/S}\setminus\Z^n_R} \frac1{\lambda_{L,z}^{n/2}}\cdot 
e^{2\pi i\, z\cdot (x-y)}\fstop
\end{align*}

For fixed $R$, obviously $g_{L,R}(x,y)$ is uniformly bounded in $L,x,y$. 
Similarly,  since $\sin(t)\geq {\frac2\pi}\,t$ for $t\in[0,\pi/2]$ we have that for fixed $S$, 
\begin{align*}\big|f_{L,S}\big|(x,y)&\le\frac1{a_n}\sum_{z\in\Z^n_L\setminus\Z^n_{L/S}} \frac1{\lambda_{L,z}^{n/2}}
\le \frac1{a_n}\sum_{z\in\Z^n_L\setminus\Z^n_{L/S}}\frac1{4^n|z|^n}\\
&\le \frac1{4^n\,a_n}\int_{B_{{\sqrt n}L/2}(0)\setminus B_{L/(2S)}(0)} \frac1{|x|^n}d\Leb^n(x)\\
&=C\,\Big[\log({\sqrt n}L/2)-\log(L/(2S))\Big]=C'<\infty\fstop
\end{align*}

For \eqref{eq: cont} to hold, it thus suffices to prove that
\begin{equation}\label{eq: cont-alt}
\sup_L\int_{\mathbb{T}^n}\exp\Big(\gamma^2 k_{L,R,S}(0,y)\Big)\,d\Leb^n(y)<\infty\comma\end{equation}
for some $R>2$ as above.

 For proving the latter, we follow the argument of the proof of Lemma 2, p. 611 in \cite{zygmund1934some}. To start with, we use the multi-dimensional Hausdorff--Young inequality, which can be found in \cite[p. 248]{folland1999real}:
\begin{align*}
\text{For } p\geq 2 \quad &\int_{\mathbb{T}^n}| k_{L,R,S}(0,y)|^p\, dy\leq \left(\sum_{z\in\Z^n_{L/S}\setminus\Z^n_R}|c(z)|^{p'}\right)^{p-1},
\end{align*}
where $c(z)=\frac1{a_n}\big(4L^2\,\sum_{k=1}^n \sin^2(\pi z_k/L)\big)^{-n/2}$ and $p'\in[1,2]$ is the H\"older-conjugate. Since 
$\pi|z_k|/L\le (1+\epsilon) |\sin(\pi z_k/L)|$ for all $z_k/L$ under consideration,
we have that 
\begin{align*}
\int_{\mathbb{T}^n}|  k_{L,R,S}(0,y)|^p\, dy\leq (1+\epsilon)^{np'(p-1)}\,\left(\frac1{a_n^{p'}}\sum_{z\in\Z^n_{L/S}\setminus\Z^n_R}\frac1{(2\pi|z|)^{np'}}\right)^{p-1}.
\end{align*}
Let $Q_1(z)$ be the unit cube $\prod_{i=1}^n[z-\frac12 e_i,z+\frac12 e_i]$ around $z\in\Z^n$.
Since by assumption
 \begin{align*}
 |x|\leq |z|+\frac12\sqrt{n}\leq (1+\frac12\sqrt{n})|z|\leq  (1+\epsilon)|z|
 \end{align*}
  for all $x\in Q_1(z)$ and all $z$ with $\|z\|_\infty\geq R/2$, we estimate
\begin{align*}
\sum_{z\in\Z^n_{L/S}\setminus\Z^n_R}\int_{Q_1(z)}\frac1{|z|^{np'}}\, dx
\leq &\sum_{z\in\Z^n\setminus\Z^n_R}\left(1+\epsilon\right)^{np'}\int_{Q_1(z)}\frac1{|x|^{np'}}\, dx\\
\le&\left(1+\epsilon\right)^{np'}\int_{\R^n\setminus B_{R/2}(0)}\frac1{|x|^{np'}}\, dx.
\end{align*}
With Cavalieri's principle the integral can be estimated by
\begin{align*}
\int_{\R^n\setminus B_{R/2}(0)}\frac1{|x|^{np'}}\, dx&=\int_{R/2}^\infty \frac{2\pi^{n/2}}{\Gamma(n/2)}r^{n-1}\frac1{r^{np'}}\, dr\\
&=  \frac{2\pi^{n/2}}{\Gamma(n/2)}\frac1{n(p'-1)}\left(\frac R2\right)^{n(1-p')}\leq  \frac{2\pi^{n/2}}{\Gamma(n/2)}\frac p{n}
\end{align*}
since $p'>1$ and $R\ge2$. Hence we obtain for $ k_{L,R,S}$:
\begin{align}\label{zwischen}
\int_{\mathbb{T}^n}|k_{L,R,S}(0,y)|^p\, dy\leq \left(\frac{(1+\epsilon)^2}{2\pi}\right)^{np}\frac1{a_n^p}\left( \frac{2\pi^{n/2}}{\Gamma({n}/2)}\frac p{n}\right)^{p-1}.
\end{align}

Summing these terms over all $p\in\N\setminus\{1\}$  yields
\begin{align*}
\sum_{p\geq 2}\frac{\gamma^{2p}}{p!}\int_{\mathbb{T}^n}|   k_{L,R,S}(0,y)|^p\, dy
\leq &\sum_{p\geq 2}\frac{\gamma^{2p}}{p!}\left(\frac{(1+\epsilon)^{2n}}{(2\pi)^{n}a_n}\right)^p\left(\frac{2\pi^{n/2}}{\Gamma(n/2)}\frac p{n}\right)^{p-1}\\
=&\frac{n\Gamma(n/2)}{2\pi^{n/2}}\sum_{p\geq 2}\frac{\gamma^{2p}}{p!p}\left(\frac{(1+\epsilon)^{2n}}{(2\pi)^{n}a_n}\frac{2\pi^{n/2}p}{n\Gamma(n/2)}\right)^p\\
\sim&\frac{n\Gamma(n/2)}{2\pi^{n/2}}\sum_{p\geq 2}\frac{1}{p\sqrt{2\pi p}}\left(\frac{(1+\epsilon)^{2n}2\pi^{n/2} e \gamma^2}{(2\pi)^n a_n n\Gamma(n/2)}\right)^p,
\end{align*}
where we used Stirling's formula $p!\sim \sqrt{2\pi p}(\frac pe)^p$. The last sum is finite if 
\begin{align}\label{eq:p:Estimate:1}
(1+\epsilon)^{2n}\gamma^2<\frac{(2\pi)^n a_nn\Gamma(n/2)}{2\pi^{n/2}e}
=\frac{n}{e}=\gamma_*^2 \comma
\end{align}
where we inserted $a_n=\frac2{(4\pi)^{n/2}\Gamma(n/2)}$.
Since by assumption $|\gamma|<\gamma_*$ and since $\epsilon>0$ was arbitrary, by appropriate choice of the latter, \eqref{eq:p:Estimate:1} is satisfied.

To treat the cases $p=0$ and $p=1$, observe that $(\int_{\mathbb{T}^n}|   k_{L,R,S}(0,y)|\, dy)^2\leq \int_{\mathbb{T}^n}|   k_{L,R,S}(0,y)|^2\, dy$. Thus,  there exists a constant $C^R_{n,\gamma}$ such that
\begin{align*}
\sum_{p\geq 0}\frac{\gamma^{2p}}{p!}\int_{\mathbb{T}^n}|   k_{L,R,S}(0,y)|^p\, dy\leq C^R_{n,\gamma},
\end{align*}
uniformly in $L$,
and thus in turn
there exists a constant $C^\sharp_{n,\gamma}$ such that
\begin{align*}
\sup_L\,\sum_{p\geq 0}\frac{\gamma^{2p}}{p!}\int_{\mathbb{T}^n}|   k_{L,\sharp}(0,y)|^p\, dy\leq C^\sharp_{n,\gamma},
\end{align*}

which proves \eqref{eq: cont}. 

\medskip

(ii)
To show \eqref{eq: disc},  note that 
$$\int_{\mathbb{T}^n}\exp\Big(\gamma^2 k_{L,\flat}(0,y)\Big)\,d\Leb^n(y)=\int_{\mathbb{T}^n_L}\exp\Big(\gamma^2 k_{L}(0,v)\Big)\,dm_L(v)
$$ for every $L$.
Furthermore, for $p\geq 2$,
\begin{align*}
\int_{\mathbb{T}^n_L}|k_L(0,v)|^p\, dm_L(v)\leq\left(\frac1{a_n^{p'}}\sum_{z\in\Z^n_L\setminus\{0\}}|c(z)|^{p'}\right)^{p-1}.
\end{align*}
Indeed, for $p=2$ this is due to Parseval's identity, and for $p=\infty$ this holds since $|\exp(2\pi iz(x-y))|=1$. The estimate holds for all intermediate $p\in(2,\infty)$ by virtue of the Riesz--Thorin theorem. Then, the proof of \eqref{eq: disc} follows the lines above.

\medskip

(iii)
Recall that $k_{+,L}=q_L\circ k_L^+$ with $k_L$ given is \eqref{kL+}. Thus by Jensen's inequality, \eqref{eq: nat} will follow from
\begin{equation}\label{eq: nat+}  \sup_L\int_{\mathbb{T}^n}\exp\Big(\gamma^2 k^+_{L}(0,y)\Big)\,d\Leb^n(y)<\infty\fstop\end{equation}
To prove this, we argue as before in (i), now with
\begin{align*}
k_{L}^+(x,y)=&\frac1{a_n}\sum_{z\in\Z^n_L\setminus\{0\}} \frac1{\vartheta_{L,z}^2\,\lambda_{L,z}^{n/2}}\cdot 
\exp\Big(2\pi i\, z\cdot (x-y)\Big)
\end{align*}
in the place of $k_{L,\sharp}(x,y)$. For given $\epsilon>0$, choose $R>2$ and $S>1$ as before. In particular, 
then $t\le (1+\epsilon) \sin(t)$ for all $t\in [0, \frac\pi{2S}]$ and thus 
$$1\ge \vartheta_{L,z}\ge (1+\epsilon)^{-n}\qquad\forall z\in\Z^n_{L/S}.$$
Thus decomposing $k_{L}^+$ intro three factors as before and then arguing as before will prove the claim.
\end{proof}

\begin{corollary}\label{p:Integrability}
If $|\gamma|<\gamma_*$, then for each $f\in L^2(\mathbb{T}^n)$,
\begin{itemize}
\item[(i)]  the family $\big(\int_{\mathbb{T}^n} f\,d\mu_{L,\sharp}\big)_{L\in\N}$ is $L^2(\mathbf P)$-bounded, 
\item[(ii)]  the family $\big(\int_{\mathbb{T}^n} f\,d\mu_{L,\flat}\big)_{L\in\N}$ is $L^2(\mathbf P)$-bounded.
\item[(iii)]  the family $\big(\int_{\mathbb{T}^n} f\,d\mu_{+,L}\big)_{L\in\N}$ is $L^2(\mathbf P)$-bounded.
\end{itemize}
\end{corollary}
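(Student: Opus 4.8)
The plan is to expand the second moment of the linear functional $\int f\,d\mu$ for each of the three random measures into an explicit double integral of the relevant covariance kernel, and then to reduce that quantity — via the elementary inequality $\abs{f(x)f(y)}\le\tfrac12\paren{f(x)^2+f(y)^2}$ and the symmetry of the kernels — to the uniform-in-$L$ bounds already established in Theorem~\ref{p:Estimate}. Concretely, for part~(i): since $h_{L,\sharp}$ is a centered Gaussian field with covariance $k_{L,\sharp}$, the identity $\mathbf E\bigl[e^{\gamma h_{L,\sharp}(x)+\gamma h_{L,\sharp}(y)}\bigr]=\exp\bigl(\tfrac{\gamma^2}{2}\bigl(k_{L,\sharp}(x,x)+2k_{L,\sharp}(x,y)+k_{L,\sharp}(y,y)\bigr)\bigr)$, combined with the renormalization built into $\mu_{L,\sharp}$ and Fubini (legitimate because $k_{L,\sharp}$ is bounded for fixed $L$ and $f\in L^1(\mathbb{T}^n)$), yields
\[
\mathbf E\Bigl[\Bigl(\int_{\mathbb{T}^n} f\,d\mu_{L,\sharp}\Bigr)^2\Bigr]=\iint_{\mathbb{T}^n\times\mathbb{T}^n} f(x)\,f(y)\,\exp\bigl(\gamma^2 k_{L,\sharp}(x,y)\bigr)\,dx\,dy .
\]
Bounding the integrand by its absolute value, applying $\abs{f(x)f(y)}\le\tfrac12(f(x)^2+f(y)^2)$ together with $k_{L,\sharp}(x,y)=k_{L,\sharp}(y,x)$, and then using that $k_{L,\sharp}(x,y)=k_{L,\sharp}(0,y-x)$ depends only on $x-y$, one obtains
\[
\mathbf E\Bigl[\Bigl(\int f\,d\mu_{L,\sharp}\Bigr)^2\Bigr]\le \int_{\mathbb{T}^n} f(x)^2\Bigl(\int_{\mathbb{T}^n} e^{\gamma^2 k_{L,\sharp}(x,y)}\,dy\Bigr)dx=\norm{f}_{L^2}^2\int_{\mathbb{T}^n} e^{\gamma^2 k_{L,\sharp}(0,y)}\,dy ,
\]
whose right-hand side is bounded uniformly in $L$ by Theorem~\ref{p:Estimate}(i).

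For parts~(ii) and~(iii) the same Gaussian computation gives $\mathbf E[(\int f\,d\mu_{L,\flat})^2]=\iint f(x)f(y)\,e^{\gamma^2 k_{L,\flat}(x,y)}\,dx\,dy$ and the analogous identity with $k_{+,L}$ in place of $k_{L,\flat}$. The only complication is that $k_{L,\flat}$ and $k_{+,L}$ are not translation invariant under arbitrary shifts of $\mathbb{T}^n$. However, both kernels are constant on every product cube $(v+Q_L)\times(u+Q_L)$, $v,u\in\mathbb{T}^n_L$, and invariant under lattice translations, so that for $x\in v+Q_L$
\[
\int_{\mathbb{T}^n} e^{\gamma^2 k_{L,\flat}(x,y)}\,dy=\int_{\mathbb{T}^n_L} e^{\gamma^2 k_L(v,u)}\,dm_L(u)=\int_{\mathbb{T}^n_L} e^{\gamma^2 k_L(0,u)}\,dm_L(u)=\int_{\mathbb{T}^n} e^{\gamma^2 k_{L,\flat}(0,y)}\,dy ,
\]
independently of $x$, and similarly with $k_{+,L}$. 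Running the argument of part~(i) with this observation bounds the second moments of $\int f\,d\mu_{L,\flat}$ and $\int f\,d\mu_{+,L}$ by $\norm{f}_{L^2}^2$ times the quantities appearing in Theorem~\ref{p:Estimate}(ii) and~(iii), which are finite uniformly in $L$.

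The argument involves no genuine obstacle; the only point requiring a little care is the reduction of $\int_{\mathbb{T}^n} e^{\gamma^2 k(x,y)}\,dy$ to its value at $x=0$ for the non-translation-invariant kernels $k_{L,\flat}$ and $k_{+,L}$, handled by their piecewise-constant structure as above. As an alternative for part~(iii) one may bypass this entirely: since $k_{+,L}=\mathsf{q}_L^{\otimes2}k_L^+$, Jensen's inequality gives $e^{\gamma^2 k_{+,L}(x,y)}\le(\mathsf{q}_L^{\otimes2}e^{\gamma^2 k_L^+})(x,y)$, and integrating in $y$ and using $\int_{\mathbb{T}^n} q_L(y,y')\,dy=1$ reduces the bound to $\sup_L\int_{\mathbb{T}^n} e^{\gamma^2 k_L^+(0,y)}\,dy<\infty$, which is precisely~\eqref{eq: nat+}.
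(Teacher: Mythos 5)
Your proof is correct and follows essentially the same route as the paper: expand the second moment into $\iint f(x)f(y)\exp\big(\gamma^2 k(x,y)\big)\,dx\,dy$ for each kernel, bound $f(x)f(y)$ by $f(x)^2$ (via $\abs{f(x)f(y)}\le\tfrac12(f(x)^2+f(y)^2)$ and symmetry), and invoke the uniform bounds of Theorem~\ref{p:Estimate}. If anything, you are more careful than the paper on two points it leaves implicit --- the symmetrization step and the reduction of $\int e^{\gamma^2 k(x,y)}\,dy$ to its value at $x=0$ for the non-translation-invariant kernels $k_{L,\flat}$ and $k_{+,L}$ --- both of which you justify correctly.
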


\begin{proof} (i) Given $f\in L^2(\mathbb{T}^n)$ and $\gamma$ as above, consider the Gaussian variables
$$Y_{L,\sharp}\coloneqq\int_{\mathbb{T}^n} f\,d\mu_{L,\sharp}= \int_{\mathbb{T}^n} \exp\Big(\gamma h_{L,\sharp}(x)-\frac{\gamma^2}2k_{L,\sharp}(x,x)\Big)\, f(x)\,d\Leb^n(x).$$
Then
\begin{align*}
\sup_L\mathbb E\Big[\big|Y_{L,\sharp}\big|^2
\Big]&=
\sup_L\int_{\mathbb{T}^n}\int_{\mathbb{T}^n}\exp\Big(\gamma^2 k_{L,\sharp}(x,y)\Big)\,f(x)\,f(y)\,dm_L(y)\,d\Leb^n(x)
\\
&\le
\sup_L\int_{\mathbb{T}^n}\int_{\mathbb{T}^n}\exp\Big(\gamma^2 k_{L,\sharp}(x,y)\Big)\,f^2(x)\,dm_L(y)\,d\Leb^n(x)
\\
&\le \|f\|^2\cdot \sup_L\int_{\mathbb{T}^n}\exp\Big(\gamma^2 k_{L,\sharp}(0,y)\Big)\,dm_L(y)\le  \|f\|^2\cdot C^\sharp_{n,\gamma}<\infty.
\end{align*}

(ii) Similarly, 
\begin{align*}
\sup_L\mathbb E\Big[\big|Y_{L,\flat}\big|^2
\Big]&\le \|f\|^2\cdot \sup_L\int_{\mathbb{T}^n}\exp\Big(\gamma^2 k_{L,\flat}(0,y)\Big)\,dm_L(y)\le  \|f\|^2\cdot C^\flat_{n,\gamma}<\infty
\end{align*}
for $Y_{L,\flat}\coloneqq \int_{\mathbb{T}^n} f\,d\mu_{L,\flat}=\int_{\mathbb{T}^n} \exp\Big(\gamma h_{L,\flat}(x)-\frac{\gamma^2}2k_{L,\flat}(x,x)\Big)\, f(x)\,d\Leb^n(x)$.

(iii) Analogously.
\end{proof}

{\footnotesize
%

}

\end{document}